\newcommand{\foot}[1]{\mbox{}\marginpar{\raggedleft\hspace{0pt}\tiny #1}}
\newcommand{\vc}[1]{\foot{VC: #1}}
\newcommand{\ph}{\varphi}
\newcommand{\eps}{\epsilon}
\newcommand{\ulim}{\varlimsup}
\newcommand{\NN}{\mathbb{N}}
\newcommand{\RR}{\mathbb{R}}
\newcommand{\JJ}{\mathbf{J}}
\newcommand{\II}{\mathbf{I}}
\newcommand{\MMM}{\mathcal{M}}
\newcommand{\HHH}{\mathcal{H}}
\newcommand{\GGG}{\mathcal{G}}
\newcommand{\di}{\partial}
\newcommand{\wM}{\widetilde{M}}
\newcommand{\ideal}{\di\widetilde M}
\newcommand{\hexp}{h_{\mathrm{exp}}^\perp}
\newcommand{\htop}{h_{\mathrm{top}}}
\newcommand{\hvol}{h_{\mathrm{vol}}}
\newcommand{\dididi}{ (\partial \wM \times \partial \wM) \setminus \diag}
\newcommand{\sqbd}{\di^2\wM}
\newcommand{\tmu}{\widetilde{\mu}}
\DeclareMathOperator{\grad}{grad}
\DeclareMathOperator{\card}{card}
\DeclareMathOperator{\diam}{diam}
\DeclareMathOperator{\Leb}{Leb}
\DeclareMathOperator{\cl}{cl}
\DeclareMathOperator{\vol}{vol}
\DeclareMathOperator{\supp}{supp}
\DeclareMathOperator{\diag}{diag}
\DeclareMathOperator{\pr}{pr}
\newtheorem{theorem}{Theorem}[section]
\newtheorem{lemma}[theorem]{Lemma}
\newtheorem{proposition}[theorem]{Proposition}
\newtheorem{corollary}[theorem]{Corollary}
\newtheorem{definition}[theorem]{Definition}
\newtheorem*{thma*}{Theorem}
\theoremstyle{remark}
\newtheorem{remark}[theorem]{Remark}
\DeclareMathOperator{\NE}{NE}
\numberwithin{equation}{section}
\numberwithin{figure}{section}
\begin{document}

\title[Geodesic flows without conjugate points]{Uniqueness of the measure of maximal entropy for
geodesic flows on certain manifolds without conjugate points}
\author{Vaughn Climenhaga}
\author{Gerhard Knieper}
\author{Khadim War}
\address{Dept.\ of Mathematics, University of Houston, Houston, TX 77204}
\address{Dept.\ of Mathematics, Ruhr University Bochum, 44780 Bochum, Germany}
\address{IMPA, Estrada Dona Castorina 110, Rio de Janeiro, Brazil}

\email{climenha@math.uh.edu}
\email{gerhard.knieper@rub.de}
\email{khadim@impa.br}
\date{\today}
\subjclass{ 37D25, 37D35, 37D40, 53C22}
\keywords{Geodesic flows without conjugate points, Measure of maximal entropy}

\begin{abstract}
We prove that for closed surfaces $M$ with Riemannian metrics without conjugate points and genus $\geq 2$ the geodesic flow on the unit tangent bundle $T^1M$ has a unique measure of maximal entropy.
Furthermore, this measure is fully supported on $T^1M$, 
is the limiting distribution of closed orbits,  and the flow is mixing with respect to this measure.  
We formulate conditions under which this result extends to higher dimensions.
\end{abstract}

\thanks{V.C.\ is partially supported by NSF grant DMS-1554794.}
\thanks{G.K. and K.W. are partially supported by the German Research Foundation (DFG),
CRC/TRR 191, \textit{Symplectic structures in geometry, algebra and dynamics.}}
\maketitle

\section{Introduction}

Let $X$ be a compact metric space and $F = \{f_t\colon X\to X\}_{t\in \RR}$ a continuous flow. 
The complexity of the flow can be quantified by the \emph{topological entropy}
\[
\htop(F) = \lim_{\eps\to 0} \limsup_{t\to\infty} \frac 1t\log \Lambda_t(X,F,\eps),
\]
where $\Lambda_t(X,F,\eps)$ is the maximum cardinality of a \emph{$(t,\eps)$-separated set} -- that is, a set $E\subset X$ such that for every $x,y\in E$, there is $s\in [0,t]$ such that $d(f_s x, f_s y) \geq \eps$.
The \emph{variational principle}  \cite[Theorem 8.6]{pW82} says that
\[
\htop(F) = \sup \{ h_\mu(f_1) : \mu \in \MMM_F(X) \},
\]
where $\MMM_F(X)$ is the space of flow-invariant Borel probability measures on $X$, and $h_\mu(f_1)$ is measure-theoretic entropy.  A measure $\mu \in \MMM_F(X)$ that achieves the supremum is called a \emph{measure of maximal entropy (MME)}.

If $M$ is a smooth closed Riemannian manifold of negative sectional curvature, then by classical work of Bowen and Margulis, the geodesic flow $F = \{f_t\}_{t\in \RR}$ on the unit tangent bundle $T^1M$ has a unique measure of maximal entropy. Moreover, the unique MME is fully supported  on $T^1M$ and is the limiting distribution of closed orbits of the geodesic flow \cite{rB73,rB74}.  This proof uses Markov partitions; the result can also be proved using expansivity and the specification property \cite{rB75,eF77}.

The corresponding result for rank 1 manifolds with nonpositive sectional curvatures, where the tools in the previous paragraph may all fail, was proved by the second author in \cite{gK98} using Patterson--Sullivan measures.  An alternate proof was recently given in \cite{BCFT} using non-uniform versions of expansivity and specification introduced in \cite{CT16}.

Once $M$ is allowed to have some positive sectional curvatures, there are two natural conditions to impose that still guarantee some (non-uniform) hyperbolicity for the geodesic flow. The more restrictive of these is \emph{no focal points}; in this setting many of the tools from nonpositive curvature still hold, and it is possible to adapt the approaches described above; see \cite{GR,LWW,CKP}.
  
We work in the broader setting of \emph{no conjugate points}, where most of the tools from nonpositive curvature fail in general; see \cite{BBB87,kB92} for some discussion of the phenomena that can occur.  In particular, each of the above approaches encounters substantial difficulties, so that there is no straightforward generalization of either \cite{gK98} or \cite{BCFT}.  Nevertheless, by using tools from coarse geometry together with the result from \cite{CT16} on non-uniform expansivity and specification, we obtain the following result.

\begin{theorem}\label{thm:surfaces}
Let $M$ be a smooth closed surface of genus $\geq 2$, and let $g$ be a Riemannian metric on $M$ without conjugate points.  Then the associated 
geodesic flow on $T^1M$ has a unique measure $\mu$ of maximal entropy.  The measure $\mu$  has full support, is Bernoulli (hence mixing), 
and is the limiting distribution of (homotopy classes of) closed geodesics in the sense of Definition \ref{def:periodics}. In particular, the set of closed orbits is dense in $T^1M$.
\end{theorem}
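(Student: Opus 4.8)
The plan is to apply the abstract machinery of Climenhaga--Thompson \cite{CT16} on uniqueness of equilibrium states under non-uniform versions of expansivity and specification, specialized to the zero potential. To do this I would first need a good model for the geodesic flow on $T^1M$ when $M$ is a surface of genus $\geq 2$ without conjugate points. The essential tool here is coarse geometry: since $M$ has genus $\geq 2$, the universal cover $\wM$ with the lifted metric is Gromov hyperbolic (quasi-isometric to the hyperbolic plane, by the \v{S}varc--Milnor lemma and the fact that $\pi_1(M)$ is a hyperbolic group), so it has a well-defined visual boundary $\ideal$ that can be identified with that of $\mathbb{H}^2$. Geodesics in $\wM$ are quasi-geodesics in $\mathbb{H}^2$, hence lie within bounded Hausdorff distance of true hyperbolic geodesics, and two geodesics of $\wM$ are asymptotic in forward (resp.\ backward) time iff they have the same forward (resp.\ backward) endpoint on $\ideal$. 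This gives a coarse version of the stable/unstable structure even though the usual horospherical foliations need not behave well without conjugate points.

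Next I would establish the decomposition $T^1M = [\mathcal{P}]\cdot\mathcal{G}\cdot[\mathcal{S}]$ in the sense of \cite{CT16}: I would take $\mathcal{G}$ to be (roughly) the set of geodesic segments whose lifts to $\wM$ are uniformly close to a hyperbolic geodesic between their endpoints, with the quantitative closeness controlled by a scale parameter, and $[\mathcal{P}],[\mathcal{S}]$ the "bad" prefixes and suffixes where this control degrades. The two structural hypotheses to verify are: (i) the collection $\mathcal{G}$ has specification at every scale -- here I would use the density of tangent vectors at a point whose geodesics realize prescribed boundary endpoints, together with the coarse hyperbolicity of $\wM$ to concatenate orbit segments while staying inside $\mathcal{G}$, much as one shifts quasi-geodesics in $\mathbb{H}^2$; and (ii) the "pressure gap" $P(\mathcal{P}\cup\mathcal{S}, 0) < P(0) = \htop(F)$, i.e.\ the collection of bad orbit segments carries strictly less entropy than the whole system. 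One also needs a form of expansivity: the geodesic flow without conjugate points on a higher-genus surface is expansive at small scales away from a set of measure zero, which can be extracted from the fact that distinct geodesics with the same pair of endpoints on $\ideal$ form a "flat strip"-like obstruction that, on a surface of genus $\geq 2$, must be non-generic for any MME. Granting these, \cite{CT16} yields a unique equilibrium state for the zero potential, which is the unique MME, and the same framework gives mixing (indeed the Bernoulli property, via the product structure of the construction) and the equidistribution of closed orbits as in Definition \ref{def:periodics}; full support then follows from the fact that $\mathcal{G}$-segments are dense and the unique MME gives positive mass to every open set.

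The main obstacle -- and the place where genuinely new work beyond \cite{BCFT} is required -- is the pressure gap (ii) together with the expansivity statement, because without conjugate points the pathologies documented in \cite{BBB87,kB92} mean that one cannot simply import the nonpositive-curvature arguments of \cite{gK98, BCFT}. I expect the heart of the proof to be a quantitative estimate showing that orbit segments which fail the coarse-geodesic condition at a definite scale are exponentially rare: concretely, that the exponential growth rate of such segments is strictly below $\htop(F)$, which should hinge on comparing the volume growth / Knieper-type entropy bounds in $\wM$ with the entropy carried by geodesics that wander far from the Gromov-hyperbolic model. A secondary technical point is checking that the identification of $\ideal$ with the boundary of $\mathbb{H}^2$ is sufficiently compatible with the dynamics (e.g.\ that the Busemann-type cocycles and the flow's action on the boundary are continuous and have the regularity needed for the specification construction), since without conjugate points Busemann functions are only $C^{1,1}$ at best and horospheres need not be embedded.
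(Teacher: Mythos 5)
Your proposal correctly identifies the right general toolbox (coarse geometry of the universal cover and the Climenhaga--Thompson criterion), but the specific route you sketch runs directly into the obstacle that the paper itself was designed to circumvent, and two other claims would not hold up.

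The central difficulty is the \emph{scale} at which specification can be proved. You propose taking a nontrivial decomposition $\mathcal{P}\cdot\mathcal{G}\cdot\mathcal{S}$ with $\mathcal{G}$ the orbit segments that stay coarsely close to background geodesics, and then establishing specification for $\mathcal{G}$ at every scale, in the spirit of \cite{BCFT}. But the Morse Lemma only delivers shadowing at a fixed large scale $R_0$ (and in practice a multiple of it), and in the absence of conjugate points one has none of the monotonicity of Jacobi fields, convexity of the distance between geodesics, or continuity of the stable and unstable foliations that make it possible in \cite{BCFT,CKP} to refine this to arbitrarily small scales for a well-chosen $\mathcal{G}$. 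The paper explicitly addresses this in Remark \ref{rmk:coarse}. The actual argument therefore works with the \emph{trivial} decomposition (all of $X\times\RR^+$ declared ``good'') and proves specification only at the single large scale $\delta$ coming from the Morse Lemma. To satisfy the expansivity-gap hypothesis of Theorem \ref{thm:general} one then needs $\eps>40\delta$, and the key step you are missing entirely is the passage, via residual finiteness of $\pi_1(M)$ (Proposition \ref{prop:large-N}), to a finite cover $N$ of $M$ whose injectivity radius exceeds $3\eps$. That injectivity-radius condition is what allows the non-expansive set $\NE(\eps)$ on $T^1N$ to be identified with a lift of the non-expansive set $T^1\wM\setminus\mathcal{E}$ (Lemma \ref{lem:NE}), so that condition \ref{H4} gives the entropy gap. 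Without this, specification at a large scale $\delta$ and an expansivity gap at a much smaller scale simply do not meet.

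Two further points would not survive scrutiny. First, your claim that the CT16 framework ``gives \ldots\ indeed the Bernoulli property, via the product structure of the construction'' is not correct: that framework yields uniqueness, ergodicity, a Gibbs property, and equidistribution, but not Bernoullicity. The paper obtains the Bernoulli property from the separate, genuinely surface-specific result of Ledrappier--Lima--Sarig \cite{LLS16}, and this is exactly why the higher-dimensional Theorem \ref{thm:higher-dim} only asserts mixing (proved via the Patterson--Sullivan construction and Babillot's argument), not Bernoullicity. Second, the way the paper verifies the entropy gap \ref{H4} for surfaces is not via a quantitative estimate that ``bad'' orbit segments are exponentially rare; it is a soft Pesin-theory argument (Proposition \ref{prop:entropy-gives-expansive}): any ergodic measure of positive entropy has nonzero Lyapunov exponents by Ruelle's inequality, hence has transverse Pesin stable/unstable manifolds, which after identification with the horospherical foliations forces the measure to live on the expansive set $\mathcal{E}$. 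Your proposed counting estimate might be pursuable, but it is not what the paper does and it is far from clear it can be carried out without the very structural properties that fail here. Finally, be careful about ``flat strip''-type intuition for the non-expansive set: as the paper notes after Lemma \ref{lem:eta-xi}, without focal points the set of geodesics joining two boundary points need \emph{not} be a flat strip, so arguments built on that picture are not available.
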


In fact, Theorem \ref{thm:surfaces} is a specific case of a more general result (Theorem \ref{thm:higher-dim}), 
which establishes  uniqueness for a broad class $\mathcal{H}$ of Riemannian manifolds with no conjugate points in arbitrary dimension.  The key properties of a manifold $(M,g)\in \mathcal{H}$ are:
\begin{itemize}
\item $M$ admits a negatively curved ``background'' Riemannian metric; 
\item geodesics emanating from a common point on the universal covering eventually diverge;
\item the fundamental group is residually finite;
\item all invariant measures of ``nearly maximal'' entropy must have support on the expansive set, see  \eqref{eqn:E}.
\end{itemize}
See \S\ref{sec:high} for precise definitions, an explanation of why $\mathcal{H}$ contains every surface of genus $\geq 2$, and a discussion of how restrictive these conditions are.

Uniqueness and ergodicity of the MME are provided by an application of \cite{CT16} (though see Remark \ref{rmk:coarse} below for a discussion of the stark differences between our result here and the one in \cite{BCFT}, and the novelties required in the present setting).
Bernoullicity in Theorem \ref{thm:surfaces} requires a result by Ledrappier, Lima, and Sarig \cite{LLS16}, which only applies when $\dim M=2$.  In higher dimensions, we can still obtain mixing by proving that the Patterson--Sullivan construction gives the unique MME (once uniqueness is known).
 For nonpositively curved closed manifolds of rank 1, Patterson--Sullivan measures and their geometric applications were studied in  \cite{gK97}. The properties of the Patterson--Sullivan construction were then used by Babillot \cite{mB02} to prove mixing of the flow with respect to the unique measure of maximal entropy. 
For manifolds in the class $\mathcal{H}$, we use the uniqueness result to demonstrate that the construction in \cite{gK98} gives the MME, and thus
obtain mixing as in \cite[Theorem 2]{mB02}.

\begin{theorem}\label{thm:PS}\label{thm:higher-dim}
Let $M$ be a 
Riemannian manifold with no conjugate points that lies in the class $\mathcal{H}$.
Then the associated geodesic flow on $T^1M$ has a unique measure of maximal entropy $\mu$.  The measure $\mu$ has full support,
is mixing, and is the limiting distribution of (homotopy classes of) closed geodesics in the sense of Definition \ref{def:periodics}. In particular, the set of closed orbits is dense in $T^1M$.
\end{theorem}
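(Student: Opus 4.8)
The plan is to obtain uniqueness, full support, and the periodic-orbit equidistribution of Theorem~\ref{thm:higher-dim} from the non-uniform specification framework of \cite{CT16}, feeding in geometric input from coarse comparison with the negatively curved background metric, and then to upgrade ergodicity to mixing by identifying $\mu$ with a Patterson--Sullivan measure and invoking Babillot's argument \cite{mB02}. The geometric input is set up as follows. Lift $g$ and a fixed negatively curved background metric $g_0$ to the universal cover $\wM$. A Morse-type comparison shows that $g$-geodesics are uniform $g_0$-quasigeodesics, so $(\wM,g)$ inherits the visual boundary $\ideal$ of $(\wM,g_0)$, every $g$-geodesic has well-defined endpoints at infinity, and the divergence hypothesis makes the endpoint map $v\mapsto(v^-,v^+)$ injective away from a controlled set; this yields a Hopf-type coordinate system on a full-measure part of $T^1\wM$ modeled on $\dididi\times\RR$. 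The same comparison provides a weak shadowing property: a finite concatenation of $g$-geodesic segments whose consecutive endpoints are suitably aligned is $g$-shadowed by an honest $g$-geodesic, with constants depending only on the $g_0$-hyperbolicity data.

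Next I would apply \cite{CT16} with the potential $\varphi\equiv 0$, so that equilibrium states are exactly MMEs. Let $E\subseteq T^1M$ be the expansive set of \eqref{eqn:E}, and decompose the space of orbit segments $T^1M\times[0,\infty)$ as $[\mathcal{C}^p]\,\mathcal{G}\,[\mathcal{C}^s]$, where $\mathcal{G}$ collects segments that remain in a fixed neighborhood of $E$ with uniform estimates and $\mathcal{C}^p,\mathcal{C}^s$ are the prefixes and suffixes one trims off. One then verifies the three hypotheses of \cite{CT16}: (a) $\hexp<\htop$, i.e.\ the non-expansive part is entropically negligible --- this is exactly the defining condition of the class $\mathcal{H}$ that measures of nearly maximal entropy are supported on $E$; (b) $\mathcal{G}$ has specification at every scale, which is where the weak shadowing property is used, together with residual finiteness of $\pi_1$ to realize the glued itineraries by closed geodesics; (c) the Bowen property for $\varphi\equiv 0$ on $\mathcal{G}$, which is automatic. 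The conclusion of \cite{CT16} is then a unique MME $\mu$, exhibited as the weak\textsuperscript{*} limit of the uniformly weighted periodic-orbit measures --- which is the equidistribution of closed geodesics in the sense of Definition~\ref{def:periodics}, and in particular gives density of closed orbits --- and $\mu$ is ergodic. Full support follows by the standard argument: specification on $\mathcal{G}$ forces $P(U)=\htop$ for every nonempty open $U\subseteq T^1M$, so no such $U$ can be $\mu$-null.

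It remains to upgrade ergodicity to mixing. Following \cite{gK98}, I would construct a $\pi_1$-equivariant family of conformal densities $\{\mu_x\}_{x\in\wM}$ on $\ideal$ of dimension $h:=\htop$; pairing $\mu_x\otimes\mu_x$ on $\dididi$ with the flow direction produces a geodesic-flow-invariant measure on $T^1\wM$ that descends to a finite invariant measure $\bar m$ on $T^1M$. A Brin--Katok-type local-entropy computation, carried out with the coarse Busemann and Jacobian estimates furnished by $g_0$, gives $h_{\bar m}(f_1)=h=\htop$, so $\bar m$ is an MME; by the uniqueness just proved, $\bar m=\mu$. Hence $\mu$ has the product structure required by \cite[Theorem 2]{mB02}, and since the lengths of closed geodesics generate a dense subgroup of $\RR$ --- so the flow is not conjugate to a constant-time suspension --- Babillot's dichotomy promotes ergodicity of $\mu$ to mixing.

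The main obstacle I expect is step (b). In the no-conjugate-points setting one lacks essentially all of the infinitesimal and convexity tools available under no focal points (stable and unstable Jacobi fields, continuity of horospheres, the flat strip theorem), so specification must be established purely at the level of quasigeodesics in $(\wM,g_0)$ and then transported back to $g$, with errors that remain uniform and do not erode the pressure gap in (a). A secondary difficulty is the entropy identification of the Patterson--Sullivan measure with $\mu$, since the Jacobian estimates underpinning the local-entropy formula are available here only in the non-smooth, coarse form coming from the background metric.
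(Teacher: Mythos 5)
Your overall skeleton (nonuniform specification via \cite{CT16} for uniqueness, then a Patterson--Sullivan construction together with Babillot's argument \cite{mB02} for mixing) matches the paper's plan, but several of the load-bearing steps are replaced by constructions that either do not work here or are not what the paper does, and they are gaps, not just differences of taste.

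First, the decomposition step (b) is where the approach breaks. You propose to verify the hypotheses of \cite{CT16} by decomposing orbit segments as $[\mathcal{C}^p]\,\mathcal{G}\,[\mathcal{C}^s]$ with $\mathcal{G}$ consisting of segments that ``remain in a fixed neighborhood of $E$ with uniform estimates'' and then prove specification at every scale for $\mathcal{G}$. This is precisely the \cite{BCFT} strategy, and the paper's Remark~\ref{rmk:coarse} explains why it is unavailable here: the expansive set $\mathcal{E}$ need not be open, stable/unstable foliations need not be continuous, and Jacobi fields need not be monotone, so one cannot establish specification at arbitrarily small scales for such a $\mathcal{G}$. What the paper actually does is keep the \emph{trivial} decomposition ($\mathcal{G}=X\times\RR^+$, $\mathcal{P}=\mathcal{S}=\emptyset$ in \cite[Theorem 2.9]{CT16}) and prove specification only at one large scale $\delta$ coming from the Morse constants. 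The price is that the expansivity obstruction must then be controlled at an even larger scale $\eps>40\delta$, and this is where residual finiteness \ref{H3} enters: one passes to a finite cover $N$ with injectivity radius exceeding $3\eps$ (Proposition~\ref{prop:large-N}), so that non-expansivity at scale $\eps$ on $T^1N$ can be matched, via Lemma~\ref{lem:NE}, to non-membership in $\mathcal{E}$ in the universal cover, at which point \ref{H4} gives the entropy gap. Your proposal invokes residual finiteness only to ``realize the glued itineraries by closed geodesics,'' which is not its role; without the finite-cover step the entropy gap at scale $\eps$ is not obtained.

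Second, full support. You argue that specification forces every nonempty open set to have full pressure and hence positive $\mu$-measure. This would be fine if specification held at all small scales, but it holds only at the single large scale $\delta$, which exceeds $\diam M$; since the flow is not expansive, Bowen balls at scale $\delta$ do not shrink to orbit segments, and the \cite{CT16} Gibbs-type lower bound at scale $\delta$ does not localize to small open sets. The paper instead gets full support from the Patterson--Sullivan construction (Theorem~\ref{5.4.C}): once uniqueness is known, the PS measure is identified with $\mu$, and the PS density has full support on $\ideal$ (Proposition~\ref{prop:PS}\ref{PS-b}), which yields full support of $\mu$ via \eqref{eqn:Kaim}.

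Third, equidistribution does not drop out of \cite{CT16} the way you suggest. The subtlety is that the collection $\mathcal{P}(T)$ of pairwise non-homotopic closed geodesics of period $\le T$ need not be $(T,\eps)$-separated (closed geodesics of different lengths in different free homotopy classes can shadow each other; think of the M\"obius strip example). The paper handles this in \S\ref{sec:equidist} by passing to the length window $\mathcal{P}^*(T)$ of periods in $(T-\eps,T]$, proving separation there (Lemma~\ref{lem:hom-sep}), getting the growth rate from \cite{CK02} (Theorem~\ref{thm:specification}), and doing a careful summation over $\JJ$ to show $\mu_T\to\mu$. Your step skips this.

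Finally, on mixing your route is correct in outline, but the phrase ``the lengths of closed geodesics generate a dense subgroup of $\RR$'' elides the actual technical work: because $\mathcal{E}$ is not open, continuity of the cross-ratio cannot be used naively, and the paper replaces the uniform-hyperbolicity-on-recurrent-sets argument of \cite{gK98,mB02} with the $L^1$-convergence Lemma~\ref{lem:tnk}, which supplies the a.e.\ contraction along $W^{ss}$ needed to run Babillot's Hopf argument. The identification $\bar m = \mu$ via an entropy count (Lemma~\ref{5.4.B}) is as you describe, though the construction of $\tmu$ itself requires a measurable-selection argument (Lemma~\ref{lem:sim-pr} and the Kuratowski--Ryll-Nardzewski theorem) precisely because the flat strip theorem fails and $P^{-1}(\xi,\eta)$ can \emph{a priori} contain more than one geodesic.
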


\begin{remark}
All closed surfaces of genus at least 2 are in the class  $\mathcal{H}$.
On the other hand we do not know of  any examples of  closed manifolds without conjugate points that admit a metric of negative curvature 
which are not contained in $\mathcal{H}$; see \S\ref{sec:high} for more details. Thus Theorem \ref{thm:higher-dim} gives uniqueness and mixing for the MME on all the known examples of manifolds without conjugate points supporting a metric of negative curvature.
\end{remark}

\begin{remark}
Our proof that the Patterson--Sullivan construction gives a 
measure of maximal entropy
only uses that  $(M,g)$ is a  Riemannian manifold without conjugate points 
satisfying the first two properties in the definition of $\mathcal{H}$ (see \S\ref{sec:PS}).
Under the extra (strong) assumption of expansivity, it was proved by Aur\'elien Bosch\'e in his thesis \cite{aB18} that this is in fact the unique MME, but in our more general setting the Patterson--Sullivan approach does not provide a proof of uniqueness.
\end{remark}

\begin{remark}\label{rmk:coarse}
The proof of uniqueness in Theorem \ref{thm:higher-dim} uses nonuniform expansivity and specification properties as in \cite{CT16}.  The idea is to leverage the coarse hyperbolicity provided by the Morse Lemma, which gives a shadowing property in the universal cover at an \emph{a priori} very large scale $R$.  This leads to a specification property, which is enough for uniqueness provided the ``obstructions to expansivity'' are controlled at an even larger scale.  For this we use residual finiteness of $\pi_1(M)$ to pass to a finite cover of $M$ whose injectivity radius is much larger than $R$.  This application of tools from coarse geometry has no analogue in prior work using non-uniform expansivity and specification properties \cite{BCFT,CKP}; in those settings enough hyperbolicity is available to establish specification at arbitrarily small scales for a natural family of orbit segments, but those arguments do not work here due to the failure of various nice properties such as monotonicity of Jacobi fields and continuity of the stable and unstable foliations.
\end{remark}

The proof of mixing in Theorem \ref{thm:PS} goes along the same lines as in \cite[Theorem 2]{mB02} for rank 1 manifolds where there is an abstract result for mixing provided
 that the length spectrum is not \emph{arithmetic}, i.e., is not a discrete subgroup of \(\mathbb{R}\). However, in our case, there are some technicalities 
 related to the fact that the flow is not expansive and the expansive set is not open in general. More precisely, in \cite{mB02},  the continuity of the cross-ratio function 
 that implies the non-arithmeticity of the length spectrum is established by considering its restriction to the expansive set and using uniform hyperbolicity for the recurrent subset \cite[Proposition 4.1]{gK98}. 
See \S\ref{sec:asymptotic}, and especially Lemma \ref{lem:tnk}, for the results that play an analogous role in our setting.
We remark that the cross-ratio has an  analogue in the context of contact Anosov flows, the so called temporal function \cite[Figure~2]{cL04} whose \(C^2\) regularity was  key in the  proof of exponential mixing for contact Anosov flows.

\begin{remark}
In negative curvature, Margulis proved an asymptotic formula for the number of closed geodesics \cite{gM69,gM04}, relying heavily on 
a leafwise description of the measure of maximal entropy that can be also be interpreted via the Patterson--Sullivan approach \cite{vK90}. A key role is played by the fact that periodic orbits equidistribute to the MME.
%for every $\delta>0$, the MME is the limit of the measures $\mu_T$ equidistributed over periodic orbits with length in $[T-\delta,T]$.
%the MME satisfies an equidistribution result for periodic orbits.

It was natural to conjecture \cite{BK85} that a similar result holds in nonpositive curvature, where one must consider free homotopy classes of closed geodesics. The construction of the unique MME by the second author \cite{gK98} resolved part of this conjecture, and the Margulis asymptotic itself has recently been announced in a preprint of Ricks \cite{rR}.

Theorem \ref{thm:PS} can be considered as a starting point for extending the Margulis asymptotic to manifolds without conjugate points, and we complete this process in a separate paper \cite{CKW}, which in particular includes an equidistribution result
that strengthens the one here; see Remark \ref{rmk:CKW}.
%for the periodic orbit measures $\mu_T$ \cite[Theorem 1.3]{CKW}. A weaker equidistribution result -- in which $[T-\delta,T]$ is replaced by $[0,T]$ -- could be established relatively quickly from Theorem \ref{thm:PS} using an estimate from \cite{CK02} on the number of free homotopy classes containing a closed geodesic with length in $[0,T]$, 
%%the fact that if $P(T)$ denotes the number of free homotopy classes containing a closed geodesic of period less than $T$, then there are constants $A,B$ such that $AT^{-1} e^{T\htop(F)} \leq P(T) \leq Be^{T\htop(F)}$ for all sufficiently large $T$, which was proved in 
%together with an argument from ergodic theory estimating the entropy of any limit point of measures constructed from $(T,\epsilon)$-separated sets. There are some subtleties in relating free homotopy classes and $(T,\epsilon)$-separated sets, though, and to avoid getting bogged down in these details, which can be found in \cite[\S2.4 and \S6.1]{CKW}, we omit this equidistribution result from the formal statements of Theorems \ref{thm:surfaces} and \ref{thm:PS}.
\end{remark}

\subsection*{Structure of the paper}

In \S\ref{sec:geometry}, we give definitions and properties of manifolds without conjugate points and state the Morse lemma in Theorem \ref{thm:Morse}. In  \S\ref{sec:es}, we give the definition of specification and state the general results for uniqueness in Theorem \ref{thm:general}. In \S\ref{sec:per-limit}, we describe the property of equidistribution of closed geodesics.
In \S\ref{sec:algebra}, we discuss the property of fundamental group being residually finite which is enough to have a finite cover of arbitrarily large injectivity radius.
 
In \S\ref{sec:high}, we give a precise definition of the class $\mathcal{H}$ of manifolds to which Theorem \ref{thm:higher-dim} applies, and we prove Theorem \ref{thm:surfaces} under Theorem \ref{thm:higher-dim}.  \S\ref{sec:proof} is devoted to the proof of Theorem \ref{thm:higher-dim}, modulo the mixing property.
In \S\ref{sec:PS}, we prove that the MME in Theorem \ref{thm:higher-dim} is given by a Patterson--Sullivan construction as in \cite{gK98}. This construction is then used in \S\ref{sec:mixing} to prove that the flow is mixing with respect to the measure of maximal entropy which completes the proof of Theorem \ref{thm:PS}.  Certain technical proofs are given in the appendices.
 
\subsection*{Acknowledgments}

We are grateful to the anonymous referees for a careful reading and for many helpful suggestions.

\section{Background}\label{sec:background}

\subsection{Geometry of manifolds without conjugate points}\label{sec:geometry}

%\subsection{Geodesic flows and coarse hyperbolicity}%\label{sec:geodesic}

\subsubsection{Geodesic flows}\label{sec:geodesic}

Given a smooth closed $n$-dimensional Riemannian manifold $(M,g)$,
we write $F=\{f_{t}: T^1M\to T^1M \}_{t \in \RR} $ for the geodesic flow on the unit tangent bundle defined by $f_t(v) = \dot{c}_v(t)$, where $c_v$ is the unique geodesic on $M$ with $\dot{c}_v(0) = v$.  It is convenient for us to use the metric
$
d_1(v,w) = \max_{t \in [0,1]} d(c_v(t), c_w(t))
$
on $T^1M$ where  $d$ is the metric on $M$ induced by the Riemannian metric.

Throughout the paper, we will assume that $(M,g)$ is a smooth closed Riemannian manifold 
without conjugate points. Such manifolds are characterized
by the fact that the exponential map $\exp_p\colon T_pM \to M$ is not singular for all $p \in M$ or equivalently,  each nontrivial orthogonal Jacobi field vanishes at most at one point. 
The following relationships between this property and other conditions that give some kind of hyperbolic behavior are straightforward:
\begin{center}
\emph{nonpositive sectional curvature %$\stackrel{\Rightarrow}{\not\Leftarrow}$ 
$\Rightarrow$
no focal points
$\Rightarrow$
no conjugate points.}
\end{center}
The converse implications all fail in general.
%It is easy to see that nonpositive sectional curvature or more generally no focal points implies no conjugate points.

The  Cartan--Hadamard Theorem 
says that  the universal cover \(\wM\) of $(M,g)$ is diffeomorphic to $\RR^n$  via the exponential map and  therefore for every pair of distinct points $p,q\in \wM$, there is a unique geodesic segment $c\colon [a,b]\to \wM$ such that $c(a)=p$ and $c(b) = q$: geodesics are globally minimizing in \(\wM\).

The  group of deck transformations $\Gamma$ is isomorphic to the fundamental group $\pi_1(M)$ and acts isometrically on $\wM$.  In particular, $M$ is isometric to the quotient $\wM/\Gamma$.
We write $\pr \colon \wM \to M$ for the canonical projection and $\pr_* \colon T^1\wM \to T^1M$ for the map it induces between the unit tangent bundles.  Given a finite flow-invariant measure $\mu$ on $T^1M$, the \emph{lift} of $\mu$ is the $\sigma$-finite flow-invariant and $\Gamma$-invariant measure $\tmu$ on $T^1\wM$ defined by
\begin{equation}\label{eqn:tmu-lift}
\tmu(A) = \int_{T^1M} \#(\pr_*^{-1}(v) \cap A) \,d\mu(v).
\end{equation}

We let \(\pi\colon T^1\wM\to \wM\) be the standard projection.

\subsubsection{Coarse hyperbolicity using a background metric}

An important strategy throughout the paper will be to compare the geometric properties of $M$ with respect to two different Riemannian metrics $g$ and $g_0$, where $g$ is the original metric we are given, and $g_0$ is a `background' metric which we will always assume to have negative sectional curvatures.  

\begin{remark}\label{rmk:g0}
Existence of a negatively curved background metric places genuine topological restrictions on $M$.  In particular, as stated above the universal covering
is diffeomorphic to $\RR^n$ and by Preissmann's theorem each abelian subgroup of the fundamental group is infinite cyclic. Therefore, in dimension two it forces the genus of $M$ to be at least $2$ and in higher dimensions excludes examples such as Gromov's graph manifolds of nonpositive curvature for which uniqueness of the MME is known  \cite[\S6]{gK98}. On the other hand in this paper we do not impose
any local assumptions such as restrictions on the curvature.
\end{remark}

We will write $d,d^0$ for the distance functions associated to $g,g_0$ on both $M$ and $\wM$.  The first crucial observation is that by compactness of $M$ and the equivalence of  the quadratic forms of \(g\) and \(g_0\),  there exists a constant \(A>0\) such that for every \(p,q\in\wM\), we have
\begin{equation}\label{lem:unif-equiv}
A^{-1}\cdot d^{0}(p,q)\leq d(p,q)\leq A\cdot d^{0}(p,q).
\end{equation}
This has an important consequence for topological entropy.  Generalizing an earlier result of Manning \cite{aM79} in nonpositive curvature, it was shown by Freire and Ma\~n\'e \cite{FM82} that on a closed manifold without conjugate points, 
the volume growth in the universal cover is equal to the topological entropy of the geodesic flow: 
\begin{equation}\label{eq:topent}
\htop(F)=\lim_{r\to\infty}\frac{1}{r}\log \vol (B(p,r)),
\end{equation}
where \(p\) is any point in \(\wM\) and \(B(p,r) \subset \wM\) is the ball centered at \(p\) of radius \(r\). 
We say that $g$ has positive topological entropy if its geodesic flow $F$ has $\htop(F)>0$.
The following is an immediate consequence of \eqref{lem:unif-equiv} and \eqref{eq:topent}.

\begin{lemma}\label{lem:h>0}
If $M$ admits a metric $g_0$ without conjugate points that has positive topological entropy, then \emph{every} metric $g$ without conjugate points on $M$ has positive topological entropy.
In particular, this occurs if $M$ admits a metric $g_0$ with negative sectional curvatures.
\end{lemma}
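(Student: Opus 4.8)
The plan is to read off both conclusions directly from the Freire--Ma\~n\'e formula \eqref{eq:topent} together with the uniform comparison \eqref{lem:unif-equiv}, so the argument is short. First I would record the volume analogue of \eqref{lem:unif-equiv}: since $g$ and $g_0$ have equivalent quadratic forms and $M$ is compact, the ratio of the two Riemannian volume densities is a continuous positive function on $M$, hence bounded away from $0$ and $\infty$; this ratio is $\Gamma$-invariant, so the same bound holds on $\wM$, and therefore there is a constant $C>0$ with $C^{-1}\vol^0(E)\leq \vol(E)\leq C\vol^0(E)$ for every Borel $E\subseteq\wM$, where $\vol^0$ denotes $g_0$-volume.

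Next, fix $p\in\wM$ and let $A$ be the constant in \eqref{lem:unif-equiv}. From $d(p,q)\leq A\,d^0(p,q)$ we obtain the inclusion $B^0(p,r/A)\subseteq B(p,r)$ of balls in $\wM$, and hence
\[
\vol(B(p,r))\ \geq\ C^{-1}\vol^0(B^0(p,r/A)).
\]
Applying $\tfrac1r\log(\cdot)$ and letting $r\to\infty$ eliminates the constant $C$, and \eqref{eq:topent} --- which applies to $g_0$ precisely because $g_0$ has no conjugate points, and to $g$ by assumption --- gives $\htop(F)\geq A^{-1}\htop(F_0)>0$, where $F_0$ is the geodesic flow of $g_0$. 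This proves the first assertion, with the quantitative estimate $\htop(F)\geq A^{-1}\htop(F_0)$ as a byproduct.

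For the final sentence, I would take $g_0$ to have negative sectional curvature and note that, since $M$ is closed, its sectional curvature attains a maximum, necessarily some value $-a^2<0$; in particular $g_0$ has no conjugate points (negative curvature implies no conjugate points, as recalled above), so it remains only to check $\htop(F_0)>0$. This follows from G\"unther's volume comparison inequality: on the Cartan--Hadamard manifold $\wM$, where the injectivity radius is infinite, one has $\vol^0(B^0(p,r))\geq V(r)$, the volume of an $r$-ball in the simply connected space form of curvature $-a^2$, and $V(r)$ grows like $e^{(n-1)ar}$; so \eqref{eq:topent} gives $\htop(F_0)\geq (n-1)a>0$. (Alternatively one may simply invoke the classical fact that the geodesic flow of a closed negatively curved manifold is Anosov, and hence has positive topological entropy.)

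There is no real obstacle here --- the paper rightly calls this an immediate consequence. The only points to keep an eye on are that \eqref{eq:topent} genuinely requires the relevant metric to have no conjugate points, which is exactly why this hypothesis is imposed on $g_0$, and that the volume comparison, like \eqref{lem:unif-equiv} itself, is obtained by first passing to the compact quotient $M$ and then lifting back to $\wM$.
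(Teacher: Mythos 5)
Your proof is correct and fills in exactly the details the paper leaves implicit when it calls the lemma ``an immediate consequence of \eqref{lem:unif-equiv} and \eqref{eq:topent}'': the uniform metric comparison gives the ball inclusion $B^0(p,r/A)\subseteq B(p,r)$, the bounded ratio of volume densities on the compact quotient $M$ controls the discrepancy between $g$-volume and $g_0$-volume, and passing to the exponential growth rate removes the multiplicative constants, yielding $\htop(F)\geq A^{-1}\htop(F_0)$; the final sentence follows from G\"unther's comparison (or the Anosov property) exactly as you say. No gaps.
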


From now on we consider a closed Riemannian manifold $(M,g)$ without conjugate points which admits a background metric $g_0$ of negative curvature, and thus has positive topological entropy.
%Let \(d, d^0\) denote the distance functions associated to the lift of the metrics \(g,g_0\) on \(\wM\). 
This lets us deduce certain coarse hyperbolicity properties, for which we  recall that Hausdorff distance \(d_H\) between two subsets $C_1,C_2\subset \wM$ (with respect to \(g\))  is defined by 
\[
d_H(C_1, C_2) :=\inf\{r>0: C_1\subset T_r(C_2), C_2\subset T_r(C_1)\}
\]
where \(T_r(C):=\{p\in\wM: d(p, C)\leq r\}\). We denote by \(d_H^0\) the Hausdorff distance with respect to \(d^0\).
The  following result  goes back to Morse \cite{mM24} in dimension two, and Klingenberg \cite{wK71} in higher dimensions.  We follow the statement from \cite[Theorem 3.3]{GKOS14}; see \cite[Lemma 2.7]{gK02} for a detailed proof.

\begin{theorem}[Morse Lemma]\label{thm:Morse}
If $g,g_0$ are two metrics on $M$ such that $g$ has no conjugate points and $g_0$ has negative curvature, then there is a constant $R_0 = R_0(g,g_0) > 0$ such that if $c\colon [a,b]\to \wM$ and $\alpha\colon [a_0,b_0] \to \wM$ are minimizing geodesic segments with respect to $g,g_0$, respectively, joining $c(a)=\alpha(a_0)$ to $c(b)=\alpha(b_0)$, then $d_H(c[a,b],\alpha[a_0,b_0]) \leq R_0$.
\end{theorem}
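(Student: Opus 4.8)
The plan is to deduce the statement from the classical stability of quasi-geodesics in Gromov-hyperbolic spaces, applied to the universal cover equipped with the \emph{background} metric. The point of routing through $g_0$ is that $g_0$ has genuine (pinched) negative curvature, so that $(\wM,d^0)$ is hyperbolic in the sense of Gromov, whereas $g$ is only assumed to have no conjugate points; the role of the no-conjugate-points hypothesis will be confined to the single fact that $g$-geodesics in $\wM$ are globally minimizing, which makes them $d^0$-quasi-geodesics.

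Concretely, I would proceed in four steps. First, since $M$ is closed, the sectional curvature of $g_0$ attains a negative maximum $-a^2<0$; by the Cartan--Hadamard theorem $(\wM,g_0)$ is a Hadamard manifold with curvature bounded above by $-a^2$, hence a $\mathrm{CAT}(-a^2)$ space, and such spaces are $\delta$-hyperbolic with $\delta$ depending only on $a$ (comparison with the model hyperbolic plane of curvature $-a^2$). Moreover $(\wM,d^0)$ is a geodesic metric space (Hopf--Rinow) with a unique geodesic between any two points, so $\alpha[a_0,b_0]$ is \emph{the} $d^0$-geodesic segment joining $p:=c(a)=\alpha(a_0)$ to $q:=c(b)=\alpha(b_0)$. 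Second, parametrizing $c$ by $g$-arc length and using that geodesics in $\wM$ are globally $g$-minimizing (Cartan--Hadamard for $g$, which has no conjugate points), we get $d(c(s),c(t))=|s-t|$ for all $s,t$, and then \eqref{lem:unif-equiv} yields $A^{-1}|s-t|\le d^0(c(s),c(t))\le A|s-t|$, i.e.\ $c$ is an $(A,0)$-quasi-geodesic for $d^0$. Third, by the Morse stability lemma in the $\delta$-hyperbolic space $(\wM,d^0)$ there is $R'=R'(\delta,A)>0$ with $d_H^0\big(c[a,b],\alpha[a_0,b_0]\big)\le R'$. Fourth, \eqref{lem:unif-equiv} turns a $d^0$-ball of radius $r$ into a $d$-ball of radius $Ar$, hence $d_H\big(c[a,b],\alpha[a_0,b_0]\big)\le AR'=:R_0$, and $R_0$ depends only on $g,g_0$ through $A$ and $\delta$, as required.

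The substantive input is the stability of quasi-geodesics in Gromov-hyperbolic spaces, which is classical (and is, in essence, Morse's original comparison argument recast in coarse-geometric language); if one takes it as a black box, the remaining work is bookkeeping, the only thing to verify being that the relevant constants can be chosen uniformly over $\wM$ — which is immediate from compactness of $M$, since both the comparison constant $A$ of \eqref{lem:unif-equiv} and the curvature pinch of $g_0$ are global. If instead one wanted a genuinely self-contained argument, the main obstacle would be proving that quasi-geodesic stability directly (via a divergence-of-geodesics estimate, or a Rauch-type Jacobi field comparison as in Morse and Klingenberg); note that such a comparison must be carried out against $g_0$, never against $g$, precisely because we have no local control on the curvature of $g$ — the hypothesis on $g$ is used only to ensure minimality, hence the quasi-geodesic property.
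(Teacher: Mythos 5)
The paper does not prove Theorem~\ref{thm:Morse}; it cites the result (Morse, Klingenberg, and \cite[Theorem~3.3]{GKOS14}) and refers the reader to \cite[Lemma~2.7]{gK02} for a detailed argument, so there is no in-paper proof to compare against. Your proof is correct and follows the standard coarse-geometric route used in those references: global $g$-minimality of geodesics in $\wM$ (Cartan--Hadamard) together with \eqref{lem:unif-equiv} makes $c$ an $(A,0)$-quasi-geodesic for $d^0$; negative, uniformly pinched curvature of $g_0$ makes $(\wM,d^0)$ Gromov hyperbolic with $\delta$ depending only on $g_0$; quasi-geodesic stability then gives a $d^0$-Hausdorff bound $R'(\delta,A)$, and one more application of \eqref{lem:unif-equiv} converts it to a $d$-Hausdorff bound $R_0=AR'$, with all constants uniform by compactness of $M$. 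You also correctly identify the structural point that the two hypotheses enter in complementary ways — minimality from $g$, hyperbolicity from $g_0$ — which is exactly why no local curvature control on $g$ is needed.
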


We prove the following consequence in Appendix \ref{sec:Morse}.

\begin{lemma}\label{lem:endpts-suffice}
Let $g$ be a metric on $M$ without conjugate points. If \(M\)  admits a metric of negative curvature then for every $R_1> 0$ there is $R_2>0$ such that for every \(T>0\) if $c_1,c_2\colon [0,T]\to \wM$ are two geodesics with $d(c_1(0),c_2(0)) \leq R_1$ and $d(c_1(T),c_2(T)) \leq R_1$, then $d(c_1(t),c_2(t)) \leq R_2$ for all $t\in [0,T]$.
\end{lemma}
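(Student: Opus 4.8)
The plan is to reduce the statement about $g$-geodesics to a statement about $g_0$-geodesics via the Morse Lemma, and then exploit the convexity of the distance function in the negatively curved universal cover $(\wM, g_0)$. Fix the Morse constant $R_0 = R_0(g,g_0)$ from Theorem \ref{thm:Morse} and the comparison constant $A$ from \eqref{lem:unif-equiv}. Given $c_1, c_2\colon [0,T]\to\wM$ minimizing $g$-geodesics with $g$-distance $\leq R_1$ at both endpoints, let $\alpha_i\colon[0,L_i]\to\wM$ be the unique $g_0$-minimizing geodesic joining $c_i(0)$ to $c_i(T)$ (parametrized by $g_0$-arclength). By the Morse Lemma, $d_H(c_i[0,T], \alpha_i([0,L_i])) \leq R_0$, so it suffices to bound the $g$-distance between points of $\alpha_1$ and $\alpha_2$ and then add $2R_0$ (after matching up $g$-nearest points on the $\alpha_i$ to $c_i(t)$; a point $c_i(t)$ is within $g$-distance $R_0$ of some point on $\alpha_i$, and any two points of $c_1$, $c_2$ that are close can be routed through the $\alpha_i$). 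Concretely, for $t\in[0,T]$ pick $s_i$ with $d(c_i(t), \alpha_i(s_i)) \leq R_0$; then $d(c_1(t),c_2(t)) \leq 2R_0 + d(\alpha_1(s_1),\alpha_2(s_2))$, and using \eqref{lem:unif-equiv} it is enough to bound $d^0$ between appropriately chosen points of $\alpha_1$ and $\alpha_2$.

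The core estimate is therefore: \emph{in $(\wM,g_0)$, two minimizing geodesic segments whose endpoints are pairwise within $g_0$-distance $AR_1$ stay within $g_0$-distance $C(R_1)$ of each other.} This is where negative (indeed just nonpositive) curvature does the work: the function $t\mapsto d^0(\beta_1(t),\beta_2(t))$ is convex along pairs of geodesics $\beta_1,\beta_2$ parametrized proportionally to $g_0$-arclength on $[0,1]$, by the standard CAT(0)/Cartan--Hadamard convexity argument (flat-strip-free is not needed; convexity of $d^0$ suffices). Hence its maximum on $[0,1]$ is attained at an endpoint, giving $d^0(\beta_1(t),\beta_2(t)) \leq \max\{d^0(\beta_1(0),\beta_2(0)), d^0(\beta_1(1),\beta_2(1))\} \leq AR_1$. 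Translating back: the full-segment geodesics $\alpha_1$, $\alpha_2$, reparametrized on $[0,1]$, satisfy $d^0(\alpha_1(u),\alpha_2(u)) \leq AR_1$ for the matched parameter $u$. One then has to check that the $g$-nearest-point parameters $s_1,s_2$ chosen above can be taken to correspond (up to bounded error in $g_0$-arclength) under this $[0,1]$-reparametrization — this uses that $|L_1 - L_2|$ is bounded (the endpoints are close, so the $g_0$-lengths differ by at most $2AR_1$ by the triangle inequality) together with the fact that moving along $\alpha_i$ by bounded $g_0$-arclength changes position by bounded $g$-distance via \eqref{lem:unif-equiv}. Assembling these bounds yields $R_2 = 2R_0 + A\cdot(A R_1) + (\text{bounded reparametrization correction})$, depending only on $R_1$, $g$, $g_0$ and not on $T$.

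The main obstacle is the bookkeeping in the last step: the two geodesics $c_1,c_2$ are parametrized by $g$-arclength on the \emph{same} interval $[0,T]$, whereas the convexity argument naturally lives on the $g_0$-reparametrized interval $[0,1]$ of each $\alpha_i$, and the $\alpha_i$ have different $g_0$-lengths. I expect one needs a short lemma saying that if $p = c_i(t)$ is $g$-close to $\alpha_i(s_i)$, then the "fraction" $s_i/L_i$ is, for the two curves, the same up to an additive error controlled by $R_0$, $R_1$, $A$ — essentially because both $\alpha_1$ and $\alpha_2$ coarsely fellow-travel $c_1 \approx c_2$, so progress along one matches progress along the other. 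Once that is in hand everything is a finite chain of triangle inequalities. I do not expect the curvature hypothesis to be used beyond convexity of $d^0$, so the negative-curvature assumption on $g_0$ enters only through the Morse Lemma (which genuinely needs it) and not through the comparison estimate itself.
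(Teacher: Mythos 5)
Your strategy is the same as the paper's: apply the Morse Lemma to replace each $c_i$ by the $g_0$-geodesic $\alpha_i$ with the same endpoints, use convexity of $t\mapsto d^0(\alpha_1(t),\alpha_2(t))$ in the negatively curved background to bound the $\alpha_i$ apart at matched parameters, and finish with triangle inequalities and \eqref{lem:unif-equiv}. The one step you leave open, however, is flagged in a form that would not close the argument. You propose to show that the nearest-point parameters satisfy $|s_1/L_1 - s_2/L_2|\leq \epsilon$ for some $\epsilon$ controlled by $R_0,R_1,A$; but an additive error $\epsilon$ in the \emph{fraction} corresponds to a positional discrepancy of order $\epsilon L_i \sim \epsilon T$ along $\alpha_i$, which is unbounded in $T$ unless $\epsilon = O(1/T)$. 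So that lemma, as stated, is either false or is secretly a multiplicative statement that you would still need to convert into a uniform bound.

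The paper sidesteps the matching problem entirely. It parametrizes both $\alpha_i$ by $g_0$-arclength, checks $|T_2-T_1|\leq 2AR_1$ by the triangle inequality, and deduces from convexity that $d^0(\alpha_1(t_0),\alpha_2(t_0))\leq 3AR_1$ for the \emph{same} parameter $t_0\in[0,T_1]$. It then routes $c_1(t)\to\alpha_1(t_0)\to\alpha_2(t_0)\to c_2(t')$, where $t_0$ is a nearest parameter for $c_1(t)$ on $\alpha_1$ and $t'$ is a nearest parameter for $\alpha_2(t_0)$ on $c_2$; this gives $d(c_1(t),c_2(t'))\leq 2R_0+3A^2R_1$. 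The discrepancy $|t-t'|$ is then bounded not by comparing parameters on the background geodesics but by using that $c_2$ is a minimizing unit-speed $g$-geodesic in $\wM$, so $t'=d(c_2(0),c_2(t'))\leq d(c_2(0),c_1(0))+d(c_1(0),c_1(t))+d(c_1(t),c_2(t'))\leq R_1+t+2R_0+3A^2R_1$, and symmetrically; since $d(c_2(t),c_2(t'))=|t-t'|$, this yields $R_2=4R_0+(6A^2+1)R_1$. I recommend you replace your parameter-matching lemma with this chain: it is exactly the "finite chain of triangle inequalities" you anticipated, but it lives on the $g$-side, where the minimizing property of geodesics in $\wM$ converts closeness of points into closeness of times.
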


\begin{remark}
When $g$ itself has nonpositive curvature, Lemma \ref{lem:endpts-suffice} follows easily from the convexity of the distance function $t\mapsto d(c_1(t), c_2(t))$ between two geodesics. In our more general setting we rely on the background metric of negative curvature and use the Morse Lemma.
\end{remark}

\subsubsection{Busemann functions and horospheres}

Given \(v\in T^1\wM\), recall that $c_v\colon \RR \to \wM$ denotes the geodesic with $\dot c_v (0) = v$.  For each $t>0$, consider the function on $T^1\wM$ defined by
$b_{v,t}(p) := d(p,c_v(t)) - t$.
\begin{lemma}[{\cite[Proposition 1]{jE77}}]\label{lem:busemann}
For every $v\in T^1\wM$ and $p\in \wM$, the limit $b_v(p) := \lim_{t\to\infty} b_{v,t}(p)$ exists and defines a $C^1$ function on $\wM$.  Moreover, $\grad b_v(p) = \lim_{t\to \infty} \grad b_{v,t}(p)$.
\end{lemma}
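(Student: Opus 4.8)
The plan is to obtain the pointwise limit from a monotonicity argument, upgrade it to locally uniform convergence, and then read off the $C^1$ regularity and the convergence of gradients from the global minimality of geodesics in $\wM$. For the first step, note that $c_v|_{[s,t]}$ is minimizing in $\wM$, so $d(c_v(s),c_v(t))=t-s$; the triangle inequality then gives $b_{v,t}(p)=d(p,c_v(t))-t\le d(p,c_v(s))-s=b_{v,s}(p)$ for $s<t$, while $d(p,c_v(t))\ge d(c_v(0),c_v(t))-d(c_v(0),p)=t-d(p,c_v(0))$ gives $b_{v,t}(p)\ge -d(p,c_v(0))$. Hence $b_v(p)=\lim_{t\to\infty}b_{v,t}(p)$ exists. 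As a pointwise limit of $1$-Lipschitz functions, $b_v$ is $1$-Lipschitz, hence continuous, and by Dini's theorem the convergence $b_{v,t}\to b_v$ is uniform on compact subsets of $\wM$.

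Next I would describe the ``gradient lines'' of the functions $b_{v,t}$ and pass to the limit. Since $g$ has no conjugate points, $\exp_q$ is a diffeomorphism of $T_q\wM$ onto $\wM$ for every $q$, so $p\mapsto d(p,c_v(t))$ is smooth away from $c_v(t)$, and $u_t:=\grad b_{v,t}(p)$ is the unit vector at $p$ pointing away from $c_v(t)$. Writing $\sigma_t(a):=\exp_p(a\,u_t)$, global minimality of geodesics in $\wM$ yields $d(\sigma_t(a),c_v(t))=d(p,c_v(t))+a$, hence $b_{v,t}(\sigma_t(a))=b_{v,t}(p)+a$, for all $a\ge -d(p,c_v(t))$. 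Choosing $t_k\to\infty$ with $u_{t_k}\to u\in T^1_p\wM$ (compactness of the fibre), the geodesics $\sigma_{t_k}$ converge uniformly on compacta to $\sigma_u(a):=\exp_p(a\,u)$, and since $d(p,c_v(t_k))\to\infty$ the locally uniform convergence $b_{v,t_k}\to b_v$ lets me pass to the limit, producing a unit-speed geodesic $\sigma_u$ with $\sigma_u(0)=p$ along which $b_v$ is affine of slope $1$ on all of $\RR$; call such a geodesic a $b_v$-gradient line through $p$.

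The crux is uniqueness of the $b_v$-gradient line through $p$. If $\sigma,\sigma'$ are two such lines, then for $a>0$ the $1$-Lipschitz property gives $d(\sigma(a),\sigma'(-a))\ge b_v(\sigma(a))-b_v(\sigma'(-a))=2a$, while the triangle inequality together with $d(\sigma(a),p)=d(p,\sigma'(-a))=a$ gives $d(\sigma(a),\sigma'(-a))\le 2a$. Equality forces $p$ to lie on the minimizing geodesic from $\sigma(a)$ to $\sigma'(-a)$, which is unique in $\wM$; since $\sigma|_{[0,a]}$ and $\sigma'|_{[-a,0]}$ are themselves minimizing, smoothness of this geodesic at $p$ forces $\dot\sigma(0)=\dot\sigma'(0)$. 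Denote the resulting common direction by $\xi(p)$. In particular every subsequential limit of $(u_t)$ equals $\xi(p)$, so $\grad b_{v,t}(p)=u_t\to\xi(p)$, which is the asserted convergence of gradients.

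It remains to upgrade this to $C^1$ regularity, and this is the step I expect to be the real obstacle: a manifold without conjugate points carries no curvature bounds, so the family $\{\grad b_{v,t}\}$ need not be equicontinuous and one cannot simply differentiate under the limit; the regularity must instead be squeezed out of the structural facts above. Running the limiting argument with $p$ varying shows $\xi\colon\wM\to T^1\wM$ is continuous: if $p_k\to p$ and, along a subsequence, $\xi(p_k)\to\eta$, then the geodesics through $p_k$ with velocity $\xi(p_k)$ converge to the geodesic through $p$ with velocity $\eta$, along which $b_v$ is affine of slope $1$ by continuity of $b_v$, so $\eta=\xi(p)$. By Rademacher's theorem $b_v$ is differentiable almost everywhere, and at a differentiability point $p$ the derivative of $b_v$ along $\sigma_{\xi(p)}$ equals $1$, which with $|\grad b_v(p)|\le 1$ forces $\grad b_v(p)=\xi(p)$; thus $\xi$ is an almost-everywhere representative of $\grad b_v$. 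Since a Lipschitz function whose almost-everywhere gradient admits a continuous representative is $C^1$ with that gradient (integrate along almost every coordinate line and extend to all lines by continuity of both sides), we conclude that $b_v\in C^1$ with $\grad b_v=\xi=\lim_{t\to\infty}\grad b_{v,t}$.
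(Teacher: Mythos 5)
Your proof is correct. Note that the paper does not prove this lemma at all: it is quoted verbatim from Eschenburg's 1977 paper (the citation \cite{jE77}, Proposition 1), with only the remark that existence of the limit ``is essentially due to the fact that geodesics on $\wM$ are globally minimizing.'' What you have written is a complete, self-contained substitute for that citation, and it follows the classical route: monotonicity plus the lower bound gives the pointwise limit, Dini upgrades it to locally uniform convergence, and every subsequential limit $u$ of $\grad b_{v,t}(p)$ produces a unit-speed geodesic along which $b_v$ is affine of slope $1$; the calibration argument ($d(\sigma(a),\sigma'(-a))=2a$ forcing the concatenation to be a single minimizing geodesic, hence smooth at $p$) pins down that direction uniquely, giving convergence of the gradients. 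Your final step -- continuity of the direction field $\xi$, Rademacher, and the fact that a Lipschitz function whose a.e.\ gradient has a continuous representative is $C^1$ -- is the right way to extract $C^1$ regularity without any curvature bounds, and correctly identifies this as the only genuinely delicate point (one cannot differentiate under the limit, since $\{\grad b_{v,t}\}$ need not be equicontinuous). An equally standard variant, closer to Eschenburg's write-up, observes that the same subsequence argument gives $\grad b_{v,t_k}(p_k)\to\xi(p)$ whenever $p_k\to p$, i.e.\ \emph{locally uniform} convergence of the gradients, after which $C^1$ regularity of the limit is immediate; but your Rademacher route is equally valid. No gaps.
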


Existence of the limit is essentially due to the fact that geodesics on $\wM$ are globally minimizing.  The limiting function $b_v$ is called a \emph{Busemann function}, and was shown in \cite{gK85} to be in fact $C^{(1,1)}$.

Observe that if $t\geq \tau$, then $b_{v,t}(c_v(\tau)) = d(c_v(\tau),c_v(t)) - t = (t-\tau)-t=-\tau$, so we have
\begin{equation}\label{eqn:grad-bv}
b_v(c_v(t)) = -t
\text{ and }
-\grad b_v(c_v(t)) = \dot{c}_v(t) = f_t(v).
\end{equation}

Given \(v\in T^1\wM\), the \emph{stable and unstable horospheres} \(H^s(v)\) and \(H^u(v)\) are the subsets of $\wM$ defined by 
\begin{equation}\label{eqn:Hsu}
H^s(v) :=\{p\in\wM: b_v(p)=0\}
\quad\text{and}\quad
%H^u(v) :=\{x\in\wM: b_{-v}(x)=0\}.
H^u(v) := H^s(-v).
\end{equation}

We refer to $c_v(\infty)$ as the \emph{center} of the stable horosphere $H^s(v)$.  Similarly, $c_v(-\infty) := c_v^-(\infty)$ is the center of $H^u(v)$, where we write $c_v^-(t) = c_v(-t)$.
%{\color{blue}
%The Busemann flow
%\(\psi_t^{\xi}\) is defined to be the flow on \(\wM\) tangent to \(\grad b_v\) where \(v\) is such that \(c_v(\infty)=\xi\). From \eqref{eqn:grad-bv}, we get the invariance of the horospheres by the Busemann flow:
%\begin{equation}\label{eq:invhor}
%\psi^\xi_t(H^s(v))=H^s(f_tv).%\quad\text{ for }\quad \sigma=s,u.
%\end{equation}
%}

We also consider the \emph{(weak) stable and unstable manifolds}, which are the subsets of $T^1\wM$ defined by
\begin{equation}\label{eqn:Wsu}
W^s(v):=\{-\grad b_v(p) \mid p \in\wM\} 
\quad\text{and}\quad
 W^u(v) := - W^s(-v).
\end{equation}
Some justification for this terminology will be given in the next section.
For the moment we observe that $W^s(v)$ is the union of the  unit normal vector fields to horospheres centered at $c_v(\infty)$, and that it is $F$-invariant by \eqref{eqn:grad-bv}.
The regularity of the Busemann function implies that the horospheres are \(C^{(1,1)}\) manifolds and the  stable and unstable manifolds are Lipschitz.

\subsubsection{Manifolds of hyperbolic type and the boundary at infinity}

We say that  $(M,g)$   has the \emph{divergence property} if any pair of geodesics  $c_1 \neq c_2$ in $( \wM, g)$ with $c_1(0) =c_2(0)$ diverge, i.e.,
\begin{equation}\label{eqn:divergence}
\lim\limits_{t \to \infty} d(c_1(t), c_2(t)) = \infty.
\end{equation}

\begin{remark}\label{rmk:divergence}
Every surface without conjugate points has the divergence property \cite{lwG56}. In higher dimensions it is unknown whether this condition always holds.
\end{remark}

The following definition and theorem are due to Eberlein \cite{pEb72}.

\begin{definition}
A simply connected Riemannian manifold $\wM$ without conjugate points 
is a \emph{(uniform) visibility manifold} if for every $\epsilon>0$ there exists $L>0$ such that whenever a geodesic $c\colon [a, b] \to\wM$ stays at distance at least $L$ from some point $p\in\wM$, then the angle sustained by $c$ at $p$ is less than $\epsilon$, that is
  \begin{equation*}
    \angle_p(c)=\sup_{a\leq s,t\leq b} \angle_p(c(s),c(t))<\epsilon.
  \end{equation*}
\end{definition}

\begin{theorem}\label{thm:visibility}
Let $(M,g)$ be a closed manifold without conjugate points which admits a background metric $g_0$ of negative curvature.
Then $( \wM, g)$ is a visibility manifold if and only if $(M,g)$ has the divergence property.
\end{theorem}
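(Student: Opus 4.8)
The plan is to prove both implications of Theorem~\ref{thm:visibility} by exploiting the Morse Lemma (Theorem~\ref{thm:Morse}), which lets us transfer the visibility property freely between the metric $g$ and the negatively curved background metric $g_0$, and then to reduce each direction to a statement about $g$ alone.

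\medskip

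\noindent\textbf{From divergence to visibility.}
First I would observe that $(\wM,g_0)$ is itself a visibility manifold: this is classical for simply connected manifolds of negative curvature (Eberlein--O'Neill), since $g_0$ has a well-defined boundary at infinity on which $\Gamma$ acts as a convergence group, and pinching is not needed for the visibility axiom in the variable-curvature negatively curved case once one has no conjugate points and the divergence property, which holds automatically in negative curvature. Next I would use the Morse Lemma to compare angles: if a $g$-geodesic $c\colon[a,b]\to\wM$ stays $g$-distance $\geq L$ from $p$, then by \eqref{lem:unif-equiv} the corresponding $g_0$-geodesic segment $\alpha$ with the same endpoints stays $g$-distance $\geq L - R_0$ from $p$, hence $g_0$-distance $\geq A^{-1}(L-R_0)$ from $p$. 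Applying the $g_0$-visibility property gives a small $g_0$-angle at $p$ between the endpoints of $\alpha$, i.e.\ between the endpoints of $c$. The remaining issue is that a small $g_0$-angle at $p$ must be converted into a small $g$-angle at $p$ — but angles are not comparable pointwise under \eqref{lem:unif-equiv}. Here I would argue instead directly: it suffices to show that for every $\eps>0$ there is $L$ so that any $g$-geodesic staying $g$-distance $\geq L$ from $p$ subtends small \emph{$g$-angle} at $p$; one shows the contrapositive, that a sequence of $g$-geodesics $c_k$ staying far from $p$ but subtending $g$-angle $\geq\eps$ at $p$ leads to a contradiction. Passing to a subsequence, the initial vectors at the closest-point projections converge, and one extracts two limiting $g$-geodesic rays from a common point (using that the $c_k$ come arbitrarily close to $p$, after a compactness/pigeonhole step) that do \emph{not} diverge, contradicting the divergence property \eqref{eqn:divergence}. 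This is the standard Eberlein argument and the divergence hypothesis is exactly what makes it run.

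\medskip

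\noindent\textbf{From visibility to divergence.}
For the converse I would argue the contrapositive: suppose the divergence property fails, so there are geodesics $c_1\neq c_2$ in $(\wM,g)$ with $c_1(0)=c_2(0)=:p_0$ and $\liminf_{t\to\infty} d(c_1(t),c_2(t)) = D < \infty$. Fix times $t_k\to\infty$ with $d(c_1(t_k),c_2(t_k))\leq D+1$. Consider the geodesic segments $\sigma_k$ from $c_1(t_k)$ to $c_2(t_k)$; these have $g$-length at most $D+1$, so they stay within bounded $g$-distance of each other and in particular within bounded distance of both $c_1$ and $c_2$ near time $t_k$. Now take the geodesic triangle with vertices $p_0$, $c_1(t_k)$, $c_2(t_k)$, whose sides are $c_1|_{[0,t_k]}$, $c_2|_{[0,t_k]}$, and $\sigma_k$. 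As $k\to\infty$ the side $\sigma_k$ recedes to infinity (its distance from $p_0$ goes to infinity, since $c_1,c_2$ emanate from $p_0$ and $t_k\to\infty$, using \eqref{lem:unif-equiv} and that $g$-geodesics are minimizing so $d(p_0,c_i(t_k))\to\infty$, while $\sigma_k$ lies near the far end). But the angle subtended by $\sigma_k$ at $p_0$ is bounded below: it is at least the $g$-angle $\angle_{p_0}(\dot c_1(0),\dot c_2(0)) > 0$, which is independent of $k$, since $\sigma_k$ contains the points $c_1(t_k)$ and $c_2(t_k)$ whose directions from $p_0$ are $\dot c_1(0)$ and $\dot c_2(0)$. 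This directly contradicts the visibility property with $\eps := \tfrac12\angle_{p_0}(\dot c_1(0),\dot c_2(0))$. So visibility implies divergence.

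\medskip

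\noindent\textbf{Main obstacle.}
I expect the genuine difficulty to be entirely in the first direction, in the step that converts the Morse-comparison (which is metrically clean) into an \emph{angular} statement, because \eqref{lem:unif-equiv} does not control angles and the Busemann/horosphere structure for $g$ lacks the rigidity it has in negative curvature. The resolution is to avoid comparing $g$- and $g_0$-angles at a point and instead run Eberlein's original contradiction argument for $(\wM,g)$ directly, feeding in the divergence property \eqref{eqn:divergence} as the crucial compactness-breaking hypothesis; the Morse Lemma then enters only to guarantee that the relevant limiting configurations of $g$-geodesics behave well (e.g.\ that segments with nearby endpoints stay uniformly close, cf.\ Lemma~\ref{lem:endpts-suffice}), rather than as the source of the angular estimate itself.
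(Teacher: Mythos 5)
The paper does not actually prove Theorem~\ref{thm:visibility}: it cites Eberlein \cite{pEb72} (and, in the remark below the theorem, Ruggiero \cite{rRu} for the version assuming only Gromov hyperbolicity of $\wM$). So there is no in-paper argument to compare against, and your proposal has to stand on its own.

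Your ``visibility implies divergence'' direction is correct, and does not even use the background metric: if $c_1 \neq c_2$ emanate from $p_0$ but $\liminf_{t\to\infty} d(c_1(t), c_2(t)) < \infty$, the short geodesic segments $\sigma_k$ joining $c_1(t_k)$ to $c_2(t_k)$ recede from $p_0$ as $k \to \infty$ yet always subtend at $p_0$ the fixed angle $\angle_{p_0}(\dot c_1(0), \dot c_2(0)) > 0$ --- because $\sigma_k$ contains both $c_1(t_k)$ and $c_2(t_k)$, and $c_1|_{[0,t_k]}$, $c_2|_{[0,t_k]}$ are the unique minimizing geodesics from $p_0$ to those points. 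This contradicts visibility.

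Your ``divergence implies visibility'' direction has the right overall strategy (a compactness/contradiction argument with divergence as the compactness-breaking hypothesis), but the description contains a substantive error and a genuine gap. You write that one uses ``that the $c_k$ come arbitrarily close to $p$''; in fact the hypothesis is the opposite, $d(p, c_k) \to \infty$. More importantly, the crux --- why the two limiting rays $c_v$, $c_w$ obtained as limits of the directions $V(p, c_k(a_k))$, $V(p, c_k(b_k))$ at $p$ fail to diverge --- is not given. You point to Lemma~\ref{lem:endpts-suffice}, but that lemma requires the two geodesics to be close at \emph{both} ends; here the rays $[p, c_k(a_k)]$ and $[p, c_k(b_k)]$ share the start $p$, while their far endpoints can be arbitrarily far apart, since the visibility axiom puts no bound on the length of $c_k$. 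What one actually needs is $\delta$-thinness of geodesic triangles in $(\wM, g)$: applied to the triangle with vertices $p$, $c_k(a_k)$, $c_k(b_k)$, thinness shows that for any fixed $T$ and $k$ large, the point at parameter $T$ on $[p, c_k(a_k)]$ lies within $\delta$ of $[p, c_k(b_k)]$ (it cannot be within $\delta$ of the opposite side $c_k$ since $d(p, c_k) \to \infty$), and since $g$-geodesics from $p$ minimize, the parameters must match up to $\delta$, giving $d(c_v(T), c_w(T)) \leq 2\delta$ for every $T$. Thinness for $(\wM, g)$ does follow from the Morse Lemma together with the fact that $(\wM, g_0)$, having pinched negative curvature (pinched because $M$ is compact), is $\mathrm{CAT}(-\kappa^2)$, hence Gromov hyperbolic; but this is a different consequence of Theorem~\ref{thm:Morse} than Lemma~\ref{lem:endpts-suffice}, and without invoking it the argument does not close.
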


\begin{remark}
We remark that Ruggiero  \cite{rRu} obtained this result without assuming the existence of a negatively curved background metric.
Instead he assumed the weaker condition that $ \wM$ is hyperbolic in the sense of Gromov \cite{mG87}. 
\end{remark}

\begin{definition}\label{def:hyp-type}
We say that a closed manifold $(M,g)$ without conjugate points is of \emph{hyperbolic type} provided it carries a background metric $g_0$ of negative curvature
and it satisfies the divergence property.
\end{definition}

\begin{remark}\label{rmk:higher-genus}
By Remark \ref{rmk:divergence} and the classification of surfaces, every surface of genus $\geq2$ without conjugate points is of hyperbolic type.
\end{remark}

Now we assume that $(M,g)$ is of hyperbolic type, and describe a compactification of $\wM$ following Eberlein \cite{pEb72}.  Two geodesic rays  $c_1, c_2\colon [0,\infty) \to \wM$ are called \emph{asymptotic}
if $d(c_1(t), c_2(t))$ is bounded for $t \ge 0$.  This is an equivalence relation; we denote by  \(\ideal\) the set of equivalence classes and call its elements \emph{points at infinity}. We denote the equivalence class of a geodesic ray (or geodesic) $c$  by $c(\infty)$.
The following construction is useful.

\begin{lemma}\label{lem:ct}
Given $p\in \wM$ and $v\in T^1\wM$, for each $t>0$ let $c_t$ be the geodesic from $p$ to $c_v(t)$, with $c_t(0) = p$.
Then the limit $w := \lim_{t\to\infty} \dot{c}_t(0) \in T_p^1 \wM $ exists and has the property that $w = -\grad b_v(p)$ and $c_w(\infty) = c_v(\infty)$.
\end{lemma}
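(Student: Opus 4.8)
The plan is to establish the existence of the limit $w = \lim_{t\to\infty}\dot c_t(0)$ by a compactness-plus-Cauchy argument, and then identify it with $-\grad b_v(p)$ using the gradient convergence in Lemma \ref{lem:busemann}. First I would observe that $c_t$ is the geodesic from $p$ to $c_v(t)$, which by definition of $b_{v,t}$ satisfies $\dot c_t(0) = -\grad b_{v,t}(p)$; indeed $b_{v,t}(q) = d(q,c_v(t)) - t$, and the gradient of $q\mapsto d(q,c_v(t))$ at $p$ is the unit vector pointing away from $c_v(t)$ along the (unique, since $\wM$ has no conjugate points and geodesics are globally minimizing) geodesic from $c_v(t)$ to $p$, hence $\grad b_{v,t}(p) = -\dot c_t(0)$. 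Then Lemma \ref{lem:busemann} directly gives $\dot c_t(0) = -\grad b_{v,t}(p) \to -\grad b_v(p) =: w$, so the limit exists and equals $-\grad b_v(p)$. This is the cleanest route: the existence of the limit is \emph{inherited} from the already-quoted convergence of Busemann gradients rather than reproved from scratch.

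Next I would verify $c_w(\infty) = c_v(\infty)$, i.e.\ that the geodesic $c_w$ with $\dot c_w(0) = w$ is asymptotic to $c_v$. By \eqref{eqn:grad-bv}-type reasoning, or rather by the defining property of the Busemann function, the integral curves of $-\grad b_v$ are exactly the geodesics that ``flow toward'' the point at infinity $c_v(\infty)$; concretely, $b_v(c_w(s)) = b_v(p) - s$ since $-\grad b_v$ has unit length and $c_w$ follows it, and this linear decay identifies $c_w(\infty)$ with the center of the horospheres $\{b_v = \text{const}\}$, which is $c_v(\infty)$. Alternatively and more elementarily, I would bound $d(c_w(s), c_v(s))$: since $d(c_t(0),c_v(0)) = d(p,c_v(0)) =: D$ is fixed and $c_t$ joins $p$ to $c_v(t)$, Lemma \ref{lem:endpts-suffice} applied on $[0,t]$ (with $R_1 = \max(D,0)$, after reparametrizing so both geodesics are defined on a common interval) gives a uniform bound $d(c_t(s), c_v(s)) \le R_2$ for $0\le s\le t$; letting $t\to\infty$ and using continuous dependence of geodesics on initial conditions ($\dot c_t(0)\to w$ forces $c_t(s)\to c_w(s)$ locally uniformly in $s$) yields $d(c_w(s),c_v(s)) \le R_2$ for all $s\ge 0$, which is exactly asymptoticity. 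I would likely present the Busemann-function argument as the main one and mention the Morse-lemma bound as giving the explicit asymptoticity estimate.

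The main obstacle I anticipate is a minor bookkeeping issue rather than a conceptual one: making sure the identification $\dot c_t(0) = -\grad b_{v,t}(p)$ is stated with the correct sign and that $c_t$ really is the unit-speed geodesic \emph{from} $p$ (so that $\dot c_t(0)$ is the outgoing direction at $p$, pointing toward $c_v(t)$, which is $-\grad b_{v,t}$ because $b_{v,t}$ increases as one moves away from $c_v(t)$). One must also be slightly careful that $c_t$ in Lemma \ref{lem:busemann}'s setup is parametrized by arclength on $[0, d(p,c_v(t))]$ while we only look at the initial velocity, so no reparametrization subtlety affects $\dot c_t(0)$. Everything else is a direct citation: existence and $C^1$-regularity of $b_v$ with gradient convergence from Lemma \ref{lem:busemann}, unique minimizing geodesics from the Cartan--Hadamard discussion, and the coarse bound from Lemma \ref{lem:endpts-suffice}. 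So the proof should be short, essentially: identify the velocity with a Busemann-approximant gradient, pass to the limit, and read off asymptoticity from linear decay of $b_v$ along $c_w$.
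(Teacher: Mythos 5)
Your identification of the limit is exactly the paper's: $\dot c_t(0) = -\grad b_{v,t}(p)$, and Lemma \ref{lem:busemann} gives $\grad b_{v,t}(p) \to \grad b_v(p)$, so the limit exists and equals $-\grad b_v(p)$. For the claim $c_w(\infty)=c_v(\infty)$, your \emph{secondary} argument is precisely the paper's proof: apply Lemma \ref{lem:endpts-suffice} with $R_1 = d(p,\pi(v))$ to the geodesics $c_t, c_v$ on $[0,t]$, obtain a bound $R_2$ independent of $t$, and pass to the limit using $c_t(s)\to c_w(s)$ to get $d(c_w(s),c_v(s))\le R_2$ for all $s$. You should promote that to the main argument.

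The argument you propose to feature instead -- ``$b_v$ decays linearly along $c_w$, and this linear decay identifies $c_w(\infty)$ with the center of the horospheres, which is $c_v(\infty)$'' -- is circular at this point in the paper. By the definition of $\ideal$, the assertion $c_w(\infty)=c_v(\infty)$ means that $d(c_w(s),c_v(s))$ stays bounded, i.e.\ that $c_w$ is asymptotic to $c_v$. The statement that the geodesics following $-\grad b_v$ are exactly the geodesics asymptotic to $c_v$ is the content of Lemma \ref{lem:uniqasym}, which is proved \emph{after} and \emph{using} Lemma \ref{lem:ct} (together with Lemma \ref{lem:p-to-xi} and the divergence property). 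In the general no-conjugate-points setting this equivalence is genuinely nontrivial: it is precisely why the paper defines $W^s(v)$ via the Busemann gradient and then separately proves that it coincides with the asymptotic class under the hyperbolic-type hypothesis. The phrase ``the center of the horospheres is $c_v(\infty)$'' is a definition in the paper, not something that transfers to $c_w$ merely because $b_v$ decays at unit rate along $c_w$. So linear decay alone does not yield boundedness of $d(c_w(s),c_v(s))$ here; the Morse-lemma route via Lemma \ref{lem:endpts-suffice} is the one that actually closes the argument, and it is the one the paper uses.
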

\begin{proof}
Lemma \ref{lem:busemann} gives existence of the limit and the claim regarding $b_v$.  To show that $c_w(\infty) = c_v(\infty)$, we apply Lemma \ref{lem:endpts-suffice} with $R_1 = d(p,\pi(v))$
to the geodesics $c_t,c_v \colon [0,t]\to \wM$, which gives 
$d(c_t(s), c_v(s)) \leq R_2$ for all $s\in [0,t]$ where $R_2$ depends only on $p$ and $v$.  Since $c_w(s) = \lim_{t\to\infty} c_t(s)$ for all $s>0$, we conclude that  $d(c_w(s),c_v(s)) \leq R_2$, and thus $c_w(\infty) = c_v(\infty)$.
 \end{proof}

\begin{lemma}\label{lem:p-to-xi}
Given any $p\in \wM$ and $\xi\in \ideal$, there is a unique geodesic ray $c\colon [0,\infty)\to \wM$ with $c(0) = p$ and $c(\infty) = \xi$.
Equivalently, the map  $f_p\colon T_p^1\wM \to \ideal$ defined by $f_p(v) = c_v(\infty)$ is a bijection.
\end{lemma}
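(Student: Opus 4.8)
The plan is to prove the ``Equivalently'' formulation: that the map $f_p\colon T^1_p\wM\to\ideal$, $f_p(v)=c_v(\infty)$, is a bijection. This is equivalent to the first assertion because $v\mapsto c_v|_{[0,\infty)}$ is a bijection from $T^1_p\wM$ onto the set of unit-speed geodesic rays issuing from $p$, and under it the preimage $f_p^{-1}(\xi)$ corresponds exactly to the set of geodesic rays from $p$ to $\xi$. So it is enough to check surjectivity and injectivity of $f_p$ separately.

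For \emph{surjectivity} (existence of a ray from $p$ to $\xi$), fix $\xi\in\ideal$. Since $\ideal$ is by definition the set of equivalence classes of asymptotic geodesic rays, $\xi$ contains at least one representative; choose a unit-speed geodesic ray $\sigma$ with $\sigma(\infty)=\xi$ and set $v=\dot\sigma(0)\in T^1\wM$, so that $c_v$ restricted to $[0,\infty)$ equals $\sigma$. Applying Lemma \ref{lem:ct} to the pair $(p,v)$ produces $w=\lim_{t\to\infty}\dot c_t(0)\in T^1_p\wM$ (where $c_t$ is the geodesic from $p$ to $c_v(t)$) satisfying $c_w(\infty)=c_v(\infty)=\xi$. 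Hence $f_p(w)=\xi$.

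For \emph{injectivity} (uniqueness), let $v_1,v_2\in T^1_p\wM$ with $f_p(v_1)=f_p(v_2)$. Then $c_{v_1}$ and $c_{v_2}$ represent the same point at infinity, hence are asymptotic, so $t\mapsto d(c_{v_1}(t),c_{v_2}(t))$ is bounded on $[0,\infty)$. Since these geodesics agree at $t=0$ (namely $c_{v_1}(0)=p=c_{v_2}(0)$) and $(M,g)$ is of hyperbolic type, the divergence property \eqref{eqn:divergence} forces $c_{v_1}=c_{v_2}$, for otherwise the distance would tend to infinity; thus $v_1=v_2$. I do not expect a genuine obstacle here: the lemma is an immediate consequence of Lemma \ref{lem:ct} together with the divergence property, so the substantive work lies entirely in those earlier inputs (in particular in the Morse Lemma underlying Lemma \ref{lem:ct}). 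The one point worth stating carefully is that divergence, phrased for geodesics sharing an initial point, is invoked here in contrapositive form for geodesic rays, which is harmless since each ray extends uniquely to a bi-infinite geodesic and two rays from $p$ are distinct precisely when their extensions are.
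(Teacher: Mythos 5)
Your proof is correct and follows the paper's own argument: surjectivity via Lemma \ref{lem:ct} applied to a representative ray of $\xi$, and injectivity from the divergence property applied to two asymptotic geodesics through $p$. You have merely written out in full the two steps the paper states in one line each.
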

\begin{proof}
Surjectivity follows from Lemma \ref{lem:ct}.  Injectivity is an immediate consequence of the divergence property.
\end{proof}

Following Eberlein we equip $\ideal$ with a topology that makes it a compact metric space homeomorphic to $S^{n-1}$.  Fix $p\in \wM$ and let $f_p\colon T^1_p\wM \to \partial \wM$ be the bijection $v \mapsto c_v(\infty)$ from Lemma \ref{lem:p-to-xi}. The topology (sphere-topology) on $\partial \wM$ is defined such that $f_p$
becomes a homeomorphism.
Since for all $q \in \wM$ the map $f_q^{-1} f_p \colon T^1_p\wM \to T^1_q\wM$ is a homeomorphism, see \cite{pEb72},
the topology is independent on the reference point $p$.

The topologies on $\partial \wM$ and $\wM$
extend naturally
 to  $\cl (\wM): =  \wM\cup \partial \wM$
by requiring that the map
$\varphi\colon B_1(p) = \{v \in T_p \wM:   \|v\| \le 1\} \to \cl(\wM)$
defined by
\[
\varphi(v) = \begin{cases}
 \exp_p\left(\frac{v}{1-\|v\|}\right) & \|v\| < 1\\
f_p(v) & \|v\| = 1
\end{cases}
\]
is a homeomorphism. This topology, called the cone topology, was introduced by Eberlein
and O'Neill \cite{EO73} in the case of Hadamard manifolds and by Eberlein   \cite{pEb72} in the case of visibility manifolds. In
particular, $\cl( \wM) $ is homeomorphic to a closed ball in
$\mathbb{R}^n$.
The relative topology on $\ideal$ coincides with the sphere topology, and the relative topology on $\wM$ coincides with the manifold topology.
% The relative topology on $\partial  \wM$, respectively, on $ \wM$ coincides with the sphere topology, respectively, the topology on the manifold $ \wM$.
\begin{remark}\label{rem:minmal}
The isometric action of $\Gamma=\pi_1(M)$ on $\wM$ extends to a continuous action on $\ideal$.
Since by \cite{pEb72} the geodesic flow is topologically transitive, every $\Gamma$-orbit in $\ideal$ is dense, i.e. the action on  $\ideal$ is minimal.
\end{remark}

\begin{definition}\label{def:busemann}
Given $p\in \wM$ and $\xi\in\ideal$, let $v\in T_p^1\wM$ be the unique unit tangent vector at $p$ such that $c_v(\infty) = \xi$.  
We call $b_p(q,\xi) := b_v(q)$ the Busemann function based at $\xi$ 
and normalized by $p$, i.e. $b_p(p,\xi) =0$.
\end{definition}

The following important property of visibility manifolds is due to Eberlein \cite[Proposition 1.14]{pEb72}.
\begin{proposition}\label{prop:Eb1}
Let $\wM$ be a visibility manifold and 
$$
A = \{(q,z) \in \wM \times \cl(\wM) \mid q \not= z \}. 
$$ 
Consider the map $V\colon A \to T^1\wM$ such that
$V(q,z) \in  T^1_q\wM$ is the unique vector with $c_{V(q,z)}(d(q,z)) = z$. Then $V\colon A \to T^1\wM$ is continuous with respect to the topology defined above.
\end{proposition}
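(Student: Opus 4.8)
The map $V$ takes a point $q$ and a target $z$ (either a point of $\wM$ or a point at infinity) and returns the initial velocity of the geodesic from $q$ toward $z$; I must show this depends continuously on $(q,z)$. The plan is to treat continuity at a point $(q_0, z_0) \in A$ by cases according to whether $z_0 \in \wM$ or $z_0 \in \ideal$, and in each case use the already-established tools: for $z_0 \in \wM$ this is just smooth dependence of geodesics on endpoints (via $V(q,z) = \exp_q^{-1}(z)/d(q,z)$, using that $\exp_q$ is a diffeomorphism on $\wM$ by Cartan--Hadamard), so the only real work is at $z_0 \in \ideal$. So first I would dispatch the easy interior case and reduce to sequences $(q_k, z_k) \to (q_0, z_0)$ with $z_0 \in \ideal$.

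For the boundary case, suppose $(q_k,z_k)\to(q_0,z_0)$ in $\cl(\wM)\times\cl(\wM)$ with $z_0\in\ideal$ and $q_0\in\wM$. Write $v_k = V(q_k,z_k) \in T^1\wM$ and $v_0 = V(q_0,z_0)$; since $T^1\wM$ is locally compact and the $q_k$ converge, it suffices to show every convergent subsequence of $(v_k)$ has limit $v_0$. Pass to a subsequence with $v_k \to w \in T^1_{q_0}\wM$ (the basepoints converge to $q_0$, so the limit sits over $q_0$). The goal becomes: $c_w(\infty) = z_0$, since then uniqueness in Lemma \ref{lem:p-to-xi} forces $w = v_0$. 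To identify $c_w(\infty)$, I would use the cone-topology description: the geodesic segments $c_{v_k}$ from $q_k$ toward $z_k$, reparametrized from $q_0$, converge uniformly on compact sets to $c_w$ (continuity of the exponential map handles the case $z_k \in \wM$; when $z_k \in \ideal$ the segment is the full ray and one uses that $v_k \to w$ with convergence of initial conditions). The definition of the cone topology on $\cl(\wM)$ via the homeomorphism $\varphi$ then says precisely that the ``endpoints'' of these (possibly infinite) geodesic segments, namely $z_k$, converge to $c_w(\infty)$; combined with $z_k \to z_0$ and the Hausdorff-ness of $\cl(\wM)$, this gives $c_w(\infty) = z_0$.

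The step I expect to be the main obstacle is making precise the claim that ``the geodesics from $q_k$ toward $z_k$ converge to the geodesic from $q_0$ toward $z_0$'' in a way that is uniform over the mixed situation where some $z_k$ lie in $\wM$ and others at infinity, and then extracting the endpoint convergence from the definition of the cone topology. Here the clean way is to work directly with the homeomorphism $\varphi\colon B_1(q_0)\to\cl(\wM)$ from the excerpt: pulling $z_k$ back through $\varphi^{-1}$ (after transporting to the basepoint $q_0$, which is continuous since the transition maps between reference points are homeomorphisms) turns the statement into continuity of a composition of maps already known to be continuous --- the exponential map, its inverse on $\wM$, and $f_{q_0}$ --- together with the visibility hypothesis, which is what guarantees (via Lemma \ref{lem:endpts-suffice} and the divergence property underlying Lemma \ref{lem:p-to-xi}) that the limiting vector $w$ genuinely points at $z_0$ rather than at some other boundary point asymptotic in a weaker sense. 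An alternative, perhaps cleaner, route avoids choosing the basepoint entirely: express $V(q,z)$ through Busemann functions when $z\in\ideal$, namely $V(q,\xi) = -\grad b_q(\cdot,\xi)$ evaluated appropriately, and invoke continuity of $(q,\xi)\mapsto \grad b_v$; but since the regularity statements for Busemann functions in the excerpt are pointwise rather than jointly continuous in the direction, I would fall back on the compactness-and-subsequence argument above as the primary line, citing Eberlein \cite{pEb72} for the topological facts about $\cl(\wM)$ that are needed to close the gap.
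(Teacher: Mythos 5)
The paper does not give a proof of this proposition; it is quoted directly from Eberlein \cite[Proposition 1.14]{pEb72}, so there is no internal argument to compare yours against. Evaluating your proposal on its own terms, the reduction is fine (dispatch the interior case by smoothness of $\exp$, pass to a convergent subsequence $v_k = V(q_k,z_k)\to w$ over $q_0$, and observe that by Lemma \ref{lem:p-to-xi} it suffices to show $c_w(\infty)=z_0$), but the step that is supposed to identify $c_w(\infty)$ with $z_0$ is not actually established. You write that uniform convergence of $c_{v_k}$ to $c_w$ on compacts plus ``the definition of the cone topology via $\varphi$'' says that the endpoints $z_k$ converge to $c_w(\infty)$. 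That is not what the cone topology gives. The homeomorphism $\varphi$ is based at the fixed point $q_0$, and what $z_k\to z_0$ tells you is $V(q_0,z_k)\to V(q_0,z_0)=v_0$. Your hypothesis, by contrast, controls $V(q_k,z_k)=v_k\to w$, where the basepoints $q_k$ are moving. The missing (and essential) content is precisely the comparison $V(q_k,z_k)-V(q_0,z_k)\to 0$ as $q_k\to q_0$, uniformly over $z_k$. Your remark that this is handled by ``transporting to the basepoint $q_0$, which is continuous since the transition maps between reference points are homeomorphisms'' conflates two different statements: the paper records that $f_q^{-1}f_p$ is a homeomorphism for each \emph{fixed} pair $(p,q)$, whereas what you need is joint continuity of $(q,v)\mapsto f_{q_0}^{-1}f_q(v)$ as $q$ varies -- and that joint continuity is essentially the proposition itself, so this appeal is circular.

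To close the gap you need an actual argument that the change of basepoint is controlled. One route: compare the geodesics $c_{u_k}$ (from $q_0$ toward $z_k$, with $u_k=V(q_0,z_k)$) and $c_{v_k}$ (from $q_k$ toward $z_k$); they share the target $z_k$ and have initial points a distance $d(q_0,q_k)\to 0$ apart, so Lemma \ref{lem:endpts-suffice} (upgraded to rays in the case $z_k\in\ideal$, which takes a short additional approximation argument) gives $\sup_t d(c_{u_k}(t),c_{v_k}(t))\le R_2$ for large $k$. Passing any convergent subsequence $u_k\to u$ and letting $k\to\infty$ yields $d(c_u(t),c_w(t))\le R_2$ for all $t\ge 0$, so $c_u$ and $c_w$ are asymptotic rays from $q_0$; the divergence property then forces $u=w$. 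Combined with $u_k\to v_0$ from the cone topology, this gives $w=v_0$. Alternatively, one can use the uniform $C^{(1,1)}$ regularity of Busemann functions from \cite{gK85} (plus a Jacobi-field Hessian bound for the finite-distance functions $d(\cdot,z_k)$) to get a uniform Lipschitz bound on $q\mapsto V(q,z)$ -- you considered and then dismissed this route, but the objection you raise (that regularity is ``pointwise rather than jointly continuous in the direction'') is beside the point: you do not need continuity of the gradient in the direction variable, only a Lipschitz constant in $q$ that is uniform over directions, and that is exactly what the $C^{(1,1)}$ statement together with cocompactness provides. Either of these arguments fills the hole; as written, the hole is there.
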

\begin{corollary}\label{cor:c1}
For  $p,q,z \in \wM$ define $b_p(q,z) = d(q,z)-d(p,z)$. Then for all $p \in \wM $, compact subsets $K \subset \wM$, $\xi \in \partial  \wM$ and all $\epsilon >0$
there exists an open set $U \subset  \cl(\wM)$ such that
\[
| b_p(q,z) - b_p(q,\xi)| < \epsilon
\]
for all $q \in K$ and $z \in U$. In particular, we have
\begin{equation}\label{eqn:bpqxi}
\lim\limits_{z \to \xi }d(q,z) -d(p,z) = b_p(q, \xi)
\text{ for every $p,q\in \wM$ and $\xi\in\ideal$}.
\end{equation}
\end{corollary}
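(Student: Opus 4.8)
The plan is to deduce Corollary~\ref{cor:c1} directly from the continuity statement of Proposition~\ref{prop:Eb1}, with the main work being to control the Busemann function $b_p(q,z)$ uniformly as $z$ approaches a boundary point $\xi$. First I would observe that for $z \in \wM$ the quantity $b_p(q,z) = d(q,z) - d(p,z)$ is precisely the value at $q$ of the function $b_{v_z,t_z}$ from \S\ref{sec:geometry}, where $v_z = V(p,z) \in T^1_p\wM$ is the initial vector of the geodesic from $p$ to $z$ and $t_z = d(p,z)$; indeed $b_{v_z,t_z}(q) = d(q, c_{v_z}(t_z)) - t_z = d(q,z) - d(p,z)$. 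For $z = \xi \in \ideal$, set $v_\xi = V(p,\xi)$, the unique unit vector at $p$ with $c_{v_\xi}(\infty) = \xi$; then $b_p(q,\xi) = b_{v_\xi}(q)$ by Definition~\ref{def:busemann}.

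The key step is then a two-part estimate. Fix $\xi$, $p$, the compact set $K$, and $\eps > 0$. Using Lemma~\ref{lem:busemann} (and uniform convergence of $b_{v,t} \to b_v$ on the compact set $K$, which follows since the functions $b_{v_\xi,t}$ are $1$-Lipschitz and monotone in $t$ by global minimality of geodesics in $\wM$), choose $T$ large so that $|b_{v_\xi,T}(q) - b_{v_\xi}(q)| < \eps/2$ for all $q \in K$. Next, by Proposition~\ref{prop:Eb1} the map $V$ is continuous at $(p,\xi)$, so there is a neighborhood $U$ of $\xi$ in $\cl(\wM)$ such that for $z \in U$ the vector $v_z = V(p,z)$ is close to $v_\xi$ in $T^1_p\wM$; shrinking $U$ further (and using that $d(p,z) \to \infty$ as $z \to \xi$, which holds since $\xi \in \ideal$ and the cone topology separates boundary points from interior points), we arrange $d(p,z) \geq T$ for $z \in U$. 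Then continuity of $V$ plus continuity of $(v,t) \mapsto b_{v,t}(q)$ on the compact parameter set gives $|b_{v_z, T}(q) - b_{v_\xi,T}(q)| < \eps/4$ for all $q \in K$ after shrinking $U$. Finally, monotonicity of $t \mapsto b_{v_z,t}(q)$ together with the $1$-Lipschitz bound, or more directly a comparison argument via Lemma~\ref{lem:endpts-suffice}, controls $|b_{v_z,d(p,z)}(q) - b_{v_z,T}(q)|$ by the amount the geodesic $c_{v_z}$ moves past time $T$; combining with Lemma~\ref{lem:ct} (which says the initial vectors of the geodesics from $p$ to $c_{v_\xi}(t)$ converge to $v_\xi$ with common endpoint at infinity) one gets this term below $\eps/4$ for $z \in U$ as well. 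Adding up the three estimates yields $|b_p(q,z) - b_p(q,\xi)| < \eps$ for all $q \in K$, $z \in U$, which is the claim; taking $K = \{q\}$ and letting $\eps \to 0$ gives \eqref{eqn:bpqxi}.

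I expect the main obstacle to be making the last comparison — controlling $b_{v_z,t}(q)$ for $t$ between $T$ and $d(p,z)$ — genuinely uniform in $z \in U$ rather than just for the fixed geodesic ray toward $\xi$. The clean way around this is to avoid estimating the "tail" directly and instead argue entirely with the continuous map $V$: write $b_p(q,z) = \langle \text{geometric quantity depending continuously on } V(q,z) \text{ and } V(p,z)\rangle$, noting that $V(q,z)$ is also controlled by Proposition~\ref{prop:Eb1} since $(q,z) \in A$ for $q \in K$ and $z$ near $\xi$ (as $q \neq \xi$). Concretely, for $z \in \wM$ we have $b_p(q,z) = d(q,z) - d(p,z)$, and one shows this equals a continuous function of $(q, V(p,z))$ that extends continuously to $z = \xi$ with value $b_v(q)$ — precisely the content one needs — by the following observation: $d(q,z) - d(p,z) = \lim_{s} (d(q, c_{V(p,z)}(s)) - s)$ stabilizes once $s \geq d(p,z)$, and the convergence in Lemma~\ref{lem:busemann} combined with Lemma~\ref{lem:ct} is uniform on compacta. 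Since everything in sight is continuous on the compact set $K \times \cl(U)$ after this reformulation, a standard compactness/uniform-continuity argument closes the estimate; the only subtlety worth spelling out carefully in the writeup is the uniform convergence $b_{v,t} \to b_v$ in both variables $(v,t)$ jointly, which follows from equicontinuity ($1$-Lipschitz in $q$) and monotone convergence.
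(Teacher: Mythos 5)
Your decomposition into three terms is a genuinely different route from the paper's, and the first two terms are fine (Dini plus the $1$-Lipschitz property handles $|b_{v_\xi,T}(q)-b_{v_\xi}(q)|$ uniformly on $K$, and joint continuity of $(v,t)\mapsto d(q,c_v(t))-t$ handles $|b_{v_z,T}(q)-b_{v_\xi,T}(q)|$). But the third term is a genuine gap, and you have correctly located it: you need $|b_{v_z,d(p,z)}(q)-b_{v_z,T}(q)|<\eps/4$ \emph{uniformly} over the uncountably many directions $v_z$ with $z\in U$, i.e.\ a statement that the monotone convergence $b_{v,t}\to b_v$ is uniform in $v$ as well as in $q$. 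That is essentially equivalent to what the corollary asserts, so the argument is circular as it stands; Lemma \ref{lem:endpts-suffice} only yields a fixed constant $R_2$, not a quantity tending to $0$. Worse, the proposed repair is based on a false identity: the function $s\mapsto d(q,c_{V(p,z)}(s))-s$ does \emph{not} stabilize at $s=d(p,z)$ --- it is monotone non-increasing and keeps decreasing toward $b_{v_z}(q)$, which in general differs from $d(q,z)-d(p,z)$. Concretely, $b_p(q,z)$ is not a function of $(q,V(p,z))$ alone: two points $z\neq z'$ on the same geodesic ray from $p$ have $V(p,z)=V(p,z')$ but different values of $d(q,z)-d(p,z)$.

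The paper sidesteps the tail problem entirely by differentiating in $q$ rather than estimating values: for $q\neq z$ one has $\grad_q b_p(q,z)=-V(q,z)$ (for $z\in\wM$ this is the gradient of the distance function to $z$; for $z=\xi\in\ideal$ it is Lemma \ref{lem:busemann} together with Lemma \ref{lem:ct}), and $b_p(p,z)=0$ for every $z$. Writing $b_p(q,z)-b_p(q,\xi)$ as the integral of $\langle V(c(t),\xi)-V(c(t),z),\dot c(t)\rangle$ along the geodesic $c$ from $p$ to $q$ reduces the whole estimate to making $\|V(\cdot,z)-V(\cdot,\xi)\|$ small on the compact cone $B(K)$ of such segments, which is exactly what the uniform continuity of $V$ from Proposition \ref{prop:Eb1} provides. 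The point is that the gradient formula converts information about the far-away behavior of geodesics toward $z$ or $\xi$ (where uniformity is hard) into information about directions at points of a fixed compact set (where continuity of $V$ applies directly). If you want to keep your three-term scheme, you would have to supply the uniform-in-$v$ convergence of $b_{v,t}\to b_v$ as a separate lemma, which in the no-conjugate-points setting is not available off the shelf.
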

\begin{proof}
For $p \in \wM$ and $z \in \cl(\wM)$ consider the function
$q \mapsto b_p(q,z) $. Then for $q \not= z$ we have $\grad b_p(q,z) = - V(q,z)$. For a compact set
$K \subset \wM$ define 
$$
B(K) = \cl \{q \in c[0,1] \mid c: [0,1] \to \wM \; \ \text{geodesic with} \; \ c(0) = p, c(1) \in K \}
$$
which is compact as well. Choose $r>0$ such that $K \subset B(p,r)$ and $\epsilon>0$. Since $V\colon A \to  T^1\wM$ is continuous it is uniformly continuous on compact subsets.
In particular, for $\xi \in  \partial  \wM$ there exists
a neighborhood $U \subset  \cl(\wM)$ such that $U \cap B(K) = \emptyset$ and
$
\|V(q, z) -V(q, \xi) \| < \frac{\epsilon}{r} 
$
for all $q \in B(K)$ and $z\in U$. For $q \in K$ and $z \in U$ and the geodesic $ c:[0,1] \to \wM $  with $c(0) =p $ and $c(1) =q $. we obtain
\begin{multline*}
| b_p(q, z) - b_p(q, \xi) | =  \left |\int_0^1 \frac{d}{dt}(b_p(c(t), z) - b_p((c(t), \xi)) dt \right |\\
= \left | \int_0^1 \langle \grad b_p(c(t),z) - \grad b_p(c(t),\xi ), \dot c(t) \rangle dt \right |
< \frac{\epsilon}{r}  \int_0^1 \|\dot c(t) \| dt \le \epsilon
\end{multline*}
which yields the claim made in the corollary.
\end{proof}

\begin{corollary}\label{lem:b-xi}
Given $p,p',q\in \wM$ and $\xi\in\ideal$, we have $b_{p'}(q,\xi) = b_p(q,\xi) - b_p(p',\xi)$.
In particular,  all Busemann functions based at $\xi$ coincide up to an additive constant and 
$b_{p'}(q,\xi) =  - b_q(p',\xi)$.
\end{corollary}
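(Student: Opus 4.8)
The plan is to first establish the cocycle identity at finite points, where it is purely algebraic, and then pass to the limit using Corollary \ref{cor:c1}. For any $z\in\wM$, the quantities defined there satisfy
\[
b_{p'}(q,z) = d(q,z) - d(p',z) = \bigl(d(q,z) - d(p,z)\bigr) - \bigl(d(p',z) - d(p,z)\bigr) = b_p(q,z) - b_p(p',z),
\]
since the terms $d(p,z)$ cancel. I would then fix any sequence $z\to\xi$ in $\wM$ converging to $\xi$ in the cone topology and apply \eqref{eqn:bpqxi} to each of the three terms $b_{p'}(q,z)$, $b_p(q,z)$, and $b_p(p',z)$ separately; this is legitimate since the limit in Corollary \ref{cor:c1} is valid for every pair of points of $\wM$. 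Taking the limit in the displayed identity yields $b_{p'}(q,\xi) = b_p(q,\xi) - b_p(p',\xi)$, which is the main assertion.

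For the remaining claims I would argue as follows. The identity just proved shows that the function $q\mapsto b_{p'}(q,\xi) - b_p(q,\xi)$ equals the constant $-b_p(p',\xi)$, independent of $q$; this is precisely the statement that all Busemann functions based at $\xi$ coincide up to an additive constant. For the final identity $b_{p'}(q,\xi) = -b_q(p',\xi)$, I would specialize the cocycle relation to the case $p=q$, obtaining $b_{p'}(q,\xi) = b_q(q,\xi) - b_q(p',\xi)$, and then invoke the normalization $b_q(q,\xi) = 0$ from Definition \ref{def:busemann} to conclude.

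I do not expect a genuine obstacle in this argument; the only point deserving care is that \eqref{eqn:bpqxi} is being applied with three different choices of base and evaluation points, and one should note that the convergence $z\to\xi$ can be taken along a single sequence so that all three limits hold simultaneously. Everything else is the trivial cancellation $d(p,z)-d(p,z)=0$ and the defining normalization of the Busemann function.
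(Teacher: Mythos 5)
Your argument is correct and follows the paper's own proof: establish the cocycle identity $b_{p'}(q,z) = b_p(q,z) - b_p(p',z)$ at finite $z$ by cancellation of $d(p,z)$, then pass to the limit $z\to\xi$ via Corollary \ref{cor:c1}. Your extra details on deducing the last identity by setting $p=q$ and invoking $b_q(q,\xi)=0$ simply spell out what the paper leaves implicit.
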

\begin{proof}
Given  $p,p',q, z\in \wM$ we obtain 
 $b_{p'}(q,z) = b_p(q,z) - b_p(p',z)$ and taking the limit $z \to \xi $ corollary \ref{cor:c1} yields the claim.
\end{proof}

The following result justifies the terminology `stable manifold' for $W^s(v)$.

\begin{lemma}\label{lem:uniqasym}
Let $(M,g)$ be a closed Riemannian manifold without conjugate points and of hyperbolic type.
Then for each $v \in T^1\wM$, we have
\begin{equation}\label{eqn:Ws-asymptotic}
W^s(v) =\{w \in T^1\wM : c_w \text{ is asymptotic to }  c_v  \}.
\end{equation}
\end{lemma}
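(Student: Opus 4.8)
The plan is to prove the two inclusions in \eqref{eqn:Ws-asymptotic} separately, using the divergence property (available since $(M,g)$ is of hyperbolic type) for one direction and the Busemann function machinery for the other.

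First I would prove the inclusion ``$\subseteq$''. Take $w \in W^s(v)$, so by definition \eqref{eqn:Wsu} there is $p \in \wM$ with $w = -\grad b_v(p)$. I want to show $c_w$ is asymptotic to $c_v$. The natural approach is Lemma \ref{lem:ct}: letting $c_t$ be the geodesic from $p$ to $c_v(t)$, that lemma tells us $\lim_{t\to\infty}\dot c_t(0) = -\grad b_v(p) = w$ and that $c_w(\infty) = c_v(\infty)$. It remains to upgrade ``$c_w(\infty) = c_v(\infty)$'' (equality of points at infinity, i.e. asymptotic rays from possibly different basepoints) to ``$c_w$ asymptotic to $c_v$'' in the sense of bounded distance between the full parametrized geodesics. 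But in fact the proof of Lemma \ref{lem:ct} already produces a uniform bound $d(c_w(s),c_v(s))\le R_2$ for all $s\ge 0$ via Lemma \ref{lem:endpts-suffice}, so this direction is essentially contained in what has been established: $c_w$ and $c_v$ stay within $R_2$ of one another, hence are asymptotic.

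Next, the inclusion ``$\supseteq$''. Suppose $w \in T^1\wM$ with $c_w$ asymptotic to $c_v$; I must show $w = -\grad b_v(\pi(w))$, i.e. $w \in W^s(v)$. Set $p = \pi(w)$. By Lemma \ref{lem:ct} applied to $p$ and $v$, the vector $w' := -\grad b_v(p)$ satisfies $c_{w'}(\infty) = c_v(\infty)$, and moreover $c_{w'}$ is asymptotic to $c_v$ (as just noted). So both $c_w$ and $c_{w'}$ are geodesic rays starting at the common point $p$ that are asymptotic to $c_v$, hence asymptotic to each other; in particular $d(c_w(t),c_{w'}(t))$ is bounded. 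Now I invoke the divergence property \eqref{eqn:divergence}: two geodesics in $\wM$ starting at the same point either coincide or diverge. Since $c_w$ and $c_{w'}$ both start at $p$ and do not diverge, they must coincide, so $w = \dot c_w(0) = \dot c_{w'}(0) = w' = -\grad b_v(p)$, which is what we wanted. (Equivalently, one can phrase this via Lemma \ref{lem:p-to-xi}: there is a unique ray from $p$ to $c_v(\infty)$.)

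The only genuinely delicate point is the reconciliation of the two notions of ``asymptotic'': the equivalence relation defining $\ideal$ uses bounded distance between rays with possibly different basepoints, whereas \eqref{eqn:Ws-asymptotic} refers to the specific parametrized geodesics $c_w$ and $c_v$. I expect this to be painless here because, as noted, Lemma \ref{lem:endpts-suffice} (itself a consequence of the Morse Lemma, Theorem \ref{thm:Morse}) converts a bound on distances at two times — or a common basepoint plus a common endpoint at infinity — into a uniform bound at all intermediate times. So the main structural ingredients are: Lemma \ref{lem:ct} (existence of the asymptotic ray realizing $-\grad b_v$), Lemma \ref{lem:endpts-suffice} (uniformization of the asymptotic bound), and the divergence property (uniqueness of the asymptotic ray from a given basepoint). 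I would organize the write-up as the two inclusions above, citing these three facts.
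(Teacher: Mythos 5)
Your proof is correct and uses exactly the same ingredients as the paper's (Lemma \ref{lem:ct} for existence of the common vector in each fiber, and Lemma \ref{lem:p-to-xi} / the divergence property for uniqueness); the paper merely packages the two inclusions as the observation that both sides of \eqref{eqn:Ws-asymptotic} meet each $T_p^1\wM$ in exactly one point and that Lemma \ref{lem:ct} exhibits a common point. Note also that the ``delicate point'' you flag is vacuous: the paper's equivalence relation defining $\ideal$ is stated directly in terms of the parametrized rays, so $c_w(\infty)=c_v(\infty)$ is by definition the assertion that $d(c_w(t),c_v(t))$ is bounded for $t\ge 0$.
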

\begin{proof}
By Lemma \ref{lem:p-to-xi}, both the left- and right-hand sides of \eqref{eqn:Ws-asymptotic} contain exactly one point from each $T_p^1\wM$.  By Lemma \ref{lem:ct}, each $T_p^1\wM$ contains a $w$ that lies in both the left- and right-hand sides.  The result follows.
\end{proof}

We say that a geodesic $c\colon \RR\to\wM$ connects two points at infinity $\eta,\xi\in\ideal$ if $c(-\infty) := c^-(\infty) = \eta$ and $c(\infty)=\xi$, where $c^-(t) = c(-t)$.

\begin{lemma}[\cite{wK71}]\label{lem:eta-xi}
For every $\eta,\xi\in \ideal$ with $\eta\neq\xi$, there exists a geodesic $c$ connecting $\eta$ and $\xi$.
\end{lemma}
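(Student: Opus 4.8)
The plan is to construct the connecting geodesic as a limit of geodesic segments whose endpoints exhaust neighborhoods of $\eta$ and $\xi$, using the visibility property to control the angles and the compactness of $\cl(\wM)$ to extract convergent subsequences. Fix a basepoint $p\in\wM$. Since $\eta\neq\xi$ are distinct points at infinity, pick sequences $y_n\to\eta$ and $z_n\to\xi$ in $\cl(\wM)$ with $y_n,z_n\in\wM$; concretely one may take $y_n = c_{v}(-n)$ and $z_n = c_{v}(n)$ for some choices, but it is cleanest to take any points converging to $\eta$ and $\xi$ in the cone topology. For each $n$ let $c_n\colon [a_n,b_n]\to\wM$ be the unique minimizing geodesic from $y_n$ to $z_n$ (unique because $\wM$ is a Cartan--Hadamard manifold), parametrized by arclength.

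First I would show that the geodesics $c_n$ do not escape to infinity near $p$: more precisely, that there is a compact set (a ball around $p$) that every $c_n$ enters, with a uniform bound on the angle issue controlled by visibility. Suppose for contradiction that $d(p, c_n) \to \infty$ along a subsequence. Then $c_n$ stays at distance at least $L$ from $p$ for the $L = L(\epsilon)$ coming from the visibility definition, so the angle $\angle_p(c_n) < \epsilon$ for $n$ large. Letting $n\to\infty$ and using continuity of $V$ from Proposition \ref{prop:Eb1}, the directions from $p$ to $y_n$ converge to the direction of the ray $p\to\eta$ and the directions from $p$ to $z_n$ converge to the direction of the ray $p\to\xi$; but the angle subtended at $p$ by $c_n$ is at least the angle between these two directions minus small error, and since $\eta\neq\xi$ and $f_p$ is a bijection (Lemma \ref{lem:p-to-xi}), the ray directions $f_p^{-1}(\eta)$ and $f_p^{-1}(\xi)$ are distinct, forcing a definite positive lower bound on $\angle_p(c_n)$ for large $n$. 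Choosing $\epsilon$ smaller than this bound gives a contradiction. Hence, passing to a subsequence, there are points $q_n = c_n(t_n) \in c_n$ with $q_n\to q$ for some $q\in\wM$.

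Next I would reparametrize so that $c_n(0) = q_n$ (shifting the parameter) and pass to a further subsequence so that $\dot c_n(0)\to w\in T^1_q\wM$; this uses compactness of the unit sphere $T^1_q\wM$ together with continuous dependence of geodesics on initial conditions. Let $c := c_w$. For the endpoints: the subsegments of $c_n$ on $[0,\infty)$ converge uniformly on compact sets to $c|_{[0,\infty)}$, and I claim $c(\infty)=\xi$. Indeed $z_n\to\xi$ in the cone topology, $c_n$ passes through $z_n$, and one shows using Corollary \ref{cor:c1} (continuity of Busemann-type functions up to the boundary) or directly via Proposition \ref{prop:Eb1} that the forward ray of the limit must point toward $\xi$; symmetrically $c(-\infty)=\eta$ using $y_n\to\eta$. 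The key technical point here is translating "$c_n$ passes near $z_n$ which approaches $\xi$" into "$c_w(\infty)=\xi$", which I would do by noting that $V(q_n, z_n)\to V(q,\xi)$ by continuity of $V$ on $A$, and that $\dot c_n(0)$ is (after the reparametrization placing $c_n(0)$ at $q_n$) exactly $V(q_n, z_n)$ up to the reversal needed for the backward endpoint; so $w = \lim \dot c_n(0) = V(q,\xi)$, whence $c_w(\infty)=\xi$, and the analogous computation with $-\dot c_n(0) = V(q_n,y_n)\to V(q,\eta)$ gives $c_w(-\infty)=\eta$.

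The main obstacle I anticipate is the non-escape step: ensuring the geodesics $c_n$ stay within a bounded region near some fixed basepoint. In nonpositive curvature this is handled by convexity of distance functions, but here the only tool is the visibility property, and one must carefully set up the contradiction so that the visibility angle estimate (which bounds $\angle_p(c_n)$ from above when $c_n$ is far from $p$) is incompatible with the lower bound on that angle forced by $\eta\neq\xi$. Getting the two directions $f_p^{-1}(y_n)\to f_p^{-1}(\eta)$ and $f_p^{-1}(z_n)\to f_p^{-1}(\xi)$ to converge, and hence the angle between them to converge to a positive number, is exactly where continuity of $V$ (Proposition \ref{prop:Eb1}) and injectivity of $f_p$ (Lemma \ref{lem:p-to-xi}) enter. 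Once this compactness is in place, the extraction of the limiting geodesic and the identification of its endpoints are routine given the continuity statements already established in the excerpt.
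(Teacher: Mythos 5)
The paper states this lemma with a citation to Klingenberg \cite{wK71} and does not reproduce a proof, so there is no in-text argument to compare against; your construction is correct and recovers the standard argument from the visibility-manifold literature (see also \cite[Proposition 1.7]{pEb72}). The two ingredients you identify are exactly the right ones: visibility rules out the segments $c_n$ escaping to infinity, because $\angle_p(c_n)\geq \angle_p(y_n,z_n)\to\angle_p\bigl(f_p^{-1}(\eta),f_p^{-1}(\xi)\bigr)>0$ while visibility would force $\angle_p(c_n)<\epsilon$ whenever $d(p,c_n)\geq L(\epsilon)$; and once a footpoint $q_n\to q\in\wM$ and tangent vector $\dot c_n(0)\to w$ are extracted, continuity of $V$ from Proposition \ref{prop:Eb1} identifies $w=V(q,\xi)$ and $-w=V(q,\eta)$, so $c_w$ connects $\eta$ to $\xi$. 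One small point worth making explicit: when you write $\dot c_n(0)=V(q_n,z_n)$ and $-\dot c_n(0)=V(q_n,y_n)$ after the reparametrization, you are implicitly using that $q_n$ is interior to the segment $[y_n,z_n]$; this is automatic because $q_n\to q\in\wM$ while $y_n,z_n$ leave every compact set in $\cl(\wM)\cap\wM$, so $d(q_n,y_n)\to\infty$ and $d(q_n,z_n)\to\infty$.
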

%\gk{this lemma is already in Morse respectively Klingenberg KLI71 for higher dimension. Alternatively on could quote Eberlein who proved it for visibility manifolds}
%\begin{proof}
%By the existence of the background metric of negative curvature, for every \(\eta,\xi\in\ideal\), there is a unique geodesic $\alpha$ with respect to the metric of negative curvature joining \(\eta\) and \(\xi\). 
%\changed{The Morse Lemma gives for each $t>0$ a $g$-geodesic $c_t$ that passes through $B(\alpha(0),R_0)$ and connects $\alpha(-t)$ and $\alpha(t)$.  Reparametrizing so that $c_t(0) \in B(\alpha(0),R_0)$, we can choose $t_k\to\infty$ such that $\dot c_{t_k}(0) \to v\in \pi^{-1}\overline{B(\alpha(0),R_0)}$. Then $c_v$ remains within $R_0$ of the image of $\alpha$ for all time, so $c_v$ is a $g$-geodesic connecting $\eta$ and $\xi$.}
%\end{proof}

The geodesic $c$ in Lemma \ref{lem:eta-xi} is \emph{not} always unique; there may be multiple geodesics connecting $\eta$ and $\xi$, in which case $\eta$ and $\xi$ are in some sense `conjugate points at infinity'; we allow such points in $\ideal$ even though we forbid conjugate points in $\wM$.
In the more restrictive setting of no focal points (in particular, if $M$ has nonpositive curvature), any two distinct geodesics connecting $\eta\neq\xi\in \ideal$ must bound a flat strip in $\wM$, but this is no longer the case in our setting.

Given $v,w\in T^1\wM$, observe that $c_v(\pm\infty) = c_w(\pm\infty)$ if and only if $w\in W^s(v) \cap W^u(v)$ by Lemma \ref{lem:uniqasym}, and so $c_v$ is the unique geodesic joining $c_v(\pm\infty)$ if and only if
\begin{equation}\label{eqn:Wsu-orbit}
W^s(v) \cap W^u(v) = \{\dot{c}_v(t) : t\in \RR \} = \{ f_t v : t\in \RR \}.
\end{equation}
We can also give a characterization in terms of the horospheres:  \(c_v\) is the unique geodesic joining \(c_v(\pm\infty)\) if and only if 
%for every \(x\in H^s(v)\cap H^u(v)\) we have \(x=\pi(v)\).
$H^s(v) \cap H^u(v)$ consists of the single point $\pi(v)$.

In the following we call
\begin{equation}\label{eqn:E}
\begin{aligned}
\mathcal{E} &:= 
\{ v\in T^1\wM : W^s(v) \cap W^u(v) = \{ \dot{c}_v(t) : t\in\RR \} \} \\
&=\{ v \in T^1\wM : H^s(v)\cap H^u(v)\text{ is a single point}\}.
\end{aligned}
\end{equation}
the \emph{expansive set}, which we will use in the definition of $\HHH$ in \S\ref{sec:high}:
From the discussion above, we have $v\in \mathcal{E}$ if and only if $c_v$ is uniquely determined up to parametrization by $c_v(\pm\infty)$.

%\subsection{General results about equilibrium states}\label{sec:es}

\subsection{A uniqueness result using specification}\label{sec:gen}\label{sec:es}

We use an approach to uniqueness of the measure of maximal entropy that goes back to Bowen \cite{rB75}; see Franco \cite{eF77} for the flow case.  This approach relies on the properties of \emph{expansiveness} and \emph{specification}.  We will use versions of these properties that differ slightly from those used by Bowen and Franco, but which keep the essential features; we describe these now, together with a general uniqueness result proved by the first author and D.J.\ Thompson \cite{CT16} that extends Bowen's result to a more nonuniform setting.

Let $(X,F)$ be a continuous flow on a compact metric space.  

\begin{definition}\label{def:NE}
Given $\eps>0$, say that a point $x\in X$ is \emph{expansive at scale $\eps$} if there is $s>0$ such that
\[
\{y\in X : d(f_t x, f_ty) < \eps \text{ for all } t\in \RR \} \subset \{f_t x : t\in [-s,s]\}.
\]
Let $\NE(\eps)$ be the set of points in $X$ that are \emph{not} expansive at scale $\eps$.  The \emph{entropy of obstructions to expansivity at scale $\eps$} is
\begin{equation}\label{eqn:hexp}
\hexp(\eps) = \sup \{ h_\mu(f_1) : \mu\in \MMM_F(X),\ \mu(\NE(\eps))=1\}.
\end{equation}
\end{definition}

\begin{definition}\label{def:specification}
Given $\delta>0$, we say that the flow has
\emph{specification at scale $\delta>0$} if  there exists $\tau=\tau(\delta)$ such that for every $(x_1, t_1)$, $\dots, (x_N, t_N)\in X\times [0,\infty)$  there exist a point $y\in X$ and times $T_1,\dots, T_N\in\RR$ such that $T_{j+1} - (T_j + t_j) \in [0,\tau]$ for every $1\leq j < N$ and
%$\tau_1, \dots, \tau_{N-1}\in[0, \tau]$ such that for $s_0=\tau_0=0$ and $s_j=\sum_{i=1}^j t_i + \sum_{i=1}^{j-1} \tau_i$, we have
\[
f_{T_j}(y)\in B_{t_j}(x_j, \delta):=\Big\{z\in X: \sup_{s\in[0,t_j]}d(f_sx_j, f_sz)<\delta \Big\}
\]
for every $j\in \{ 1,\dots, N\}$.  It is convenient to also use the notation $s_j = T_j + t_j$ for the time at which the orbit of $y$ stops shadowing the orbit of $x_j$, and $\tau_j = T_{j+1} - s_j$ for the time it takes to transition from the orbit of $x_j$ to the orbit of $x_{j+1}$.
%$T_j= s_{j-1}+\tau_{j-1}$ for the time at which the orbit of $y$ is near $x_j$; 
See Figure \ref{fig:specification} for the relationship between the various times.
\end{definition}

\begin{figure}[htbp]
\includegraphics[width=\textwidth]{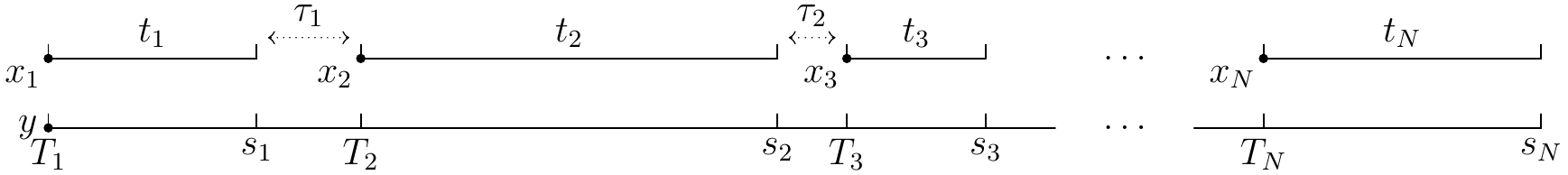}
\caption{Book-keeping in the specification property.}
\label{fig:specification}
\end{figure}

\begin{remark}
It is well-known that the specification property holds for transitive Anosov flows, including the geodesic flow on a smooth negatively curved closed Riemannian manifold; see \cite{rB72} and also Appendix \ref{sec:specification}.
\end{remark}

The following result is proved in \cite[Theorem 2.9]{CT16}.

\begin{theorem}\label{thm:general}
Let $(X,F)$ be a continuous flow on a compact metric space, and suppose that there are $\delta,\eps>0$ with $\eps>40\delta$ such that $\hexp(\eps) < \htop(f_1)$ and the flow has specification at scale $\delta$.  Then $(X,F)$ has a unique measure of maximal entropy.
\end{theorem}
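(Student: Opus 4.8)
The plan is to follow the strategy of Bowen \cite{rB75} and Franco \cite{eF77}, in the refined form of \cite{CT16}: use specification to build \emph{one} measure of maximal entropy with a Gibbs-type \emph{lower} bound on Bowen balls, and then show that any ergodic MME must coincide with it, using that measures of near-maximal entropy cannot concentrate on the non-expansive set $\NE(\eps)$.

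First I would use specification at scale $\delta$ to control the growth of separated sets. Concatenating $(t_j,\delta)$-separated families via the specification property, at the cost of transition gaps of length at most $\tau=\tau(\delta)$, shows that $t\mapsto\log\Lambda_t(X,F,\delta)$ is almost superadditive, so that $h_\delta:=\lim_{t\to\infty}\frac1t\log\Lambda_t(X,F,\delta)$ exists, and a comparison of scales identifies it with $\htop(f_1)$. Along a sequence $t_k\to\infty$ I would take maximal $(t_k,\delta)$-separated sets $E_k$, form the empirical measures $\mu_k=\frac1{\#E_k}\sum_{x\in E_k}\delta_x$, and let $\mu$ be a weak$^*$ subsequential limit of their flow-averages $\frac1{t_k}\int_0^{t_k}(f_s)_*\mu_k\,ds$. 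The usual argument from the proof of the variational principle, adapted to flows, gives $h_\mu(f_1)\geq h_\delta=\htop(f_1)$, so $\mu$ is an MME. The key additional input is a lower bound: there is $K>0$ such that for all $x\in X$ and $t>0$,
\[
\mu\big(B_t(x,4\delta)\big)\ \geq\ K\,e^{-t\,\htop(f_1)}.
\]
This is obtained by using specification to splice the orbit segment $(x,t)$ into the middle of the $\sim e^{t_k\htop(f_1)}$ segments counted by $\Lambda_{t_k}(X,F,\delta)$: this produces $\gtrsim e^{(t_k-t)\htop(f_1)}$ pairwise $\delta$-separated orbit segments that all pass through $B_t(x,2\delta)$ during a fixed time window, and estimating their contribution to $\mu_k$ yields the bound. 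Using the expansivity hypothesis together with the separated-set upper bound, one similarly controls $\mu$ from above at scale $\eps$.

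For uniqueness, let $m$ be an ergodic MME. Since $h_m(f_1)=\htop(f_1)>\hexp(\eps)$, the definition of $\hexp(\eps)$ forces $m(\NE(\eps))=0$, so $m$-a.e.\ point is expansive at scale $\eps$; for such a point $x$, the set $\{y:d(f_tx,f_ty)<\eps\ \forall t\in\RR\}$ is a bounded orbit arc, and a flow version of the Brin--Katok formula then gives $-\frac1t\log m(B_t(x,\eps))\to h_m(f_1)=\htop(f_1)$ for $m$-a.e.\ $x$. Fixing $\gamma>0$, for $m$-a.e.\ $x$ there is $T(x)$ with $m(B_t(x,\eps))\leq e^{-t(\htop(f_1)-\gamma)}$ for $t\geq T(x)$. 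Using $\eps>40\delta$, a Bowen ball $B_t(x,\eps)$ contains a definite number of pairwise disjoint Bowen balls $B_t(y_i,4\delta)$ with prescribed centers, and the lower bound on $\mu$ gives $\mu(B_t(x,\eps))\geq\#\{y_i\}\cdot Ke^{-t\htop(f_1)}$; an upper bound on $\#\{y_i\}$ comes from covering $B_t(x,\eps)$ by $(t,\delta/2)$-Bowen balls and using the upper estimate on $m$. Combining these comparisons shows that $m$ and $\mu$ are mutually absolutely continuous with bounded densities along a sequence of scales, which by a Bowen-type covering argument yields $m\ll\mu\ll m$; since $m$ is ergodic this forces $\mu$ to be ergodic as well, hence $m=\mu$. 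As every MME decomposes into ergodic MMEs, $\mu$ is the unique MME.

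The step I expect to be the main obstacle is the two-scale bookkeeping in the uniqueness argument: one only has the lower bound for $\mu$ at the small scale $\delta$, where specification is available, while one only controls an arbitrary MME from above at the larger scale $\eps$, where the entropy of obstructions to expansivity is small. Reconciling these requires the explicit gap $\eps>40\delta$, which guarantees that a scale-$\eps$ Bowen ball genuinely "contains" a controlled number of disjoint scale-$\delta$ Bowen balls with prescribed centers, after absorbing the error terms coming both from the transition gaps of length $\tau$ in the specification property and from the merely asymptotic nature of the Brin--Katok convergence. A secondary technical point is the flow setting: one must work with the reparametrization-robust metric $d_1$ and check that separated sets, Bowen balls, and the time-averaging used to construct $\mu$ interact well.
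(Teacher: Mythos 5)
The statement you are proving is not established in this paper: the authors explicitly cite it to [CT16, Theorem~2.9], and the remark following the statement explains that it is a degenerate case of that theorem (taking $\phi=0$, $\GGG=\mathcal{D}=X\times\RR^+$, $\mathcal{P}=\mathcal{S}=\emptyset$). So there is no in-paper proof to compare against; I will assess your sketch against the CT16 argument.

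Your outline correctly identifies the Bowen--Franco strategy: build a candidate MME $\mu$ with a Gibbs-type lower bound on Bowen balls via specification, and then show any ergodic MME $m$ must coincide with it by exploiting that $h_m>\hexp(\eps)$ forces $m(\NE(\eps))=0$. You also rightly flag the two-scale bookkeeping and the role of $\eps>40\delta$ as the crux. That said, the sketch leaves the genuinely hard steps unjustified. First, specification at scale $\delta$ alone does \emph{not} give $\lim_t\frac1t\log\Lambda_t(X,F,\delta)=\htop(f_1)$; in general one only has $\le$, and the missing inequality needs the expansivity hypothesis, which you invoke only afterward. Second, the Brin--Katok formula is a statement about the $\eps'\to 0$ limit; knowing that $m$-a.e.\ point lies off $\NE(\eps)$ does not by itself give $-\frac1t\log m(B_t(x,\eps))\to h_m$ at the \emph{fixed} scale $\eps$ -- CT16 prove a separate ``entropy computed at scale $\eps$'' lemma (via Katok's formula and the entropy gap $\hexp(\eps)<\htop$) precisely because of this. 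Third, and most importantly, the assertion that the Bowen-ball comparisons yield $m\ll\mu\ll m$ ``by a Bowen-type covering argument'' is where all the work lies; a naive covering estimate of the form $m(A)\le\#E_t\cdot\sup_x m(B_t(x,\cdot))$ combined with the Gibbs lower bound for $\mu$ does not obviously produce a contradiction, and CT16 in fact argue through a Carath\'eodory pressure structure on collections of orbit segments (adapted partition sums, a two-scale lemma, and a separate ergodicity argument for $\mu$) rather than by direct density comparison. In short: the architecture of your proof matches the source, but the three places you wave through -- identifying $h_\delta$ with $\htop(f_1)$, upgrading Brin--Katok to a fixed scale, and the absolute-continuity-cum-ergodicity conclusion -- are exactly where the substance of CT16's argument sits, and as written they remain gaps.
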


\begin{remark}
The result proved in \cite{CT16} is more general (and more complicated) in two ways: it applies to nonzero potential functions, and it only requires specification to hold for a (sufficiently large) collection of orbit segments.  The version stated here follows from \cite[Theorem 2.9]{CT16} by putting $\phi=0$, $\GGG = \mathcal{D} = X\times \RR^+$, and $\mathcal{P} = \mathcal{S} = \emptyset$.
\end{remark}

\subsection{Limit distribution along periodic orbits}\label{sec:per-limit}

Let $X$ be a compact metric space and $f_t\colon X \to X$ a continuous flow. 
%For \(\delta, T>0\), let \(\mathcal P(T,\delta)\) be a maximal \(\delta\)-separated set of periodic orbits of period less than \(T\) and \(P(T,\delta) := \card \mathcal P(T,\delta) \). 
Given $T>0$, let $\mathcal{P}(T)$ be a maximal set of pairwise non-homotopic periodic orbits with period at most $T$, and let $P(T) = \card \mathcal{P}(T)$.
Let \(\mu_T\) be the invariant probability measure defined by
\begin{equation}\label{eqn:muT}
\int_X \varphi \, d\mu_T = \frac1 {P(T)} \sum\limits_{\gamma \in \mathcal P (T)}
\frac{1}{\ell(\gamma)} \int\limits_{0}^{\ell(\gamma)}  \varphi(\gamma(s)) ds
\end{equation}
for each 
 $\varphi \in C^0(X)$.
%The prove is similar to the proof of \cite[Proposition 6.4]{gK98} 

\begin{definition}\label{def:periodics}
We say that a flow-invariant probability measure $\mu$ is the \emph{limiting distribution of (homotopy classes of) periodic orbits} if $\mu_T\to \mu$ in the weak* topology.  If this occurs in the case when $X = T^1M$ and $f_t$ is the geodesic flow, we also say that $\mu$ is the \emph{limiting distribution of (homotopy classes of) closed geodesics}.
\end{definition}

\begin{remark}\label{rmk:CKW}
In our proofs of Theorems \ref{thm:surfaces} and \ref{thm:higher-dim}, we prove equidistribution of periodic orbits in the sense of Definition \ref{def:periodics}. One could also pursue a stronger equidistribution result by fixing $\delta>0$ and writing $\mathcal{P}(T,\delta)$ for the set of orbits in $\mathcal{P}(T)$ with period in $(T-\delta,T]$; in negative curvature and in nonpositive curvature, the measures corresponding to $\mathcal{P}(T,\delta)$ as in \eqref{eqn:muT} converge to the MME as $T\to\infty$ for every $\delta>0$ \cite{gM04,BCFT}. This stronger equidistribution result, which is crucial for the Margulis asymptotic estimates \cite{gM69,gM04},
turns out to be true in our setting as well, as we prove in \cite{CKW}. However, it requires machinery that we do not develop here; the crucial step is to show that for every $\delta>0$, $\card \mathcal{P}(T,\delta)$ grows with exponential rate $\htop(f_1)$. As we discuss in \S\ref{sec:equidist}, this growth is already known for $\card \mathcal{P}(T)$, which gives an estimate for $\card \mathcal{P}(T,\delta)$ if we restrict to ``most'' $T$, but the stronger estimate is beyond the scope of this paper.
\end{remark}

It is worth observing here that the notion of length does not depend on which representative of a free homotopy class we choose.

\begin{lemma}\label{lem:same-length}
Let $M$ be a manifold with no conjugate points.
If $c_1,c_2$ are closed geodesics in the same free homotopy class, then they have the same length.
\end{lemma}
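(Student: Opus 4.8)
The plan is to lift both closed geodesics to the universal cover and exploit the fact that freely homotopic closed geodesics correspond to axes of the \emph{same} deck transformation, then use the coarse-geometry tools (the Morse Lemma and Lemma \ref{lem:endpts-suffice}) to control how far apart these axes can be, which forces their translation lengths to agree.

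First I would set up the algebra. Let $c_1,c_2\colon \RR\to M$ be closed geodesics of periods $\ell_1,\ell_2$ lying in the same free homotopy class. Choosing a basepoint and lifting, each $c_i$ determines a deck transformation $\gamma_i\in\Gamma$, well-defined up to conjugacy, whose conjugacy class is exactly the free homotopy class; since $c_1,c_2$ are freely homotopic, after relabelling the lifts we may assume $\gamma_1=\gamma_2=:\gamma$. Let $\tilde c_1,\tilde c_2\colon\RR\to\wM$ be lifts that are axes of $\gamma$, meaning $\gamma(\tilde c_i(t))=\tilde c_i(t+\ell_i)$ for all $t$. Thus $\tilde c_i$ is a $\gamma$-invariant geodesic in $\wM$ and $\ell_i$ is the displacement of $\gamma$ measured along $\tilde c_i$.

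Next I would bound the Hausdorff distance between the two axes. Both $\tilde c_1$ and $\tilde c_2$ are $\gamma$-invariant, and $\gamma$ acts cocompactly on each (the quotient being the circle $c_i$). A standard argument shows that two $\gamma$-invariant geodesics lie at bounded Hausdorff distance: pick $p\in\tilde c_1$ and let $q\in\tilde c_2$ be a nearest point; then for every $n\in\ZZ$ the point $\gamma^n p\in\tilde c_1$ is within $d(p,q)$ of $\gamma^n q\in\tilde c_2$, and since the $\gamma^n p$ are $\ell_1$-dense along $\tilde c_1$ (and similarly on $\tilde c_2$), every point of either geodesic is within $d(p,q)+\max(\ell_1,\ell_2)/2$ of the other; hence $d_H(\tilde c_1(\RR),\tilde c_2(\RR))=:D<\infty$. (Alternatively, invoke the Morse Lemma against the background metric $g_0$: both axes are fellow-travelled by the corresponding $g_0$-axis of $\gamma$, which is unique, so they are within $2R_0$ of each other.) Now apply Lemma \ref{lem:endpts-suffice} or the fellow-traveller property directly: for large $T$, consider the geodesic segments $\tilde c_1|_{[0,T]}$ and a reparametrised subsegment of $\tilde c_2$ with endpoints within $D$ of $\tilde c_1(0)$ and $\tilde c_1(T)$; one obtains $d(\tilde c_1(t),\tilde c_2(t+a))\le R_2$ for all $t\in[0,T]$ for some fixed shift $a$ and a constant $R_2$ depending only on $D$ and $g,g_0$, \emph{independent of $T$}. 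Finally, compare displacements: $d(\tilde c_1(0),\gamma^n\tilde c_1(0))=n\ell_1$ while $d(\tilde c_2(a),\gamma^n\tilde c_2(a))=n\ell_2$, and since these orbit points stay within the fixed distance $R_2+D$ of each other for all $n$, the triangle inequality gives $|n\ell_1-n\ell_2|\le 2(R_2+D)$ for every $n\ge1$; dividing by $n$ and letting $n\to\infty$ forces $\ell_1=\ell_2$.

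The main obstacle, and the reason one cannot just quote convexity of $t\mapsto d(\tilde c_1(t),\tilde c_2(t))$ as in nonpositive curvature, is establishing that the two axes fellow-travel \emph{uniformly} and producing the constant $R_2$ independent of the length $T$ of the segment under consideration. This is exactly the content packaged into Lemma \ref{lem:endpts-suffice} (itself a consequence of the Morse Lemma, Theorem \ref{thm:Morse}), so the real work is reduced to correctly matching up parametrisations and endpoints so that lemma applies; once the uniform bound is in hand, the displacement comparison is immediate. A minor point to handle carefully is the choice of common parametrisation shift $a$ and verifying the argument is insensitive to it, and ensuring the lifts are chosen to be axes of literally the same $\gamma$ rather than conjugates — but both are routine.
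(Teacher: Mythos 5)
Your argument reaches the right conclusion, but it is considerably more elaborate than necessary, and in one respect it quietly strengthens the hypothesis. Invoking the Morse Lemma and Lemma~\ref{lem:endpts-suffice} requires the existence of a negatively curved background metric $g_0$ (condition~\ref{H1}), whereas Lemma~\ref{lem:same-length} is stated --- and in the paper proved --- for \emph{any} closed manifold without conjugate points. So if the $R_2$-estimate from Lemma~\ref{lem:endpts-suffice} were essential to your proof, you would only have established a weaker lemma. In fact it is not essential: notice that the fellow-travelling bound you work hard to obtain is entirely superfluous to the final step. Once you have axes $\tilde c_1,\tilde c_2$ of the same deck transformation $\gamma$ with $\gamma\tilde c_i(t)=\tilde c_i(t+\ell_i)$, the displacement comparison needs only that $d(\gamma^n \tilde c_1(0),\gamma^n\tilde c_2(a)) = d(\tilde c_1(0),\tilde c_2(a))$ because $\gamma^n$ is an isometry, together with the fact that geodesics in $\wM$ realise distance (this is where ``no conjugate points'' enters). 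Then
\[
n\ell_1 = d(\tilde c_1(0),\gamma^n\tilde c_1(0)) \le d(\tilde c_1(0),\tilde c_2(a)) + n\ell_2 + d(\gamma^n\tilde c_2(a),\gamma^n\tilde c_1(0)) = 2D' + n\ell_2,
\]
and dividing by $n$ gives $\ell_1\le\ell_2$ with no reference to $R_2$, $D$, or the background metric. The paper's proof is essentially this: lift the free homotopy itself to obtain lifts $\tilde c_1,\tilde c_2$ with $d(\tilde c_1(n\ell_1),\tilde c_2(n\ell_2))$ constant in $n$ (your ``same $\gamma$, isometric action'' observation, expressed without the detour through axes), apply the triangle inequality using geodesic minimality in $\wM$, and divide by $n$. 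Stripping out the Morse Lemma and Lemma~\ref{lem:endpts-suffice} from your write-up leaves an argument that matches the paper's and holds in the stated generality.
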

\begin{proof}
Let $\ell_i$ be the length of $c_i$. Lifting the homotopy to the universal cover $\wM$ gives geodesics $\tilde c_1, \tilde c_2 \colon \RR\to \wM$ such that $d(\tilde c_1(n\ell_1), \tilde c_2(n\ell_2)) = d(\tilde c_1(0), \tilde c_2(0))=: r$ for all integers $n$. Thus
\begin{align*}
n\ell_1 &= d(\tilde c_1(0), \tilde c_1(n\ell_1)) \\
&\leq d(\tilde c_1(0), \tilde c_2(0)) + d(\tilde c_2(0), \tilde c_2(n\ell_2)) + d(\tilde c_2(n\ell_2), \tilde c_1(n\ell_1)) = 2r + n\ell_2,
\end{align*}
where the equalities use the assumption of no conjugate points so that geodesics minimize distances in $\wM$. Dividing by $n$ and sending $n\to\infty$ gives $\ell_1 \leq \ell_2$, and by symmetry this suffices.
\end{proof}

\subsection{Residually finite fundamental groups}\label{sec:algebra}

\begin{definition}
A group $G$ is \emph{residually finite} if the intersection of its finite index subgroups is trivial.
\end{definition}

For surfaces we have the following result which was first proved by Baumslag \cite{gB62} and then Hempel \cite{jH72} gave an alternative proof.

\begin{theorem}\label{prop:hyp-metric}
Every surface has residually finite fundamental group.
\end{theorem}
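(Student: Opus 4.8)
The statement to prove is Theorem \ref{prop:hyp-metric}: every surface has residually finite fundamental group. I want to sketch a proof.

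\emph{Plan.} The plan is to reduce the statement to a well-known algebraic fact: finitely generated subgroups of $\mathrm{GL}_n(\mathbb{C})$ (indeed, of $\mathrm{GL}_n$ over any field, or any commutative ring that is finitely generated over $\mathbb{Z}$) are residually finite. This is a theorem of Malcev. So the core task is to realize the fundamental group of a surface as such a linear group.

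\emph{Key steps.} First, I would dispose of the low-complexity cases: the sphere $S^2$ and the projective plane $\mathbb{RP}^2$ have finite fundamental group ($\{1\}$ and $\mathbb{Z}/2$), which is trivially residually finite; the torus and Klein bottle have fundamental groups $\mathbb{Z}^2$ and the Klein bottle group, which are virtually $\mathbb{Z}^2$ and visibly residually finite (e.g.\ $\mathbb{Z}^2$ is residually finite because $\bigcap_n n\mathbb{Z}^2 = 0$, and residual finiteness passes to finite-index overgroups as well as to subgroups). Second, for the remaining cases — closed orientable surfaces of genus $\geq 2$, closed non-orientable surfaces of genus $\geq 3$, and surfaces with boundary (whose fundamental groups are free) — I would invoke a uniformization/geometrization statement: each such surface admits a complete hyperbolic metric, so $\pi_1$ embeds as a discrete subgroup of $\mathrm{PSL}_2(\mathbb{R})$, hence is linear over $\mathbb{R}$; alternatively, for the orientable genus $\geq 2$ case one can use the explicit presentation and a faithful representation into $\mathrm{PSL}_2(\mathbb{R})$. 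Free groups embed in $\mathrm{SL}_2(\mathbb{Z})$. Third, I would apply Malcev's theorem to conclude residual finiteness, and note that passing from $\mathrm{PSL}_2$ to $\mathrm{SL}_2$ is harmless since $\mathrm{PSL}_2(\mathbb{R})$ is itself linear (it acts faithfully on $\mathfrak{sl}_2$ by the adjoint representation, so sits inside $\mathrm{GL}_3(\mathbb{R})$).

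\emph{Main obstacle.} The main point that needs care is simply citing the right black boxes cleanly: Malcev's theorem on linearity implying residual finiteness, and the existence of a faithful linear representation of each surface group (equivalently, the existence of a hyperbolic structure, or direct construction for the genus-2 surface from which all higher-genus orientable cases follow by the subgroup property of residual finiteness). Since the paper is content to cite Baumslag \cite{gB62} and Hempel \cite{jH72}, I would expect the actual write-up to be a one- or two-line reference rather than a reconstruction; but if a self-contained argument is wanted, the linearity route above is the cleanest, with the only real subtlety being the case analysis over all (possibly non-orientable, possibly bounded) surfaces and making sure each falls into ``finite,'' ``virtually abelian,'' ``free,'' or ``hyperbolic surface group.''
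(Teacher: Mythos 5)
The paper does not actually reconstruct a proof of this statement: it is stated as a citation of Baumslag \cite{gB62} (and Hempel \cite{jH72} as an alternative), and no argument is given in the text. So there is no ``paper's own proof'' to compare against, only the references.

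Your proposed argument via linearity and Malcev's theorem is correct and is a standard, clean route that is genuinely different from the two cited sources: Baumslag's original proof is a direct combinatorial/group-theoretic argument for surface groups (and more generally for certain groups built from free groups), and Hempel's proof proceeds via covering-space/topological methods. The Malcev route trades these for two black boxes --- (i) finitely generated linear groups are residually finite, and (ii) every surface group is linear (via a hyperbolic structure or explicit representations into $\mathrm{PSL}_2(\mathbb{R})$, plus the adjoint representation to pass from $\mathrm{PSL}_2$ to a matrix group, and $\mathrm{SL}_2(\mathbb{Z})$ for the free cases) --- together with the elementary facts that residual finiteness passes to subgroups and up through finite-index overgroups. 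Your case analysis is essentially complete; the only minor point worth flagging is that a noncompact surface without boundary can have infinitely generated free fundamental group, to which Malcev's finitely-generated hypothesis does not apply directly; but this is covered by your observation that free groups of arbitrary rank embed in $F_2\leq\mathrm{SL}_2(\mathbb{Z})$ and residual finiteness descends to subgroups, so no real gap remains. For the purposes of this paper, where only closed surfaces of genus $\geq 2$ are used, the hyperbolic-structure plus Malcev argument is entirely adequate and arguably the most efficient self-contained route.
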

Later on, Hempel \cite{jH87} proved that fundamental groups of three manifolds are residually finite. It is an open problem whether every manifold supporting a negatively curved metric has a residually finite fundamental group \cite{gA14}.

For our purposes we need the following implication of a manifold having residually finite fundamental group.
\begin{proposition}\label{prop:large-N}
Let $M$ be a smooth Riemannian manifold and suppose that  $\pi_1(M)$ is residually finite.  Then for every $R>0$  there is a smooth Riemannian manifold $N$ and a locally isometric covering map $p\colon N\to M$ such that the injectivity radius of $N$ is at least $R$.
\end{proposition}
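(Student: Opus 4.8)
The plan is to use residual finiteness to produce, for each element of a suitable finite ``bad'' set, a finite-index normal subgroup avoiding it, then intersect finitely many of these to get a single finite-index normal subgroup $\Gamma_0 \leq \Gamma = \pi_1(M)$ that avoids all of them; the cover $N = \wM/\Gamma_0$ will then have large injectivity radius. First I would fix $R>0$ and recall that, since $M$ is compact, there is $D>0$ such that every point of $\wM$ lies within $D$ of the $\Gamma$-orbit of a fixed basepoint $p\in\wM$ (a fundamental domain has bounded diameter). The key geometric observation is that the injectivity radius of a cover $\wM/\Gamma_0$ at the image of a point $q\in\wM$ is $\tfrac12\inf\{d(q,\gamma q) : \gamma\in\Gamma_0\setminus\{e\}\}$, the half-minimal displacement; so it suffices to ensure that every nontrivial $\gamma\in\Gamma_0$ moves every point of $\wM$ by at least $2R$.

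Next I would reduce this to a finite condition. The set $S = \{\gamma\in\Gamma : d(\gamma p, p) \leq 2R + 2D\}$ is finite, because $\Gamma$ acts properly discontinuously (cocompactly) on $\wM$. I claim that if $\Gamma_0\trianglelefteq\Gamma$ has finite index and $\Gamma_0\cap S = \{e\}$, then $\wM/\Gamma_0$ has injectivity radius $\geq R$: indeed, for any $q\in\wM$ choose $\beta\in\Gamma$ with $d(q,\beta p)\leq D$; if some $\gamma\in\Gamma_0\setminus\{e\}$ satisfied $d(q,\gamma q) < 2R$, then $d(\beta p, \gamma\beta p) \le d(\beta p, q) + d(q,\gamma q) + d(\gamma q, \gamma\beta p) < D + 2R + D$, so $\beta^{-1}\gamma\beta \in S\cap(\Gamma_0\setminus\{e\})$ (using normality of $\Gamma_0$), a contradiction. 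Hence the displacement of every nontrivial deck transformation of $N$ over $\wM$ is at least $2R$ everywhere, giving $\inj(N)\geq R$.

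Then I would invoke residual finiteness to build such a $\Gamma_0$. For each $\gamma\in S\setminus\{e\}$, residual finiteness provides a finite-index subgroup $H_\gamma\leq\Gamma$ with $\gamma\notin H_\gamma$; replacing $H_\gamma$ by the intersection of its finitely many conjugates (the normal core), which is still finite-index and still omits $\gamma$ when $\gamma$ is not conjugate into it — more carefully, one uses that the normal core of $H_\gamma$ has finite index and its own residually-finite consequences; in fact it is cleanest to directly use the standard fact that in a finitely generated group one can find a finite-index \emph{normal} subgroup omitting any prescribed nontrivial element. Taking $\Gamma_0 = \bigcap_{\gamma\in S\setminus\{e\}} H_\gamma$ (a finite intersection of finite-index normal subgroups, hence finite-index and normal) gives $\Gamma_0\cap S=\{e\}$, and $N := \wM/\Gamma_0$ with the covering map $p\colon N\to M$ induced by $\wM\to M$ is the desired manifold, with the pulled-back metric making $p$ a local isometry.

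The main obstacle is a technical rather than conceptual one: the standard definition of residual finiteness only gives finite-index subgroups, not normal ones, whereas the geometric argument needs normality to control displacement at \emph{every} point of $\wM$ via the conjugation trick. This is handled by passing to normal cores (the intersection of all conjugates of a finite-index subgroup is finite-index and normal, since it is the kernel of the action on the finite coset space), which is routine but must be stated carefully; alternatively one observes $\pi_1(M)$ is finitely generated so only finitely many conjugates of $H_\gamma$ arise for distinct coset representatives. Everything else — the half-displacement formula for injectivity radius of a quotient, finiteness of $S$, and the triangle-inequality estimate — is elementary given the Cartan--Hadamard picture already established in the excerpt.
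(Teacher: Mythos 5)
Your proof takes the same basic approach as the paper: use residual finiteness to produce a finite-index subgroup $\Gamma_0\leq\pi_1(M)$ that avoids a finite set of small-displacement deck transformations, and set $N=\wM/\Gamma_0$. But you are more careful than the paper on two technical points, both of which genuinely matter. First, you insist that $\Gamma_0$ be \emph{normal}, so that conjugating a nontrivial $\gamma\in\Gamma_0$ by an arbitrary $\beta\in\Gamma$ keeps it inside $\Gamma_0$; this is what lets you transfer a displacement bound at the basepoint $p$ to a displacement bound at an arbitrary $q\in\wM$. Second, you enlarge the bad set to radius $2R+2D$, where $D$ is the covering radius of a fundamental domain, absorbing the error in passing from $q$ to the nearest orbit point $\beta p$. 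The paper's proof works with the radius-$2R$ set $Z=\{\gamma\neq e: \gamma x\in B(x,2R)\}$ and does not require the $G_\gamma$ to be normal, which a priori only controls the displacement at the single point $x$; the closing remark about fundamental domains containing a ball of radius $R$ does not on its own bound the systole of $N$. So your version, while following the same idea, is the more complete argument. Your fallback via the normal core (kernel of the $\Gamma$-action on the finite coset space $\Gamma/H_\gamma$) is the standard and correct fix, and as you note it works for any group, not just finitely generated ones. One small remark: the formula $\inj_{\bar q}(N)=\tfrac12\inf\{d(q,\gamma q):\gamma\in\Gamma_0\setminus\{e\}\}$ uses that $N$ has no conjugate points, which is inherited from $M$ in the setting where the proposition is applied; the proposition as stated only says ``smooth Riemannian manifold,'' so this hypothesis is implicitly being carried along, and you are right to flag the Cartan--Hadamard picture as the reason the half-displacement formula holds.
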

\begin{proof}
Fix a point $x$ in the universal cover $\wM$, and consider the finite set $Z = \{ \gamma \in \pi_1(M) \setminus \{e\} : \gamma(x) \in B(x,2R)\}$.  Since $\pi_1(M)$ is residually finite, for each $\gamma\in Z$ there is a finite index subgroup $G_\gamma < \pi_1(M)$ such that $\gamma \notin G_\gamma$.  Then $G = \bigcap_{\gamma\in Z} G_\gamma < \pi_1(M)$ is a finite index subgroup such that $d(x,\gamma(x)) \geq 2R$ for all nontrivial $\gamma\in G$.
In particular, $N = \wM / G$ defines a compact manifold that is a finite cover of \(M\) has injectivity radius at least $R$. To see this, we can consider without loss of generality that \(x\in D\) where \(D\) is a fundamental domain corresponding to the covering \(\wM\to M\). Then there is a fundamental domain of \(\wM\to N\) that contains all \(\gamma D\) where \(\gamma\in Z\) and by the definition of \(Z\), this  domain contains a ball of radius  \(\geq R\).
\end{proof}

\section{A class of manifolds with unique measure of maximal entropy}

\label{sec:high}

%\subsection{Statement of the result}

%A large part of Theorem \ref{thm:surfaces} is a consequence of the following general result, which we prove in \S\ref{sec:proof} using Theorem \ref{thm:general}.  In \S\ref{sec:surfaces-pf} we explain how Theorem \ref{thm:surfaces} is deduced from this result.
%Before stating the result, 

Now we can define the class $\mathcal{H}$ of manifolds to which Theorem \ref{thm:higher-dim} applies.  

%we recall that if $\mu$ is a probability measure on $T^1M$, then there is a unique $\sigma$-finite measure $\tmu$ on $T^1\wM$ that is $\pi_1(M)$-invariant and whose restriction to every fundamental domain projects to $\mu$ under the universal covering map.  We refer to $\tmu$ as the \emph{lift} of $\mu$, and note that it is flow-invariant if and only if $\mu$ is.

\begin{definition}\label{def:H}
Let $\mathcal{H}$ denote the class of closed smooth Riemannian manifolds $(M,g)$ without conjugate points such that the following conditions are satisfied.
%\begin{theorem}\label{thm:higher-dim}
%Let $(M,g)$ be a smooth closed Riemannian manifold without conjugate points.  Suppose also that
\begin{enumerate}[label=\textbf{\upshape{(H\arabic{*})}}]
\item\label{H1} $M$ supports a Riemannian metric $g_0$ for which all sectional curvatures are negative; 
\item\label{H2} $(M,g)$ has the divergence property;
\item\label{H3} the fundamental group $\pi_1(M)$ is residually finite; 
\item\label{H4} $\sup\{h_\mu(f_1) : \mu\in \MMM_F(T^1M), \tmu(\mathcal{E})=0\} < \htop(F)$, where $\mathcal{E}$ is the expansive set 
defined in \eqref{eqn:E} and $\tmu$ is the lift of $\mu$ given in \eqref{eqn:tmu-lift}; 
\end{enumerate}
%Then the geodesic flow for $g$ on $T^1M$ has a unique measure of maximal entropy $\mu$.  
%The measure $\mu$ is ergodic and is the limiting distribution of closed geodesics.
%\end{theorem}
\end{definition}

%As we said above, for two and three dimensional manifolds, the topological conditions of Theorem \ref{thm:higher-dim} are known to be satisfied \cite{gB62, jH72, jH87}, and thus the conclusions hold for every metric without conjugate points as soon as the entropy gap condition and the existence of background metric of negative curvature can be verified.

%\begin{example}\label{eg:hyperbolic}
%If $M$ admits a hyperbolic metric -- that is, if $M$ is diffeomorphic to $\mathbb{H}^n / \Gamma$ for some cocompact discrete isometry group $\Gamma$\vc{Is this the right definition of `hyperbolic metric'?} -- then the corresponding geodesic flow is Anosov, so the first condition is satisfied, and $\pi_1(M)$ is known to be residually finite, so Theorem \ref{thm:higher-dim} applies to every metric without conjugate points.  In particular, every two-dimensional manifold with genus $\geq 2$ admits a hyperbolic metric, and thus Theorem \ref{thm:surfaces} is a corollary of Theorem \ref{thm:higher-dim} together with the fact from \cite{LLS16} that an ergodic measure of maximal entropy for geodesic flow over a surface has the Bernoulli property.\vc{Also need the entropy gap, and an argument for fully supported}
%\end{example}

We show below that every closed surface without conjugate points and genus  $\geq 2$ satisfies \ref{H1}--\ref{H4}; this is the key to deducing Theorem \ref{thm:surfaces} from Theorem \ref{thm:higher-dim}.

\begin{remark}

As observed in Remark \ref{rmk:g0}, there are rank 1 manifolds of nonpositive curvature for which \ref{H1} fails, but which have unique measures of maximal entropy by \cite{gK98}.  Thus this condition places a genuine topological restriction on the class of manifolds contained in $\HHH$.

In higher dimensions, the status of the other conditions is less clear; we are not currently aware of any examples for which \ref{H1} holds but any of \ref{H2}--\ref{H4} fails.  It is known that \ref{H3} holds whenever $\dim M \leq 3$
\cite{gB62,jH72,jH87}, and it is an open problem in geometric group theory to determine whether \ref{H1} implies \ref{H3} in general \cite{gA14}.

Following similar arguments to those in \cite[\S8]{BCFT}, condition \ref{H4} can be verified under the following assumptions (we omit the proof):
\begin{enumerate}
\item Conditions \ref{H1}--\ref{H3} hold;
\item the expansive set $\mathcal{E}$ has non-empty interior;
\item the finite cover $N$ of $M$ constructed in the next section has the property that its geodesic flow is entropy-expansive at scale $10\delta$, where $\delta$ is given in in \eqref{eqn:R3} below.  
\end{enumerate}
\end{remark}

%\begin{remark}\label{eg:3-dim}
%%\cite{jH87}
%%If $\dim M=3$, then $\pi_1(M)$ is residually finite, and so Theorem \ref{thm:higher-dim} applies to every 3-manifold without conjugate points that admits a negatively curved metric and for which the entropy gap can be verified.
%Condition \ref{H3} holds whenever $\dim M \leq 3$ \cite{gB62, jH72, jH87}.
%There are examples of 3-manifolds that admit a metric $g$ with no conjugate points and that satisfy the entropy gap condition \ref{H4}, but do not admit a metric of negative curvature.
%\end{remark}

%\subsection{Proof of Theorem \ref{thm:surfaces}}\label{sec:surfaces-pf}

\begin{proof}[Proof of Theorem \ref{thm:surfaces} assuming Theorem \ref{thm:higher-dim}]

Let $M$ be a closed surface of genus $\geq 2$, and $g$ a metric on $M$ with no conjugate points.  We claim that conditions \ref{H1}--\ref{H4} are satisfied.  Indeed, \ref{H1} is a standard result; \ref{H3} is Theorem \ref{prop:hyp-metric}; and \ref{H2} was proved in \cite{lwG56}.  The proof of \ref{H4}
is a consequence of the following proposition.

\begin{proposition}\label{prop:entropy-gives-expansive}
Let $M$ be a surface of genus $\geq 2$ without conjugate points, and  $\mu\in \MMM_F(T^1M)$ an ergodic measure with $h_\mu(f_1) > 0$. Then $\tmu(T^1\wM \setminus \mathcal{E}) = 0$.
\end{proposition}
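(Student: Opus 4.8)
The plan is to show that if an ergodic measure $\mu$ on $T^1M$ has positive entropy, then $\mu$-almost every vector lies in the expansive set $\mathcal{E}$ (equivalently $\tmu$-almost every vector, since $\tmu$ and $\mu$ are related by \eqref{eqn:tmu-lift} and $\mathcal{E}$ is $\Gamma$-invariant). By \eqref{eqn:E}, $v\notin\mathcal{E}$ exactly when the geodesic $c_v$ is not the unique geodesic joining $c_v(-\infty)$ to $c_v(+\infty)$, equivalently when $H^s(v)\cap H^u(v)$ contains more than one point. First I would invoke the Ruelle inequality / Pesin theory, or more precisely the Oseledets theorem for the geodesic flow on a surface without conjugate points: since $h_\mu(f_1)>0$, the measure must have a nonzero Lyapunov exponent $\mu$-a.e., and hence carry nontrivial stable/unstable behavior. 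The key geometric input special to surfaces is that in dimension $2$ one has good control of Jacobi fields along recurrent geodesics; this is where the hypothesis $\dim M = 2$ is essential.

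The core of the argument should be a contradiction: suppose $\mu(\{v : v\notin\mathcal{E}\})>0$; by ergodicity this set has full measure. For such $v$, the intersection $H^s(v)\cap H^u(v)$ is a nondegenerate connected set (a segment of a horocycle-like curve, or at least a set of positive ``width''), so there is a whole interval's worth of geodesics asymptotic to $c_v$ in both forward and backward time. The plan is to show that along such a geodesic the Jacobi fields cannot expand: the stable Jacobi field and the unstable Jacobi field must ``coincide'' in a suitable sense because the stable and unstable horospheres are tangent along a nondegenerate set rather than meeting transversally. More concretely, on a surface, the second fundamental forms (or geodesic curvatures) of the stable and unstable horospheres through $\pi(v)$ must agree when $H^s(v)$ and $H^u(v)$ share more than one point locally, forcing the relevant Riccati solutions to be equal; this in turn forces the Lyapunov exponent at $v$ to vanish. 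Using Poincaré recurrence and the ergodic theorem along a full-measure set, one propagates this to conclude the Lyapunov exponent vanishes $\mu$-a.e., contradicting $h_\mu(f_1)>0$ (via Ruelle's inequality, which needs the exponent to be positive on a positive measure set to support positive entropy).

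I would organize the steps as follows. (1) Reduce to the statement ``$h_\mu>0 \Rightarrow$ positive Lyapunov exponent a.e.'' via the Ruelle inequality for the geodesic flow (valid for $C^2$ flows, no conjugate-point hypotheses needed). (2) Recall the structure theory on surfaces without conjugate points: existence and regularity of Busemann functions (Lemma \ref{lem:busemann}), the divergence property (automatic on surfaces, Remark \ref{rmk:divergence}), and the description of $W^s,W^u$ and horospheres; note that the positivity of the Lyapunov exponent identifies the Oseledets stable/unstable directions with the tangent spaces to $W^s(v), W^u(v)$. (3) Show that a positive Lyapunov exponent forces the stable and unstable horospheres to meet only at $\pi(v)$, i.e. $v\in\mathcal{E}$: if they met at a second point, then along the geodesic connecting the two horospheres one would have a Jacobi field bounded in both time directions, which is incompatible with exponential divergence coming from the nonzero exponent (here one uses that on a surface the stable/unstable Jacobi fields are one-dimensional, so ``tangency on a nondegenerate set'' is a genuine coincidence of solutions). (4) Conclude $\tmu(T^1\wM\setminus\mathcal{E})=0$.

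The main obstacle I anticipate is step (3): making rigorous the passage from ``nonzero Lyapunov exponent'' to ``$H^s(v)\cap H^u(v)$ is a single point.'' In the no-conjugate-points setting one lacks the convexity and monotonicity of Jacobi fields available under nonpositive curvature, so the standard Riccati-comparison arguments do not apply directly; one must instead use the surface-specific results on the asymptotic behavior of Jacobi fields along recurrent geodesics (this is presumably where a reference to work of the second author, or to Eberlein-type arguments via the visibility property and Theorem \ref{thm:visibility}, enters), and carefully relate the Oseledets splitting to the geometrically-defined horospherical foliations even though the latter are only Lipschitz. Getting the measurable selection and the a.e.\ statements to match up — so that the positive-exponent set, which has full $\mu$-measure, is contained in $\mathcal{E}$ up to a null set — is the delicate technical point; everything else is either a cited theorem or a short geometric observation.
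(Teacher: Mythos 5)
Your proposal follows essentially the same route as the paper's proof: Ruelle's inequality gives nonzero Lyapunov exponents $\mu$-a.e., Pesin theory supplies transverse stable and unstable manifolds, and Lemma \ref{lem:uniqasym} identifies these with the normal fields of the horospheres, forcing $H^s(v)\cap H^u(v)$ to reduce to the single point $\pi(v)$, i.e.\ $v\in\mathcal{E}$. The paper's own argument is exactly this three-sentence chain, so the delicate point you flag in your step (3) --- rigorously matching the Oseledets/Pesin objects with the only-Lipschitz horospherical ones --- is treated there with the same brevity rather than resolved in more detail.
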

\begin{proof}
 Let $\mu\in \MMM_F(T^1M)$ be a ergodic measure with $h_\mu(f_1) > 0$. 
This implies  by Ruelle's inequality that   $\mu$-a.e.
$v \in T^1M$  has nonzero Lyapunov exponents, and hence by Pesin theory \(v\) has transverse stable and unstable leaves. 
Using Lemma \ref{lem:uniqasym}, the stable and unstable manifolds of $\mu$-a.e.\ \(v\) correspond to normal fields of 
the stable and unstable horospheres, and thus these horospheres intersect in a single point, so $\mu$-a.e.\ $v\in\mathcal{E}$.
\end{proof}

Finally, the topological entropy of the geodesic flow is positive by Lemma \ref{lem:h>0}, so Proposition \ref{prop:entropy-gives-expansive} establishes \ref{H4}.
%the first condition of Theorem \ref{thm:higher-dim}.  %Moreover, every such $M$ admits a hyperbolic metric, so the second condition is satisfied, and the third is as well by Theorem \ref{prop:hyp-metric}. 
Thus Theorem \ref{thm:higher-dim} applies to every surface of higher genus with no conjugate points, giving a unique measure of maximal entropy $\mu$, which is ergodic and is the limiting distribution of closed geodesics. It follows from \cite{LLS16} that $\mu$ is Bernoulli which concludes the proof of Theorem \ref{thm:surfaces}.
\end{proof}

\section{Uniqueness and equidistribution}\label{sec:proof}

\subsection{Proof of uniqueness}%Theorem \ref{thm:higher-dim}}

In this section we prove the first part of Theorem \ref{thm:higher-dim} by using Theorem \ref{thm:general} to establish uniqueness of the MME when $(M,g)$ is a smooth closed Riemannian manifold without conjugate points satisfying \ref{H1}--\ref{H4}.

The first step is to pass to an appropriate finite cover.
Let $A$ be given by \eqref{lem:unif-equiv}, and $R_0$ by the Morse lemma.  Fix $R_1 > 3AR_0$ and let $R_2$ be given by Lemma \ref{lem:endpts-suffice}; observe that the proof in Appendix \ref{sec:Morse} gives $R_2 = 4R_0 + (6A^2+1)R_1$.  Let
\begin{equation}
%\begin{multline*}
\label{eqn:R3}
%\delta = \max \{d(\dot c_1(0), \dot c_2(0)) : c_1,c_2\colon \RR\to M \text{ are geodesics} \\ \text{such that } d(c_1(0),c_2(0)) \leq R_2\}
\delta = R_2 + 2 = 4R_0 + (6A^2+1)R_1 + 2.
\end{equation}
%\end{multline*}
and fix $\eps > 40\delta$.
By \ref{H3} and Proposition \ref{prop:large-N}, there is a finite cover $N$ of $M$ whose injectivity radius exceeds $3\eps$.
Observe that the covering map $p\colon N\to M$ naturally extends to a finite-to-1 semi-conjugacy from the geodesic flow on $T^1N$ to the geodesic flow on $T^1M$ which implies that

\begin{lemma}\label{lem:p*}
The pushforward map $p_* \colon \MMM_F(T^1 N) \to \MMM_F(T^1 M)$ is surjective and entropy-preserving.
\end{lemma}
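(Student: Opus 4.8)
**Proof proposal for Lemma \ref{lem:p*}.**

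The plan is to unwind the standard relationship between invariant measures on a space and its finite quotient, using the fact that the covering map $p\colon N\to M$ induces a finite-to-one continuous semiconjugacy $\bar p\colon T^1N\to T^1M$ between the two geodesic flows (lifting geodesics along a local isometry sends geodesics to geodesics at the same speed). First I would record the two things we need from $\bar p$: it is surjective (every geodesic in $M$ lifts to a geodesic in $N$, since $N\to M$ is a covering) and it is uniformly finite-to-one, with fibre cardinality equal to the degree $k$ of the cover. Surjectivity of $p_*$ then follows by a soft argument: given $\nu\in\MMM_F(T^1M)$, pick any $\tilde\nu_0\in\MMM_F(T^1N)$ (the flow on the compact space $T^1N$ has invariant measures), and more usefully, lift $\nu$ directly by setting $\tilde\nu = \frac1k \bar p^{*}\nu$ in the sense that $\int \psi\,d\tilde\nu := \frac1k\int_{T^1M}\big(\sum_{w\in \bar p^{-1}(v)}\psi(w)\big)\,d\nu(v)$; this is a well-defined Borel probability measure because the fibre sum of a continuous function is again continuous (the cover is locally trivial), it is flow-invariant because $\bar p$ conjugates the flows and the fibres are permuted by the flow, and clearly $p_*\tilde\nu = \nu$. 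Alternatively one can simply invoke that a continuous surjective factor map between flows induces a surjection on invariant measures.

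Next I would address the entropy-preserving claim, i.e.\ $h_{p_*\mu}(f_1) = h_\mu(f_1)$ for every $\mu\in\MMM_F(T^1N)$. Since $\bar p$ is a factor map, the inequality $h_{p_*\mu}(f_1)\le h_\mu(f_1)$ is automatic. For the reverse inequality the key point is that the fibres of $\bar p$ are finite of uniformly bounded cardinality $k$, so the partition of $T^1N$ into fibres of $\bar p$ has zero entropy and $\bar p$ is in fact a finite extension; by the Abramov--Rokhlin formula (or directly, since a finite-to-one factor map cannot lose or create entropy — the conditional entropy $h_\mu(f_1\mid \bar p^{-1}\mathcal B_{T^1M})$ is bounded by $\log k$ times a constant and actually vanishes because the fibres, being finite and flow-invariant as sets, carry no dynamics) one gets $h_\mu(f_1) = h_{p_*\mu}(f_1)$. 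The cleanest route is: the $\bar p$-fibre partition $\xi$ is finite, hence $h_\mu(f_1,\xi)=0$ is false in general but $H_\mu(\xi)\le\log k<\infty$, and then $h_\mu(f_1)\le h_{p_*\mu}(f_1) + H_\mu(\xi\mid \bar p^{-1}\mathcal B)$; since $\xi$ is itself measurable with respect to $\bar p^{-1}\mathcal B$? No — I would instead use that $\bar p$ is a local homeomorphism, so it is finite-to-one and open, whence it is a \emph{principal} finite extension and entropy is preserved; this is classical (see e.g.\ the treatment of finite-to-one factor maps in Walters).

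The main obstacle — really the only non-formal point — is making the entropy-preservation argument airtight in the flow category: one must be slightly careful that ``finite-to-one continuous factor'' genuinely forces equality of entropies rather than just $\le$. The safe way to handle it is to note that $\bar p$ is a local homeomorphism (as $p$ is a local isometry and the flows are continuous), so over a sufficiently fine finite open cover $\mathcal U$ of $T^1M$ the preimage $\bar p^{-1}\mathcal U$ refines into $k$ ``sheets'' and one checks directly that $(t,\eps)$-separated sets upstairs project to $(t,\eps')$-separated sets downstairs and conversely lift with multiplicity at most $k$, giving equality of topological entropies on corresponding pieces; combined with the variational principle applied to $\mu$ and to $p_*\mu$ and the surjectivity already established, this yields $h_\mu(f_1)=h_{p_*\mu}(f_1)$. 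I would then simply cite this as the standard fact that finite covers preserve entropy of the geodesic flow, which is how the lemma is used in the sequel (it guarantees that an MME of $M$ lifts to an MME of $N$ and that uniqueness upstairs implies uniqueness downstairs).
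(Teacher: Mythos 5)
The paper itself gives no proof of this lemma; it is stated immediately after the observation that the covering map $p\colon N\to M$ induces a finite-to-one semi-conjugacy $\bar p\colon T^1N\to T^1M$ between the geodesic flows, and the lemma is taken as a direct consequence. Your proposal fills in exactly this gap in the standard way, so the overall strategy matches the paper's.

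Your surjectivity argument is clean and correct: the averaged lift $\tilde\nu$ defined by $\int\psi\,d\tilde\nu=\frac1k\int\sum_{w\in\bar p^{-1}(v)}\psi(w)\,d\nu(v)$ is a well-defined flow-invariant probability measure pushing forward to $\nu$, since the fibre sum of a continuous function is continuous and $\bar p$ conjugates the two flows.

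The entropy-preservation discussion, however, goes off the rails in the middle before recovering. The ``$\bar p$-fibre partition $\xi$'' is the partition into sets $\bar p^{-1}(v)$, which has uncountably many atoms; it is not finite, $H_\mu(\xi)$ is not bounded by $\log k$, and $\xi$ generates the full pullback $\sigma$-algebra $\bar p^{-1}\mathcal B$ so the conditional entropy $H_\mu(\xi\mid\bar p^{-1}\mathcal B)$ is trivially zero, not something you want to estimate against $\log k$. You notice the contradiction yourself and abandon that line, which is the right call, but it would be cleaner to drop it altogether. The correct one-step argument is Bowen's inequality for factor maps applied to the time-one maps: for the semi-conjugacy $\bar p\colon (T^1N,f_1)\to (T^1M,f_1)$ one has
\[
h_\mu(f_1)\ \le\ h_{p_*\mu}(f_1)\ +\ \sup_{v\in T^1M} h_{\mathrm{top}}\bigl(f_1,\,\bar p^{-1}(v)\bigr),
\]
and since each fibre $\bar p^{-1}(v)$ is a finite set (of cardinality equal to the covering degree) its topological entropy under $f_1$ is zero, giving $h_\mu(f_1)\le h_{p_*\mu}(f_1)$; the reverse inequality is automatic for any factor. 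This also dispenses with the need to argue via $(t,\eps)$-separated sets and the variational principle, which as you write it does not actually yield the pointwise equality $h_\mu=h_{p_*\mu}$ for a fixed $\mu$ (the variational principle only controls suprema).

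Summary: correct approach, matching the paper's implicit one, with a fixable detour; replace the fibre-partition digression with a direct citation of Bowen's entropy inequality for finite-to-one factors.
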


It follows from Lemma \ref{lem:p*} that if the geodesic flow on $T^1N$ has a unique measure of maximal entropy, then so does the geodesic flow on $T^1 M$. Ergodicity follows from uniqueness because otherwise every ergodic component will be an MME. 
Similarly, if the unique MME on $T^1N$ is the limit distribution of periodic orbits, then the flow on $T^1N$ satisfies $\lim_{T\to\infty} \frac 1T \log P(T,\delta) = \htop(f_1)$, and since the semi-conjugacy is finite-to-1, the same is true of the geodesic flow on $T^1M$, so Proposition \ref{prop:limidis} gives the corresponding result for $T^1M$.
%To see that the unique MME on \(T^1N\) being the limit distribution of closed orbit implies the same result on \(T^1M\), we first observe since \(N\) is a finite cover then the rate of growth of periodic orbits on \(T^1M\) and \(T^1N\) are the same and is the topological entropy. Then from Proposition \ref{prop:limidis}, the unique measure of maximal entropy on \(T^1N\) being limit distribution of closed orbit implies the same result of the measure of maximal entropy on \(T^1M\). 

Thus prove the claims in Theorem \ref{thm:higher-dim} regarding uniqueness and closed geodesics, it suffices to prove them for geodesic flow on $T^1N$, which we will do using Theorem \ref{thm:general}.  From now on we consider $X = T^1N$ and let $F$ be the geodesic flow.

\begin{lemma}\label{lem:NE}
If $v\in \NE(\eps) \subset T^1N$, then any lift $\widetilde v$ of $v$ to $T^1\widetilde N = T^1\wM$ has the property that $\widetilde v\notin \mathcal{E}$.
In particular, if $\mu\in \MMM_F(T^1N)$ is such that $\mu(\NE(\eps))=1$, 
then $\tmu(\mathcal{E})=0$.
\end{lemma}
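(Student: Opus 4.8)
The plan is to unwind the definitions of $\NE(\eps)$ and $\mathcal{E}$ and exploit that the injectivity radius of $N$ exceeds $3\eps$, so that the $d_1$-metric on $T^1N$ is faithfully reflected, up to scale $3\eps$, by the lifted metric on $T^1\wM = T^1\widetilde N$. First I would recall that $v \in \NE(\eps)$ means: for every $s > 0$ there is $w = w_s \in T^1N$ with $d_1(f_t v, f_t w) < \eps$ for all $t \in \RR$, yet $w \notin \{f_t v : t \in [-s,s]\}$. Fixing a lift $\widetilde v$ of $v$, I would lift each $w_s$ to the vector $\widetilde w_s \in T^1\wM$ whose base point is the unique preimage of $\pi(w_s)$ lying within distance $\eps$ of $\pi(\widetilde v)$ — this preimage is unique because the injectivity radius of $N$ exceeds $3\eps > \eps$. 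Since the covering $T^1\wM \to T^1N$ is a local isometry for the $d_1$-metrics at scales below the injectivity radius, and since the geodesics $c_{\widetilde v}$ and $c_{\widetilde w_s}$ stay within distance $\eps$ of each other for all time (lifting the inequality $d(c_v(t),c_w(t)) < \eps$, which is below $\inj(N)$, so the minimizing path lifts), we get $d(c_{\widetilde v}(t), c_{\widetilde w_s}(t)) < \eps$ for all $t \in \RR$.

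Next I would pass to the limit $s \to \infty$. By $\Gamma$-equivariance and the fact that all the $\widetilde w_s$ have base points in the fixed ball $B(\pi(\widetilde v), \eps)$, a subsequence of $\widetilde w_s$ converges to some $\widetilde w \in T^1\wM$ with $d(c_{\widetilde v}(t), c_{\widetilde w}(t)) \le \eps$ for all $t$; hence $c_{\widetilde v}$ and $c_{\widetilde w}$ are bi-asymptotic, i.e.\ $c_{\widetilde w}(\pm\infty) = c_{\widetilde v}(\pm\infty)$, so by Lemma~\ref{lem:uniqasym} (applied to both $v$ and $-v$) we have $\widetilde w \in W^s(\widetilde v) \cap W^u(\widetilde v)$. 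To conclude $\widetilde v \notin \mathcal{E}$ via \eqref{eqn:E}, I must rule out $\widetilde w \in \{f_t \widetilde v : t \in \RR\}$. Here is where the separation condition on the $w_s$ enters: because $w_s \notin \{f_t v : t \in [-s,s]\}$ for every $s$, the downstairs vectors $w_s$ do not return close to the whole orbit arc, and I would argue that the limiting upstairs vector $\widetilde w$ cannot equal $f_t \widetilde v$ for any finite $t$ — if it did, then $w_s = \pr_*(\widetilde w_s)$ would, for large $s$, lie within any prescribed neighborhood of $\pr_*(f_t\widetilde v) = f_t v$ with $|t|$ eventually $\le s$, contradicting the exclusion. (Some care is needed in case $c_v$ is periodic of small period, but then one works instead with the genuine statement that $\widetilde w \neq f_t \widetilde v$ for the specific finite $t$ forced by the base point, using that the base points of $\widetilde w_s$ avoid $\pi(f_t\widetilde v)$ for $|t|\le s$.) This gives $W^s(\widetilde v) \cap W^u(\widetilde v) \supsetneq \{f_t\widetilde v\}$, i.e.\ $\widetilde v \notin \mathcal{E}$.

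For the second assertion: if $\mu \in \MMM_F(T^1N)$ has $\mu(\NE(\eps)) = 1$, then by the first part every $v$ in a full $\mu$-measure set has all of its lifts outside $\mathcal{E}$. Since $\mathcal{E}$ is $\Gamma$-invariant and $F$-invariant, the set $T^1N \setminus p_*(\mathcal{E}^c)^{c}$... more carefully: let $G = \{v \in T^1N : \text{every lift of } v \text{ lies outside } \mathcal{E}\}$; this is a Borel, $F$-invariant set with $\mu(G) = 1$, and $\pr_*^{-1}(G) \subset T^1\wM \setminus \mathcal{E}$. Then by the definition \eqref{eqn:tmu-lift} of the lift, $\tmu(\mathcal{E}) \le \tmu(\pr_*^{-1}(G)^c) = \int_{T^1N} \#(\pr_*^{-1}(v) \cap \pr_*^{-1}(G)^c)\,d\mu(v) = 0$, since the integrand vanishes on $G$.

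I expect the main obstacle to be the step ruling out $\widetilde w \in \{f_t\widetilde v : t \in \RR\}$: one must convert the ``not in the orbit arc $[-s,s]$'' condition downstairs into a clean statement that survives taking limits upstairs, and handle the bookkeeping between the two metrics $d$ on $M$ and the induced metric on the cover $N$ (and their respective $d_1$ versions on the unit tangent bundles), being careful that all the relevant distances stay below $\inj(N) > 3\eps$ so that lifts are unambiguous and local isometry applies. The compactness/limit argument and the measure-theoretic conclusion are routine once the scale bookkeeping is set up correctly.
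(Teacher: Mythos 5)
Your overall plan is the right one, and the measure-theoretic step at the end is correct, but there is a genuine gap at the heart of the argument: the detour through the limit $s\to\infty$ is both unnecessary and, as written, not justified. The exclusion condition from the definition of $\NE(\eps)$ is that $w_s$ is \emph{not equal} to $f_tv$ for $|t|\le s$; it does not say that $w_s$ stays a definite distance away from the orbit arc $\{f_tv : |t|\le s\}$. So when you pass to a subsequential limit $\widetilde w = \lim \widetilde w_{s_k}$ and try to rule out $\widetilde w = f_{t_0}\widetilde v$, you cannot conclude a contradiction merely because ``$w_s$ would lie within any prescribed neighborhood of $f_tv$'': that is exactly what is allowed by the definition. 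The parenthetical patch about periodic geodesics and base points avoiding $\pi(f_t\widetilde v)$ does not repair this, for the same reason. In short, it is perfectly consistent with the hypotheses for the $\widetilde w_{s_k}$ to converge onto the orbit of $\widetilde v$, so you cannot conclude anything about the limit by this route.

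The fix, and the route the paper actually takes, is to \emph{not} take a limit at all: just fix a single $s>\eps$. You already have a lift $\widetilde w_s$ with $d(c_{\widetilde v}(t),c_{\widetilde w_s}(t))<\eps$ for all $t\in\RR$ (so $\widetilde w_s\in W^s(\widetilde v)\cap W^u(\widetilde v)$). If $\widetilde w_s$ were on the orbit of $\widetilde v$, say $\widetilde w_s = f_{t_0}\widetilde v$, then since geodesics in $\wM$ are minimizing, $|t_0| = d(c_{\widetilde v}(0), c_{\widetilde v}(t_0)) = d(c_{\widetilde v}(0), c_{\widetilde w_s}(0)) <\eps < s$. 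Projecting down gives $w_s = f_{t_0}v$ with $|t_0|<s$, contradicting $w_s\notin\{f_tv : t\in[-s,s]\}$. So $\widetilde w_s$ is a second point of $W^s(\widetilde v)\cap W^u(\widetilde v)$ off the orbit of $\widetilde v$, and $\widetilde v\notin\mathcal{E}$ already for this fixed $s$. The same observation makes the ``some care is needed if $c_v$ is periodic'' caveat evaporate; no case analysis is required.

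One smaller remark: in the lifting step, the claim that $d(f_t\widetilde v, f_t\widetilde w_s)<\eps$ holds \emph{for all} $t$ needs a continuity argument, not just a pointwise ``the minimizing path lifts.'' A priori you get, for each $t$, \emph{some} deck translate $\gamma(t)f_t\widetilde w_s$ within $\eps$ of $f_t\widetilde v$; uniqueness at scale below $\inj(N)$ plus continuity of $t\mapsto\gamma(t)$ into the discrete group $\pi_1(N)$ forces $\gamma$ to be constant, and $\gamma(0)=e$ by the choice of $\widetilde w_s$. This is the reason the paper passes to a finite cover $N$ with injectivity radius $>3\eps$: it is exactly what guarantees uniqueness of $\gamma(t)$ for each $t$. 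Your draft gestures at this but does not spell out the continuity step.
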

\begin{proof}
If $v\in \NE(\eps)$, then for every $s>0$, there is $w\in T^1 N$ such that $w\notin \{f_t v : t\in [-s,s]\}$, but $d(f_t v, f_t w) < \eps$ for all $t\in \RR$.  
Given a lift $\widetilde v \in T^1 \widetilde N = T^1 \wM$ of $v$, let $\widetilde w\in T^1\widetilde N$ be a lift of $w$ with $d(\widetilde v, \widetilde w) < \eps$. Then for all $t\in \RR$ there is a unique $\gamma(t) \in \pi_1(N)$ such that $d(f_t \widetilde v, \gamma(t) f_t \widetilde w) < \eps$; existence follows since $d(f_t v, f_t w) < \eps$, and uniqueness follows since $2\eps$ is smaller than the injectivity radius of $N$. The function $\gamma\colon \RR\to \pi_1(N)$ is continuous and thus constant on $\RR$, so $d(f_t \widetilde v, f_t \widetilde w) < \eps$ for all $t\in\RR$.
In particular, taking $s>\eps$ we conclude that $\widetilde v,\widetilde w$ are tangent to distinct geodesics between the same points on $\ideal$, and thus $\widetilde v\notin \mathcal{E}$.
%Let $\mu\in \MMM_F(T^1N)$ be such that $\mu(\NE(\eps))=1$.   Then for $\mu$-a.e.\ $v\in T^1 N$ and 
The claim regarding $\mu$ and $\tmu$ follows immediately.
\end{proof}

It follows from Lemma \ref{lem:NE} and 
%the first hypothesis in Theorem \ref{thm:higher-dim} that
Condition \ref{H4} that
\[
\hexp(\eps) \leq \sup\{h_\mu(f_1) : \mu\in \MMM_F(T^1M), \tmu(\mathcal{E})=0\} < \htop(F),
\]
which verifies the entropy gap condition that is needed for Theorem \ref{thm:general}.

\begin{proposition}\label{prop:specification}
The geodesic flow of $(N,g)$ has specification at scale $\delta$.
\end{proposition}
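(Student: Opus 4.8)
The plan is to establish specification at scale $\delta$ by combining the Morse Lemma (Theorem~\ref{thm:Morse}) with the fact that $N$ has large injectivity radius, and crucially that distinct geodesic segments in $\wM$ sharing endpoints stay uniformly close (Lemma~\ref{lem:endpts-suffice}). The basic mechanism: specification in the negatively curved background metric $g_0$ is standard (transitive Anosov flows have specification at every scale; see Appendix~\ref{sec:specification}), and we want to transfer a shadowing orbit from $(\wM,g_0)$ back to $(\wM,g)$ using the Morse Lemma, then project to $N$. The key point is that the scale $\delta = R_2+2$ was chosen precisely so that the Morse-Lemma constant $R_0$ and the endpoint-closeness constant $R_2$ fit inside it.

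First I would set up the book-keeping. Given orbit segments $(v_1,t_1),\dots,(v_N,t_N)$ in $T^1N$, lift $v_1$ to $\widetilde v_1 \in T^1\wM$, and inductively choose lifts: having chosen a lift of the $g$-geodesic segment tracking $v_j$ and ending at some point $q_j \in \wM$, pick the lift $\widetilde v_{j+1}$ so that its basepoint lies within some controlled distance of $q_j$. In $(\wM,g_0)$, connect the relevant basepoints by minimizing $g_0$-geodesic segments, producing a long $g_0$-geodesic (or a $g_0$-orbit shadowing the concatenation, using $g_0$-specification with some transition time $\tau_0$). Then, on each sub-segment, compare this $g_0$-geodesic with the $g$-geodesic joining the same pair of endpoints: by the Morse Lemma their Hausdorff distance is $\le R_0$, so in the $g$ metric (using $A^{-1}d^0 \le d \le A d^0$, cf.\ \eqref{lem:unif-equiv}) the two curves are uniformly close. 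The point is that I can realize a single $g$-geodesic segment in $\wM$ passing near all the basepoints $\pi(\widetilde v_1),\dots$ in the correct order; call its initial vector $\widetilde y$. Then $w := \pr_*(\widetilde y) \in T^1N$ is the desired shadowing point. To check $f_{T_j}(w) \in B_{t_j}(v_j,\delta)$, I compare the $g$-geodesic through $\widetilde y$ with $c_{\widetilde v_j}$ on the interval $[T_j, T_j+t_j]$: both are $g$-geodesics; their endpoints are within $R_1$ (by the choice of how close basepoints were taken, combined with the $R_0$ Morse estimate), so Lemma~\ref{lem:endpts-suffice} gives pointwise distance $\le R_2 < \delta$ in $\wM$, hence in $N$ (here the large injectivity radius of $N$ guarantees the $\wM$-distance and the $N$-distance agree at this scale, so no spurious wrapping occurs). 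The transition time $\tau_j$ is bounded by a constant $\tau(\delta)$ coming from $\tau_0$ together with $A$, $R_0$, $R_1$.

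I expect the main obstacle to be the careful choice of lifts and the verification that one genuinely obtains a single $g$-geodesic in $\wM$ that threads near all the basepoints in order, rather than merely a piecewise-geodesic or a $g$-orbit that could drift. This is where the Morse Lemma does the real work: because every $g$-geodesic joining two points is Hausdorff-close to the $g_0$-geodesic joining them, and $g_0$-geodesics concatenate nicely (negative curvature, visibility), one controls the global geometry of the connecting $g$-geodesic from the $g_0$-picture. A secondary technical point is bookkeeping the order of constants: one must make sure the distances accumulated across transitions stay within $R_1$ (so Lemma~\ref{lem:endpts-suffice} applies with the fixed constant $R_2$) and do not grow with $N$; this is handled because each comparison is between a single pair of geodesic segments with endpoints a bounded distance apart, not a telescoping sum. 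The definition \eqref{eqn:R3} of $\delta$ in terms of $R_0$, $A$, $R_1$ is exactly engineered to absorb these constants, so the remaining work is routine once the geometric picture is pinned down.

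Finally, I would note that the scale $\delta$ here is \emph{a priori} large (it is not a small-scale specification property, in contrast to \cite{BCFT,CKP}), which is exactly why the entropy-gap condition \ref{H4} must hold at the even larger scale $\eps > 40\delta$, and why passing to the cover $N$ with injectivity radius $> 3\eps$ is essential: it is what lets distance comparisons in $\wM$ descend faithfully to $N$ without the geodesics closing up on themselves.
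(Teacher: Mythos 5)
Your approach is essentially the paper's: convert each $g$-orbit segment to the $g_0$-geodesic segment with the same endpoints in $\wM$, glue these in the negatively curved world (the paper does this by hand via the local product structure and density of unstable leaves for the Anosov flow $F^0$, i.e.\ Proposition~\ref{prop:denseleaf} and the contraction estimate \eqref{eqn:du-ij}, rather than invoking $F^0$-specification as a black box, but the mechanism is the same), take the $g$-geodesic with the same endpoints as the resulting long $g_0$-geodesic, and conclude with the Morse Lemma and Lemma~\ref{lem:endpts-suffice}; note also that the large injectivity radius of $N$ plays no role here, since the covering projection never increases distances (it is needed only for the expansivity estimate in Lemma~\ref{lem:NE}). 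The one step you dismiss as routine but which constitutes the bulk of the paper's proof is the time bookkeeping: $g$- and $g_0$-arclength along the glued geodesic can differ by a factor of up to $A$ on every transition, so the paper introduces the recursive corrections $\hat t_j = t_j + \Delta_{j-1}$ and proves Lemmas~\ref{lem:tilde-hat}--\ref{lem:hat-s} to prevent this discrepancy from accumulating linearly in $j$ (which it otherwise would for the prescribed-times version in Theorem~\ref{specgeoflow}); for the weaker property of Definition~\ref{def:specification} that Proposition~\ref{prop:specification} asserts, only the successive gaps must be bounded and nonnegative, so your plan does go through, but verifying those gap estimates is precisely where the real work lies.
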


This is a consequence of Theorem \ref{specgeoflow} and Remark \ref{rmk:spec} below.
The basic idea is to use the Morse Lemma to go from orbit segments for $F$ to orbit segments for $F^0$, then use the specification property for $F^0$ to find a single $g_0$-geodesic that shadows each of these in turn, and finally to use the Morse Lemma again to show that the $g$-geodesic with the same endpoints in $\wM$ shadows the original sequence of orbit segments.

Once specification at scale $\delta$ has been proved, Theorem \ref{thm:general} implies that the geodesic flow on $T^1N$ has a unique measure of maximal entropy $\mu$.
%, and by Lemma \ref{lem:p*}, so does the geodesic flow on $T^1 M$.
%Ergodicity of $\mu$ follows immediately from uniqueness, since if $\mu$ was not ergodic, then every component of its ergodic decomposition would be a measure of maximal entropy.  

\subsection{Specification}\label{sec:specification}

Let $M$ be a smooth closed Riemannian manifold without conjugate points satisfying \ref{H1} and \ref{H2}.
Let $R_1,R_2,\delta$ be as given above. This section is devoted to the proof of the following.

\begin{theorem} \label{specgeoflow}
There exist $\tau,\tau'>0$ such that given $(v_1,t_1),\dots, (v_k,t_k) \in T^1M\times (0,\infty)$ and $T_1,\dots, T_k \in \RR$ with $T_{j+1} - T_j \geq t_j + \tau$ for all $1\leq j < k$, there are $\hat{T}_j \in [T_j-\tau',T_j]$ and  $w\in T^1M$ such that for all $1\leq j\leq k$, we have $f_{\hat{T}_j}w \in B_{t_j}(v_j,\delta)$. 
\end{theorem}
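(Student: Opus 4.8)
The plan is to build the shadowing orbit $w$ by working in the universal cover $\wM$ and transferring between the given metric $g$ and the negatively curved background metric $g_0$ via the Morse Lemma (Theorem~\ref{thm:Morse}), exploiting that geodesic flows of negatively curved metrics enjoy the classical specification property (Appendix~\ref{sec:specification}). Concretely, fix $p\in\wM$ a basepoint and lift each $v_j\in T^1M$ to $\widetilde v_j\in T^1\wM$ together with its $g$-geodesic segment $\gamma_j := c_{\widetilde v_j}\colon[0,t_j]\to\wM$. For each $j$ let $\alpha_j\colon[0,s_j]\to\wM$ be the $g_0$-geodesic segment with the \emph{same endpoints} $\gamma_j(0),\gamma_j(t_j)$ (it exists and is unique since $\wM$ with $g_0$ is Cartan--Hadamard). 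By the Morse Lemma, $d_H(\gamma_j[0,t_j],\alpha_j[a_0,b_0])\le R_0$, and combined with \eqref{lem:unif-equiv} the $g_0$-lengths $s_j$ are comparable to $t_j$ up to additive and multiplicative constants depending only on $A,R_0$.

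Next I would feed the orbit data $(\dot\alpha_j(0),s_j)$ (projected to $T^1N$ or $T^1M$ under $g_0$) into the specification property for the negatively curved metric $g_0$: this yields a single point $u\in T^1\wM$ (lifting an appropriate point downstairs) and transition times, so that the $g_0$-geodesic $c^0_u$ decomposes into consecutive arcs, the $j$-th of which $d_1^0$-shadows $\alpha_j$ to within some small $g_0$-scale, with controlled gaps of length at most some $\tau_0$. In particular the endpoints of the $j$-th shadowing arc of $c^0_u$ lie within a bounded $g_0$-distance — hence (by \eqref{lem:unif-equiv}) bounded $g$-distance, say $\le R_1/3$ after adjusting constants — of $\gamma_j(0)$ and $\gamma_j(t_j)$ respectively. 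Now let $q_0$ and $q_k$ be the $g$-geodesic endpoints in $\wM$ corresponding to the start of the first shadowing arc and the end of the last, and let $w$ be the $g$-geodesic in $\wM$ from $q_0$ to $q_k$, reparametrized so that $f_{\hat T_j}w$ is near the start of $\gamma_j$; project $w$ to $T^1M$. The times $\hat T_j$ are read off from the cumulative lengths, and the bound $\hat T_j\in[T_j-\tau',T_j]$ comes from the discrepancy between $\sum_{i<j}(t_i+\tau_i)$ and $T_j$ together with the length comparison $|s_i - t_i| = O(1)$, which forces $\tau,\tau'$ to be chosen large (depending on $A,R_0$ and the $g_0$-specification constant $\tau_0$) but uniform.

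The crux is the final shadowing estimate: one must show $d(f_{\hat T_j}w,\,f_{\hat T_j}\widetilde v_j)<\delta$ on $[0,t_j]$, i.e.\ the single $g$-geodesic $c_w$ stays within $\delta$ of each $\gamma_j$ over the relevant time window. Here the key tool is Lemma~\ref{lem:endpts-suffice}: the sub-arc of $c_w$ over the $j$-th window and the segment $\gamma_j$ are two $g$-geodesics whose endpoints are within $R_1$ of each other (the start within $R_1/3 + (\text{gap})$, the end likewise — this is exactly why $R_1$ was chosen $>3AR_0$ with room to spare), so by Lemma~\ref{lem:endpts-suffice} they stay within $R_2$ throughout, and $\delta = R_2+2$ absorbs this plus the reparametrization slack. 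I expect this last step — bookkeeping the endpoint bounds for the $g$-geodesic $c_w$ through the concatenation of arcs, and verifying that the Morse Lemma applied to $c_w$ versus a $g_0$-geodesic with the same endpoints keeps $c_w$ close to $c^0_u$ hence to each $\alpha_j$ hence to each $\gamma_j$ — to be the main obstacle, since it requires two applications of the Morse Lemma (once to pass $g\to g_0$ for the $v_j$'s, once to pass $g_0\to g$ for $w$) and a careful choice of the constants $R_1,R_2,\delta,\tau,\tau'$ so that all the triangle-inequality estimates close up uniformly in $k$ and in the data.
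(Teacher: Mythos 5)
Your overall architecture is the same as the paper's: pass each $g$-segment to the $g_0$-geodesic with the same endpoints, glue these with the hyperbolicity of the background flow, take the $g$-geodesic with the same endpoints as the resulting $g_0$-orbit, and close the estimate with the Morse Lemma and Lemma \ref{lem:endpts-suffice}. You also correctly identify the final bookkeeping as the crux. However, your treatment of that bookkeeping has two genuine gaps.

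First, the claim ``$|s_i-t_i|=O(1)$'' is false: $s_i$ is the $g_0$-length and $t_i$ the $g$-length of a segment with the same endpoints, and \eqref{lem:unif-equiv} only gives $A^{-1}t_i\le s_i\le At_i$, a multiplicative comparison, so $|s_i-t_i|$ can be of order $t_i$. The correct mechanism for the time estimates never compares $g$- and $g_0$-lengths directly; it compares $g$-distances between corresponding \emph{endpoints} via the quadrilateral inequality (Lemma \ref{lem:quad}), e.g.\ $|\tilde s_i^{\,k}-\tilde T_i^{\,k}-\hat t_i|\le d(c_k(\tilde T_i^{\,k}),x_i)+d(c_k(\tilde s_i^{\,k}),y_i)$, which is bounded by the Morse constant and the shadowing scale only.

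Second, and more seriously, even with the correct per-segment comparison the errors accumulate. If you invoke the background specification property as a black box, the transition times are only pinned down within an interval of length $\tau_0$, and the $g$-length of each transition arc is only pinned down within $[A^{-1}T-O(R_0),\,AT+O(R_0)]$; summing over $j$ segments gives a discrepancy between the actual shadowing time and the prescribed $T_j$ of order $O(j)$, so $\tau'$ cannot be taken uniform in $k$. The paper defeats this drift with two devices you omit: the gluing is done with transition time \emph{exactly} $T$ in the $g_0$-metric (Proposition \ref{prop:denseleaf} applied at time $t=T$, with the geometric-series bound \eqref{eqn:duw'} keeping the cumulative leafwise error below $\rho'$ uniformly in $k$), and, crucially, each orbit segment is recursively \emph{lengthened} by the accumulated drift from the previous steps, $\hat t_j=t_j+\Delta_{j-1}$ with $\Delta_{j-1}=s_{j-1}-\hat s_{j-1}$, after which the telescoping estimates of Lemmas \ref{lem:tilde-hat}--\ref{lem:hat-s} show the drift stays bounded by a constant depending only on $A$, $R_0$, $\rho'$, $\tau$. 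Without some such correction mechanism your argument proves shadowing of each block at \emph{some} time, but not the stated uniform localization $\hat T_j\in[T_j-\tau',T_j]$, which is exactly the feature needed later (e.g.\ in the entropy-gap argument).
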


\begin{remark}\label{rmk:spec}
The conclusion of Theorem \ref{specgeoflow} is a mild strengthening of the specification property from Definition \ref{def:specification}.  To deduce that property from this one, observe that the property here remains true if $\tau,\tau'$ are replaced by $\max(\tau,\tau')$, and then choosing $T_{j+1} = T_j + t_j + \tau$, the times $\hat T_j$ satisfy $\hat T_{j+1} - (\hat T_j + t_j) \leq T_{j+1} - (T_j - \tau + t_j) \leq 2\tau$.
\end{remark}

Note that it suffices to prove Theorem \ref{specgeoflow} in the case when $T_{j+1} = T_j + t_j + \tau$; to reduce the general case to this one, replace $t_j$ by $T_{j+1} - T_j - \tau \geq t_j$ and observe that this does not weaken the condition on $w$.

Let $F^0 = \{f_t^0\}$ denote the geodesic flow in the background (negatively curved) Riemannian metric.  We identify $T^1M$ for $g_0$ with $T^1M$ for $g$ in the natural way.  To establish specification for $F$, we use the hyperbolicity properties of $F^0$ as in \cite[\S4]{BCFT} together with the Morse Lemma.  

Let $W^{s}$ and $W^{u}$ denote the strong stable and unstable foliations of $T^1M$ for the background flow $F^0$.  (We follow the notation in \cite{BCFT} rather than that used in \S\ref{sec:background}, where this notation referred to the weak foliations; in this section we will not use any of the foliations for the flow $F$.)
Equip the leaves of these foliations with the intrinsic metrics $d^s$ and $d^u$ defined by pulling back the metrics that the horospheres inherit from the Riemannian metric.  Let $W^{cs}$ denote the foliation whose leaves have the form $\{f_t^0(W^s(v)) : t\in \RR\}$, and define $d^{cs}$ on each leaf of $W^{cs}$ locally by $d^{cs}(v,w) = |t| + d^s(f_t^0 v, w)$, where $t$ is such that $f_t^0 v\in W^s(w)$.  Given $\rho>0$, let $W_\rho^u(v)$ denote the $d^u$-ball in $W^u(v)$ of radius $\rho$ centered at $v$, and similarly for $W_\rho^{cs}$.

The foliations $W^{cs}$ and $W^u$ have the following local product structure property:
there are $\kappa\geq 1$ and $\delta>0$ such that for every $\eps\in (0,\delta]$ and all $w_1,w_2 \in B(v,\eps)$, the intersection $W_{\kappa \eps}^u(w_1) \cap W_{\kappa \eps}^{cs}(w_2)$ contains a single point, which we denote by $[w_1,w_2]$, and this point satisfies
\begin{align*}
d^u(w_1, [w_1,w_2]) &\leq \kappa d_1(w_1,w_2), \\
d^{cs}(w_2, [w_1,w_2]) &\leq \kappa d_1(w_1,w_2).
\end{align*}

\begin{proposition} \label{prop:denseleaf}
For every $\rho>0$, there exists $T>0$ such that for every $v,w\in T^1M$, we have $f_t^0(W_\rho^u(v)) \cap W_\rho^{cs}(w) \neq\emptyset$ for every $t\geq T$.
\end{proposition}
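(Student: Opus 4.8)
The plan is to reduce the statement to a density/mixing property of the background flow $F^0$, which is a transitive Anosov flow (the geodesic flow of a closed negatively curved manifold), and then exploit the uniform local product structure recalled just above the statement. First I would note that it suffices to prove the claim for a single sufficiently large $T = T(\rho)$, since the conclusion for all $t \geq T$ then follows: if $f_t^0(W_\rho^u(v)) \cap W_\rho^{cs}(w) \neq \emptyset$ then applying $f_s^0$ for small $s>0$ and using that $f_s^0$ maps $W^u$-leaves to $W^u$-leaves while only mildly distorting the intrinsic metric $d^u$ (uniform expansion along $W^u$ actually helps here — flowing forward enlarges the image of the $\rho$-ball), and that $f_s^0$ preserves $W^{cs}$-leaves, one propagates the nonempty intersection forward. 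More robustly, one can simply apply the single-$T$ statement to the pair $(f_{t-T}^0 v, w)$ and use $f_{t-T}^0(W_\rho^u(f_{t-T}^0 v))$... — care is needed here, so the cleanest route is the direct argument below that already produces all large $t$.

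The core step uses transitivity and uniform hyperbolicity of $F^0$. Fix $\rho > 0$. By the local product structure property stated above, there is $\beta = \beta(\rho) > 0$ such that whenever $d_1(p,q) < \beta$, the point $[p,q] = W^u_{\kappa\beta}(p)\cap W^{cs}_{\kappa\beta}(q)$ is defined and lies within $\rho$ (in the appropriate intrinsic metrics) of both $p$ and $q$, provided $\beta$ is small enough that $\kappa\beta < \rho$. Since $F^0$ is a transitive Anosov flow on the compact connected manifold $T^1M$, it is topologically mixing (for geodesic flows in negative curvature this is classical — the nonwandering set is all of $T^1M$ and the flow is mixing), hence for the $\beta$-ball there is $T_0$ such that for all $t \geq T_0$ and all $v, w$, the set $f_t^0(B(v,\beta'))$ meets $B(w,\beta')$ for a suitable $\beta' \leq \beta$; more precisely I would use that $f_t^0(W^u_\rho(v))$ becomes $\beta'$-dense in $T^1M$ for all $t$ large, uniformly in $v$, because the unstable leaves expand and the flow is mixing — this uniform density is the key input and is standard for Anosov flows. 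Then pick $p \in f_t^0(W^u_\rho(v))$ with $d_1(p, w) < \beta$; write $p = f_t^0(v')$ with $v' \in W^u_\rho(v)$. The point $z := [p, w] \in W^u_{\kappa\beta}(p) \cap W^{cs}_{\kappa\beta}(w) \subset W^u_\rho(p) \cap W^{cs}_\rho(w)$. Since $z \in W^u(p) = W^u(f_t^0 v')$ and $d^u(p,z)$ is small while $d^u$ along unstable leaves contracts under $f_{-t}^0$, we get $f_{-t}^0 z \in W^u_\rho(v')\subset W^u_\rho(v)$ — wait, one must check the radius: $f_{-t}^0$ contracts $d^u$, so $d^u(v', f_{-t}^0 z) \leq d^u(v',v) + d^u(v, f_{-t}^0 z)$; better to observe $f_{-t}^0 z \in W^u(v)$ and $d^u(v, f_{-t}^0 z) \le d^u(v, v') + e^{-\lambda t} d^u(p,z) \le \rho$ for $t$ large (shrinking the earlier choices so $d^u(v,v')$ leaves room). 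Thus $z \in f_t^0(W^u_\rho(v)) \cap W^{cs}_\rho(w)$, as desired.

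The main obstacle I anticipate is making the radii bookkeeping genuinely uniform: one needs the density time $T_0$ and the product-structure constant $\kappa$ to be independent of $v,w$, and one needs the $\rho$-balls used at the three stages (the expanded unstable ball, the bracket operation, and the pulled-back unstable ball) to nest correctly. The honest fix is to work throughout with a small auxiliary radius $\rho' = \rho/(2\kappa)$ (say) for the local product structure and for the point $v'$, reserving the full $\rho$ for the final conclusion, and to invoke uniform exponential contraction along $W^u$ under the backward flow to absorb the $\kappa\beta$ error once $t$ exceeds a threshold depending only on $\rho$ and the hyperbolicity constants. The topological-mixing/uniform-density statement for $F^0$ is where I would cite the standard theory of Anosov geodesic flows (e.g.\ \cite{rB72} and the references in Appendix~\ref{sec:specification}); everything else is the uniform local product structure already quoted. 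No step should require curvature bounds on $g$ — this proposition is purely a statement about the background flow $F^0$.
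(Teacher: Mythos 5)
Your proposal is essentially correct and rests on the same three ingredients as the paper's proof: density of unstable leaves, uniform expansion of $W^u$ under $f_t^0$, and the uniform local product structure. However, the execution differs in two ways, and in both the paper's route is cleaner.

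First, to make the density quantitative and uniform, the paper does not invoke topological mixing at all; it uses the much more elementary route of \cite[Lemma 8.1]{CFT18}: since every $W^u$-leaf is dense, a compactness argument produces a single $R>0$ so that $W^u_{R-\rho}(f_t^0 v)$ is $\rho/\kappa$-dense in $T^1M$ for \emph{every} $v$ and $t$. This is cleaner than appealing to mixing, since mixing of $F^0$ is a statement about open sets and upgrading it to the uniform-in-$(v,w)$ density you need requires a compactness argument anyway — so you might as well run the compactness argument on leaf density directly.

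Second, and more substantively, you find a nearby point $p\in f_t^0(W^u_\rho(v))$, bracket to get $z=[p,w]$, and then have to pull $z$ back by $f_{-t}^0$ and check it lands inside $W^u_\rho(v)$, forcing the radius bookkeeping you flag (replacing $\rho$ by $\rho'$ and absorbing the $\kappa\beta$ error via exponential contraction). The paper sidesteps this entirely: from uniform expansion it extracts $T$ such that $f_t^0(W^u_\rho(v)) \supset W^u_R(f_t^0 v)$ for all $t\geq T$ and all $v$; then the local product structure applied to the $\rho/\kappa$-density of $W^u_{R-\rho}(f_t^0 v)$ gives a point of $W^u_R(f_t^0 v)\cap W^{cs}_\rho(w)$, which is automatically in $f_t^0(W^u_\rho(v))\cap W^{cs}_\rho(w)$ with no pullback or radius shrinking. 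Working with the inclusion rather than the bracketed point avoids the issue you correctly identified as the main obstacle and also gives the "for all $t\geq T$" conclusion for free. Your version can be patched along the lines you sketch, but I'd recommend adopting the inclusion-based bookkeeping.
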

\begin{proof}
Since every leaf of $W^u$ is dense in $T^1M$, a simple compactness argument as in \cite[Lemma 8.1]{CFT18} gives $R>0$ such that $W_{R-\rho}^u(f_t^0 v)$ is $\rho/\kappa$-dense in $T^1M$ for every $v\in T^1M$ and $t\in \RR$.  By the local product structure, we have $W_R^u(f_t^0 v) \cap W_\rho^{cs}(w) \neq \emptyset$.  Since leaves of $W^u$ are uniformly expanded by $f^0_t$,  there exists $T>0$ such that $f_t^0(W_\rho^u(v)) \supset W_R^u(f_t^0 v)$ for every $v\in T^1M$ and $t\geq T$, which completes the proof.
 \end{proof}

Since leaves of $W^u$ are uniformly expanded by $f_t^0$, there is $\lambda\in (0,1)$ such that whenever $v,w$ lie in the same leaf of $W^u$, we have
\begin{equation}\label{eqn:u-contracts}
d^u(f_{-t}^0 v, f_{-t}^0 w) \leq \lambda d^u(v,w)
\quad\text{for all }t\geq 1.
\end{equation}
Let $\rho'>0$ be sufficiently small that $R_1 > 3A(R_0 + \rho')$, and let
\begin{equation}\label{eqn:rho'}
\rho = \rho' (1-\lambda)/2.
\end{equation}
By Proposition \ref{prop:denseleaf}, there exists $T\geq 1$ such that $f_t(W_{\rho}^u(v))$ intersects $W_{\rho}^{cs}(w)$ whenever $t\geq T$.
We will prove Theorem \ref{specgeoflow} with $\tau=AT$
%, where $A$ is the constant relating $d^0$ and $d$.
and $\tau' = 2\tau_0$, where $\tau_0 = 2\tau + 7AR_0 + 4A\rho'$.

\begin{definition}[Correspondence between orbit segments]
The following procedure defines a map $E\colon T^1M\times (0,\infty) \to T^1M\times (0,\infty)$ with the property that if $(v,t)$ represents an $F$-orbit segment, then $E(v,t)$ represents an $F^0$-orbit segment that shadows it to within the scale given by the Morse Lemma.
\begin{enumerate}
\item Given $(v,t)\in T^1M\times (0,\infty)$, the corresponding $F$-orbit segment projects to the $g$-geodesic segment $c_v([0,t])$.  
\item Let $x,y\in \wM$ be the endpoints of a lift of $c_v([0,t])$ to $\wM$.
\item Let $s = d^0(x,y)$, and let $\alpha\colon [0,s]\to M$ be a $g_0$-geodesic that lifts to a segment running from $x$ to $y$.
\item Let $E(v,t) = (\dot\alpha(0),s)$.
\end{enumerate}
\end{definition}

\begin{figure}[htbp]
\includegraphics[width=\textwidth]{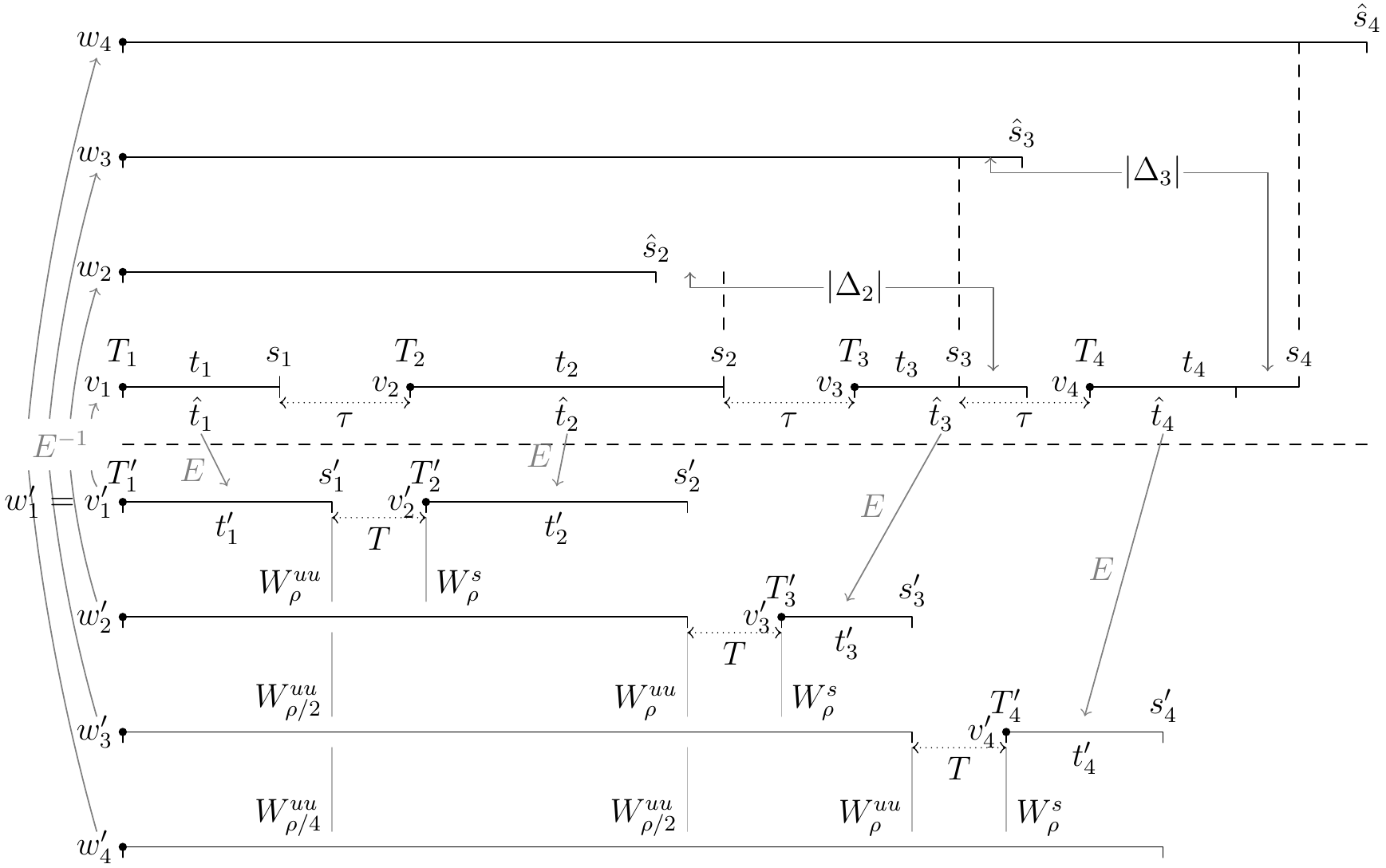}
\caption{Proving specification.  Horizontal lines correspond to orbit segments with length given by the number in the middle of the line; labels above endpoints of lines represent time.  The top half of the picture corresponds to $F,g,d$, the bottom half to $F^0,g_0,d^0$; the two halves are related by specific applications of the map $E$ and its inverse.}
\label{fig:spec-2}
\end{figure}

Fix $(v_1, t_1), \dots, (v_k, t_k)\in T^1M\times (0,\infty)$ and $T_1,\dots, T_k,s_1,\dots,s_k \in \RR$ with $T_1=0$ and
\[
s_j = T_j + t_j, \qquad
T_{j+1} = s_j + \tau.
\]
%Assume without loss of generality that $T_1=0$.
We define sequences $\hat t_i, v_i',t_i',w_i',s_i',T_i',\hat s_i,\Delta_i$ recursively (see Figure \ref{fig:spec-2}).
\begin{itemize}
\item Let $\hat t_1 = t_1$, $\hat s_1 = s_1$, and $\Delta_1 = T_1' = 0$.
\item Let $(w_1',s_1') = (v_1', t_1') = E(v_1,\hat t_1)$.
\end{itemize}
Now fix $j\geq 2$ and suppose all terms have been defined for $1\leq i < j$.
\begin{itemize}
\item Let $\hat t_j = t_j + \Delta_{j-1}$ and $(v_j',t_j') = E(v_j,\hat t_j)$.
\item Let $T_{j}' = s_{j-1}' + T$ and $s_j' = T_j' + t_j'$.
\item Use Proposition \ref{prop:denseleaf} to get $w_j'$ such that
\[
f^0_{T_j'}(w_j') \in f^0_T(W_\rho^u(f^0_{s_{j-1}'} w_{j-1}')) \cap W_\rho^{cs}(v_j').
\]
\item Put $(w_j, \hat{s}_j) = E^{-1}(w_j',s_j')$ and $\Delta_j = s_j - \hat{s}_j$.
\end{itemize}

Observe that $d^{cs}(f^0_{T_j'} w_j', v_j') \leq \rho$ and for all $1\leq i< j\leq k$,
\begin{equation}\label{eqn:du-ij}
d^u(f^0_{s_i'} w'_{j}, f^0_{s'_i} w'_{j-1}) \leq \lambda^{j-1-i} \rho.
\end{equation}
For each $1\leq i < k$, summing \eqref{eqn:du-ij} over $j$ from $i$ to $k-1$ gives
\begin{equation}\label{eqn:duw'}
d^u(f_{s_i'}^0 w_k', f_{s_i'}^0 w_i') \leq \sum_{j=i+1}^{k} \lambda^{j-1-i} \rho < \rho(1-\lambda)^{-1}.
\end{equation}
Let $\alpha_j\colon [0,s_j']\to \wM$ be a $g_0$-geodesic corresponding to $(w_j',s_j')$.
%Let $c\colon [0,\hat{s}_k]\to \wM$ be the $g$-geodesic corresponding to $(w_k,\hat{s}_k)$.  
Then for each $i$, there is a $g_0$-geodesic $\beta_i\colon [0,t_i']\to \wM$ corresponding to $(v_i',t_i')$ such that for every $1\leq i\leq j\leq k$ and every $0\leq t\leq t_i'$, we have
\begin{equation}\label{eqn:beta-alpha}
d^0(\beta_i(t), \alpha_j(T_i' + t)) 
\leq d^{cs}(f_{T_i'}^0 w_i', v_i') + d^u(f_{s_i'}^0 w_k', f_{s_i'}^0 w_i')
< \rho'.
\end{equation}
In particular, writing $x_i = \beta_i(0), y_i = \beta_i(t_i')\in \wM$, we have
\begin{equation}\label{eqn:xiyi}
d^0(x_i, \alpha_j(T_i')) < \rho'
\text{ and }
d^0(y_i,\alpha_j(s_i')) < \rho'
\text{ for every }1\leq i\leq j\leq k.
\end{equation}
Let $b_i\colon [0,\hat{t}_i]\to \wM$ be the $g$-geodesic connecting $x_i$ and $y_i$; note that $b_i$ corresponds to the $F$-orbit segment $(v_i,\hat{t}_i)$.  Let $c_j\colon \RR\to \wM$ be the $g$-geodesic with the property that
\begin{equation}\label{eqn:c-alpha}
c_j(0) = \alpha_j(0)
\text{ and }
c_j(\hat s_j) = \alpha_j(s_j').
\end{equation}
We prove Theorem \ref{specgeoflow} by showing that $\dot{c}_j(0)$ has the desired shadowing properties.

First note that by the Morse Lemma, for each $1\leq i\leq j\leq k$ there are $\tilde T_i^j, \tilde s_i^j\in\RR$ such that
\begin{equation}\label{eqn:T-s-tilde}
d(\alpha_j(T_i'),c_j(\tilde T_i^j)) \leq AR_0
\text{ and }
d(\alpha_j(s_i'), c_j(\tilde s_i^j)) \leq AR_0.
\end{equation}
We can take $\tilde T_0^j=0$ and $\tilde s_j^j = \hat s_j$.  Using \eqref{eqn:xiyi} gives
\begin{equation}\label{eqn:xycj}
d(x_i,c_j(\tilde T_i^j)) \leq A(R_0+\rho')
\text{ and }
d(y_i,c_j(\tilde s_i^j)) \leq A(R_0+\rho').
\end{equation}
This, together with Lemma \ref{lem:endpts-suffice}, will establish the necessary shadowing properties once we have proved the following lemmas relating $\tilde s_i^j,  \hat s_i, s_i$ and $\tilde T_i^j, T_i$.

%understand the relationship between the values of $\tilde s_i^j$,  $\hat s_i$, and $s_i$.  
%We claim that there is a uniform constant (independent of $i,j$) such that for every $1\leq i\leq j\leq k$, all three of these values are within a uniform constant of each other.

\begin{lemma}\label{lem:tilde-hat}
For every $1\leq i\leq j\leq k$, we have $|\tilde s_i^j - \hat s_i| \leq A(R_0 + 2\rho')$.
\end{lemma}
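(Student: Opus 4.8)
The plan is to treat the diagonal case $i=j$ separately and reduce the case $i<j$ to two triangle‑inequality estimates. When $i=j$ the choices already made give $\tilde s_j^j=\hat s_j$, so there is nothing to prove. For $i<j$, I would compare both $\tilde s_i^j$ and $\hat s_i$ to the single auxiliary quantity $d(\alpha_j(0),\alpha_j(s_i'))$, establishing $|\tilde s_i^j-d(\alpha_j(0),\alpha_j(s_i'))|\le AR_0$ and $|\hat s_i-d(\alpha_j(0),\alpha_j(s_i'))|\le A\rho'$; adding these yields the claim (in fact with $2\rho'$ sharpened to $\rho'$).

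For the first estimate, note that since $s_i'\le s_j'$ when $i\le j$, the point $\alpha_j(s_i')$ lies on the $g_0$‑geodesic segment $\alpha_j([0,s_j'])$, which by the Morse Lemma (Theorem~\ref{thm:Morse}) stays within $g$‑Hausdorff distance $R_0$ of $c_j([0,\hat s_j])$; hence in \eqref{eqn:T-s-tilde} one may take $\tilde s_i^j$ to be the parameter of a nearest point of $c_j([0,\hat s_j])$ to $\alpha_j(s_i')$, so in particular $\tilde s_i^j\in[0,\hat s_j]$ and $\tilde s_i^j\ge 0$. Since geodesics in $(\wM,g)$ minimize distance and $c_j(0)=\alpha_j(0)$, this gives $\tilde s_i^j=d(c_j(0),c_j(\tilde s_i^j))=d(\alpha_j(0),c_j(\tilde s_i^j))$; combining with $d(\alpha_j(s_i'),c_j(\tilde s_i^j))\le AR_0$ from \eqref{eqn:T-s-tilde} via the reverse triangle inequality yields $|\tilde s_i^j-d(\alpha_j(0),\alpha_j(s_i'))|\le AR_0$.

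For the second estimate, recall that $\hat s_i=d(\alpha_i(0),\alpha_i(s_i'))$ by definition of $E^{-1}$, so by the reverse triangle inequality it suffices to bound $d(\alpha_j(0),\alpha_i(0))$ and $d(\alpha_j(s_i'),\alpha_i(s_i'))$. The key point is that $\alpha_j$ and $\alpha_i$ stay close not merely on $[T_i',s_i']$ (which is \eqref{eqn:beta-alpha}) but on all of $[0,s_i']$: summing \eqref{eqn:du-ij} over the intermediate indices, exactly as in \eqref{eqn:duw'}, shows that $\dot\alpha_j(s_i')$ and $\dot\alpha_i(s_i')$ lie on a common strong unstable leaf with $d^u(\dot\alpha_j(s_i'),\dot\alpha_i(s_i'))<\rho/(1-\lambda)=\rho'/2$, using \eqref{eqn:rho'}. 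Since the backward flow $f_{-t}^0$ does not expand the leaf metric $d^u$ (the content of \eqref{eqn:u-contracts} for $t\ge1$, and valid for all $t\ge0$ by negative curvature of $g_0$) and $d^0$ is dominated by $d^u$ (the intrinsic horosphere metric dominates the ambient one), we get $d^0(\alpha_j(t),\alpha_i(t))<\rho'/2$ for every $t\in[0,s_i']$; taking $t=0$ and $t=s_i'$ and passing to $d$ via \eqref{lem:unif-equiv} gives $d(\alpha_j(0),\alpha_i(0))+d(\alpha_j(s_i'),\alpha_i(s_i'))<A\rho'$, hence $|d(\alpha_j(0),\alpha_j(s_i'))-\hat s_i|<A\rho'$.

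Putting the two estimates together gives $|\tilde s_i^j-\hat s_i|<AR_0+A\rho'\le A(R_0+2\rho')$. The one step that I expect to need real care is the claim that $\dot\alpha_j(s_i')$ and $\dot\alpha_i(s_i')$ lie on a common strong unstable leaf at $d^u$‑distance $<\rho'/2$ \emph{for the lifts} $\alpha_i,\alpha_j$ to $\wM$ fixed in the derivation of \eqref{eqn:beta-alpha}: one must check that the bound from \eqref{eqn:du-ij}, which is naturally a statement in $T^1M$, persists after lifting to $T^1\wM$, i.e.\ that the chosen lifts of $w_i',\dots,w_k'$ are mutually compatible. Everything else reduces to the reverse triangle inequality, the metric equivalence \eqref{lem:unif-equiv}, and the fact that geodesics in $\wM$ minimize distances.
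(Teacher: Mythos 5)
Your argument is correct and is essentially the paper's proof: the paper applies Lemma~\ref{lem:quad} directly to the four points $c_j(0),c_j(\tilde s_i^j),c_i(0),c_i(\hat s_i)$ and bounds the two cross-distances using \eqref{eqn:T-s-tilde} and \eqref{eqn:duw'}, which is the same triangle-inequality bookkeeping you perform via the pivot quantity $d(\alpha_j(0),\alpha_j(s_i'))$. Your sharpening of $2\rho'$ to $\rho'$ and the caveat about compatible lifts are fine but not needed for the stated bound.
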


\begin{lemma}\label{lem:tilde-tilde}
For every $1\leq i\leq j\leq k$, we have
\begin{gather}
\label{eqn:s-to-T}
A^{-1} T - 2AR_0 \leq \tilde T_i^j - \tilde s_{i-1}^j \leq \tau + 2AR_0, \\
\label{eqn:T-to-s}
|\tilde s_i^j - (\tilde T_i^j + \hat t_i)| \leq 2A(R_0+\rho').
\end{gather}
\end{lemma}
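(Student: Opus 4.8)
The plan is to derive both \eqref{eqn:s-to-T} (for $2\le i\le j$, there being no gap before the first segment) and \eqref{eqn:T-to-s} from the triangle inequality in $\wM$, using two structural features of the objects built in \S\ref{sec:specification}. First, $c_j$ is a unit-speed $g$-geodesic and geodesics in $\wM$ minimize, so $d(c_j(a),c_j(b))=|a-b|$ for all $a,b\in\RR$. Second, $\alpha_j$ is a unit-speed $g_0$-geodesic, so $d^0(\alpha_j(a),\alpha_j(b))=|b-a|$ for $a,b\in[0,s_j']$, whence $A^{-1}|b-a|\le d(\alpha_j(a),\alpha_j(b))\le A|b-a|$ by \eqref{lem:unif-equiv}. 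I will also use the Morse bounds \eqref{eqn:T-s-tilde}, the estimates $d(x_i,\alpha_j(T_i'))<A\rho'$ and $d(y_i,\alpha_j(s_i'))<A\rho'$ (which follow from \eqref{eqn:xiyi} and \eqref{lem:unif-equiv}), and the identity $d(x_i,y_i)=\hat t_i$, which holds because $x_i,y_i$ are endpoints of a lift to $\wM$ of the $g$-geodesic segment $c_{v_i}([0,\hat t_i])$ and such segments minimize.

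Granting these, both displays reduce to one ordering statement. For \eqref{eqn:s-to-T}: since $T_i'-s_{i-1}'=T$, the second feature gives $A^{-1}T\le d(\alpha_j(s_{i-1}'),\alpha_j(T_i'))\le AT$, and combining this with \eqref{eqn:T-s-tilde} and the first feature,
\[
\bigl|\tilde T_i^j-\tilde s_{i-1}^j\bigr|=d\bigl(c_j(\tilde T_i^j),c_j(\tilde s_{i-1}^j)\bigr)\in\bigl[A^{-1}T-2AR_0,\ AT+2AR_0\bigr];
\]
since $\tau=AT$ this is exactly \eqref{eqn:s-to-T}, provided $\tilde T_i^j\ge\tilde s_{i-1}^j$. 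For \eqref{eqn:T-to-s}: estimating $d(c_j(\tilde T_i^j),c_j(\tilde s_i^j))$ by the triangle inequality along the chain $c_j(\tilde T_i^j),\alpha_j(T_i'),x_i,y_i,\alpha_j(s_i'),c_j(\tilde s_i^j)$, and using $d(x_i,y_i)=\hat t_i$ together with the $AR_0$- and $A\rho'$-bounds above,
\[
\Bigl|\,\bigl|\tilde s_i^j-\tilde T_i^j\bigr|-\hat t_i\,\Bigr|\le 2AR_0+2A\rho'=2A(R_0+\rho'),
\]
which is \eqref{eqn:T-to-s}, provided $\tilde s_i^j\ge\tilde T_i^j$.

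Thus the crux — the only step that is not mere bookkeeping — is the ordering $\tilde s_{i-1}^j\le\tilde T_i^j\le\tilde s_i^j$ for the chosen parameters. The plan is to fix the $\tilde T_i^j,\tilde s_i^j$ so that \eqref{eqn:T-s-tilde} holds \emph{and} $0=\tilde T_1^j\le\tilde s_1^j\le\tilde T_2^j\le\cdots\le\tilde s_j^j=\hat s_j$: because $c_j$ and $\alpha_j$ are minimizing geodesics for $g$ and $g_0$ between the \emph{same} pair of points of $\wM$, the Morse Lemma upgrades to a \emph{monotone} fellow-traveling, i.e.\ a nondecreasing correspondence of their parameters realizing the bound in \eqref{eqn:T-s-tilde}, and one takes $\tilde T_i^j$, $\tilde s_i^j$ to be the parameters along $c_j$ matched by it to $\alpha_j(T_i')$, $\alpha_j(s_i')$ (using $s_{i-1}'<T_i'<s_i'$ to get the claimed chain of inequalities). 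This monotone upgrade is standard hyperbolic geometry and is where negativity of the background curvature is genuinely used: by \eqref{lem:unif-equiv}, $c_j$ is a quasigeodesic for $g_0$, whose curvature is negative and — $M$ being closed — bounded away from $0$, so $(\wM,g_0)$ is Gromov hyperbolic, and stability of quasigeodesics there gives an order-preserving fellow-traveling with a constant depending only on $g,g_0$ (which we take $R_0$ large enough to accommodate from the outset, harmlessly inflating $R_2$ and $\delta$ in \eqref{eqn:R3}). Theorem \ref{thm:Morse} records only the Hausdorff-distance shadow of this statement. With the ordering in place the two displays finish the proof, and Lemma \ref{lem:tilde-hat} follows by the same considerations, comparing $\tilde s_i^j$ with $\hat s_i$.
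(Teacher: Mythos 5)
Your main estimates coincide with the paper's: both derive \eqref{eqn:s-to-T} and \eqref{eqn:T-to-s} from the Morse bound \eqref{eqn:T-s-tilde}, the metric equivalence \eqref{lem:unif-equiv}, and the four-point triangle inequality (Lemma \ref{lem:quad}); your five-term chain for \eqref{eqn:T-to-s} is the same computation as the paper's single application of Lemma \ref{lem:quad} to $c_j(\tilde T_i^j),c_j(\tilde s_i^j),x_i,y_i$. Where you genuinely add something is in isolating the ordering $\tilde s_{i-1}^j\le\tilde T_i^j\le\tilde s_i^j$. The paper opens its proof of \eqref{eqn:s-to-T} with the identity $\tilde T_i^j-\tilde s_{i-1}^j=d(c_j(\tilde s_{i-1}^j),c_j(\tilde T_i^j))$, which silently drops an absolute value; without the ordering one only bounds $|\tilde T_i^j-\tilde s_{i-1}^j|$, and since \eqref{eqn:s-to-T} itself (once $T$ is taken large) forces $\tilde T_i^j>\tilde s_{i-1}^j$, the ordering is genuinely part of the content being claimed. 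You are right that Theorem \ref{thm:Morse} as stated (a Hausdorff-distance bound) does not directly supply it and that a monotone version is what is needed. Your route works: the nearest-point projection onto $\alpha_j$ in the negatively curved background metric is a continuous contraction carrying $c_j$ onto $\alpha_j$ with the correct endpoints, and taking $\tilde s_i^j$ (resp.\ $\tilde T_i^j$) to be the \emph{first} time whose projection is $\alpha_j(s_i')$ (resp.\ $\alpha_j(T_i')$) produces a strictly increasing family satisfying \eqref{eqn:T-s-tilde} with $A^2R_0$ in place of $AR_0$, a constant degradation that, as you note, is absorbed into $R_2$ and $\delta$. So the proposal is correct, follows the paper's route in its quantitative estimates, and makes explicit an ordering step that the paper leaves tacit.
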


\begin{lemma}\label{lem:hat-s}
For every $1\leq i\leq k$, we have $|\tilde s_i^k - s_i| \leq \tau + 5AR_0 + 4A\rho'$ and $|\tilde T_i^k - T_i| \leq 2\tau + 7AR_0 + 4A\rho'$.
\end{lemma}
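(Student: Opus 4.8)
The plan is to fix $j=k$ throughout, abbreviate $\tilde T_i := \tilde T_i^k$ and $\tilde s_i := \tilde s_i^k$ (extending the convention $\tilde T_0 = 0$ by $\tilde s_0 := 0$, consistent with $T_1' = 0$), and obtain both bounds by a direct computation from Lemmas \ref{lem:tilde-hat} and \ref{lem:tilde-tilde} together with the defining relations $s_i = T_i + t_i$, $T_i = s_{i-1} + \tau$, $\hat t_i = t_i + \Delta_{i-1}$, and $\Delta_{i-1} = s_{i-1} - \hat s_{i-1}$ (the last valid for all $i \geq 2$ since $\Delta_1 = s_1 - \hat s_1 = 0$). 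First I would record a bound on the transition times: writing $\tilde T_i - \tilde s_{i-1} = \tau + \xi_i$, estimate \eqref{eqn:s-to-T} and the choice $\tau = AT$ give $\xi_i \in [A^{-1}T - \tau - 2AR_0,\ 2AR_0]$, and since $A^{-1}T \geq 0$ this yields $|\xi_i| \leq \tau + 2AR_0$.

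The crux is the bound on $\tilde s_i$, which I claim holds for each $i$ separately with no accumulation of error. For $i \geq 2$, substitute into \eqref{eqn:T-to-s}: up to an additive error of at most $2A(R_0 + \rho')$ one has $\tilde s_i = \tilde T_i + \hat t_i = (\tilde s_{i-1} + \tau + \xi_i) + (t_i + \Delta_{i-1})$; now replace $\Delta_{i-1} = s_{i-1} - \hat s_{i-1}$ and use Lemma \ref{lem:tilde-hat} at index $i-1$ to replace $\hat s_{i-1}$ by $\tilde s_{i-1}$ up to an error of at most $A(R_0 + 2\rho')$. The two copies of $\tilde s_{i-1}$ cancel, leaving $\tilde s_i = (s_{i-1} + \tau) + t_i + \xi_i = s_i + \xi_i$ up to an error of at most $A(R_0 + 2\rho') + 2A(R_0 + \rho') = 3AR_0 + 4A\rho'$; hence $|\tilde s_i - s_i| \leq |\xi_i| + 3AR_0 + 4A\rho' \leq \tau + 5AR_0 + 4A\rho'$. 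This cancellation is precisely the payoff of building $\Delta_{i-1}$ into $\hat t_i$ in the construction. For $i = 1$ the estimate is direct (and slack), since $\hat t_1 = t_1$, $\Delta_1 = 0$, and $T_1 = 0$, so $\tilde s_1 = \tilde T_1 + t_1$ up to $2A(R_0 + \rho')$ with $|\tilde T_1| \leq \tau + 2AR_0$.

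For the $\tilde T_i$ bound I would use $\tilde T_i - T_i = (\tilde s_{i-1} + \tau + \xi_i) - (s_{i-1} + \tau) = (\tilde s_{i-1} - s_{i-1}) + \xi_i$ for $i \geq 2$, and combine the bound $|\tilde s_{i-1} - s_{i-1}| \leq \tau + 5AR_0 + 4A\rho'$ just obtained with $|\xi_i| \leq \tau + 2AR_0$ to get $|\tilde T_i - T_i| \leq 2\tau + 7AR_0 + 4A\rho'$; the case $i = 1$ is immediate as $\tilde T_1 - T_1 = \tilde T_1$. The whole argument is bookkeeping, with the real content — that the errors telescope away rather than compound — already packaged in Lemmas \ref{lem:tilde-hat} and \ref{lem:tilde-tilde}; the only things to be careful about are matching the additive constants to the claimed values and reconciling the $i = 1$ endpoint conventions ($\tilde T_0 = \tilde s_0 = 0$ and $\tilde T_1 \approx 0$) with \eqref{eqn:s-to-T}. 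I do not expect a genuine obstacle here.
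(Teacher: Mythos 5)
Your proof is correct and takes essentially the same route as the paper's: both bound $|\tilde s_i^k - s_i|$ in one step (no induction) by combining \eqref{eqn:T-to-s} and \eqref{eqn:s-to-T} with the cancellation coming from $\hat t_i - t_i = s_{i-1}-\hat s_{i-1}$ and Lemma \ref{lem:tilde-hat} at index $i-1$, arriving at the same constants, and then deduce the $\tilde T_i^k$ bound from the $\tilde s_{i-1}^k$ bound plus \eqref{eqn:s-to-T}. The only differences are cosmetic (you substitute $\hat s_{i-1}\approx\tilde s_{i-1}$ and cancel the $\tilde s_{i-1}$'s, while the paper cancels the $s_{i-1}$'s and is left with $|\tilde s_{i-1}^k - \hat s_{i-1}|$), and your explicit treatment of the $i=1$ endpoint conventions is a welcome bit of extra care.
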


In what follows, we will repeatedly use the following elementary consequence of the triangle inequality.

\begin{lemma}\label{lem:quad}
If $(X,d)$ is a metric space, then for every $w,x,y,z\in X$ we have
\[
|d(w,x) - d(y,z)| \leq d(w,y) + d(x,z).
\]
\end{lemma}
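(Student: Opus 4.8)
The final statement to prove is Lemma \ref{lem:quad}, which is purely a metric-space fact: for any $w,x,y,z$ in a metric space $(X,d)$, $|d(w,x) - d(y,z)| \leq d(w,y) + d(x,z)$.

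\medskip

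The plan is to apply the triangle inequality twice and then take absolute values. First I would bound $d(w,x)$ from above by routing through $y$ and then $z$: by the triangle inequality, $d(w,x) \leq d(w,y) + d(y,x) \leq d(w,y) + d(y,z) + d(z,x)$, which rearranges to $d(w,x) - d(y,z) \leq d(w,y) + d(x,z)$, using symmetry $d(z,x) = d(x,z)$. Next I would do the symmetric computation with the roles of the pair $(w,x)$ and the pair $(y,z)$ interchanged: $d(y,z) \leq d(y,w) + d(w,z) \leq d(y,w) + d(w,x) + d(x,z)$, which rearranges to $d(y,z) - d(w,x) \leq d(w,y) + d(x,z)$. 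Combining the two displayed inequalities gives $|d(w,x) - d(y,z)| \leq d(w,y) + d(x,z)$, as claimed.

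\medskip

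There is no real obstacle here; the only thing to be careful about is bookkeeping — making sure the intermediate points are inserted in the right order so that the terms that survive are exactly $d(w,y)$ and $d(x,z)$ rather than, say, $d(w,z)$ and $d(x,y)$. Since the statement is symmetric under simultaneously swapping $w\leftrightarrow x$ and $y\leftrightarrow z$ (which fixes the right-hand side), it would in fact suffice to prove just one of the two inequalities and invoke this symmetry, but writing both out explicitly is just as short and leaves nothing to check.
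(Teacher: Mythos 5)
Your proof is correct and follows exactly the paper's argument: bound $d(w,x)$ via the triangle inequality through $y$ and $z$, rearrange, and note the reverse inequality is symmetric. Nothing to add.
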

\begin{proof}
The triangle inequality gives $d(w,x) \leq d(w,y) + d(y,z) + d(z,x)$, so $d(w,x) - d(y,z) \leq d(w,y) + d(z,x)$, and the other inequality is similar.
\end{proof}

Before proving Lemmas \ref{lem:tilde-hat}--\ref{lem:hat-s}, we demonstrate how they complete the proof of Theorem \ref{specgeoflow}.  Since $\tilde s_i^k - \tilde T_i^k = d(c_k(\tilde T_i^k), c_k(\tilde s_i^k))$ and $t_i = d(x_i,y_i)$, we can apply Lemma \ref{lem:quad} to these four points to deduce that
\[
|(\tilde s_i^k - \tilde T_i^k) - t_i| \leq d(c_k(\tilde T_i^k),x_i) + d(c_k(\tilde s_i^k),y_i) \leq 2A(\rho_0 + \rho'),
\]
where the last inequality uses \eqref{eqn:xycj}.  Thus
\[
d(c_k(\tilde T_i^k + t_i), y_i)
\leq |\tilde s_i^k - (\tilde T_i^k + t_i)| + d(c_k(\tilde s_i^k),y_i)
\leq 3A(\rho_0 + \rho') < R_1,
\]
so Lemma \ref{lem:endpts-suffice} gives
\begin{equation}\label{eqn:c-shadows}
d(c_k(\tilde T_i^k + t),b_i(t)) < R_2 \text{ for all } 0\leq i\leq k \text{ and } 0\leq t\leq t_i,
\end{equation}
where $R_2 = 4R_0 + (6A^2+1)R_1$.  Taking $\hat T_i = \tilde T_i^k$ gives the desired shadowing property, with $\hat T_i = [T_i - \tau_0, T_i + \tau_0]$ by Lemma \ref{lem:hat-s}, where $\tau_0 = 2\tau + 7AR_0 + 4A\rho'$.  Replacing $T_i$ by $T_i+\tau_0$ gives the form of the property stated in the theorem.

Now we prove Lemmas \ref{lem:tilde-hat}--\ref{lem:hat-s}.

\begin{proof}[Proof of Lemma \ref{lem:tilde-hat}]
Since $\tilde s_i^j = d(c_j(0),c_j(\tilde s_i^j))$ and $\hat s_i=d(c_i(0),c_i(\hat s_i))$, we can apply Lemma \ref{lem:quad} to these four points to get
\begin{align*}
|\tilde s_i^j - \hat s_i| &\leq d(c_j(0),c_i(0)) + d(c_i(\hat s_i),c_j(\tilde s_i^j)) \\
&= d(\alpha_j(0),\alpha_i(0)) + d(\alpha_i(s_i'),c_j(\tilde s_i^j)) \\
&\leq A\rho' + d(\alpha_i(s_i'),\alpha_j(s_i')) + d(\alpha_j(s_i'),c_j(\tilde s_i^j))
\leq 2A\rho' + AR_0,
\end{align*}
where the last inequality uses \eqref{eqn:duw'} to compare $\alpha_i$ and $\alpha_j$, and \eqref{eqn:T-s-tilde} to compare $\alpha_j$ and $c_j$.
\end{proof}

\begin{proof}[Proof of Lemma \ref{lem:tilde-tilde}]
Since $\tilde T_i^j - \tilde s_{i-1}^j = d(c_j(\tilde s_{i-1}^j),c_j(\tilde T_i^j))$, we apply Lemma \ref{lem:quad} to these two points and $\alpha_j(s_{i-1}'),\alpha_j(T_i')$ to get
\begin{multline*}
|\tilde T_i^j - \tilde s_{i-1}^j - d(\alpha_j(s_{i-1}'),\alpha_j(T_i'))| \\
\leq d(c_j(\tilde s_{i-1}^j), \alpha_j(s_{i-1}')) + d(c_j(\tilde T_i^j),\alpha_j(T_i')) \leq 2AR_0.
\end{multline*}
Since $d^0(\alpha_j(s_{i-1}'),\alpha_j(T_i')) = T_i' - s_{i-1}' = T$, \eqref{lem:unif-equiv} gives us the bounds $A^{-1}T \leq d(\alpha_j(s_{i-1}'),\alpha_j(T_i')) \leq AT=\tau$, which proves \eqref{eqn:s-to-T}.
To prove \eqref{eqn:T-to-s}, we observe that $\tilde s_i^j - \tilde T_i^j = d(c_j(\tilde T_i^j), c_j(\tilde s_i^j))$ and $d(x_i,y_i) = \hat t_i$, so we apply Lemma \ref{lem:quad} to these four points, obtaining
\[
|(\tilde s_i^j - \tilde T_i^j) - \hat t_i| \leq d(c_j(\tilde T_i^j), x_i) + d(c_j(\tilde s_i^j),y_i),
\]
and then \eqref{eqn:xycj} completes the proof.
\end{proof}

\begin{proof}[Proof of Lemma \ref{lem:hat-s}]
Recall that $\Delta_i = s_i - \hat s_i$ and $\hat t_i = t_i + \Delta_{i-1}$, so $\hat t_i - t_i = \Delta_{i-1} = s_{i-1} - \hat s_{i-1}$.  From Lemma \ref{lem:tilde-tilde} we have
\[
|\tilde s_i^k - (\tilde s_{i-1}^k + \tau + \hat t_i)|
\leq |\tilde s_i^k - (\tilde T_i^k + \hat t_i)|
+ |\tilde T_i^k - (\tilde s_{i-1}^k + \tau)| \leq \tau + 4AR_0 + 2A\rho'.
\]
Recalling that $s_i = s_{i-1} + \tau + t_i$, we have
\begin{align*}
|\tilde s_{i}^k - s_i|
&\leq |\tilde s_i^k - (\tilde s_{i-1}^k + \tau + \hat t_i)|
+ |(\tilde s_{i-1}^k + \tau + \hat{t}_i) - (s_{i-1} + \tau + t_i)| \\
&\leq \tau + 4AR_0 + 2A\rho' + |\tilde s_{i-1}^k - s_{i-1} + \hat{t}_i - t_i| \\
&= \tau + 4AR_0 + 2A\rho' + |\tilde s_{i-1}^k - s_{i-1} + (s_{i-1} - \hat s_{i-1})| \\
&\leq \tau + 4AR_0 + 2A\rho' + A(R_0+2\rho'),
\end{align*}
where the last inequality uses Lemma \ref{lem:tilde-hat}.  One further application of Lemma \ref{lem:tilde-tilde} gives
\begin{align*}
|\tilde T_i^k - T_i| &\leq |\tilde T_i^k - (\tilde s_{i-1}^k + \tau)| + |(\tilde s_{i-1}^k + \tau) - (s_{i-1} + \tau)| \\
&\leq \tau + 2AR_0 + (\tau + 5AR_0 + 4A\rho'),
\end{align*}
which proves the lemma.
\end{proof}

\subsection{Equidistribution of closed geodesics}\label{sec:equidist}

To prove that the unique MME $\mu$ is the limiting distribution of (homotopy classes of) closed geodesics, we start by recalling the following standard fact, which can be proved by following \cite[Theorem 9.10]{pW82} or \cite[Proposition 6.4]{gK98}.

\begin{proposition}\label{prop:limidis}
Given $\JJ\subset (0,\infty)$, suppose that for each $T\in \JJ$ there is a set $\mathcal{C}(T)$ of closed geodesics with length at most $T$ such that
%Given a sequence $T_k\to\infty$, let $\mathcal{P}(T_k)$ be a set of closed geodesics with length at most $T_k$; write $P(T_k)$ for the cardinality of $\mathcal{P}(T_k)$. Suppose that
\begin{equation}\label{eqn:Tk-h}
\lim_{\substack{T \to \infty\\ T\in \JJ}} \frac 1{T} \log \card \mathcal{C}(T)= \htop(f_1) > 0,
\end{equation}
and that there is $\epsilon>0$ such that each $\mathcal{C}(T)$ is $(T,\epsilon)$-separated for each $T\in \JJ$, meaning that any two distinct elements of $\mathcal{C}(T)$ admit unit speed parametrizations $c_1,c_2$ such that $d(c_1(t),c_2(t)) \geq \epsilon$ for some $t\in [0,T]$. Then the invariant probability measures $\nu_T$ defined by
\begin{equation}\label{eqn:nuT}
\int \varphi \,d\nu_T = \frac 1{\card \mathcal{C}(T)} \sum_{c\in \mathcal{C}(T)} \int_0^{|c|} \varphi(c(t))\,dt
\end{equation}
have the property that every weak* accumulation point of $\{\nu_{T}\}_{T\in \JJ}$  is a measure of maximal entropy.
\end{proposition}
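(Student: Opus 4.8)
The plan is to adapt the classical argument for equidistribution of $(n,\eps)$-separated periodic orbits -- in the form of \cite[Theorem 9.10]{pW82}, or \cite[Proposition 6.4]{gK98} -- to the flow setting at hand. Fix a weak* accumulation point $\nu$ of $\{\nu_T\}_{T\in\JJ}$, and pass to a subsequence so that $\nu_{T_k}\to\nu$ with $T_k\in\JJ$ and $T_k\to\infty$. Since each $\nu_T$ is $F$-invariant, so is $\nu$, and the variational principle gives $h_\nu(f_1)\le\htop(f_1)$; the whole content of the proposition is the reverse inequality $h_\nu(f_1)\ge\htop(f_1)$.

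To establish it, I would first fix a finite Borel partition $\mathcal{P}=\{P_1,\dots,P_m\}$ of $X$ with $\diam P_i<\eps$ for every $i$ and $\nu(\di P_i)=0$; such a partition exists by the standard construction (cover the compact space $X$ by finitely many balls of radius $<\eps/2$ whose boundaries are $\nu$-null, which is always possible, and disjointify them). Invariance of $\nu$ then gives $\nu(\di\mathcal{P}^{(q)})=0$, where $\mathcal{P}^{(q)}:=\bigvee_{i=0}^{q-1}f_{-i}\mathcal{P}$, so that $H_{\nu_{T_k}}(\mathcal{P}^{(q)})\to H_\nu(\mathcal{P}^{(q)})$ as $k\to\infty$ for each fixed $q$. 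Next, for each $T\in\JJ$ I would choose a unit-speed parametrization of each geodesic in $\mathcal{C}(T)$ and let $E(T)\subset X$ be the set of initial velocities so obtained, so $\card E(T)=\card\mathcal{C}(T)$. The $(T,\eps)$-separation hypothesis and the definition of $d_1$ (write $t=i+s$ with $i$ an integer and $s\in[0,1]$) show that $E(T)$ is $(n,\eps)$-separated for $f_1$ with $n=n(T):=\lceil T\rceil$; since $\diam\mathcal{P}<\eps$, the points of $E(T)$ lie in pairwise distinct atoms of $\bigvee_{i=0}^{n-1}f_{-i}\mathcal{P}$, so for $\sigma_T:=\frac1{\card E(T)}\sum_{v\in E(T)}\delta_v$ we get $H_{\sigma_T}\bigl(\bigvee_{i=0}^{n-1}f_{-i}\mathcal{P}\bigr)=\log\card\mathcal{C}(T)$. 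The heart of the argument is the usual book-keeping estimate: grouping the indices $\{0,\dots,n-1\}$ into blocks of length $q$ shifted by $j\in\{0,\dots,q-1\}$, using subadditivity of static entropy under joins, bounding the $O(q)$ leftover terms for each $j$ by $\log m$, summing over $j$, and invoking concavity of $\mu\mapsto H_\mu(\mathcal{P}^{(q)})$, one obtains
\[
\frac{q}{n}\log\card\mathcal{C}(T)\ \le\ H_{\mu_T}(\mathcal{P}^{(q)})+\frac{2q^2}{n}\log m,\qquad \mu_T:=\frac1n\sum_{i=0}^{n-1}(f_i)_*\sigma_T .
\]
Letting $T=T_k\to\infty$, so that $n/T\to1$, and using \eqref{eqn:Tk-h} together with $\mu_{T_k}\to\nu$ and $\nu(\di\mathcal{P}^{(q)})=0$, this gives $q\,\htop(f_1)\le H_\nu(\mathcal{P}^{(q)})$; dividing by $q$ and letting $q\to\infty$ yields $h_\nu(f_1,\mathcal{P})\ge\htop(f_1)$, hence $h_\nu(f_1)\ge\htop(f_1)$, and $\nu$ is an MME.

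The step I expect to be the main obstacle is the passage between the time-$1$ map $f_1$ and the flow: one must verify that the averaged measures $\mu_{T_k}$ -- built from Dirac masses sampled at integer times along the closed orbits -- have the same weak* limit as the $\nu_{T_k}$, which average along the full continuous orbits. The point is that the closed geodesics whose periods lie within a fixed distance of the integers (the only ones for which integer-time sampling may fail to approximate the orbit average of a fixed continuous test function) form a vanishing proportion of $\mathcal{C}(T)$ as $T\to\infty$; a clean way to handle this uniformly is to replace $\sigma_T$ by $\int_0^1 (f_s)_*\sigma_T\,ds$ and apply concavity once more, at the cost of replacing $n$ by $n-1$. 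A related minor point is that the $(T,\eps)$-separation hypothesis only provides, for each pair of geodesics, \emph{some} parametrizations witnessing separation, whereas the argument needs a single parametrization per geodesic making $E(T)$ genuinely $(n,\eps)$-separated; this is arranged using that the elements of $\mathcal{C}(T)$ are pairwise distinct as orbits of the geodesic flow, together with compactness of $T^1M$. With these points in place, all remaining estimates are routine.
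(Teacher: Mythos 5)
Your argument is the classical Bowen--Walters estimate (subadditivity of static entropy across blocks, concavity of $\mu\mapsto H_\mu(\mathcal P^{(q)})$, lower bound on $H_{\sigma_T}(\mathcal P^{(n)})$ from separation), which is exactly what the paper is invoking by citing \cite[Theorem~9.10]{pW82} and \cite[Proposition~6.4]{gK98}; so the route is the same as the paper's, and the core inequality you display is correct.

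Two remarks on the points you flag at the end. First, the heuristic that ``closed geodesics whose periods lie within a fixed distance of the integers form a vanishing proportion'' is a red herring and is not what makes the Dirac-sampled average agree with the orbit average in the limit. For a periodic orbit of period $\ell$, the discrete Birkhoff average $\frac1n\sum_{i=0}^{n-1}\delta_{f_i v}$ generically does \emph{not} converge to the orbit average (take $\ell$ an integer), and there is no reason the bad periods should be rare in $\mathcal C(T)$. What actually saves the argument is that $n-\ell$ (equivalently $T-\ell$) is uniformly $O(1)$ for the orbits under consideration, so that $\frac1n\int_0^n(f_s)_*\delta_v\,ds$ differs from $\frac1\ell\int_0^\ell(f_s)_*\delta_v\,ds$ by total variation $O(1/T)$. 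This is true in the paper's application because there $\mathcal C(T)=\mathcal P^*(T)$ consists of geodesics with $|c|\in(T-\epsilon,T]$; the hypotheses of the proposition as written (length $\le T$, the separation, and \eqref{eqn:Tk-h}) do not by themselves force this, so the proposition is being read with that extra normalization in mind (and one should note that \eqref{eqn:nuT} is only a probability measure after dividing by $\sum_c|c|$). Once you use the smoothed measure $\int_0^1(f_s)_*\sigma_T\,ds$ you also need to run the Misiurewicz block-decomposition for that smoothed measure rather than just transporting the estimate proved for the discrete one; this is routine but worth spelling out.

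Second, the parametrization issue you raise is real, but ``compactness of $T^1M$'' does not resolve it: the pairwise existence of witnessing parametrizations is a constraint-satisfaction problem and need not admit a simultaneous choice of base points. In the paper's application this is a non-issue because the separation proved in Lemma~\ref{lem:hom-sep} holds for \emph{every} choice of unit-speed parametrizations $c_1,c_2$ (the proof derives a homotopy from the assumption $d(c_1(t),c_2(t))<\epsilon$ for all $t\in[0,T]$, with no special choice of base point), so any fixed choice of initial vectors $v_c$ gives a genuinely $(n,\epsilon)$-separated set for $f_1$. If one wants the proposition to be literally correct under the stated hypotheses, the phrase ``admit unit speed parametrizations'' should be read as ``for one fixed choice of initial vectors,'' i.e.\ that the set $E(T)=\{v_c:c\in\mathcal C(T)\}$ is $(T,\epsilon)$-separated.

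With these caveats your proof matches the paper's intended argument.
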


%Recall from \S\ref{sec:per-limit} that $\mathcal{P}(T)$ is a maximal collection of pairwise non-homotopic closed geodesics with period at most $T$, and $P(T)$ is its cardinality. 
Since the MME $\mu$ is unique, we could prove that $\mu$ is the limiting distribution of closed geodesics by applying Proposition \ref{prop:limidis} with $\mathcal{C}(T) = \mathcal{P}(T)$ (recall \S\ref{sec:per-limit}) if we knew that
\begin{enumerate}
\item $\lim_{T\to\infty} \frac 1T \log P(T) = \htop(f_1)$, and
\item there is $\epsilon>0$ such that $\mathcal{P}(T)$ is $(T,\epsilon)$-separated.
\end{enumerate}
Unfortunately the second of these turns out to be false in general, so we must do a little more work, but the failure is not dramatic. First we obtain the growth rate estimate by recalling the following special version of a result from \cite{CK02} which was partially based on ideas  in \cite{gK83}. 

\begin{theorem}\label{thm:specification}
Let $(M,g)$ be a closed Riemannian manifold (not necessarily without conjugate points) admitting a metric of negative curvature.
Let $P(T)$ be the number of free homotopy classes containing a closed geodesic of period less than $T$. Then there exist positive constants
$A,B$ and $T_0$ such that
\begin{equation}\label{eqn:APB}
A \frac{e^{hT}}{T} \le P(T) \le B e^{hT}
\end{equation}
for all $T \ge T_0$, where 
$
h = \lim_{r\to\infty}\frac{1}{r}\log \vol (B(p,r))
$
is exponential volume growth (volume entropy) on the universal covering $\wM$. 
\end{theorem}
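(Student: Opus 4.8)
The plan is to translate $P(T)$ into a count of conjugacy classes in $\Gamma=\pi_1(M)$ organized by \emph{translation length}, and then run an orbit-counting argument in the spirit of Margulis and Knieper, using the negatively curved metric only through the Morse Lemma and Preissmann's theorem. Free homotopy classes of closed curves in $M$ correspond bijectively to nontrivial conjugacy classes $[\gamma]$ in $\Gamma$; fixing $p\in\wM$ and $\gamma\ne e$, the minimal length of a loop in $[\gamma]$ equals $\ell(\gamma):=\inf_{x\in\wM}d(x,\gamma x)>0$, and an Arzel\`a--Ascoli argument (using compactness of $M$) realizes this infimum by a loop which is locally length-minimizing, hence a closed geodesic. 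Therefore
\[
P(T)=\#\{\,[\gamma]:\gamma\in\Gamma\setminus\{e\},\ \ell(\gamma)<T\,\},
\]
and the problem becomes one of estimating this count against the orbit-counting function $N(r):=\#(\Gamma p\cap B(p,r))$. Packing pairwise disjoint translates of a fundamental domain $D\ni p$ with $d_0:=\diam D$ gives $\vol B(p,r-d_0)\le \vol(D)\,N(r)\le \vol B(p,r+d_0)$, so $N$ has exponential growth rate $h$.

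\emph{Upper bound.} Conjugating to bring a point realizing $\ell(\gamma)$ into $D$ produces, for each class with $\ell(\gamma)<T$, a representative $\gamma'$ with $d(p,\gamma' p)\le\ell(\gamma)+2d_0<T+2d_0$; since distinct classes give distinct group elements and the action is free, $\gamma'\mapsto\gamma' p$ embeds these into $\Gamma p\cap B(p,T+2d_0)$, whence $P(T)\le N(T+2d_0)\le \vol B(p,T+3d_0)/\vol D$. To upgrade this to $P(T)\le Be^{hT}$ one needs the sharp volume bound $\vol B(p,r)\le Ce^{hr}$ --- which does \emph{not} follow from the definition of $h$ alone, since a growth of order $r\mapsto re^{hr}$ is a priori consistent with it. It is this estimate that forces the hypothesis of a negatively curved background metric: via the Morse Lemma and \eqref{lem:unif-equiv} it can be deduced from the sharp volume asymptotics available for $g_0$, and it is part of the analysis carried out in \cite{gK83,CK02}.

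\emph{Lower bound --- the main obstacle.} An elementary splitting argument (a shortest path from $p$ to $\gamma p$ passes within $d_0$ of some orbit point at distance $\le r+d_0$ from $p$) gives the approximate submultiplicativity $N(r+s)\le N(r+d_0)N(s+d_0)$, so $r\mapsto\log N(r)$ is subadditive up to a bounded additive constant, and Fekete's lemma yields the clean \emph{lower} bound $N(r)\ge ce^{hr}$. Hence $B(p,T)$ carries at least $ce^{hT}$ orbit points; binning them by $\lfloor d(p,\gamma p)\rfloor$ and pigeonholing over the $T$ unit annuli, there is an integer $k\le T$ with at least $ce^{hT}/T$ orbit points $\gamma p$ satisfying $d(p,\gamma p)\in[k-1,k)$, each of which contributes a class counted by $P(T)$. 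The remaining --- and hardest --- task is to control the multiplicity with which a single conjugacy class is hit: for a primitive $\gamma_0$ with $\ell(\gamma_0)=\ell_0$ one must bound the number of conjugates $\alpha\gamma_0\alpha^{-1}$ with $d(p,\alpha\gamma_0\alpha^{-1}p)<T$. Here Preissmann's theorem (abelian subgroups of $\Gamma$ are cyclic, from the negatively curved metric) identifies these conjugates, up to bounded multiplicity, with cosets $\alpha\langle\gamma_0\rangle$, and the Morse Lemma forces the corresponding axes to pass within $\tfrac12(T-\ell_0)+O(1)$ of $p$; a volume estimate for tubular neighborhoods of these quasi-geodesic axes then bounds the count, and summing the resulting multiplicities over all primitive classes returns $P(T)\ge Ae^{hT}/T$ --- the factor $T^{-1}$ reflecting that the bulk of $P(T)$ comes from (nearly) primitive classes whose length lies within $O(\log T)$ of $T$. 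Making this multiplicity bound uniform over all classes is the technical heart of \cite{gK83,CK02}, and is the step I expect to be the principal difficulty; it is exactly here that negative curvature of the background metric, not merely the absence of conjugate points, is indispensable.
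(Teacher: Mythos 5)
The paper's own proof is a direct citation to \cite{CK02} (after observing that $\wM$ is Gromov hyperbolic via \eqref{lem:unif-equiv} and the negatively curved background metric $g_0$, and invoking Freire--Ma\~n\'e to identify $h$ with $\htop(f_1)$ in the no-conjugate-points case). Your proposal is a faithful reconstruction of the internal strategy of that reference --- translating $P(T)$ into conjugacy-class counting by translation length, comparing $N(r)$ with $\vol B(p,r)$ by the \v{S}varc--Milnor packing argument, and correctly locating the two technical pivots (the sharp bound $\vol B(p,r)\le Ce^{hr}$ and the multiplicity bound on conjugates of a primitive $\gamma_0$ meeting a fixed ball) --- which you, like the paper, ultimately defer to \cite{gK83,CK02}; so you and the paper agree in leaning on the same result, and your sketch of its internals is accurate.
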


This result was formulated  in \cite{CK02} in the general context of Gromov hyperbolic metric spaces
and since $\wM$ is Gromov hyperbolic the result applies in our setting.
As remarked in \eqref{eq:topent} in case of no conjugate points the volume entropy coincides with the topological entropy
of the geodesic flow, and we conclude that \eqref{eqn:APB} holds with $h=\htop(f_1)$.

Now we turn our attention to the question of whether orbits in $\mathcal{P}(T)$ are $(T,\epsilon)$-separated. In general, two closed geodesics of different lengths can lie in different free homotopy classes but still shadow each other arbitrarily closely: consider the center circle and boundary circle of a (flat) M\"obius strip. However, if the lengths are close enough, this cannot occur.

\begin{lemma}\label{lem:hom-sep}
Let $\epsilon>0$ be such that $2\epsilon$ is smaller than the injectivity radius of $M$, and let $c_1,c_2$ be closed geodesics with lengths in $(T-\epsilon,T]$ for some $T>0$. If $c_1,c_2$ are not homotopic, then there is $t\in [0,T]$ such that $d(c_1(t),c_2(t)) \geq \epsilon$.
\end{lemma}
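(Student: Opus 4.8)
The plan is to argue by contrapositive: assume that $d(c_1(t),c_2(t)) < \epsilon$ for all $t\in[0,T]$ and deduce that $c_1$ and $c_2$ are freely homotopic. First I would lift both geodesics to the universal cover $\wM$. Let $\gamma_1,\gamma_2\in\Gamma=\pi_1(M)$ be the deck transformations corresponding to $c_1,c_2$ (so $\gamma_i$ translates along a lift $\tilde c_i$ of $c_i$ by the length $\ell_i$ of $c_i$), and choose the lifts $\tilde c_1,\tilde c_2\colon\RR\to\wM$ so that $d(\tilde c_1(0),\tilde c_2(0)) < \epsilon$. Using $d(c_1(t),c_2(t))<\epsilon$ together with the fact that $2\epsilon$ is smaller than the injectivity radius of $M$, I can, just as in the proof of Lemma \ref{lem:NE}, find a \emph{unique} $\gamma(t)\in\Gamma$ with $d(\tilde c_1(t),\gamma(t)\tilde c_2(t))<\epsilon$ for each $t$; uniqueness again comes from $2\epsilon$ being below the injectivity radius. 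The map $t\mapsto\gamma(t)$ is continuous into the discrete group $\Gamma$, hence constant: there is a single $g\in\Gamma$ with $d(\tilde c_1(t),g\,\tilde c_2(t)) < \epsilon$ for all $t\in[0,T]$.

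The next step is to leverage periodicity. Since $\ell_i\in(T-\epsilon,T]$, the values $\ell_1$ and $T$ (and $\ell_2$) are close, so the above bound at $t$ near $\ell_1$ lets me compare $\tilde c_1(\ell_1) = \gamma_1\tilde c_1(0)$ with $g\,\tilde c_2(\ell_1)$, which in turn is close (within a controlled multiple of $\epsilon$ using $|\ell_1-\ell_2|<\epsilon$ and the local Lipschitz bound $\|\dot c_2\|=1$) to $g\gamma_2\tilde c_2(0)$, which is close to $g\gamma_2 g^{-1}(g\tilde c_2(0))$, within $\epsilon$ of $g\gamma_2 g^{-1}\tilde c_1(0)$. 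Thus $\gamma_1\tilde c_1(0)$ and $(g\gamma_2 g^{-1})\tilde c_1(0)$ lie within a distance bounded by a small absolute multiple of $\epsilon$ (one needs to check the constants work out below the injectivity radius, which is the point requiring care). Since both $\gamma_1\tilde c_1(0)$ and $(g\gamma_2 g^{-1})\tilde c_1(0)$ are $\Gamma$-translates of $\tilde c_1(0)$ and lie closer together than the injectivity radius permits for distinct such translates, we must have $\gamma_1 = g\gamma_2 g^{-1}$, i.e.\ $\gamma_1$ and $\gamma_2$ are conjugate in $\pi_1(M)$. Conjugacy of the corresponding elements of $\pi_1(M)$ is exactly the statement that $c_1$ and $c_2$ are freely homotopic, giving the contrapositive.

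\textbf{Main obstacle.} The delicate point is bookkeeping the constants so that every comparison stays strictly below the injectivity radius threshold, since the hypothesis only gives $2\epsilon$ below the injectivity radius, not a larger multiple. The comparison of $\gamma_1\tilde c_1(0)$ with $g\gamma_2g^{-1}\tilde c_1(0)$ naively accumulates several copies of $\epsilon$ (from the $t=0$ estimate, the $t\approx\ell_1$ estimate, and the $|\ell_1-\ell_2|<\epsilon$ slack). The fix is to evaluate the estimate $d(\tilde c_1(t),g\tilde c_2(t))<\epsilon$ at the single time $t=\ell_1$ directly, rather than chaining through several points: this yields $d(\gamma_1\tilde c_1(0),\,g\tilde c_2(\ell_1))<\epsilon$, and then $d(g\tilde c_2(\ell_1),\,g\tilde c_2(\ell_2)) = d(\tilde c_2(\ell_1),\tilde c_2(\ell_2)) = |\ell_1-\ell_2| < \epsilon$ (geodesics are unit speed and minimizing in $\wM$), and $g\tilde c_2(\ell_2) = g\gamma_2\tilde c_2(0)$, which is within $\epsilon$ of $g\gamma_2 g^{-1}\tilde c_1(0)$ after translating the original $t=0$ bound by $g\gamma_2 g^{-1}$. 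So $d(\gamma_1\tilde c_1(0),\, g\gamma_2 g^{-1}\tilde c_1(0)) < 3\epsilon$; if this is still too large one strengthens the hypothesis constant, but in fact with $2\epsilon$ below the injectivity radius one can instead argue that the translation element realizing this distance must be trivial by a winding-number/homotopy argument rather than a metric one, or simply note that Lemma \ref{lem:same-length} forces $\ell_1=\ell_2$ once we know they are homotopic — so the cleanest route is to first show homotopy via the constancy of $\gamma(t)$ argument giving $\gamma_1 = g\gamma_2 g^{-1}$ outright, deferring the length equality to Lemma \ref{lem:same-length}. I would present the constancy-of-$\gamma(t)$ argument as the heart of the proof and keep the metric estimates minimal.
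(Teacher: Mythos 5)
Your argument is correct, but it is genuinely different from the one in the paper. The paper stays entirely in $M$: it reparametrizes $c_2$ by $\beta = |c_2|/|c_1|$ so that both loops are defined on $[0,|c_1|]$, checks that $d(c_1(t),c_2(\beta t)) < 2\epsilon$ (the extra $\epsilon$ coming from the reparametrization slack $|\beta-1|\,t \le \epsilon$), and then writes down an explicit free homotopy $H(s,t)=\exp_{c_1(t)}(sV(c_1(t)))$ using that $2\epsilon$ is below the injectivity radius. Your route goes to the universal cover and shows the deck transformations $\gamma_1$ and $g\gamma_2 g^{-1}$ coincide; this is the more ``algebraic'' version and reuses the locally-constant-$\gamma(t)$ device from Lemma \ref{lem:NE}, whereas the paper's version is shorter and needs no lifting. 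Both are legitimate; the paper's is arguably the more economical.

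Two remarks on your write-up. First, the constant-counting you worry about is not actually a problem: for any $q\in M$ and any nontrivial $\gamma\in\Gamma$ one has $d(\tilde q,\gamma\tilde q)\ge 2\inj(M) > 4\epsilon$, so your final estimate $d\bigl(\gamma_1\tilde c_1(0),\,g\gamma_2 g^{-1}\tilde c_1(0)\bigr) < 3\epsilon$ already forces $\gamma_1 = g\gamma_2 g^{-1}$ — no strengthening of the hypothesis is needed (and note that since you chose the lifts with $d(\tilde c_1(0),\tilde c_2(0))<\epsilon$, constancy gives $g=e$, simplifying the bookkeeping further). Second, the closing paragraph is muddled: the constancy of $\gamma(t)$ alone does \emph{not} give the conjugacy ``outright'' — you still need the endpoint comparison at $t=\ell_1$ — and invoking Lemma \ref{lem:same-length} to get $\ell_1=\ell_2$ would be circular, since that lemma presupposes the homotopy you are trying to establish. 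The clean version of your proof is: constancy of $\gamma(t)$, then the three-term triangle inequality at $t=\ell_1$, then the $2\inj(M)$ bound on orbit separation.
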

\begin{proof}
Suppose that $d(c_1(t),c_2(t)) < \epsilon$ for all $t\in [0,T]$. We prove that $c_1,c_2$ are homotopic.
Let $|c_i|$ be the length of $c_i$, and let $\beta = |c_2|/|c_1|$; observe that $|\beta-1| \leq \epsilon/|c_1|$. Then for each $t\in [0,|c_1|]$ we have
\[
d(c_1(t),c_2(\beta t)) \leq d(c_1(t),c_2(t)) + |(\beta-1)t| < \epsilon + \frac{\epsilon}{|c_1|} t \leq 2\epsilon.
\]
This is smaller than the injectivity radius, so there is a continuous vector field $V$ on $c_1$ such that $\exp_{c_1(t)} V(c_1(t)) = c_2(\beta t)$ for all $t\in [0,|c_1|]$. Then the map $H\colon [0,1]\times [0,|c_1|] \to M$ given by $H(s,t) = \exp_{c_1(t)}(sV(c_1(t)))$ is a homotopy between $c_1$ and $c_2$.
\end{proof}

Motivated by Lemma \ref{lem:hom-sep}, let $P^*(T)$ denote the number of free homotopy classes of closed geodesics with lengths in $(T-\epsilon,T]$. (Note that by Lemma \ref{lem:same-length}, the length does not depend on the representative chosen.) A priori it is possible that $P^*(T)$ will be `too small' for some values of $T$,
%\footnote{In fact, since the present paper was completed, we have used the mixing property of $\mu$ and a Margulis-type argument to prove that $P^*(T)$ grows with the correct rate, so that one can apply Proposition \ref{prop:limidis} with $\JJ = (0,\infty)$ and obtain a stronger equidistribution result than the one here; see \cite[Theorem 1.3]{CKW}.  However, this requires a substantial amount of work that goes beyond the scope of this paper.} so we define
\[
\JJ := \{T>0 : P^*(T) \geq T^{-3} e^{hT}\}.
\]
%Let $U$ be an arbitrary convex weak*-neighborhood of $\mu$ in the space of invariant Borel probability measures. Proposition \ref{prop:limidis} and uniqueness of the MME yield $\bar T$ such that for all $T\in \JJ \cap (\bar T,\infty)$ and any set $\mathcal{C}(T)$ of pairwise non-homotopic closed geodesics with lengths in $(T-\epsilon,T]$, we have $\nu_T \in U$, where $\nu_T$ is defined by \eqref{eqn:nuT}.
Now given $T>0$, write $T_n := T-n\epsilon$ and $N = \lfloor T/\epsilon \rfloor$, and observe that $P(T) = \sum_{n=0}^{N} P^*(T_n)$. We split the sum into three parts: writing
\[
\II_T := \{ n : T_n \in \JJ \cap (T/2,\infty) \}
\quad\text{and}\quad
\II_T' := \{ n : T_n \in (T/2,\infty) \setminus \JJ \},
\]
we have
\begin{equation}\label{eqn:I}
P(T) = P(T/2) + \sum_{n\in \II_T} P^*(T_n)
+ \sum_{n\in \II_T'} P^*(T_n).
\end{equation}
From \eqref{eqn:APB} we see that
\[
P(T) \geq \frac AT e^{hT}
\quad\text{and}\quad
P(T/2) \leq B e^{hT/2},
\]
and thus
\begin{equation}\label{eqn:T2T}
\frac{P(T/2)}{P(T)} \leq \frac{B e^{hT/2}}{\frac AT e^{hT}} = \frac{TB}{A} e^{-hT/2} \to 0
\text{ as } T\to\infty.
\end{equation}
The definition of $\JJ$ gives
%\begin{equation}\label{eqn:I2}
\[
\sum_{n\in \II_T'} P^*(T_n) < \sum_{n\in \II_T'} T_n^{-3} e^{hT_n}
\leq \frac T{2\epsilon} \Big( \frac T2\Big)^{-3} e^{hT} = \frac{4}{\epsilon T^2} e^{hT},
%\end{equation}
\]
and thus
\begin{equation}\label{eqn:I'}
\frac{\sum_{n\in \II_T'} P^*(T_n)}{P(T)} \leq \frac 4{\epsilon T^2} e^{hT} \frac TA e^{-hT}
= \frac 4{\epsilon AT} \to 0 \text{ as } T\to\infty.
\end{equation}
Now given a set $\mathcal{P}(T)$ of pairwise non-homotopic closed geodesics with lengths at most $T$, let $\mu_T$ be the corresponding periodic orbit measures defined in \eqref{eqn:muT}, and let $\nu_T$ be the periodic orbit measures associated to $\mathcal{P}^*(T) := \{ c\in \mathcal{P} : |c| \in (T-\epsilon,T]\}$. Observe that
\begin{equation}\label{eqn:mut-nut}
\begin{aligned}
\mu_T &= \frac 1{P(T)} \sum_{n=0}^N P^*(T_n) \nu_{T_n} \\
&= \frac {P(T/2)}{P(T)} \mu_{T/2} + \frac{\sum_{n\in \II_T} P^*(T_n) \nu_{T_n}}{P(T)}
+ \frac{\sum_{n\in \II_T'} P^*(T_n) \nu_{T_n}}{P(T)}.
\end{aligned}
\end{equation}
As $T\to\infty$, the total weight of the first expression goes to $0$ by \eqref{eqn:T2T}, and the total weight of the third expression goes to $0$ by \eqref{eqn:I'}. It follows that the limit points of $\{\mu_T\}$ as $T\to\infty$ are the same as the limit points of $\{\nu_T\}_{T\in\JJ}$. By Proposition \ref{prop:limidis} and Lemma \ref{lem:hom-sep}, every such limit point is a measure of maximal entropy. Because the measure of maximal entropy is unique, we conclude that $\mu_T\to \mu$, which completes the proof of the equidistribution property (Definition \ref{def:periodics}) claimed in Theorem \ref{thm:higher-dim}.

\section{Patterson--Sullivan measure and the MME}\label{sec:PS}
In this section we assume that $M$ is a closed Riemannian manifold without conjugate points having the divergence property of geodesic rays and admitting a metric of negative sectional curvature, i.e., we are only assuming that  conditions \ref{H1} and \ref{H2} in Definition \ref{def:H} are satisfied.
We will show that under this assumption the Patterson--Sullivan measure can be used to define a measure of maximal entropy
which is fully supported on $T^1M$.  If we add the conditions  \ref{H3} and \ref{H4} from Definition \ref{def:H} we obtain uniqueness 
as was shown in \S\ref{sec:proof}.

\subsection{Poincar\'{e} series and the Patterson--Sullivan measure}
 
If $\Gamma$ denotes the group of deck transformations, for $p,q \in  \wM$ and $s \in \mathbb{R}$, we consider the Poincar\'{e} series
 \[
P(s,p,q) = \sum\limits_{\gamma \in \Gamma} e^{- sd(p,\gamma q)}.
\]
Since  $\wM$ is Gromov hyperbolic  it follows from \cite {mCo93}  that the series
converges for $s> h$ and diverges for $s \le h$, where $h$ is the topological entropy.
For $x \in \wM$ the set 
$\Lambda(\Gamma)$ of accumulation points of the orbit
 $\Gamma x$ in $ \wM$ is called the limit set. Since $\wM$ is cocompact we have $\Lambda(\Gamma) =
\partial \wM $. 
Fix $x \in \wM $, $s > h$ and consider for each $p \in \wM$  the measure
\begin{equation}\label{eqn:nupxs}
\nu_{p,x,s} = \frac1{P(s,x,x)} \sum\limits_{\gamma \in \Gamma} e^{-sd(p,\gamma x)} \delta_{\gamma x}
\end{equation}
where $\delta_y$ is the Dirac mass associated to $y \in \wM $.
Using the fact that
$e^{-s d(p,x)} e^{-s d(x,\gamma x)} \leq e^{-s d(p,\gamma x)} \leq e^{sd(p,x)} e^{-sd(x,\gamma x)}$ for every $x,p\in \wM$ and $\gamma\in \Gamma$, we see that
\begin{equation}\label{eqn:nu-weight}
e^{-sd(p,x)} \le \nu_{p,x,s} (\cl( \wM) ) \le e^{sd(p,x)};
\end{equation}
in particular, the $\nu_{p,x,s}$ are all finite.  Moreover, we clearly have
\begin{equation}\label{eqn:supp-nu}
\Gamma x \subset \supp \nu_{p,x,s} \subset \overline{\Gamma
x}.
\end{equation}
Now choose for a fixed $p \in \wM  $ and a weak limit $\lim\limits_{k\to \infty} \nu_{p,x,s_k} = : \nu_p$.

The divergence of the series $P(s,x,x)$ for $s = h$ and the
discreteness of $\Gamma$ yields that the support of
$\nu_{p}$ is contained in the limit set. Moreover, one obtains:

\begin{proposition}\label{prop:PS} There is a sequence \(s_k\to h\) as \(k\to\infty\) such that 
for every $p \in \wM$ the weak* limit $\lim\limits_{k\to \infty} \nu_{p,x,s_k} =:
\nu_p$ exists.
The  family of measures $\{\nu_p\}_{p \in \wM}$ has the following 
properties.
\begin{enumerate}[label=\upshape{(\alph{*})}]
\item\label{PS-a} $\{\nu_p\}_{p \in \wM}$ is $\Gamma$-equivariant: for all Borel sets $A \subset \partial \wM$, we have
\[
\nu_{\gamma p} (\gamma A) = \nu_p(A).
\]
\item\label{PS-c} $\frac{d \nu_q}{d\nu_p} (\xi) = e^{-h b_p(q, \xi)}$
for almost all $\xi  \in \partial \wM$, where $b_p(q,\xi)$ is as in Definition \ref{def:busemann}.
\item\label{PS-b} $\supp \ \nu_p = \partial \wM$ for all $p \in
\wM$.
\end{enumerate}
\end{proposition}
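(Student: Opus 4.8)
The plan is to establish existence of the limiting family together with properties \ref{PS-a}--\ref{PS-c} by a diagonal/compactness argument followed by direct verification of each property. First I would extract the sequence: $\cl(\wM)$ is compact and metrizable, so for a fixed reference point $p$ the measures $\nu_{p,x,s}$ lie in a weak* compact set by the uniform mass bound \eqref{eqn:nu-weight}; choosing $s_k \to h$ one obtains a convergent subsequence $\nu_{p,x,s_k}\to\nu_p$. To get convergence \emph{for every} $q\in\wM$ simultaneously along one sequence $s_k$, I would note that $\nu_{q,x,s}$ and $\nu_{p,x,s}$ are mutually absolutely continuous with an \emph{explicit} Radon--Nikodym derivative, namely
\[
\frac{d\nu_{q,x,s}}{d\nu_{p,x,s}}(\gamma x) = e^{-s(d(q,\gamma x) - d(p,\gamma x))},
\]
and that $d(q,\gamma x) - d(p,\gamma x) \to b_p(q,\xi)$ as $\gamma x\to\xi$ by Corollary \ref{cor:c1} (equation \eqref{eqn:bpqxi}); uniform continuity on compacta of this "pre-Busemann" cocycle, again from Corollary \ref{cor:c1}, lets me pass the density to the limit. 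So once $\nu_{p,x,s_k}\to\nu_p$ is fixed, $\nu_{q,x,s_k}$ automatically converges to the measure $\nu_q$ with $d\nu_q/d\nu_p(\xi) = e^{-h b_p(q,\xi)}$; this simultaneously proves convergence for all $q$ and property \ref{PS-c}. A countable dense set of $q$'s plus the density bound makes this rigorous without any further subsequence extraction.

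For property \ref{PS-a}, the $\Gamma$-equivariance, I would compute directly at the level of the approximating sums: reindexing $\gamma\mapsto \delta^{-1}\gamma$ in \eqref{eqn:nupxs} gives
\[
\nu_{\delta p, x, s}(\delta A) = \frac{1}{P(s,x,x)}\sum_{\gamma\in\Gamma} e^{-s d(\delta p,\gamma x)}\delta_{\gamma x}(\delta A),
\]
and since $\Gamma$ acts by isometries, $d(\delta p, \gamma x) = d(p, \delta^{-1}\gamma x)$, while $\delta_{\gamma x}(\delta A) = \delta_{\delta^{-1}\gamma x}(A)$; after the substitution $\gamma' = \delta^{-1}\gamma$ this is exactly $\nu_{p,x,s}(A)$ up to the same normalizing constant. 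Passing to the weak* limit along $s_k$ yields $\nu_{\delta p}(\delta A) = \nu_p(A)$. (A small point: one must check $\delta(\cl\wM)$-boundary sets are handled correctly, but this is routine since the limit measures have no atoms on $\ideal$ in the relevant sense, or one argues via continuous test functions.)

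For property \ref{PS-b}, full support, the key input is that the Poincaré series \emph{diverges} at the critical exponent $s=h$ (by \cite{mCo93}, as recalled in the text), which is precisely what Patterson's trick is designed to exploit. Since $\supp\nu_p \subset \Lambda(\Gamma) = \ideal$ by cocompactness, it remains to show the support is \emph{all} of $\ideal$. I would use minimality of the $\Gamma$-action on $\ideal$ (Remark \ref{rem:minmal}): $\supp\nu_p$ is a nonempty closed set, and by the $\Gamma$-quasi-invariance coming from \ref{PS-a} and \ref{PS-c} (the densities are everywhere positive and finite) it is $\Gamma$-invariant; a nonempty closed $\Gamma$-invariant subset of $\ideal$ must be all of $\ideal$ by minimality. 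The only thing to verify is that $\supp\nu_p$ is genuinely nonempty, which follows from $\nu_p(\ideal) \ge e^{-h\,d(p,x)} > 0$ by \eqref{eqn:nu-weight}. The main obstacle I anticipate is the simultaneous-convergence-in-$q$ step and the careful justification that the Radon--Nikodym derivative formula survives the weak* limit --- this requires the uniform (in $q$ over compacta, in $\xi$) control from Corollary \ref{cor:c1}, and some care that the limiting density is genuinely a density (e.g. that $\nu_p$ gives zero mass to any $\nu_q$-null set), but the convexity-free estimates needed are all already packaged in \eqref{eqn:bpqxi} and the uniform equivalence \eqref{lem:unif-equiv}.
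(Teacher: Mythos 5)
Your proposal is correct and follows essentially the same route as the paper: convergence for all $q$ simultaneously and property \ref{PS-c} both come from the explicit Radon--Nikodym density $e^{-s(d(q,\cdot)-d(p,\cdot))}$ converging uniformly to $e^{-h b_p(q,\cdot)}$ via Corollary \ref{cor:c1}, property \ref{PS-a} comes from reindexing the defining sum by an isometry and passing to the limit, and property \ref{PS-b} follows from minimality of the $\Gamma$-action on $\ideal$ together with the quasi-invariance given by \ref{PS-a} and \ref{PS-c}. The only cosmetic differences are your invocation of a countable dense set of $q$'s (not needed, since the density argument works directly for each $q$ with no further subsequence) and your mention of divergence of the Poincar\'e series at $s=h$, which plays no role in the full-support argument once minimality is available.
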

\begin{proof} 
Fix $p\in \wM$. Let $s_k\to h$ be such that the weak* limit $\lim_{k\to \infty} \nu_{p,x,s_k} =: \nu_p$ exists.  To see that $s_k$ works for all $q$, define a function $\psi\colon \cl(\wM) \to \RR$ by
\[
\psi(z) = \begin{cases}
d(q,z) - d(p,z) & \text{if } z\in \wM, \\
b_p(q,z) &\text{if } z\in \ideal,
\end{cases}
\]
and observe that \eqref{eqn:nupxs} gives $\frac{ d\nu_{q,x,s}}{d\nu_{p,x,s}} = e^{-s\psi}$ for all $q$ and $s$. The function $\psi$ is continuous by \eqref{eqn:bpqxi}, so using the fact that $e^{-s_k \psi} \to e^{-h\psi}$ uniformly, we deduce that for any continuous $\phi\colon \cl(\wM)\to\RR$ we have
\[
\lim_{k\to\infty} \int \phi\,d\nu_{q,x,s_k}
= \lim_{k\to\infty} \int \phi e^{-s_k\psi} \,d\nu_{p,x,s_k} = \int \phi e^{-h \psi} \,d\nu_p.
\]
This proves that $\nu_q := \lim_k \nu_{q,x,s_k}$ exists for all $q\in \wM$, and that \ref{PS-c} holds.

For \ref{PS-a}, it suffices to observe that \eqref{eqn:nupxs} gives
\[
\nu_{\gamma p,x,s}(\gamma A)
= \frac 1{P(s,x,x)} \sum_{\alpha\in \Gamma} e^{-sd(\gamma p,\alpha x)} \delta_{\alpha x}(\gamma A),
\]
and that upon re-indexing by $\beta = \gamma^{-1}\alpha$, the sum is equal to
\[
\sum_{\beta\in \Gamma} e^{-sd(\gamma p, \gamma \beta x)} \delta_{\gamma \beta x}(\gamma A)
= \sum_{\beta\in \Gamma} e^{-sd(p, \beta x)} \delta_{\beta x}(A),
\]
so that $\nu_{\gamma p,x,s}(\gamma A) = \nu_{p,x,s}(A)$. Taking a limit as $s_k\to h$ gives \ref{PS-a}.

%We first observe that given a compact subset \(K\subset\wM\), there is a sequence \(s_k\to h\) as \(k\to\infty\) such that for every $p \in K$ the weak limit $\lim\limits_{k\to \infty} \nu_{p,x,s_k} =:\nu_p$ exists. On the other hand, the definition of $\nu_{p,x,s}$ immediately gives$\nu_{\gamma p,x,s} (\gamma A) = \nu_{p,x,s} (A)$ for all $p,x,s$.  Thus, since \(M\) is a compact quotient we  have the same sequence \(s_k\to h\) as \(k\to\infty\) such that for every $p \in \wM$ the weak limit $\lim\limits_{k\to \infty} \nu_{p,x,s_k} =:\nu_p$ exists. This gives the first assertion.

%The second  assertion follows immediately from \eqref{eqn:bpqxi}.
%The third assertion follows from 
Finally, for \ref{PS-b}, we use
the fact that $\Gamma$ acts minimally on $\partial  \wM$ (see Remark \ref{rem:minmal}).
Suppose
$\xi \notin \supp \nu_p$ then $\gamma \xi  \notin \supp \nu_p$ for all $\gamma \in \Gamma$, since given an open neighborhood  
$U \subset   \wM$ of $\xi$ with $\nu_p(U) =0$ we have  $0 = \nu_{\gamma p}(\gamma(U)) =  \nu_p(\gamma(U))$ 
by \ref{PS-a} and \ref{PS-c}. Since the orbit  $\Gamma \xi$ is a dense and the $\supp \nu_p$ is closed we obtain $\supp \nu_p = \emptyset$ 
which contradicts the non-triviality
of the Patterson-Sullivan measure.
\end{proof} 
\begin{remark}\label{rem:PS}
Since  $|b_p(q,\xi)| \leq d(p,q)$ for all $p,q,\xi$ property  \ref{PS-c}  implies that for every $p,q\in \wM$ and any measurable subsets $A\subset \ideal$ that
 $\frac{\nu_p(A)}{\nu_q(A)} \leq e^{hd(p,q)}$.
\end{remark}
For $\xi \in \partial  \wM$ and $p \in \wM$
consider the projections
\[
pr_\xi \colon  \wM \to \partial  \wM
\quad\text{and}\quad
pr_p \colon  \wM \setminus \{p\} \to  \partial  \wM
\]
along geodesics emanating from $\xi$ and $p$, respectively. That is, $pr_\xi(x)
= c_{\xi, x}(\infty)$, where $c_{\xi,x}$ is the geodesic with
$c_{\xi,x}(-\infty) = \xi$,
 $c_{\xi, x}(0) = x$ and $pr_p(x) =
c_{p,x}(\infty)$, where $c_{p,x}$ is the geodesic
 with $c_{p,x}(0) = p$,
$c_{p,x}(d(p,x))=x$. 

\begin{lemma}\label{lem:open-shadow}
There exists $R>0$ such that
for all $x \in  \cl(\wM) $ and $p \in \wM$, the \textit{shadow set} 
$pr_x B(p,R)$  of the open geodesic ball $B(p,R)$ with center $p$ and radius $R$ contains an open set in $\partial  \wM$.
\end{lemma}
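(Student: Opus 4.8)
The plan is to reduce, via the Morse Lemma, to the classical shadow lemma in the negatively curved background metric $g_0$. The point is that the identity map $(\wM,g)\to(\wM,g_0)$ is a quasi-isometry, so by the Morse Lemma it induces a homeomorphism of the boundary spheres at infinity; consequently a set that is a neighbourhood of a point of $\ideal$ in the $g_0$-cone topology is also a neighbourhood in the $g$-cone topology used throughout the paper. The case $x=p$ is trivial: every $g$-geodesic ray from $p$ already meets $B(p,R)$, so $\mathrm{pr}_p B(p,R)=\ideal$; thus we may assume $x\neq p$.

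First I would fix a $g$-geodesic $c$ emanating from $x$ and passing through $p$: when $x\in\wM$ this is the ray from $x$ through $p$, and when $x=\xi\in\ideal$ it is the unique geodesic through $p$ asymptotic to $\xi$ (uniqueness by Lemma \ref{lem:uniqasym} together with the divergence property). Set $\eta:=\mathrm{pr}_x(p)=c(\infty)$. Next I would compare $c$ with the $g_0$-geodesic $\alpha$ emanating from $x$ with the same other endpoint $\eta$. Extending Theorem \ref{thm:Morse} from segments to rays and bi-infinite geodesics by the standard exhaustion argument (approximate by segments $c([-T,T])$ and the $g_0$-segment with the same endpoints, apply the Morse Lemma, and let $T\to\infty$, using that in negative curvature the connecting geodesic is unique), one gets that $\alpha$ lies within $g$-Hausdorff distance $R_0$ of $c$, hence passes within $d^0$-distance $AR_0$ of $p$ by \eqref{lem:unif-equiv}. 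Now the classical shadow lemma in the pinched-negatively-curved space $(\wM,g_0)$ produces a $g_0$-open neighbourhood $U$ of $\eta$ in $\ideal$ — concretely, those $\eta'$ whose $g_0$-Gromov product with $\eta$ based at $p$ exceeds a fixed threshold — such that for every $\eta'\in U$ the $g_0$-geodesic $\alpha'$ emanating from $x$ with other endpoint $\eta'$ passes within $d^0$-distance $AR_0+C_0$ of $p$, where $C_0$ depends only on the curvature bounds of $g_0$ (via the fellow-traveling property of geodesics sharing an ideal endpoint).

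Applying the Morse Lemma once more: for each $\eta'\in U$ (shrinking $U$ so that $\eta'\neq\xi$ when $x=\xi$), a $g$-geodesic $c'$ emanating from $x$ with endpoint $\eta'$ exists by Lemma \ref{lem:eta-xi} (resp.\ Lemma \ref{lem:p-to-xi} when $x\in\wM$), and $c'$ lies within $g$-Hausdorff distance $R_0$ of $\alpha'$, hence passes within $g$-distance $R:=R_0+A(AR_0+C_0)+1$ of $p$. Choose $y'\in c'\cap B(p,R)$. Since geodesics emanating from $x$ through a prescribed point are unique (Cartan--Hadamard when $x\in\wM$; the divergence property when $x\in\ideal$), we have $c'=c_{x,y'}$ up to parametrisation, so $\mathrm{pr}_x(y')=c'(\infty)=\eta'$. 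Therefore $U\subseteq \mathrm{pr}_x B(p,R)$, and since $R$ is independent of $x$ and $p$ and $U$ is open in $\ideal$, the lemma follows.

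The main obstacle is the bookkeeping around the two boundary identifications: one must be careful that the shadow/Gromov-product neighbourhoods produced from $g_0$-geometry are genuinely open in the $g$-cone topology on $\ideal$, which relies on the (routine but slightly tedious) extension of the Morse Lemma to rays and bi-infinite geodesics and on the resulting boundary homeomorphism. I note that one could alternatively argue entirely intrinsically, since $\wM$ is itself Gromov hyperbolic: the ordinary shadow lemma in a geodesic $\delta$-hyperbolic space applies verbatim (uniqueness of connecting geodesics is not needed), and $\ideal$ with Eberlein's cone topology coincides with the Gromov boundary.
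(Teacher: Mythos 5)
Your argument is correct and takes essentially the same route as the paper: use the Morse Lemma to pass from $g$-geodesics to $g_0$-geodesics, invoke the negative curvature of $g_0$ to produce an open shadow neighbourhood near the endpoint of the geodesic from $x$ through $p$, and pass back via Morse with $R$ depending only on $R_0$, $A$, and the $g_0$-geometry. You are somewhat more explicit than the paper's terse proof about the case distinction $x\in\wM$ versus $x\in\ideal$, about working with geodesics emanating from $x$ itself (rather than bi-infinite geodesics through an auxiliary ideal point), and about extending the Morse Lemma to rays and lines, all of which are reasonable points of care.
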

\begin{proof}
For \(x\in\cl{(\wM)}\) and \(p\in\wM\), let $v = -V(p,x)$ be given by Proposition \ref{prop:Eb1}. By the definition of the topology on \(\partial \wM\),  for every  \(v\in T_p^1\wM\) and \(\varepsilon>0\) we have \(A_{\varepsilon}(v):=\{c_w(\infty): \angle_p(v,w)<\varepsilon\}\) is open in \(\partial\wM\). For every \(\eta\in A_\varepsilon(v)\) there exists a unique geodesic  \(c^0_{\xi,\eta}\) with respect to the metric of negative curvature joining \(\xi:=c_v(-\infty)\) and \(\eta\); every such geodesic stays at a bounded distance to \(p\) and this distance can be made arbitrary small by choosing \(\varepsilon\) arbitrary small. By the Morse Lemma, every geodesic \(c^0_{\xi,\eta}\) corresponds to at least one geodesic \(c_{\xi,\eta}\) which stay at \(R_0\) distance (see Theorem \ref{thm:Morse}). By choosing \(\varepsilon\) small enough and \(R\) large, we can guarantee  \(d(p,c_{\xi,\eta})<R\) which implies that \(A_{\varepsilon}(v)\subset pr_x B(p,R)\).
\end{proof}

Using Lemma \ref{lem:open-shadow} we obtain:
\begin{proposition}\label{prop:PS-bds}
Let $\{\nu_p\}_{p\in  \wM }$ be the Patterson--Sullivan measures and fix $\rho\geq R$, where $R$ is as in Lemma \ref{lem:open-shadow}.
\begin{enumerate}[label=\upshape{(\alph{*})}]
\item\label{bds-a}
There exists $\ell=\ell(\rho) > 0$ such that for every $x\in \cl(\wM)$, we have
\[
\nu_p(pr_x B(p,\rho)) \ge \ell.
\]
\item\label{bds-b}
There is a constant $b = b(\rho)$
such that for all $x \in \wM$ and $\xi = c_{p,x}(-\infty)$,
\[
\frac{1}{b} e^{-h d(p,x)} \le \nu_p(pr_\xi(B(x,\rho)) \le be^{- h d(p,x)}.
\]
\item\label{bds-c}
A similar estimate holds if we project from $p \in \wM$, namely there is a constant
$a = a(\rho) > 0$ such that for all $p\in \wM$,
\[
\frac{1}{a} e^{-h d(p,x)} \le \nu_p(pr_p(B(x,\rho))) \le a e^{-h
d(p,x)}.
\]
\end{enumerate}
\end{proposition}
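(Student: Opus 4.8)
The three estimates are the standard Patterson--Sullivan shadow bounds, and the proof will follow the classical argument (as in \cite{gK97,gK98,mCo93}) adapted to our coarse-hyperbolic setting, using the Busemann-cocycle property \ref{PS-c}, the $\Gamma$-equivariance \ref{PS-a}, the minimality of the $\Gamma$-action on $\ideal$, and the uniform fellow-traveling provided by Lemma \ref{lem:endpts-suffice} and the Morse Lemma.

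For part \ref{bds-a}, I would argue by compactness. Lemma \ref{lem:open-shadow} gives $R>0$ such that for every $x\in\cl(\wM)$ the shadow $\pr_x B(p,R)$ contains a nonempty open subset of $\ideal$; by Proposition \ref{prop:PS}\ref{PS-b} this open set has positive $\nu_p$-measure. To get a uniform lower bound $\ell>0$ as $x$ ranges over $\cl(\wM)$, note that the assignment $x\mapsto V(p,x)$ is continuous (Proposition \ref{prop:Eb1}) and the size of the open cone $A_\varepsilon(v)$ produced in the proof of Lemma \ref{lem:open-shadow} can be chosen locally uniformly; since $\cl(\wM)$ is compact, finitely many such cones suffice and one takes $\ell$ to be the minimum of their $\nu_p$-measures (which are positive, again by full support). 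Monotonicity of shadows in $\rho$ then handles all $\rho\ge R$.

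For parts \ref{bds-b} and \ref{bds-c} the mechanism is to move the basepoint: the shadow of a ball around $x$ projected to infinity is, up to bounded error, the $\Gamma$-translate of a shadow of a ball around $p$. More precisely, pick a deck transformation $\gamma$ with $\gamma p$ within a fixed bounded distance $D$ of $x$ (possible since $M$ is compact, so $\Gamma p$ is $D$-dense in $\wM$). Then $\pr_\xi(B(x,\rho))$ and $\gamma\bigl(\pr_\xi(B(p,\rho+D))\bigr)$ are nested up to enlarging $\rho$ by a controlled amount, and by \ref{PS-a} we have $\nu_p(\gamma A) = \nu_{\gamma^{-1}p}(A)$; combining this with \ref{PS-c}, namely $\frac{d\nu_p}{d\nu_{\gamma^{-1}p}}(\eta) = e^{-h b_{\gamma^{-1}p}(p,\eta)}$, converts the statement into one about $\nu_p$ of a shadow of a \emph{bounded-radius} ball around a basepoint at distance comparable to $d(p,x)$. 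The Busemann term $b_{\gamma^{-1}p}(p,\eta) = b_p(\gamma^{-1}p,\eta)$ is, for $\eta$ in the relevant shadow, equal to $-d(p,\gamma^{-1}p) = -d(\gamma p, p)$ up to an additive constant depending only on $\rho$ and $R_0$ (this is where coarse hyperbolicity enters: for $\eta$ in the shadow of $B(\gamma^{-1}p,\rho)$ seen from $p$, the geodesic from $p$ to $\eta$ passes within $\rho$ of $\gamma^{-1}p$, and by Lemma \ref{lem:endpts-suffice} and the Morse Lemma the Busemann function is pinned). Since $d(p,\gamma^{-1}p) = d(x,p) + O(D+\rho)$, we get $\nu_p(\pr_\xi(B(x,\rho))) \asymp e^{-hd(p,x)}\,\nu_{\gamma^{-1}p}(\pr_\xi(B(\gamma^{-1}p,\rho')))$ with a multiplicative constant depending only on $\rho$; and the remaining factor is bounded above and below by part \ref{bds-a} (lower bound) together with the total mass normalization \eqref{eqn:nu-weight} or a symmetric shadow argument (upper bound). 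Part \ref{bds-c} is entirely parallel, projecting from the point $p$ rather than from $\xi\in\ideal$; the only change is that one uses the projection $\pr_p$ and the corresponding geodesics emanating from $p$, and the same cocycle-and-translation bookkeeping applies.

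The main obstacle is the \emph{upper} bound in \ref{bds-b} and \ref{bds-c}: the lower bounds follow from \ref{bds-a} once the basepoint has been moved, but bounding the $\nu_p$-mass of a shadow \emph{from above} requires controlling how much of $\ideal$ can be ``seen through'' a ball of radius $\rho$ from far away. In negative curvature this is the classical shadow lemma and relies on the fact that distinct points in such a shadow have geodesics to the basepoint that remain uniformly close for time $\asymp d(p,x)$, so the shadow is contained in a single ``dynamical ball'' whose $\nu_p$-measure decays like $e^{-hd(p,x)}$ by \ref{PS-c}; in our setting the substitute for geodesic convexity is precisely Lemma \ref{lem:endpts-suffice}, which forces two geodesics from $p$ that both pass within $\rho$ of $x$ to stay within $R_2 = R_2(\rho)$ of each other on $[0,d(p,x)]$, and hence their endpoints at infinity lie in a shadow of a bounded ball. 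Making this containment precise — and in particular checking that the ``bounded ball'' can be taken of a radius independent of $d(p,x)$ — is the one place where the failure of true hyperbolicity (no convexity, no continuity of horospheres) must be worked around, and it is where I would spend the most care.
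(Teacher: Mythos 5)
Your proposal follows the same route as the paper, so let me note where they align and flag a couple of slips.

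For part \ref{bds-a}, both arguments combine Lemma \ref{lem:open-shadow}, full support of $\nu_p$ (Proposition \ref{prop:PS}\ref{PS-b}), and compactness. The paper packages this into three explicit steps (full support; a uniform angular scale $\epsilon$ so that some cone $A_\epsilon(v)$ fits inside every shadow, proved by a contradiction/compactness argument using Proposition \ref{prop:Eb1}; a uniform lower bound $\nu_p(A_\epsilon(v))\ge\ell$ over $p\in K$ and $v$), and then passes from $p\in K$ to arbitrary $p$ by $\Gamma$-equivariance. Your sketch contains all of these ingredients, just less formally organized.

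For parts \ref{bds-b}--\ref{bds-c}, the paper applies the cocycle directly from $\nu_p$ to $\nu_x$: $\nu_p(A)=\int_A e^{-hb_x(p,\eta)}\,d\nu_x(\eta)$, bounds $|b_x(p,\eta)-d(p,x)|$ by a constant for $\eta$ in the shadow (via Corollary \ref{cor:c1}), and reduces to uniform two-sided bounds on $\nu_x(A)$ — the lower from part \ref{bds-a}, the upper from the uniform total mass of $\nu_q$ over a fundamental domain after applying $\Gamma$-equivariance. Your ``move-the-basepoint-by-$\Gamma$'' formulation accomplishes the same thing and arguably makes the $\Gamma$-equivariance step (which the paper leaves implicit in the upper bound) more visible. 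You also correctly identify that the Busemann-pinning step is exactly where coarse hyperbolicity enters; the paper handles this via Corollary \ref{cor:c1}, which is the substitute for convexity you describe.

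Two concrete slips to correct in your write-up. First, you write $b_{\gamma^{-1}p}(p,\eta)=b_p(\gamma^{-1}p,\eta)$, whereas Corollary \ref{lem:b-xi} gives $b_{\gamma^{-1}p}(p,\eta)=-b_p(\gamma^{-1}p,\eta)$; the identity you actually need at that point is
\[
\frac{d\nu_{\gamma^{-1}p}}{d\nu_p}(\eta)=e^{-h\,b_p(\gamma^{-1}p,\eta)}\asymp e^{-h\,d(p,x)}
\]
for $\eta$ in the relevant shadow. Second, after pulling the shadow back by $\gamma$ you should land on $\nu_p$ (or $\nu_{\gamma^{-1}x}$ with $\gamma^{-1}x$ near $p$) applied to a shadow of a bounded ball near $p$, not on $\nu_{\gamma^{-1}p}\bigl(pr_\xi(B(\gamma^{-1}p,\rho'))\bigr)$ as written; the quantity you wrote is also uniformly bounded (so the conclusion survives), but the displayed chain does not follow from the equivariance/cocycle relations as stated. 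These are bookkeeping errors rather than a gap in the method.
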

\begin{proof}
The last two estimates follow from \ref{bds-a} and the defining properties of $ \nu_p$.  To see this, observe that 
given $A \subset \partial \wM$, Proposition \ref{prop:PS}\ref{PS-c} gives
\[
\nu_p(A) = \int\limits_{A} e^{-h b_x(p, \eta)} d\nu_x(\eta).
\]
If $A = pr_\xi B (x, \rho)$ or $pr_p B(x, \rho)$ then corollary \ref{cor:c1} implies that
 $|(b_x (p, \eta) - d(p,x))|$ is bounded by
a constant for all $\eta  \in A$, which yields \ref{bds-b} and \ref{bds-c}.

The first estimate is a consequence of the following steps.\\
\underline{Step 1:} \hspace{0,5cm}
{\it $\supp \;\nu_p = \partial \wM$ for one and, hence, for all $p \in
\wM$} using Proposition \ref{prop:PS}\ref{PS-b}. \\

For $v \in T^1_x\wM = \pi^{-1}(x)$ and $\epsilon>0$,
let $A_\epsilon(v) = \{c_w(\infty) : w\in T_x^1\wM$ and $\angle(v,w) < \epsilon\} \subset \ideal$ as in the proof of Lemma \ref{lem:open-shadow}.
% consider the open neighborhood
%\[
%C_\epsilon(v) = \{c_w(\infty)\; | \;w \in T_x^1 \wM  \; \; \text{ and } \; \; \angle (v,w)
%< \epsilon\} \subset \ideal.
%\]
Fix a compact set $K\subset\wM$ such that $ \bigcup_{\gamma \in \Gamma} \gamma(K) =  \wM $ and a reference
point $x_0 \in K$.
Then it follows:\\
\underline{Step 2:} \hspace{0,5cm}  {\it For all $\rho \ge R$ there exists $\epsilon > 0$ such that for all
$p \in K$ and $x \in    \cl( \wM) $
\[
 A_\epsilon(v) \subset pr_x (B(p , \rho )) \;
\]
for some $v \in T^1_{x_0}\wM$.}\\
Suppose Step~2 is false. Then there exists $\rho \ge R$ and sequences $p_n \in K$,
$x_n \in  \cl( \wM)$  such that
\[
A_{1/n} (v) \not\subseteq pr_{x_n}(B(p_n, \rho))
\]
for all $v \in T^1_{x_0}\wM$.  We can assume after choosing a subsequence
that $x_n \to  \xi \in \cl( \wM)$ and $p_n \to p \in K$. Since  $pr_{\xi}(B(p, \rho))$ contains some open set in $\partial \wM$
there exists $\epsilon >0$ and $v_0 \in T^1_{x_0}\wM$ such that $A_{\epsilon}(v_0) \subset
pr_\xi (B(p, \rho))$. The continuity of the projection implies the existence
of $n_0$ such that for all $n \ge n_0$ we have: $CA_{\epsilon/2}(v_0) \subset
pr_{x_n} (B(p_n, \rho))$. But this contradicts the choice of the sequence. Then Step 2 is true.\\
\underline{Step 3:} \hspace{0,5cm}  {\it
For all $\epsilon > 0$ there exists a constant $\ell= \ell(\epsilon) > 0$ such that
\[
\nu_p (A_\epsilon(v)) > \ell
\]
for all $v \in T^1_{x_0}\wM $ and $p \in K$.}
This is a consequence of the following facts: each $\nu_p$ is fully supported (Step 1); $\sup \{b_p(q,\xi) : \xi\in \ideal, p,q\in K\} < \infty$ by compactness and continuity; and there is a finite collection of open sets in $\ideal$ such that each $A_\epsilon(v)$ contains an element of this collection.

Now consider $x \in  \cl( \wM)$ and $p \in \wM$. Choose $\gamma \in \Gamma$ such
that
 $\gamma p \in K$. Since $\nu_p(pr_x(B(p, \rho)) = \nu_{\gamma p}
(pr_{\gamma x}
 B(\gamma p, \rho))$ the estimate (a) follows from Steps 2 and
3.
 \end{proof}
 
 \subsection{Construction of the measure of maximal entropy using the Patterson-Sullivan measure}
Now we construct an
invariant measure for the geodesic flow using the Patterson-Sullivan measures $\nu_p$. Broadly speaking, we follow the approach in \cite{gK98}, which was originally carried out in negative curvature in \cite{vK90}; however, as we will see below, the present setting introduces some technical difficulties that require some work to overcome.

By Proposition \ref{prop:PS}\ref{PS-c},  $\nu_p$ is
 $\Gamma$-quasi-invariant with Radon-Nikodym cocycle
\begin{equation}\label{eqn:RN}
f(\gamma, \xi) = e^{-h b_p(\gamma^{-1} p, \xi)} = \frac{d\nu_{\gamma^{-1} p}}{d\nu_p}(\xi).
\end{equation}
For $(\xi, \eta) \in \sqbd := \dididi$ consider
\begin{equation}\label{eqn:beta-p}
\beta_p(\xi, \eta) = - (b_p(q, \xi) + b_p(q, \eta)) \;,
\end{equation}
where $q$ is a point on a geodesic $c$ connecting $\xi$ and $\eta$.
In geometrical terms $\beta_p(\xi, \eta)$ is the length of the
segment $c$ which is cut out by the horoballs through
$(p, \xi)$ and $(p, \eta)$.
Since $\grad_q b_p(q,\xi) =- \grad_q b_p(q,\eta)$ for all points on geodesics
connecting $\xi$ and $\eta$, this number is independent of the choice of
$q$. An easy computation using \eqref{eqn:RN}, see \cite[Lemma 2.4]{gK98}, shows:
\begin{lemma}\label{5.4.A}
For $p \in \wM$, the measure $\bar\mu$ on $\sqbd$ defined by
\[
d \bar\mu (\xi, \eta) = e^{h\beta_p(\xi,\eta)}
 d\nu_p(\xi) d\nu_p(\eta)
\]
%defines a $\Gamma$-invariant measure on $\partial \wM \times \partial \wM \setminus\diag$.
is $\Gamma$-invariant.
\end{lemma}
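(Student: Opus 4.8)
\textbf{Proof proposal for Lemma \ref{5.4.A}.}

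The plan is to verify directly that the $\Gamma$-action on $\sqbd$ preserves the measure $\bar\mu$, using the cocycle formula \eqref{eqn:RN} for the Patterson--Sullivan family together with the cocycle-like behavior of the function $\beta_p$ under deck transformations. First I would record the two ingredients. The $\Gamma$-equivariance and Radon--Nikodym identity of Proposition \ref{prop:PS}(a)--(c) give, for $\gamma\in\Gamma$,
\[
\frac{d(\gamma_*\nu_p)}{d\nu_p}(\xi) = \frac{d\nu_{\gamma^{-1}p}}{d\nu_p}(\xi) = e^{-h b_p(\gamma^{-1}p,\xi)},
\]
equivalently $d\nu_p(\gamma^{-1}\xi) = e^{-h b_p(\gamma^{-1}p,\xi)}\,d\nu_p(\xi)$ after the change of variables $\xi\mapsto\gamma^{-1}\xi$ (using equivariance to rewrite $\nu_p(\gamma A)$ in terms of $\nu_p(A)$). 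Second, I would compute how $\beta_p$ transforms: since $b_p(q,\xi)$ is a Busemann function and deck transformations are isometries, one has $b_p(\gamma^{-1}q,\gamma^{-1}\xi) = b_{\gamma p}(q,\xi)$, and then Corollary \ref{lem:b-xi} ($b_{\gamma p}(q,\xi) = b_p(q,\xi) - b_p(\gamma p,\xi)$) lets us express everything back in terms of the reference point $p$. Carrying this through $\beta_p(\xi,\eta) = -(b_p(q,\xi)+b_p(q,\eta))$ yields an identity of the shape
\[
\beta_p(\gamma\xi,\gamma\eta) = \beta_p(\xi,\eta) + b_p(\gamma^{-1}p,\xi) + b_p(\gamma^{-1}p,\eta),
\]
where one must take the evaluation point on the connecting geodesic consistently (it is independent of the choice by the gradient cancellation noted before the lemma, and a $\gamma$-translate of a geodesic connecting $\xi,\eta$ connects $\gamma\xi,\gamma\eta$).

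The main step is then to combine these. Pushing $\bar\mu$ forward by $\gamma$ and applying the two transformation rules,
\[
d(\gamma_*\bar\mu)(\xi,\eta) = e^{h\beta_p(\gamma^{-1}\xi,\gamma^{-1}\eta)}\,d\nu_p(\gamma^{-1}\xi)\,d\nu_p(\gamma^{-1}\eta),
\]
and substituting
\[
\beta_p(\gamma^{-1}\xi,\gamma^{-1}\eta) = \beta_p(\xi,\eta) - b_p(\gamma^{-1}p,\xi) - b_p(\gamma^{-1}p,\eta)
\]
together with $d\nu_p(\gamma^{-1}\xi) = e^{-h b_p(\gamma^{-1}p,\xi)}\,d\nu_p(\xi)$ and the analogous identity in $\eta$, the factor $e^{-h b_p(\gamma^{-1}p,\xi)}$ appearing from $\beta_p$ cancels against a factor $e^{+h b_p(\gamma^{-1}p,\xi)}$ — wait, I should be careful about signs: the point is precisely that the exponential weight $e^{h\beta_p}$ was built so that the correction terms from $\beta_p$ under $\gamma$ exactly cancel the Radon--Nikodym factors from the two copies of $\nu_p$. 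After cancellation one is left with $e^{h\beta_p(\xi,\eta)}\,d\nu_p(\xi)\,d\nu_p(\eta) = d\bar\mu(\xi,\eta)$, which is the claim. This is the ``easy computation'' referred to, and the reference \cite[Lemma 2.4]{gK98} can be cited for the analogous statement in negative curvature; the only thing to check is that each identity used (the Busemann cocycle relation of Corollary \ref{lem:b-xi}, the independence of $\beta_p$ from the chosen point on the connecting geodesic, and equivariance of the $\nu_p$) holds verbatim in the present no-conjugate-points setting.

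The potential obstacle, and the only place the present generality matters, is the possible \emph{non-uniqueness} of the geodesic connecting $\xi$ and $\eta$: in our setting $\eta,\xi$ may be ``conjugate at infinity'' with several connecting geodesics. One must check that $\beta_p(\xi,\eta)$ is nonetheless well-defined. This follows from the observation made just before the lemma statement: for any point $q$ on \emph{any} geodesic connecting $\xi$ and $\eta$ one has $\grad_q b_p(q,\xi) = -\grad_q b_p(q,\eta)$, so the function $q\mapsto b_p(q,\xi)+b_p(q,\eta)$ is locally constant along every such geodesic; and its value is the same on all of them because, following such a geodesic to $+\infty$ along $\eta$, the quantity $b_p(q,\eta)\to-\infty$ in a way that is comparable across the finitely many (or, a priori, the set of) connecting geodesics via Corollary \ref{cor:c1} and the Morse Lemma (Theorem \ref{thm:Morse}), which forces all connecting geodesics into a bounded tube. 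Once this well-definedness is settled, the computation above goes through unchanged, and $\Gamma$-invariance of $\bar\mu$ follows.
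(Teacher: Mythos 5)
Your overall strategy is the right one and matches the computation implicit in the paper's citation of \cite[Lemma~2.4]{gK98}: combine the Radon--Nikodym cocycle \eqref{eqn:RN} with a cocycle identity for $\beta_p$ and check that the exponential factors cancel. The first displayed identity you derive,
\[
\beta_p(\gamma\xi,\gamma\eta) = \beta_p(\xi,\eta) + b_p(\gamma^{-1}p,\xi) + b_p(\gamma^{-1}p,\eta),
\]
is correct (via \(b_p(\gamma q,\gamma\xi)=b_{\gamma^{-1}p}(q,\xi)\) and Corollary~\ref{lem:b-xi}). However, in the pushforward computation the signs and arguments are off in a way you flag but never actually fix. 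Substituting $\gamma\mapsto\gamma^{-1}$ in the correct identity gives
\[
\beta_p(\gamma^{-1}\xi,\gamma^{-1}\eta) = \beta_p(\xi,\eta) + b_p(\gamma p,\xi) + b_p(\gamma p,\eta),
\]
not the formula you wrote with $-b_p(\gamma^{-1}p,\cdot)$; similarly, equivariance plus \ref{PS-c} gives $d\nu_p(\gamma^{-1}\xi)=e^{-hb_p(\gamma p,\xi)}\,d\nu_p(\xi)$, again with $\gamma p$ in place of $\gamma^{-1}p$. With the two formulas as you wrote them, the factors $e^{-hb_p(\gamma^{-1}p,\cdot)}$ from $\beta_p$ and from $d\nu_p$ \emph{compound} rather than cancel, and the computation does not close. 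With the corrected versions, the $e^{+hb_p(\gamma p,\cdot)}$ from $\beta_p$ cancels the $e^{-hb_p(\gamma p,\cdot)}$ Jacobian and one obtains $d(\gamma_*\bar\mu)=d\bar\mu$ as desired. A slightly cleaner presentation is to compute $\bar\mu(\gamma A)$ directly: change variables $\xi\mapsto\gamma\xi$, $\eta\mapsto\gamma\eta$, use $d\nu_p(\gamma\xi)=e^{-hb_p(\gamma^{-1}p,\xi)}d\nu_p(\xi)$, and then your \emph{first} (correct) $\beta_p$ identity does the cancellation without any further sign-chasing.

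On your well-definedness aside: you are right that the possible non-uniqueness of the geodesic joining $\xi,\eta$ is the only place the no-conjugate-points generality could cause trouble, and the paper's own justification (the gradient cancellation before the lemma statement) is what one appeals to. But your claim that $b_p(q,\eta)\to-\infty$ ``in a way that is comparable'' across connecting geodesics does not by itself give equality of the constants -- bounded difference is weaker than zero difference -- so as written this aside does not actually close the gap; it should instead rest on the assertion (taken from the text preceding the lemma) that the gradient of $q\mapsto b_p(q,\xi)+b_p(q,\eta)$ vanishes on the whole set of connecting geodesics.
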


Now we use $\bar\mu$ to produce a $\Gamma$-invariant and flow-invariant Borel measure $\tmu$ on $T^1\wM$ that projects to a finite flow-invariant Borel measure $\mu$ on $T^1 M$. We will need the projection $P\colon T^1\wM  \to  \sqbd$  given by $P(v) = (c_v(-\infty), c_v(\infty))$, where $c_v$ is the geodesic with $\dot c_v(0)=v$. 

In negative curvature, we can proceed as in \cite{vK90}: $P^{-1}(\xi,\eta)$ is a single trajectory -- the set of tangent vectors to a single geodesic -- and so writing $\lambda_{\xi,\eta}$ for Lebesgue measure on $P^{-1}(\xi,\eta)$, one obtains a $\Gamma$-invariant and flow-invariant measure on $T^1\wM$ by
\begin{equation}\label{eqn:Kaim}
\tmu(A) = \int_{\sqbd} 
\lambda_{\xi,\eta}(A) \,d\bar\mu(\xi,\eta).
\end{equation}
One can follow the same approach in nonpositive curvature, where $P^{-1}(\xi,\eta)$ is either a single geodesic or a flat totally geodesic submanifold of $\wM$ on which the flow acts isometrically \cite{gK98}.
In our setting, however, the flow need not act isometrically on $P^{-1}(\xi,\eta)$ (the flat strip theorem fails), and on such sets it is not clear how to define a flow-invariant measure in a measurable and $\Gamma$-invariant way. Nevertheless, we can prove the following.

\begin{theorem}\label{5.4.C}
Let $M$ be a smooth closed Riemannian manifold without conjugate points satisfying conditions \ref{H1}--\ref{H4}. Then $P^{-1}(\xi,\eta)$ is a single geodesic for $\bar\mu$-a.e.\ $(\xi,\eta) \in \sqbd$, and thus \eqref{eqn:Kaim} defines a $\sigma$-finite Borel measure $\tmu$ on $T^1\wM$. This measure is fully supported, gives full weight to the expansive set $\mathcal{E}$ from \eqref{eqn:E}, and is the lift of the unique MME $\mu$ on $T^1M$ as in \eqref{eqn:tmu-lift}.
In particular, $\mu$ is ergodic fully supported on $T^1M$.
%the measure $\mu$ constructed by the above procedure is a measure of maximal entropy for the geodesic flow on $T^1M$: $h_{\mu} (f_1) = \htop(f_1)  = h$.
%\gk{The condition (H4) implies that the expansive set has full measure with respect to  $\mu$. The reason is that $\Gamma$ acts ergodically  on $\dididi$ with respect to $\bar \mu$ and therefore the expansive set on  $\dididi$ has zero
%or full measure. If the measure is zero $\mu$ provides a measure of maximal entropy such that the expansive set on  $T^1\wM$ has measure zero contradicting (H4). \\
%If this is true I would suggest to write: If the metric  satisfies conditions \ref{H1} to \ref{H4} the unique MME is fully supported on the expansive set.
%This is also part of the claim in Theorem 1.2. }
%If in addition we know that the measure of maximal entropy is unique and gives full weight to $\pr_*\mathcal{E}$, where $\mathcal{E}$ is the set of expansive vectors from \eqref{eqn:E}, then $\bar\mu$ gives full weight to the set of pairs $(\xi,\eta)$ for which $P^{-1}(\xi,\eta)$ is a single trajectory, and $\tmu$ is given by \eqref{eqn:Kaim}. In this case the unique MME $\mu$ is fully supported.
\end{theorem}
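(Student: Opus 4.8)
The plan is to prove Theorem \ref{5.4.C} in several stages: first establish the geometric dichotomy that $P^{-1}(\xi,\eta)$ is either a single geodesic or carries multiple geodesics (a ``strip''), then show that the set of $(\xi,\eta)$ where the strip is nontrivial has $\bar\mu$-measure zero, then use this to make sense of \eqref{eqn:Kaim} and verify that $\tmu$ is $\Gamma$-invariant, flow-invariant, and $\sigma$-finite. Finally, compare $\tmu$ with the lift of the unique MME $\mu$ produced in \S\ref{sec:proof} to conclude equality, and read off full support, ergodicity, and the fact that $\tmu(\mathcal{E})=1$.

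\emph{Step 1: the fibers and the expansive set.} Recall from \eqref{eqn:E} that $v\in\mathcal E$ exactly when $c_v$ is the unique geodesic joining $c_v(\pm\infty)$, i.e.\ when $P^{-1}(P(v))=\{f_t v : t\in\RR\}$ is a single orbit. So the statement ``$P^{-1}(\xi,\eta)$ is a single geodesic for $\bar\mu$-a.e.\ $(\xi,\eta)$'' is the \emph{same} as ``$\tmu(\mathcal E)=1$'' once $\tmu$ is defined. The natural route: first observe that the projected measure $(\pr_* P)_* \tmu$ (or rather the pushforward of $\mu$ under $v\mapsto P(\tilde v)$ up to $\Gamma$) is, by Lemma \ref{5.4.A} and Proposition \ref{prop:PS-bds}, absolutely continuous with respect to $\bar\mu$ and in fact has a density bounded above and below; meanwhile the Patterson--Sullivan measures have been designed (Proposition \ref{prop:PS-bds}) precisely so that $\bar\mu$ behaves like the Bowen--Margulis measure. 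The key input is condition \ref{H4}: any invariant measure with $\tmu(\mathcal E)=0$ has entropy $<\htop(F)$. So the strategy is to \emph{first} define $\tmu$ by \eqref{eqn:Kaim} on the $\bar\mu$-full subset of $\sqbd$ where the fiber is a single geodesic, check that the resulting $\mu$ on $T^1M$ is a finite invariant measure, then show it is an MME, then invoke uniqueness (already proved) to identify it with the MME $\mu$, and \emph{then} observe that since this $\mu$ was built entirely from single-geodesic fibers, $\tmu(\mathcal E)=1$ and hence the fiber is a single geodesic $\bar\mu$-a.e.

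\emph{Step 2: $\tmu$ is well defined and $\mu$ is finite.} On the set $G\subset\sqbd$ where $P^{-1}(\xi,\eta)$ is a single orbit, $\lambda_{\xi,\eta}$ (Lebesgue/arclength on that geodesic) is canonically defined, varies measurably, and is $\Gamma$-equivariant and flow-invariant; so \eqref{eqn:Kaim} restricted to $P^{-1}(G)$ gives a $\Gamma$-invariant, flow-invariant, Borel measure $\tmu$ on $T^1\wM$. Flow-invariance of $\tmu$ follows because the flow preserves each $\lambda_{\xi,\eta}$ and fixes $\bar\mu$; $\Gamma$-invariance follows from $\Gamma$-invariance of $\bar\mu$ (Lemma \ref{5.4.A}) together with equivariance of the fiber decomposition. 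Finiteness of the projected measure $\mu$ on $T^1M$ is the standard computation: cover $T^1M$ by finitely many pieces, lift to $T^1\wM$, and estimate $\tmu$ of a box using Proposition \ref{prop:PS-bds}\ref{bds-b}--\ref{bds-c} to compare the $\nu_p\times\nu_p$ weight of shadows with the volume growth; the factor $e^{h\beta_p}$ exactly compensates so that the measure of a flow box of length $1$ is bounded. The main technical nuisance here, and what I expect to be \textbf{the main obstacle}, is precisely that a priori one does \emph{not} know $G$ has full $\bar\mu$-measure, so the construction must be done carefully on $P^{-1}(G)$ and one must separately argue that the $\bar\mu$-mass outside $G$ does not matter --- which is exactly what \ref{H4} buys once we know $\mu$ restricted to $P^{-1}(G)$ already has maximal entropy.

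\emph{Step 3: $\mu$ is an MME and the identification.} To show $h_\mu(f_1)=\htop(F)=h$, use the local product structure of $\tmu$ coming from \eqref{eqn:Kaim}: $\tmu$ locally factors as $e^{h\beta_p}\,d\nu_p\,d\nu_p\,dt$, and the $\nu_p$-measure of a shadow of a ball of radius $\rho$ around $c_v(t)$ is comparable to $e^{-ht}$ by Proposition \ref{prop:PS-bds}. A Brin--Katok / Katok-type entropy computation (as in \cite{gK98}, originally \cite{vK90}) then gives that the measure-theoretic entropy equals the exponential rate $h$ at which dynamical balls shrink, i.e.\ $h_\mu(f_1)\ge h$; the reverse inequality is the variational principle. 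Hence $\mu$ is an MME. Since \ref{H1}--\ref{H4} hold, \S\ref{sec:proof} gives a \emph{unique} MME, so this $\mu$ \emph{is} that MME. Now $\tmu$ is its lift in the sense of \eqref{eqn:tmu-lift} (both are $\Gamma$-invariant, flow-invariant, project to $\mu$, and a $\sigma$-finite lift of a finite measure is unique up to the obvious normalization), $\tmu(\mathcal E)=1$ because $\tmu$ is supported on $P^{-1}(G)$, hence $P^{-1}(\xi,\eta)$ is a single geodesic $\bar\mu$-a.e., and full support of $\mu$ on $T^1M$ follows from $\supp\nu_p=\ideal$ (Proposition \ref{prop:PS}\ref{PS-b}) together with \eqref{eqn:supp-nu}-type density of $\Gamma$-orbits in $\sqbd$: every nonempty flow box meets $P^{-1}(G)$ in a set of positive $\tmu$-measure. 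Ergodicity of $\mu$ was already noted to follow from uniqueness (every ergodic component would be an MME), so we are done.
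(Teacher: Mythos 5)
Your Step~2 contains the crux of the matter and, as you anticipated, it is where the argument breaks down. You propose to define $\tmu$ by integrating Lebesgue measure over the ``good'' set $G\subset\sqbd$ where $P^{-1}(\xi,\eta)$ is a single orbit, then show the resulting $\mu$ on $T^1M$ is an MME, and finally invoke uniqueness and \ref{H4} to deduce that $\bar\mu(G^c)=0$. The problem is that this is circular in a way that \ref{H4} does \emph{not} repair: to run the Katok-type entropy estimate and show $h_\mu(f_1)\ge h$, you first need $\mu$ to be a nonzero finite measure, which requires $\bar\mu(G)>0$; and there is no a priori argument for $\bar\mu(G)>0$. If $\bar\mu(G)=0$, your construction produces the zero measure and there is nothing to normalize. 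You write that ``one must separately argue that the $\bar\mu$-mass outside $G$ does not matter,'' but this is precisely the gap, and it cannot be filled by appealing to a property ($\mu$ restricted to $P^{-1}(G)$ has maximal entropy) that itself presupposes $\bar\mu(G)>0$.

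The paper handles this by refusing to restrict to $G$ at all. Instead it introduces the equivalence relation $v\sim w$ iff $H^s(v)=H^s(w)$ and $H^u(v)=H^u(w)$, observes via Lemma~\ref{lem:sim-pr} that equivalence classes inject into $T^1M$, and then invokes the Kuratowski--Ryll-Nardzewski measurable selection theorem to produce a Borel map $V\colon T^1M/{\sim}\to T^1M$ selecting one representative from each class, lifted $\Gamma$-equivariantly to $\tilde V$. With this in hand, on \emph{every} fiber $P^{-1}(\xi,\eta)$ --- whether a single orbit or a strip --- one defines $\nu_{\xi,\eta}(A)=\Leb\{t\in\RR : \tilde V([f_t v])\in A\}$ and integrates against $\bar\mu$ to get $\tilde\nu$. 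The catch is that $\tilde\nu$ is $\Gamma$-invariant but not flow-invariant when the fibers are genuine strips; the paper fixes this by observing that $Q_*\tilde\nu$ \emph{is} flow-invariant on the quotient $T^1M/{\sim}$, and then applying a Krylov--Bogolyubov average within $Q_*^{-1}(m)$ to produce a genuinely flow-invariant $\mu$ with the same pushforward. Because each $\nu_{\xi,\eta}$ is a pushforward of Lebesgue measure on $\RR$, the bound $\nu_{\xi,\eta}(A)\le\diam A$ holds uniformly over all fibers, which yields $\tmu(A)\le\bar\mu(P(A))\diam A$ and hence Lemma~\ref{5.4.B} and the entropy lower bound \emph{unconditionally}. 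Only after identifying $\mu$ with the unique MME is \ref{H4} used --- and then it does exactly what you want: it forces $\tmu(\mathcal E^c)=0$, which retroactively implies $\bar\mu(G^c)=0$ and collapses the whole construction to \eqref{eqn:Kaim}. Your Steps~1, 3 and the final paragraph on full support and ergodicity are essentially sound, but the argument cannot be made to close without the measurable-selection-plus-averaging detour, or something equivalent that produces a nontrivial invariant measure without presupposing anything about $\bar\mu(G)$.
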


%In order to define a $\Gamma$-invariant and flow-invariant measure via this approach, one needs to produce a family of measures $\lambda_{\xi,\eta}$ such that
%\begin{itemize}
%\item for every Borel $A\subset T^1\wM$, the function $\sqbd \to \RR$ defined by $(\xi,\eta) \mapsto \lambda_{\xi,\eta}(A)$ is $\bar\mu$-measurable;
%\item each $\lambda_{\xi,\eta}$ is supported on $P^{-1}(\xi,\eta)$ and is flow-invariant;
%\item the $\Gamma$-equivariance property $\gamma_* \lambda_{\xi,\eta} = \lambda_{\gamma\xi,\gamma\eta}$ holds.
%\end{itemize}
%One idea at this point would be to select for each $(\xi,\eta)\in\sqbd$ a single trajectory $V(\xi,\eta) \subset T^1\wM$ corresponding to one of the geodesics connecting $\xi$ and $\eta$, and then take $\lambda_{\xi,\eta}$ to be Lebesgue measure along this geodesic. However, it is not clear how to define $V$ in a way that is simultaneously measurable and $\Gamma$-equivariant, so we proceed slightly differently, by weakening the requirement of flow-invariance slightly.

The rest of this section is devoted to proving Theorem \ref{5.4.C}. Although we will ultimately conclude that $P^{-1}(\xi,\eta)$ is a single trajectory $\bar\mu$-a.e., this will not come until the end of the proof: first we must construct an MME $\mu$ using $\bar\mu$ \emph{without} knowing this fact, and then use \ref{H4} to deduce that the measure $\tmu$ given by \eqref{eqn:tmu-lift} gives full weight to $\mathcal{E}$, at which point the construction of $\mu$ will finally allow us to deduce the desired result for $\bar\mu$.

\begin{remark}\label{rmk:H1H2}
As we will see in the proof, Theorem \ref{5.4.C} remains true if we replace \ref{H3} and \ref{H4} with the assumption that there is a unique MME $\mu$ and that the lift $\tmu$ defined by \eqref{eqn:tmu-lift} satisfies $\tmu(\mathcal{E}^c)=0$. The construction below produces an MME even if we only assume that $M$ is a manifold without conjugate points satisfying \ref{H1} and \ref{H2}. The extra assumptions are not used until we deduce the expansivity-related properties of this MME, including \eqref{eqn:Kaim} and full support.
\end{remark}

To prove Theorem \ref{5.4.C}, most of the work goes into producing an MME on $T^1M$ using $\bar\mu$. First define an equivalence relation on $T^1\wM$ by writing 
\begin{equation}\label{eqn:sim}
\text{$v\sim w$ iff $H^s(v) = H^s(w)$ and $H^u(v) = H^u(w)$.}
\end{equation}
Write $[v]$ for the equivalence class of $v$, which projects injectively under $\pi$ to the compact set $H^s(v) \cap H^u(v)$.

\begin{lemma}\label{lem:sim-pr}
If $v,w\in T^1\wM$ are such that $v\sim w$ and $\pr_* v = \pr_* w$, then  $v=w$.
\end{lemma}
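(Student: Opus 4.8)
The plan is to unwind the definitions and use the fact that the horospheres determine the underlying geodesic up to parametrization, so that the only remaining ambiguity is a time shift, which the condition $\pr_* v = \pr_* w$ then kills.

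First I would note that $v \sim w$ means $H^s(v) = H^s(w)$ and $H^u(v) = H^s(-v) = H^u(w) = H^s(-w)$. By the discussion following \eqref{eqn:Hsu}, the center of $H^s(v)$ is $c_v(\infty)$ and the center of $H^u(v)$ is $c_v(-\infty)$; these centers are intrinsic to the horospheres (a horosphere $H^s(v)$ is a level set of the Busemann function $b_v$, and $c_v(\infty)$ is recovered as the equivalence class of the geodesic rays normal to it pointing ``inward''). Hence $v \sim w$ forces $c_v(\pm\infty) = c_w(\pm\infty)$, so by Lemma \ref{lem:uniqasym} we have $w \in W^s(v) \cap W^u(v)$, i.e., $c_v$ and $c_w$ are forward- and backward-asymptotic geodesics with the same endpoints at infinity. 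Moreover, $\pi(v) \in H^s(v) \cap H^u(v)$ since $b_v(\pi(v)) = 0 = b_{-v}(\pi(v))$ by \eqref{eqn:grad-bv}, and likewise $\pi(w) \in H^s(w) \cap H^u(w) = H^s(v) \cap H^u(v)$. So both base points $\pi(v), \pi(w)$ lie in the same set $H^s(v)\cap H^u(v)$; in particular if $v \in \mathcal E$ this set is a single point and we would be done immediately, but we do not want to assume that.

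Next I would pin down the direction. Since $\pr_* v = \pr_* w$, the two vectors $v, w \in T^1\wM$ project to the same vector in $T^1 M$, so there is a deck transformation $\gamma \in \Gamma$ with $\gamma v = w$; in particular $\gamma$ maps the geodesic $c_v$ to the geodesic $c_w$ and maps $c_v(\pm\infty)$ to $c_w(\pm\infty) = c_v(\pm\infty)$. Thus $\gamma$ fixes both endpoints $c_v(\infty)$ and $c_v(-\infty)$ of $c_v$ in $\ideal$. But also $\gamma$ maps $H^s(v)$ to $H^s(\gamma v) = H^s(w) = H^s(v)$ (and similarly for $H^u$), so $\gamma$ preserves the horosphere $H^s(v) = \{b_v = 0\}$. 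Since $\pi(v) \in H^s(v)$ and $\gamma\pi(v) = \pi(\gamma v) = \pi(w) \in H^s(v)$, both points are on the same horosphere, so $b_v(\pi(v)) = b_v(\pi(w)) = 0$. Now I would use the cocycle/shift relation for Busemann functions: if $w \in W^s(v)$ with $c_w(\infty) = c_v(\infty)$, then $b_w = b_v + b_v(\pi(w))$ by Corollary \ref{lem:b-xi} (all Busemann functions centered at the same point at infinity agree up to an additive constant, and the normalization constant is determined by the value at $\pi(w)$). Hence $b_w = b_v$, so $\grad b_w = \grad b_v$, and evaluating at $\pi(v) = \pi(w)$ — wait, I need $\pi(v) = \pi(w)$; this is exactly what $\pr_* v = \pr_* w$ together with $\gamma v = w$ gives once we know $\gamma$ is trivial, which is circular — so instead: from $b_v = b_w$ and $-\grad b_v(c_v(t)) = f_t(v)$, the geodesic $c_w$ is the gradient flow line of $-b_v = -b_w$ through $\pi(w)$, hence is a reparametrization-translate of $c_v$; combined with the backward-asymptotic condition $c_w(-\infty) = c_v(-\infty)$ and the shift computation, one gets $c_w = c_v$ as parametrized geodesics precisely because the additive constant vanishes, so $v = \dot c_v(0) = \dot c_w(0) = w$.

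The main obstacle is the last step: bridging from ``same horospheres (so same endpoints at infinity and base points on a common horosphere)'' to ``equal as vectors'' without secretly assuming $v \in \mathcal E$. The cleanest route, which I would commit to in the write-up, is: $v \sim w$ gives $c_v(\pm\infty) = c_w(\pm\infty)$ hence $w \in W^s(v)$, so $b_w = b_v + b_v(\pi(w))$; the equality $H^s(v) = H^s(w)$ forces the constant $b_v(\pi(w))$ to be $0$ (since $\{b_v = 0\} = H^s(v) = H^s(w) = \{b_w = 0\} = \{b_v = -b_v(\pi(w))\}$ forces $b_v(\pi(w)) = 0$); therefore $w = -\grad b_w(\pi(w)) = -\grad b_v(\pi(w))$. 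Symmetrically, working with $H^u$, $v = -\grad b_{-v}(\pi(v)) $ and we get $\pi(v), \pi(w) \in H^s(v)\cap H^u(v)$ with $b_v = b_w$ on all of $\wM$; finally $\pr_* v = \pr_* w$ means $v$ and $w$ have the same base point in $M$ and the same direction in $T^1 M$, which together with the fact that the Busemann functions (hence the weak-stable leaves through $v$ and through $w$) coincide pins $\pi(v) = \pi(w)$ exactly — a single lift is determined by its $M$-footpoint once we know which $\Gamma$-translate of the leaf it sits on, and $b_v = b_w$ says they sit on the same leaf — and hence $v = -\grad b_v(\pi(v)) = -\grad b_w(\pi(w)) = w$.
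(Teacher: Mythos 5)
Your proposal correctly extracts most of the useful structure, and you locate exactly where the difficulty lies, but the final step that you commit to does not close the gap.

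What you establish cleanly: from $v\sim w$ you deduce $c_v(\pm\infty)=c_w(\pm\infty)$, hence $b_w=b_v+b_v(\pi(w))$ by Corollary~\ref{lem:b-xi}, and from $H^s(v)=H^s(w)$ you force the additive constant to vanish, so $b_v=b_w$ and likewise $b_{-v}=b_{-w}$; and from $\pr_*v=\pr_*w$ you get $\gamma\in\Gamma$ with $\gamma_*v=w$, fixing both endpoints $c_v(\pm\infty)\in\ideal$ and preserving $H^s(v)$. You then (rightly) flag the circularity in trying to conclude $\pi(v)=\pi(w)$ directly. However, the ``clean route'' you settle on---that ``a single lift is determined by its $M$-footpoint once we know which $\Gamma$-translate of the leaf it sits on''---is asserting that $\pr_*$ restricted to the weak-stable leaf $W^s(v)$ (or to the compact set $[v]=\pi^{-1}(H^s(v)\cap H^u(v))\cap W^s(v)$) is injective. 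That is not something you can take for granted: the footpoint map identifies $W^s(v)$ with all of $\wM$, and $\pr\colon\wM\to M$ is very far from injective, so this claimed injectivity is essentially a restatement of the lemma itself. The argument is therefore still circular at this point.

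The missing ingredient, and the one the paper uses, is a standard fact about visibility manifolds cited from \cite[\S1.4]{wK71}: a nontrivial deck transformation $\gamma$ fixes exactly two points of $\ideal$, the endpoints of an axis $c$ on which it acts by a nonzero translation $\gamma c(t)=c(t+a)$. Once one knows $\gamma$ fixes $c_v(\pm\infty)$, one concludes (if $\gamma\neq\mathrm{id}$) that $c_v(\pm\infty)=c(\pm\infty)$ for such an axis. Then $\gamma$ cannot preserve a horosphere centered at $c(\infty)$: after reparametrizing so $c(0)\in H^s(v)$, we have $H^s(v)=H^s(\dot c(0))$, while $\gamma H^s(\dot c(0))=H^s(\dot c(a))$, and these are distinct level sets of the Busemann function (at levels $0$ and $-a$) when $a\neq0$. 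This contradicts $\gamma H^s(v)=H^s(\gamma_*v)=H^s(w)=H^s(v)$, so $\gamma=\mathrm{id}$ and $v=w$. Equivalently, in the language of your write-up: your identity $b_v=b_w$ together with the equivariance $b_{\gamma_*v}(\gamma q)=b_v(q)$ gives $b_v(\gamma q)=b_v(q)$ for all $q$, but $b_v$ is strictly decreasing along the axis $c$, which is incompatible with $\gamma$ translating $c$ by $a\neq0$. Either way, the axis/translation structure of nontrivial deck transformations is the piece your proof needs and does not supply.
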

\begin{proof}
Given $v,w$ as in the hypothesis,  it follows that  $\gamma_* v =w$ for some  $\gamma\in\Gamma$. Suppose that $v\neq w$; then $\gamma$ is not the identity, so by \cite[\S1.4]{wK71}, 
$\gamma$ fixes exactly two points on $\ideal$, which are the endpoints of an axis $c$. In other words, there  
exist a geodesic $c\colon \RR\to \wM$ and a real number $a\neq 0$ such that 
$\gamma c(t) = c(t+a)$ for all $t\in\RR$, and such that $c(\pm\infty)$ are the only two fixed points of $\gamma$ in $\ideal$.

Now observe that $v\sim w$ gives $P(v) = P(w) = P(\gamma_* v) = \gamma P(v)$, so $P(v) = P(w) = c(\pm\infty)$ since $\gamma$ has no other fixed points. Without loss of generality assume that $c(\infty) = c_v(\infty) = c_w(\infty)$ and that $c(0) \in H^s(v)$, so that $H^s(w) = H^s(v) = H^s(\dot{c}(0))$. Then we have
\begin{align*}
H^s(\dot{c}(0)) &= H^s(w) = H^s(\gamma_* v) 
= \gamma H^s(v) \\
&= \gamma H^s(\dot{c}(0))
= H^s(\gamma_* \dot{c}(0))
= H^s(\dot{c}(a)),
\end{align*}
implying that $a=0$, so $\gamma$ is the identity. This contradicts our assumption that $v\neq w$, and proves the lemma.
\end{proof}

This equivalence relation projects to $T^1M$: we write $v\sim w$ if $v,w$ have lifts that satisfy \eqref{eqn:sim}.
Let $\tilde Q \colon T^1\wM \to T^1\wM/{\sim}$ and $Q \colon T^1M \to T^1M/{\sim}$ be the quotient maps.  These are continuous when we equip the quotient spaces with the metric $d([v],[w]) = \min \{d(v',w') : v'\in [v], w'\in [w]\}$. 
The flow $F$ takes equivalence classes to equivalence classes, $f_t[v] = [f_t v]$, and thus it descends to a continuous flow on the quotient spaces. 

Since by the Morse Lemma (Theorem \ref{thm:Morse}) equivalence classes are compact, the measurable selection theorem of Kuratowski and Ryll-Nardzewski \cite[\S5.2]{sS98} guarantees existence of a Borel measurable map $V \colon T^1M/{\sim} \to T^1M$ such that $Q\circ V$ is the identity. 
Then Lemma \ref{lem:sim-pr} guarantees that for every $v\in T^1\wM$,  $\pr_*^{-1}(V(\pr_*[v]))$ intersects $[v]$ in a single point, which we denote $\tilde V([v])$. We conclude that $\tilde V \colon T^1\wM/{\sim} \to T^1\wM$ is a measurable map such that $\tilde Q \circ \tilde V$ is the identity, and moreover
\begin{equation}\label{eqn:G-ev}
\tilde V(\gamma_* [v]) = \gamma_* \tilde V([v]).
\end{equation}
Now define a measure $\nu_{\xi,\eta}$ on each $P^{-1}(\xi,\eta)$ by fixing any $v\in P^{-1}(\xi,\eta)$ and putting for a Borel measurable set $A \subset T^1\wM$ 
\begin{equation}\label{eqn:nu-xi-eta}
\nu_{\xi,\eta}(A) = \Leb \{ t \in \RR : \tilde V([f_t v]) \in A \}.
\end{equation}
Note that this is independent of the choice of $v$.
Use this to define a measure $\tilde\nu$ on $T^1\wM$ by 
\[
\tilde\nu(A) = \int_{\sqbd} 
\nu_{\xi,\eta}(A) \,d\bar\mu(\xi,\eta).
\]
Observe that $\tilde\nu$ is $\Gamma$-invariant by \eqref{eqn:G-ev}, and as in \cite{vK90,gK98} it descends to a finite Borel measure $\nu$ on $T^1M$. Without loss of generality we scale the metric so that $\nu(T^1M)=1$.

The measure $\nu$ is not necessarily flow-invariant. However, the measure $m = Q_* \nu$ is a flow-invariant measure on $T^1M/{\sim}$ because $Q_* \nu_{\xi,\eta}$ is flow-invariant on each $P^{-1}(\xi,\eta)/{\sim}$.
Now the set $Q_*^{-1}(m)$ of Borel probability measures is weak* compact and closed under $(f_t)_*$ for all $t\in \RR$, so the usual argument from the Krylov--Bogolyubov theorem for producing an invariant probability measure (take a weak* limit point of the family $\nu_T = \frac 1T \int_0^T (f_t)_* \nu \,dt$ as $T\to\infty$) shows that there is a flow-invariant Borel probability measure $\mu$ on $T^1M$ with $Q_* \mu = m$. This lifts to a $\Gamma$-invariant and flow-invariant Borel measure $\tmu$ on $T^1\wM$ by \eqref{eqn:tmu-lift}.

We claim that $\mu$ is a measure of maximal entropy. For this we will need some estimates on $\tilde\nu$ that carry through to $\tmu$. More specifically: it follows from \eqref{eqn:nu-xi-eta} that $\nu_{\xi,\eta}(A) \leq \diam A$ for all $A$, and thus
\begin{equation}\label{eqn:tnu}
\tilde\nu(A) \leq \bar\mu(P(A)) \diam A.
\end{equation}
The same bound holds for each $(f_t)_*\tilde\nu$, and since $\tmu$ is a limit of convex combinations of such measures, we obtain the same bound for $\tmu$:
\begin{equation}\label{eqn:tmu-leq}
\tmu(A) \leq \bar\mu(P(A)) \diam A.
\end{equation}
To show that $\mu$ is a measure of maximal entropy, we consider a measurable partition
${\mathcal A} = \{A_1, \ldots , A_m\}$  of
$T^1M$  such that the diameters of all elements in ${\mathcal A}$ are less than $\epsilon$
with respect to the metric $d_1$ defined at the beginning of \S\ref{sec:geodesic}.

\begin{lemma}\label{5.4.B}
Let $0 < \epsilon < \min \{R, \mathrm{inj} (M) \}$, where $\mathrm{inj}(M)$
is the injectivity radius
 of $M$. Then there is a constant $a > 0$ such that
\[
\mu(\alpha) \le e^{-hn} a
\]
for all $n\in\NN$ and $\alpha \in {\mathcal A}_f^{(n)}$.
\end{lemma}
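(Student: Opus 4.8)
The plan is to bound $\mu(\alpha)$ by lifting $\alpha$ to a small set $\tilde\alpha\subset T^1\wM$, feeding it into the upper bound \eqref{eqn:tmu-leq} for $\tmu$, and estimating $\bar\mu(P(\tilde\alpha))$ with the shadow bounds of Proposition \ref{prop:PS-bds}. Throughout write $h=\htop(f_1)$. \emph{Step 1 (passing upstairs).} We may assume $\alpha\neq\emptyset$; fix $v\in\alpha$ and a lift $\tilde v\in T^1\wM$, and set $p=\pi\tilde v$ and $q=c_{\tilde v}(n)$, so that $d(p,q)=n$ since geodesics minimise in $\wM$. Put $\tilde\alpha=\pr_*^{-1}(\alpha)\cap B_{d_1}(\tilde v,\epsilon)$. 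Because $\epsilon<\inj(M)$, a $d_1$-ball of radius $\epsilon$ downstairs lifts isometrically (follow the matched lift of each geodesic, noting the downstairs distance never reaches $\inj(M)$), and since $\alpha$ lies in a single element of $\mathcal{A}$ (of $d_1$-diameter $<\epsilon$), every $w\in\alpha$ has a lift in $B_{d_1}(\tilde v,\epsilon)$. Thus $\pr_*(\tilde\alpha)=\alpha$, so \eqref{eqn:tmu-lift} gives $\mu(\alpha)\le\tmu(\tilde\alpha)$, and since $\diam\tilde\alpha\le 2\epsilon$, \eqref{eqn:tmu-leq} yields $\mu(\alpha)\le 2\epsilon\,\bar\mu(P(\tilde\alpha))$.

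\emph{Step 2 (estimating $\bar\mu(P(\tilde\alpha))$).} For $w\in\tilde\alpha$, the vector $\pr_*w$ lies in the same atom of $\mathcal{A}_f^{(n)}$ as $v$, hence $d(c_v(t),c_{\pr_*w}(t))<\epsilon$ for $t\in[0,n]$; lifting this (again the matched lift is consistent because the distance stays below $\inj(M)$) gives $d(c_{\tilde v}(t),c_w(t))<\epsilon$ for $t\in[0,n]$, so $\pi w\in B(p,\epsilon)$ and $c_w(n)\in B(q,\epsilon)$. Now for $(\xi,\eta)=P(w)$ the point $\pi w$ lies on the connecting geodesic $c_w$, so, using that Busemann functions are $1$-Lipschitz with $b_p(p,\cdot)=0$, $|\beta_p(\xi,\eta)|\le|b_p(\pi w,\xi)|+|b_p(\pi w,\eta)|<2\epsilon$. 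Moreover, applying Lemma \ref{lem:ct} with base point $p$ to the geodesic $c_w$, the ray $\sigma$ from $p$ to $\eta$ stays, on $[0,n]$, within a distance of $c_w$ that is bounded in terms of $\epsilon$ only (via Lemma \ref{lem:endpts-suffice}); hence $d(\sigma(n),q)\le\rho_1$ for a constant $\rho_1=\rho_1(\epsilon)$, so $\eta\in pr_p(B(q,\rho_1))$. Writing $H=\{c_w(\infty):w\in\tilde\alpha\}$ and $\rho=\max(\rho_1,R)$ with $R$ as in Lemma \ref{lem:open-shadow}, we thus have $P(\tilde\alpha)\subset\partial\wM\times H$ and $H\subset pr_p(B(q,\rho))$. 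Since $\bar\mu$ is independent of $p$ with $d\bar\mu=e^{h\beta_p}\,d\nu_p\,d\nu_p$ (Lemma \ref{5.4.A}),
\[
\bar\mu(P(\tilde\alpha))\ \le\ e^{2h\epsilon}\,\nu_p(\partial\wM)\,\nu_p\big(pr_p(B(q,\rho))\big)\ \le\ C\,e^{-hn},
\]
where the last step uses Proposition \ref{prop:PS-bds}\ref{bds-c} (with $d(p,q)=n$) together with $\sup_{p\in\wM}\nu_p(\partial\wM)<\infty$; the latter holds because $\nu_p(\partial\wM)$ depends only on the $\Gamma$-orbit of $p$ by Proposition \ref{prop:PS}\ref{PS-a} and is bounded on a compact fundamental domain by Remark \ref{rem:PS}. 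Combining with Step 1 gives $\mu(\alpha)\le 2\epsilon C\,e^{-hn}$, so the lemma holds with $a=2\epsilon C$, independent of $n$ and $\alpha$.

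\emph{Main obstacle.} The only place where genuine geometry enters, rather than bookkeeping with \eqref{eqn:tmu-leq}, is the assertion in Step 2 that the shadow $H$ of the endpoints $c_w(\infty)$ lies in $pr_p(B(q,\rho))$ with the radius $\rho$ \emph{independent of $n$}; this rests on the ray from $p$ to $\eta$ shadowing $c_w$ to within a fixed distance, which comes from Lemma \ref{lem:ct} and Lemma \ref{lem:endpts-suffice} — i.e.\ from the Morse Lemma together with the no-conjugate-points minimising property. The two lifting steps are routine but do genuinely rely on $\epsilon<\inj(M)$ to keep matched lifts consistent.
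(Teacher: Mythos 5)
Your argument is correct and follows essentially the same route as the paper's proof: lift $\alpha$ to $\tilde\alpha$ using $\epsilon<\inj(M)$, reduce to bounding $\bar\mu(P(\tilde\alpha))$ via \eqref{eqn:tmu-leq}, and extract the $e^{-hn}$ decay from a shadow estimate whose radius is uniform in $n$ thanks to Lemma \ref{lem:ct} and Lemma \ref{lem:endpts-suffice}. The only (harmless) deviations are that you base everything at the footpoint $\pi\tilde v$ rather than the paper's fixed reference point --- legitimate since $\bar\mu$ is independent of the base point --- and that you invoke Proposition \ref{prop:PS-bds}\ref{bds-c} (shadow seen from $p$) with the crude factor $\nu_p(\partial\wM)$ for the backward endpoint, where the paper uses part \ref{bds-b} (shadow seen from the backward endpoint at infinity); both yield the same bound.
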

\begin{proof}
With $\epsilon$ fixed as in the hypothesis, let $R_2>0$ be the constant given 
by Lemma \ref{lem:endpts-suffice} with $R_1 = \epsilon$, and let $r_0 := \diam M$. We will determine the constant $a$ in terms of $\epsilon$, $R_2$, and $r_0$.

Fix $v\in \alpha$ and observe that $\alpha \subset \bigcap_{k=0}^{n-1} f_{-k} B_{d_1} (f_k v, \epsilon)$, so for every $w \in \alpha$ and $t\in [0,n]$ we have
$d(c_v(t), c_w(t)) \le \epsilon$. Let $p\in \wM$ be
the reference point used in the definition of the measure $\tilde \mu$ and
$\tilde v \in T^1\wM$ be a lift of $v$ such that $d(\pi \tilde v, p ) \le
\diam M = r_0$. 
Since $\epsilon < \mathrm{inj}(M)$ we can lift the
set $\alpha$ to a set $\tilde{\alpha} \in T^1\wM$ such
 that for all $\tilde{w}
\in \tilde\alpha$
 we have $d(c_{\tilde w}(t)$, $c_{\tilde v}(t)) \le
\epsilon$
 for all $t \in [0,n]$.

Let $c_{\tilde v}(n) = x $ and $\xi = c_{\tilde w}(- \infty)$. Let $c_{\xi, x}$
be the geodesic connecting $\xi$ and $x$ such that
$c_{\xi, x}(n) = x$. 
The construction of this geodesic in the proof of Lemma \ref{lem:ct} yields the estimate $d(c_{\xi,x}(t), c_{\tilde w}(t)) \leq R_2$ for all $t\in (-\infty,n]$. Applying this with $t=0$ we get
\[
d(c_{\xi, x}(0), p) \le d(c_{\xi, x}(0), \pi (\tilde w)) + d(\pi (\tilde w), p) \le
\epsilon + r_0 + R_2 =: r_1
\]
i.e., $\xi \in pr_x(B(p, r_1))$.
Therefore, if
$P\colon T^1\wM \to \sqbd$ denotes the
endpoint projection as in the paragraph following Lemma \ref{5.4.A}, we have
\begin{equation}\label{eqn:P-alpha}
P(\tilde \alpha) \subset \bigcup\limits_{\eta \in pr_x(B(p, r_1))}
\{\eta\} \times pr_\eta(B(x,\epsilon)).
\end{equation}
For each $\eta \in pr_x(B(p, r_1))$ choose a point
$q \in B(p, r_1)$ that lies on the geodesic $c_{\eta,x}$.
Then, using the transformation rule for the Patterson--Sullivan measure,
Proposition \ref{prop:PS-bds}\ref{bds-b} and the estimate
\[
d(q,x) \ge d(x, \pi \tilde v) - d(\pi \tilde v, p) - d(p,q) \ge n - r_0 - r_1,
\]
we obtain
\[
\nu_p(pr_\eta (B(x,\epsilon))) \le  e^{h d(p, q)} \nu_q(pr_\eta (B(x, \epsilon)))
\le e^{hr_1} b \; e^{-h d(q, x)} \le \tilde b e^{-hn}
\]
for a constant $\tilde b =b e^{h(r_0 + 2r_1)}>0$, where the first inequality follows from Remark \ref{rem:PS}.
Recalling the definition of $\bar\mu$ in Lemma \ref{5.4.A}, we see that
\begin{equation}\label{eqn:barmu-leq}
\bar\mu(P(\tilde\alpha)) \leq \bigg( \sup_{(\xi,\eta) \in P(\alpha)} e^{h\beta_p(\xi,\eta)} \bigg)
\tilde b e^{-hn} \nu_p(\ideal).
\end{equation}
The supremum is at most $\sup \{ e^{h\beta_p(\xi,\eta)} :$ the geodesic joining $\xi$ and $\eta$ intersects $B(p,r_1) \}$, which is finite. Thus combining \eqref{eqn:tmu-leq} and \eqref{eqn:barmu-leq} proves the lemma.
\end{proof}

Nowe we can complete the proof of Theorem \ref{5.4.C}. First we show that $\mu$ is an MME.  Choose a partition ${\mathcal A}$ as above, and use Lemma \ref{5.4.B} to deduce that
\[
H({\mathcal A}_{f_1}^{(n)}) = \sum\limits_{\alpha\in{\mathcal A}_{f_1}^{(n)}} \mu(\alpha) \;
(-\log \mu(\alpha))\ge (hn - \log a) \sum\limits_{\alpha\in{\mathcal A}_{f_1}^{(n)}}  \mu (\alpha) =
hn - \log a.
\]
Hence, $h \leq h(f_1,{\mathcal A}) \leq h_\mu(f_1) \leq h$, which proves the claim. Because we showed in \S\ref{sec:proof} that \ref{H1}--\ref{H4} imply uniqueness of the MME, we conclude that $\mu$ is the unique MME, and in particular is ergodic.

Now we show that $P^{-1}(\xi,\eta)$ is a single trajectory $\bar\mu$-a.e. Indeed, if $\bar\mu$ gives positive weight to the set of pairs $(\xi,\eta)$ for which $P^{-1}(\xi,\eta)$ contains more than one trajectory, then we would have $\tmu(\mathcal{E}^c)>0$, contradicting \ref{H4}. Thus $\bar\mu$-a.e.\ $(\xi,\eta)$ has the property that $P^{-1}(\xi,\eta)$ is a single trajectory, and thus \eqref{eqn:Kaim} immediately gives an invariant measure, without the need for the later averaging procedures; this measure must be $\tmu$.

Using \eqref{eqn:Kaim} we can deduce that $\mu$ is fully supported. Indeed, if $U\subset T^1\wM$ is open then $P(U) \subset \sqbd$ is open as well by the definition of the topology on $\ideal$. Thus $\bar\mu(P(U))>0$, and \eqref{eqn:Kaim} immediately gives $\tmu(U)>0$, which proves Theorem \ref{5.4.C}.

\begin{remark}
As discussed in Remark \ref{rmk:H1H2}, we could replace \ref{H3} and \ref{H4} with the assumption that the MME is unique and has a lift giving full weight to $\mathcal{E}$.
We conjecture that uniqueness immediately implies this expansivity hypothesis. Indeed, if we select for each $(\xi,\eta)\in\sqbd$ a single trajectory $V(\xi,\eta) \subset T^1\wM$ corresponding to one of the geodesics connecting $\xi$ and $\eta$, and then take $\lambda_{\xi,\eta}$ to be Lebesgue measure along this geodesic, we might expect to immediately obtain an MME (or rather its lift) by \eqref{eqn:Kaim}, and then observe that making two different choices $V_1$ and $V_2$ would give two distinct MMEs unless $P^{-1}(\xi,\eta)$ is a single trajectory $\bar\mu$-a.e.  However, it is not clear how to define $V$ in a way that is simultaneously measurable and $\Gamma$-equivariant, and so for the time being this remains a conjecture.
\end{remark}

\section{On mixing of the measure of maximal entropy}\label{sec:mixing}

In this section we follow the ideas of \cite[Theorem 2]{mB02} to prove that the MME constructed in the previous section is mixing.

\begin{theorem}\label{thm:mix}
Let \(M\) be a smooth closed Riemannian manifold without conjugate points satisfying conditions \ref{H1}--\ref{H4}. %Suppose that the geodesic flow on $T^1M$ has been shown to have a unique measure of maximal entropy $\mu$, and that $\mu$ has the following ``almost expansivity'' property: for $\mu$-a.e.\ $v\in T^1M$, every lift $\tilde v\in T^1\wM$ lies in the set $\mathcal{E}$ from \eqref{eqn:E}.
Then \(F\) is mixing with respect to the unique MME \(\mu\).
\end{theorem}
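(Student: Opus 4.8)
The plan is to follow the argument of Babillot \cite{mB02} for Theorem~2 there, adapted to our setting as indicated in the introduction. The strategy has two stages: first, reduce mixing of $(F,\mu)$ to \emph{non-arithmeticity} of the length spectrum of $(M,g)$ --- that is, to the statement that the subgroup of $\RR$ generated by the periods of closed geodesics is not discrete --- and second, prove that non-arithmeticity holds. The crucial input from the preceding sections is Theorem~\ref{5.4.C}: the lift $\tmu$ gives full measure to the expansive set $\mathcal E$, and there it has the Hopf-type product form $d\tmu = e^{h\beta_p(\xi,\eta)}\,d\nu_p(\xi)\,d\nu_p(\eta)\,dt$ coming from \eqref{eqn:Kaim} and Lemma~\ref{5.4.A}, with the Patterson--Sullivan measures $\nu_p$ fully supported (Proposition~\ref{prop:PS}) and transforming under $\Gamma$ through the Busemann cocycle.

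First I would check that $(T^1M, F, \mu)$ fits the abstract framework of \cite{mB02} for ``hyperbolic systems'': the weak stable and unstable horospheres together with the flow direction provide the local product decomposition, the conditional measures of $\tmu$ along the horospherical foliations are the $\nu_p$ (up to the Busemann density $e^{-h b_p(\cdot,\xi)}$) and Lebesgue along the flow, and the holonomies are absolutely continuous with the expected cocycle. The point requiring extra care --- and the reason \S\ref{sec:mixing} is not a verbatim translation of \cite{mB02} --- is that here $F$ is not expansive and $\mathcal E$ is not open, so the product structure is only available along $\mathcal E$ and the horospherical foliations are merely Lipschitz; since $\tmu(\mathcal E^c)=0$ this is harmless for the measurable parts of the argument, but the continuity statements needed in the second stage must be established directly, which is the purpose of \S\ref{sec:asymptotic} and Lemma~\ref{lem:tnk}.

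Granting the framework, Babillot's dichotomy states that $(F,\mu)$ is mixing as soon as it is weakly mixing, so if $(F,\mu)$ fails to be mixing there are $\lambda\neq 0$ and a measurable $\psi\colon T^1M\to S^1$ with $\psi\circ f_t = e^{i\lambda t}\psi$ $\mu$-a.e. The product structure of $\tmu$ forces the eigenvalue to be compatible with the geometry, and a standard argument as in \cite{mB02} yields $\lambda\,\ell(\gamma)\in 2\pi\mathbb Z$ for every closed geodesic $\gamma$ (morally: such a $\psi$ has a version that is constant along stable and unstable horospheres, and along the axis of a deck transformation $\gamma$ --- where the flow period equals the translation length $\ell(\gamma)$ and $\gamma$ acts trivially on the corresponding point of $\dididi$ --- the relation $\psi\circ f_{\ell(\gamma)} = e^{i\lambda\ell(\gamma)}\psi$ combines with $\Gamma$-invariance of the lift to give $e^{i\lambda\ell(\gamma)}=1$). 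Thus the length spectrum would be contained in the discrete subgroup $\tfrac{2\pi}{\lambda}\mathbb Z$, i.e.\ it would be arithmetic. So it remains only to show that the length spectrum is \emph{not} arithmetic.

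This last point is the main obstacle. I would introduce the cross-ratio $\mathcal B(\xi_1,\xi_2,\eta_1,\eta_2)$ of four pairwise distinct points of $\ideal$, built from the Busemann functions $b_p(\cdot,\cdot)$ exactly as in \cite[\S4]{gK98} (the analogue of the temporal function of \cite{cL04}), and use two facts: $\mathcal B$ is non-constant and tends to $0$ as the quadruple degenerates; and, for two closed geodesics $\gamma,\delta$ in ``linked'' position, $\mathcal B$ evaluated at their endpoints is a $\mathbb Z$-linear combination of periods of closed geodesics. Since the endpoint pairs of closed geodesics are dense in $\dididi$ (by density of closed orbits, Theorem~\ref{thm:higher-dim}), if the length spectrum were contained in some $c\mathbb Z$ then $\mathcal B$ would take values in $c\mathbb Z$ on a dense set of quadruples; by continuity of $\mathcal B$ it would then be locally constant, contradicting non-constancy. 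The hard part is precisely the continuity of $\mathcal B$ on the recurrent part of $\mathcal E$: in \cite{mB02,gK98} this rested on uniform hyperbolicity of the geodesic flow on the recurrent set (\cite[Proposition~4.1]{gK98}), which is not available to us, so I would instead deduce it from the shadowing and Morse-lemma estimates of \S\ref{sec:asymptotic}, and in particular from Lemma~\ref{lem:tnk}, which supplies the uniform control needed to pass $\mathcal B$ through limits along $\mathcal E$. With continuity established, non-arithmeticity follows, and hence $(F,\mu)$ is mixing.
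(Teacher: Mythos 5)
Your overall strategy (follow Babillot, use the Patterson--Sullivan product structure, reduce to a statement about the cross-ratio, and identify Lemma~\ref{lem:tnk} as the replacement for uniform hyperbolicity on the recurrent set) is the right one, and you are right that the failure of expansivity is the chief technical obstruction. But there is a genuine gap in how you get from \emph{non-mixing} to an arithmetic constraint.

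You write that ``Babillot's dichotomy states that $(F,\mu)$ is mixing as soon as it is weakly mixing,'' and then deduce from non-mixing the existence of an $L^\infty$ eigenfunction $\psi$ with $\psi\circ f_t = e^{i\lambda t}\psi$. This is not available as a black box: that dichotomy \emph{is} the content of Babillot's theorem, not a lemma you may cite in order to prove it, and her proof does not pass through eigenfunctions at all. The actual mechanism (reproduced in \S\ref{sec:mixing}) is her functional-analytic Lemma~1: from the failure of $\ph\circ f_t \to 0$ weakly one extracts $s_n\to\infty$ and a \emph{non-constant weak $L^2$ limit} $\psi$ with $\ph\circ f_{\pm s_n}\to\psi$. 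This $\psi$ is not an eigenfunction; what matters is the two-sidedness of the convergence, which (after Banach--Saks and Borel--Cantelli, and using Lemma~\ref{lem:tnk} for $\tmu$-a.e.\ contraction along $W^{ss}$ and $W^{uu}$) makes $\psi$ constant along both stable and unstable horospheres. The set of periods of $t\mapsto\tilde\psi(f_t\tilde v)$ is then a.e.\ a proper closed subgroup $a\mathbb Z$, cross-ratios land in $a\mathbb Z$ a.e.\ and hence (by full support of $\nu_p$ and continuity) everywhere, and the degenerate quadrilateral $(x,x,y,y)$ with cross-ratio $0$ but nearby quadrilaterals with small positive cross-ratio gives the contradiction. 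Your eigenfunction route only rules out a non-trivial eigenvalue, i.e.\ it only yields weak mixing, which is strictly weaker than what Theorem~\ref{thm:mix} asserts; you would still need the Lemma~1 argument to close the gap between weak mixing and mixing in this setting, and that is precisely the step you have omitted.

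A secondary, smaller point: your arithmeticity constraint is phrased in terms of the classical length spectrum and $\mathbb Z$-linear combinations of lengths. The paper instead works directly with the periods of the limit function $\tilde\psi$ along geodesics, which is what naturally emerges from Lemma~1 and what plugs into the cross-ratio identity of Lemma~\ref{lem:equi}. Reducing to the length spectrum would require an additional closing/approximation argument that the paper does not need.
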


\begin{remark}
As in Remark \ref{rmk:H1H2}, we could replace \ref{H3}--\ref{H4} with the assumption that the MME is unique and that its lift gives full weight to $\mathcal{E}$.
\end{remark}

As in \cite[Theorem 2]{mB02}, the proof of Theorem \ref{thm:mix} is based on three key properties of the flow that are derived from the assumptions of Theorem \ref{thm:higher-dim}: 
\begin{itemize}
\item the product structure properties of the measure of maximal entropy that is given by the Patterson--Sullivan  construction in \S\ref{sec:PS};
\item the continuity of the cross-ratio function which is discussed in \S\ref{S:cross-ratio};
\item Lemma \ref{lem:tnk} below gives enough hyperbolicity $\mu$-a.e.\ to run a version of the Hopf argument, which uses the fact that $\mu$ is ergodic together with the previous ingredients to establish mixing.
% there is enough hyperbolicity for typical point of the MME that allows to run a version of Hopf argument; this is the content of Lemma \ref{lem:tnk}.
\end{itemize}

\subsection{The cross-ratio function}\label{S:cross-ratio}
  Most of the definitions and properties below are given in \cite{fD99} which are inspired by similar concepts in \cite{jO92}. However, since  in \cite{fD99}, the case of negative curvature is considered, for completeness, we include all the proofs. 
  
Given two distinct points \(\xi, \xi'\in\partial \wM\), \((\xi,\xi')\) denotes a geodesic, which is not necessary unique, joining \(\xi\) and \(\xi'\).  Given \(p\in \wM\) and \(\xi\in\partial\wM\), \(H_p(\xi)\) denotes the horosphere centered at \(\xi\) containing \(p\).
Observe that $H_p(\xi) = \{q\in \wM : b_p(q,\xi) = 0\} = H^s(v)$, where $v$ is the unique unit tangent vector at $p$ such that $c_v(\infty) = \xi$ (see Lemma \ref{lem:p-to-xi} and Definition \ref{def:busemann}).

When $M$ is a manifold of hyperbolic type (Definition \ref{def:hyp-type}), Pesin proved continuity of the map $\xi\mapsto H_p(\xi)$ by first observing that \cite[Lemma 1.6]{pEb72} establishes the following \emph{axiom of asymptoticity} for $\wM$: suppose that $x_n,x\in \wM$, $v_n,v\in T^1\wM$, and $t_n\to\infty$ are such that $x_n\to x$, $v_n\to v$, and let $c_n$ be the unique geodesic from $x_n$ to $c_{v_n}(t_n)$; then for any limit point $w\in T^1_x\wM$ of the sequence $\dot{c}_n(0)\in T^1_{x_n}\wM$, we have $c_v(\infty) = c_w(\infty)$ \cite[Definition 5.1 and Proposition 5.4]{jP77}. He then used the axiom of asymptoticity to prove the following continuity result \cite[Lemma 6.2]{jP77}; see also \cite[Lemma 4.11]{rG07}.

\begin{proposition}\label{prop:conti}
Let \((M,g)\) be a compact Riemannian manifold without conjugate points. If \(M\) is of hyperbolic type then for every \(p\in\wM\), the map \(\xi\to H_p(\xi)\) is continuous  where \(\{H_p(\xi), \xi\in\ideal\}\) is equipped with the compact open topology: in other words, if $\xi_n \to \xi\in \ideal$ and $K\subset \wM$ is compact, then $H_p(\xi_n) \cap K \to H_p(\xi) \cap K$ uniformly.
\end{proposition}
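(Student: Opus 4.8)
The plan is to reduce the statement to two ingredients. First, by Definition \ref{def:busemann} the horosphere $H_p(\xi)$ is exactly the zero set of the Busemann function $b_p(\cdot,\xi)$, which is $C^1$ by Lemma \ref{lem:busemann} and has gradient of norm $1$ at every point, with the geodesics pointing at $\xi$ as its integral curves in the direction $-\grad b_p(\cdot,\xi)$ (cf.\ \eqref{eqn:grad-bv}). Second, I claim that $\xi_n\to\xi$ in $\ideal$ forces $b_p(\cdot,\xi_n)\to b_p(\cdot,\xi)$ uniformly on compact subsets of $\wM$. The second claim is the crux; granting it, the first ingredient is what lets one transfer uniform convergence of the functions to local Hausdorff convergence of their zero sets.

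To prove the claim I would simply invoke Corollary \ref{cor:c1}. Fix a compact set $K\subset\wM$ and $\epsilon>0$; the corollary produces an open set $U\subset\cl(\wM)$ containing $\xi$ with $|b_p(q,z)-b_p(q,\xi)|<\epsilon$ for all $q\in K$ and $z\in U$. Since $\ideal$ carries the relative topology inherited from the cone topology on $\cl(\wM)$, the hypothesis $\xi_n\to\xi$ gives $\xi_n\in U$ for all large $n$; taking $z=\xi_n$ yields $\sup_{q\in K}|b_p(q,\xi_n)-b_p(q,\xi)|<\epsilon$, which is the claim. This is precisely the step at which the assumption that $M$ is of hyperbolic type is used: it makes $\wM$ a visibility manifold, so Eberlein's map $V$ of Proposition \ref{prop:Eb1} is continuous and Corollary \ref{cor:c1} is available --- this continuity being the analytic heart of the ``axiom of asymptoticity'' exploited by Pesin.

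For the transfer step, given $q\in\wM$ let $c\colon\RR\to\wM$ be the geodesic with $c(0)=q$ and $c(\infty)=\xi$ (unique by Lemma \ref{lem:p-to-xi}); then $b_p(c(t),\xi)=b_p(q,\xi)-t$, so the point $c\bigl(b_p(q,\xi)\bigr)$ lies on $H_p(\xi)$ at distance $|b_p(q,\xi)|$ from $q$. Setting $\eta_n=\sup_{q\in K}|b_p(q,\xi_n)-b_p(q,\xi)|\to0$, any $q\in H_p(\xi_n)\cap K$ then satisfies $|b_p(q,\xi)|=|b_p(q,\xi)-b_p(q,\xi_n)|\le\eta_n$ and so lies within $\eta_n$ of $H_p(\xi)$; symmetrically every point of $H_p(\xi)\cap K$ lies within $\eta_n$ of $H_p(\xi_n)$ (after enlarging $K$ by $\eta_n$ to contain the flowed-back point). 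As $\eta_n\to0$ this gives $H_p(\xi_n)\cap K\to H_p(\xi)\cap K$ in Hausdorff distance, uniformly over compact $K$, i.e.\ the asserted continuity in the compact-open topology.

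The main obstacle is thus concentrated entirely in the uniform convergence of the Busemann functions, which is Corollary \ref{cor:c1} (equivalently, continuity of $(q,z)\mapsto V(q,z)$, equivalently the axiom of asymptoticity); once this is in hand the rest is routine. The only further geometric fact that makes the transfer work is that Busemann functions have unit-norm gradient --- without it, uniform convergence of the functions would carry no information about their zero sets --- which is why the argument must run through the explicit geodesic flow along $-\grad b_p(\cdot,\xi)$ rather than through naive limits. I would also stress that visibility is essential here and not a mere technical convenience: without it the projections $V(q,\cdot)$ can be discontinuous and $\xi\mapsto H_p(\xi)$ can genuinely fail to be continuous.
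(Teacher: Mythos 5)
Your proof is correct. Note, however, that the paper does not supply its own proof of this proposition: it is quoted verbatim from Pesin~\cite[Lemma 6.2]{jP77} (see also~\cite[Lemma 4.11]{rG07}), whose argument runs through the ``axiom of asymptoticity'' that Eberlein establishes in~\cite[Lemma 1.6]{pEb72}. So yours is a genuine alternative: a self-contained derivation that routes the argument through the paper's own Corollary~\ref{cor:c1} rather than outsourcing to Pesin. You correctly identify that the underlying geometric input is the same---the continuity of the Eberlein projection $V$ from Proposition~\ref{prop:Eb1} is exactly the analytic core of the axiom of asymptoticity---but your version makes the logic visible: locally uniform convergence of Busemann functions (Corollary~\ref{cor:c1}), then transfer to Hausdorff convergence of horospheres via the unit-norm gradient and the identity $b_p(c(t),\xi)=b_p(q,\xi)-t$ along geodesics asymptotic to $\xi$, which is a consequence of \eqref{eqn:grad-bv}, \eqref{eqn:Wsu}, and Lemma~\ref{lem:ct}. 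This has the advantage of keeping the entire chain inside the paper's toolkit.

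Two small points to tighten. First, Corollary~\ref{cor:c1} as literally stated introduces the notation $b_p(q,z)$ only for $z\in\wM$, yet concludes the estimate for $z\in U\subset\cl(\wM)$; your application takes $z=\xi_n\in\ideal$, which requires reading $b_p(q,z)$ as the Busemann function of Definition~\ref{def:busemann} when $z\in\ideal$. This is plainly the intended reading (the proof of the corollary treats $z\in\cl(\wM)$ throughout), but it is worth spelling out. Second, in the ``symmetric'' direction of the Hausdorff estimate, the flowed-back point lies on $H_p(\xi_n)$ but possibly outside $K$; to compare $H_p(\xi_n)\cap K$ with $H_p(\xi)\cap K$ cleanly one should either fix a slightly larger compact set $K'\supset T_1(K)$ at the outset or phrase the conclusion as $H_p(\xi_n)\cap K\subset T_{\eta_n}(H_p(\xi))$ and $H_p(\xi)\cap K\subset T_{\eta_n}(H_p(\xi_n))$, which is all the proposition requires. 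These are presentational, not mathematical, issues.
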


\begin{lemma}[Definition]\label{lem:ext}
 For \(p\in\wM,\) the length of the segment in \((\xi,\xi')\) with end points in \(H_{p}(\xi)\cap(\xi,\xi')\) and \(H_{p}(\xi')\cap(\xi,\xi')\) does not depend 
 on the choice of the geodesic \((\xi,\xi')\). In particular the Gromov product \((\xi|\xi')_{p}\) is well defined as the length of that segment, see Figure \ref{Gromov-Product}.
\end{lemma}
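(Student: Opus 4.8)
The plan is to reduce the statement to a minimization property of a single function on \(\wM\).

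First I would fix a geodesic \(c\colon\RR\to\wM\) representing \((\xi,\xi')\), normalized so that \(c(-\infty)=\xi\) and \(c(\infty)=\xi'\). Along \(c\) the relevant Busemann functions are affine of slope \(\pm 1\): by the gradient formulas recorded above (\eqref{eqn:grad-bv}, Lemmas \ref{lem:ct} and \ref{lem:uniqasym}), \(-\grad b_p(q,\xi)\) is the unit vector at \(q\) pointing toward \(\xi\), so \(\grad b_p(c(t),\xi)=\dot c(t)\), and likewise \(\grad b_p(c(t),\xi')=-\dot c(t)\). Hence \(t\mapsto b_p(c(t),\xi)\) is strictly increasing at unit speed and \(t\mapsto b_p(c(t),\xi')\) is strictly decreasing at unit speed, so \(c\) meets each of \(H_p(\xi)\) and \(H_p(\xi')\) in exactly one point, and the length of the segment of \(c\) between these two points equals \(|b_p(q,\xi)+b_p(q,\xi')|\) for \emph{any} \(q\in c\). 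Writing \(\Phi:=b_p(\cdot,\xi)+b_p(\cdot,\xi')\) and \(\kappa_c:=\Phi|_c\) (a constant by the above), the asserted length is \(|\kappa_c|\), so everything reduces to proving that \(\kappa_c\) does not depend on the chosen geodesic.

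The heart of the matter is the claim that \(\kappa_c=\min_{\wM}\Phi\). To see this, fix \(y\in\wM\). Since \(c(-s)\to\xi\) and \(c(s)\to\xi'\) in the cone topology of \(\cl(\wM)\) as \(s\to\infty\), Corollary \ref{cor:c1} gives \(d(y,c(-s))-d(c(0),c(-s))\to b_{c(0)}(y,\xi)\) and \(d(y,c(s))-d(c(0),c(s))\to b_{c(0)}(y,\xi')\); moreover \(d(c(0),c(\pm s))=s\) because geodesics minimize in \(\wM\) (Cartan--Hadamard, no conjugate points). Therefore \(d(y,c(-s))+d(y,c(s))-2s\to b_{c(0)}(y,\xi)+b_{c(0)}(y,\xi')\), and by Corollary \ref{lem:b-xi} this limit equals \(\Phi(y)-\Phi(c(0))=\Phi(y)-\kappa_c\). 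On the other hand, for each \(s\) the triangle inequality applied to \(y,c(-s),c(s)\), together with \(d(c(-s),c(s))=2s\), gives \(d(y,c(-s))+d(y,c(s))-2s\ge 0\). Letting \(s\to\infty\) yields \(\Phi(y)\ge\kappa_c\), which proves the claim.

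Finally, if \(c'\) is any other geodesic joining \(\xi\) and \(\xi'\), evaluating \(\Phi\) at a point of \(c'\) gives \(\kappa_{c'}=\Phi|_{c'}\ge\kappa_c\), and interchanging \(c\) and \(c'\) gives the reverse inequality, so \(\kappa_c=\kappa_{c'}\). Thus the segment length \(|\kappa_c|\) is independent of the geodesic, and since \(\kappa_c=\min_{\wM}\Phi\le\Phi(p)=0\) one may define \((\xi|\xi')_p:=-\kappa_c=-(b_p(q,\xi)+b_p(q,\xi'))\ge 0\) for \(q\) on any geodesic joining \(\xi\) and \(\xi'\). I expect the only real obstacle to be the middle step: recognizing that the common value \(\kappa_c\) is actually a \emph{global} minimum of \(\Phi\) is exactly what forces independence of the geodesic, and this uses in an essential way that geodesics are globally minimizing in \(\wM\) — without that hypothesis the statement genuinely fails (e.g.\ for the two boundary lines of a ladder graph in a general Gromov hyperbolic space the two segment lengths differ).
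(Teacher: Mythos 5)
Your argument is correct and takes a genuinely different route from the paper's. The paper disposes of the lemma in one sentence by appealing to the fact that the Busemann gradient flow $\grad b_v$ maps horospheres to horospheres, so both geodesics joining $\xi$ and $\xi'$ are transported compatibly across the two horosphere families. Your proof instead identifies the constant value $\kappa_c$ of $\Phi=b_p(\cdot,\xi)+b_p(\cdot,\xi')$ along a geodesic $c$ from $\xi$ to $\xi'$ as the \emph{global} minimum of $\Phi$ over $\wM$, deriving this from Corollary \ref{cor:c1}, Corollary \ref{lem:b-xi}, and the elementary triangle inequality $d(y,c(-s))+d(y,c(s))\ge 2s$; since two geodesics from $\xi$ to $\xi'$ then both achieve the same minimum value, they cut out segments of the same length. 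What your approach buys: it is self-contained and more elementary (no appeal to the structure of the horospherical flow), it makes explicit where the hypothesis that geodesics are globally minimizing in $\wM$ enters (via Cartan--Hadamard for manifolds without conjugate points), it produces the useful extra identity $(\xi|\xi')_p=-\min_{\wM}\bigl(b_p(\cdot,\xi)+b_p(\cdot,\xi')\bigr)$, and it gives nonnegativity of the Gromov product for free since $\Phi(p)=0$. What the paper's route buys is brevity, by reusing the horosphere-flow picture that is already in play elsewhere in the section. One very small stylistic remark: in your first paragraph the phrasing ``strictly increasing at unit speed'' is a slight overstatement of what you use; all you need and all that holds a priori is that $t\mapsto b_p(c(t),\xi)$ has derivative $+1$ everywhere (which does of course imply strict monotonicity), and it would be cleaner to phrase it that way since the Busemann function is only known to be $C^{1,1}$.
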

\begin{proof}
Let \(v,w\in T^1\wM\) associated to two different geodesics joining \(\xi\) and \(\xi'\) as in Figure \ref{Gromov-Product}. Then the proof follows 
since the flow on $\wM$ tangent to $\grad b_v$ takes horospheres to horospheres.
\end{proof}

\begin{remark}
Observe that $(\xi|\xi')_p = \beta_p(\xi,\xi')$, recall \eqref{eqn:beta-p}.
\end{remark}

\begin{figure}[htb]
\def\svgwidth{.7\textwidth}
\center{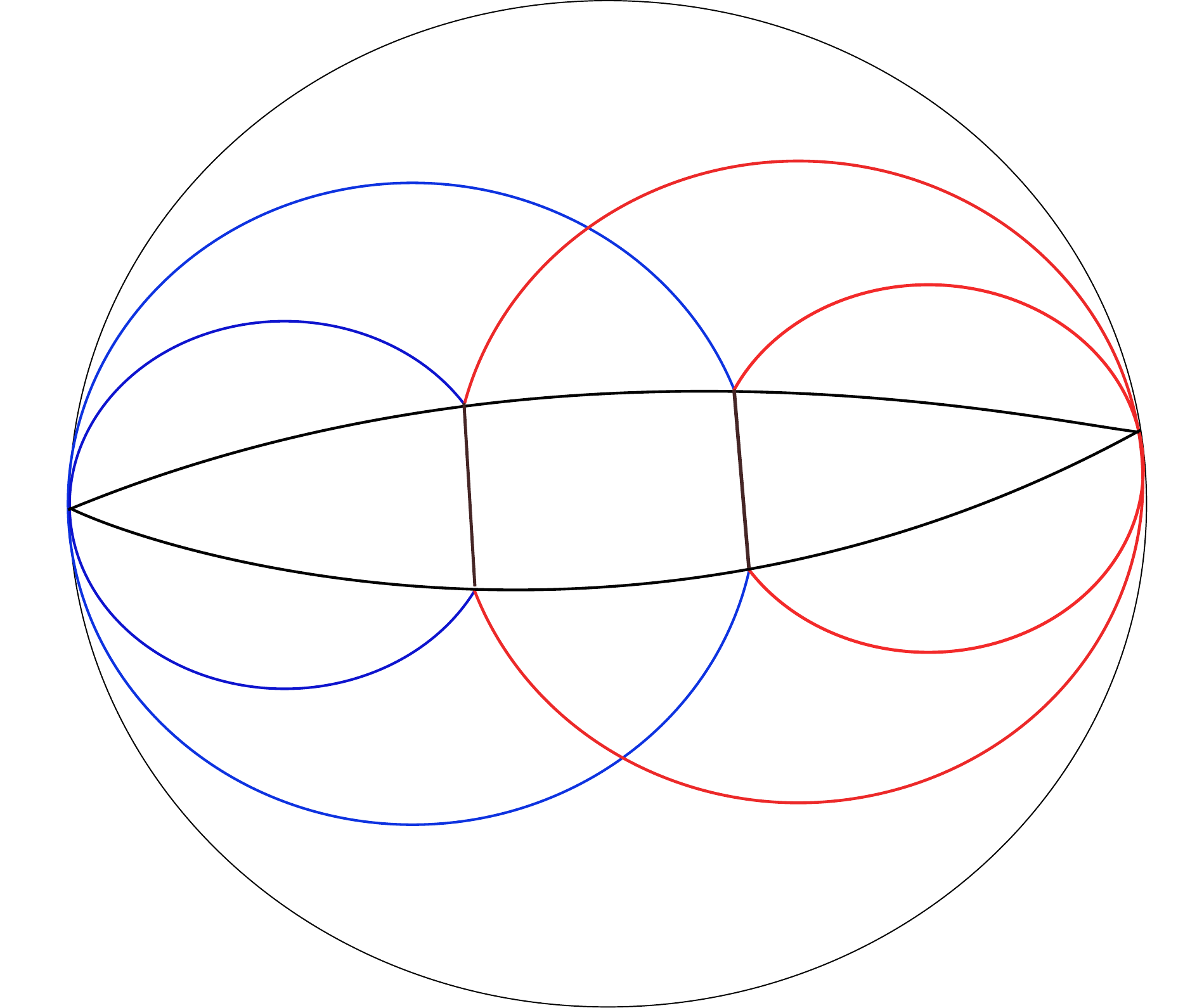}
\caption{Gromov product.}\label{Gromov-Product}
\end{figure}

%\begin{figure}[htb]
%\def\svgwidth{.5\textwidth}
%\center{\input{Hor-distance.pdf_tex}}
%\caption{Horospherical distance.}\label{Hor-distance}
%\end{figure}

Fixing a reference point $p\in \wM$,
the cross-ratio of four points \(\xi, \xi', \eta, \eta'\in\partial\wM\), with $\xi,\xi'$ distinct from $\eta,\eta'$,
is defined by 
\[
 [\xi,\xi',\eta,\eta']:=\left((\xi|\eta')_p +(\xi'|\eta)_p\right)-\left((\xi|\eta)_p +(\xi'|\eta')_p\right).
\]
We remark that from the continuity of  the map \(\xi\to H_p(\xi)\) (Proposition \ref{prop:conti}), the cross-ratio function is continuous on
\[
(\partial\wM)^{(4)} := \{ (\xi,\xi',\eta,\eta') \in (\ideal)^4 : \{\xi,\xi'\} \cap \{\eta,\eta'\} = \emptyset \}.
\]
Moreover as in \cite{mB96}, we observe that for \(q\in\wM\), 
\begin{equation}\label{eq:hor0}
(\xi|\xi')_p-(\xi|\xi')_q=b_p(q,\xi)+b_p(q,\xi')
\end{equation}

 This implies that the cross ratio does not depend on the reference point \(p\). Using Corollary \ref{lem:b-xi} and the fact that \(\Gamma\) acts on \(\wM\) by isometries, we have:
\begin{gather}\label{eq:hor1}
b_p(q,\xi)=b_p(p',\xi)+b_{p'}(q,\xi)\quad\forall p,p',q\in\wM\text{ and }\forall\xi\in\partial\wM, \\
\label{eq:hor2}
b_{\gamma(p)}(\gamma(q),\gamma(\xi))=b_p(q,\xi)\quad\forall p,q\in\wM,\ \forall \gamma\in\Gamma\text{ and }\forall\xi\in\partial\wM.
\end{gather}

Given \((\xi,\xi',\eta,\eta')\in (\ideal)^{(4)}\), we fix \(v\in(\xi,\eta)\). Let \(v_1:=(\xi',\eta)\cap H^s(v)\), \(v_2:=(\xi',\eta')\cap H^u(v_1)\), \(v_3:=(\xi,\eta')\cap H^s(v_2)\), \(v_4:=(\xi,\eta)\cap H^u(v_3)\). We have the following
\begin{figure}[htb]
\def\svgwidth{.9\textwidth}
\center{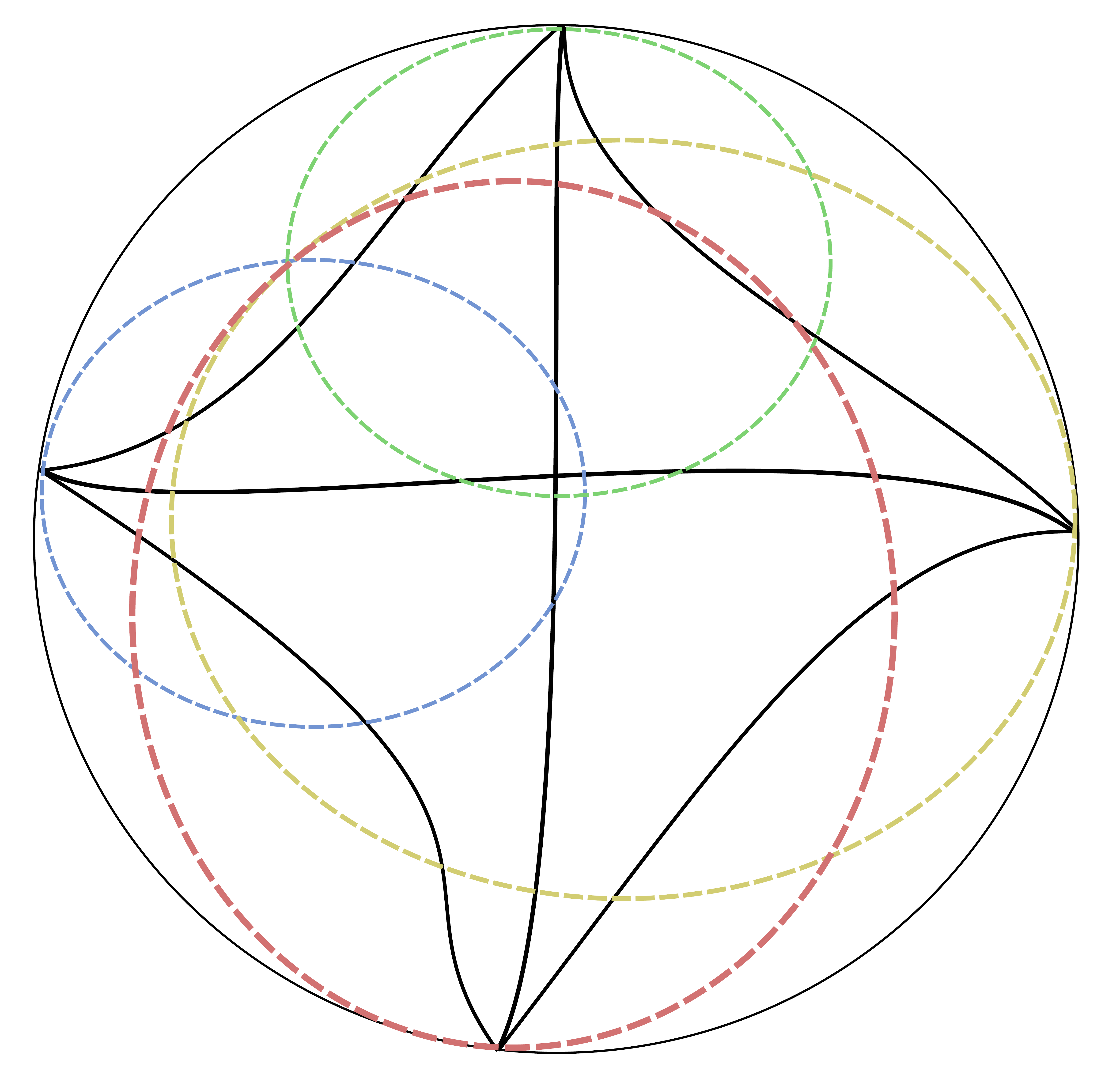}
\caption{Proof of Lemma \ref{lem:equi}.}\label{cross-ratio}
\end{figure}

\begin{lemma}\label{lem:equi}
\(v_4=f_{[\xi,\xi',\eta,\eta']}(v)\).
\end{lemma}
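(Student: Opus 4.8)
The plan is to reduce the statement to a telescoping identity for Busemann functions. Fix the base point $p:=\pi(v)$ and, for each $\zeta\in\{\xi,\xi',\eta,\eta'\}$, write $b_\zeta:=b_p(\cdot,\zeta)$ for the Busemann function based at $\zeta$ and normalized at $p$ (Definition~\ref{def:busemann}). By Corollary~\ref{lem:b-xi}, any two Busemann functions based at the same point at infinity differ by an additive constant, so every horosphere centered at $\zeta$ is a level set of $b_\zeta$; concretely, for $w\in T^1\wM$ one has $H^s(w)=\{q: b_{c_w(\infty)}(q)=b_{c_w(\infty)}(\pi(w))\}$ and $H^u(w)=H^s(-w)=\{q: b_{c_w(-\infty)}(q)=b_{c_w(-\infty)}(\pi(w))\}$. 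I would also note two facts at the outset: (i) $v_4$ is tangent to the same geodesic $c_v$ joining $\xi$ and $\eta$ that was used to start the chain, so $v_4=f_\tau(v)$ for a unique $\tau\in\RR$, and since $b_\xi$ is strictly increasing at unit speed along $c_v$ and vanishes at $p=c_v(0)$ (by \eqref{eqn:grad-bv} and Definition~\ref{def:busemann}), we have $\tau=b_\xi(\pi(v_4))$; (ii) each of the four intersections defining $v_1,\dots,v_4$ is a single point, because in every case the horosphere is centered at an endpoint of the geodesic being intersected, along which the relevant Busemann function is strictly monotone.

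Writing $\pi_i:=\pi(v_i)$, the four membership conditions unwind as follows: $\pi_1\in H^s(v)$, centered at $c_v(\infty)=\eta$ and passing through $p$, gives $b_\eta(\pi_1)=b_\eta(p)=0$; $\pi_2\in H^u(v_1)$, centered at $c_{v_1}(-\infty)=\xi'$ through $\pi_1$, gives $b_{\xi'}(\pi_2)=b_{\xi'}(\pi_1)$; $\pi_3\in H^s(v_2)$, centered at $c_{v_2}(\infty)=\eta'$ through $\pi_2$, gives $b_{\eta'}(\pi_3)=b_{\eta'}(\pi_2)$; and $\pi_4\in H^u(v_3)$, centered at $c_{v_3}(-\infty)=\xi$ through $\pi_3$, gives $b_\xi(\pi_4)=b_\xi(\pi_3)$. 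The remaining ingredient is the identity
\[
b_{\zeta_1}(q)+b_{\zeta_2}(q)=-(\zeta_1|\zeta_2)_p\qquad\text{whenever }q\text{ lies on a geodesic connecting }\zeta_1\text{ and }\zeta_2,
\]
which is immediate from $(\zeta_1|\zeta_2)_p=\beta_p(\zeta_1,\zeta_2)$ and the defining formula \eqref{eqn:beta-p}, together with the observation made right after \eqref{eqn:beta-p} that this number does not depend on $q$.

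Applying this identity with $q=\pi_1$ on $(\xi',\eta)$, then $q=\pi_2$ on $(\xi',\eta')$, then $q=\pi_3$ on $(\xi,\eta')$, and substituting at each step the membership relation already obtained, one gets successively
\[
b_{\xi'}(\pi_1)=-(\xi'|\eta)_p,\qquad b_{\eta'}(\pi_2)=-(\xi'|\eta')_p+(\xi'|\eta)_p,
\]
\[
b_\xi(\pi_3)=-(\xi|\eta')_p-b_{\eta'}(\pi_3)=-(\xi|\eta')_p-b_{\eta'}(\pi_2)=-(\xi|\eta')_p+(\xi'|\eta')_p-(\xi'|\eta)_p.
\]
Hence $\tau=b_\xi(\pi_4)=b_\xi(\pi_3)=\bigl((\xi|\eta)_p+(\xi'|\eta')_p\bigr)-\bigl((\xi|\eta')_p+(\xi'|\eta)_p\bigr)$, where the term $(\xi|\eta)_p=0$ has been inserted for symmetry (it vanishes because $p=\pi(v)$ lies on the geodesic $(\xi,\eta)$). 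Comparing with the definition of the cross-ratio identifies $\tau$ with $[\xi,\xi',\eta,\eta']$ --- up to the overall sign, which is pinned down by the orientation conventions for the geodesics $(\cdot,\cdot)$ and for $H^s,H^u$ --- so that $v_4=f_{[\xi,\xi',\eta,\eta']}(v)$.

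The part requiring genuine care is not conceptual but the orientation bookkeeping: at each of the four stages one must correctly identify which endpoint of the auxiliary geodesic is the center of the horosphere in play --- the forward endpoint for a stable horosphere, the backward endpoint for an unstable one --- and hence which $b_\zeta$ is constant on it; a single misidentification reverses a sign. Once the well-definedness of the $v_i$ and the fact that $v_4$ lies on $c_v$ are in hand, the rest is the routine telescoping displayed above.
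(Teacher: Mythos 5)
Your proof and the paper's are essentially the same argument. Both translate the four intersection conditions defining $v_1,\dots,v_4$ into equalities of Busemann values (for you: $b_\eta(\pi_1)=0$, $b_{\xi'}(\pi_2)=b_{\xi'}(\pi_1)$, etc.; for the paper: $b_p(q,\eta)=b_p(q_1,\eta)$, $b_p(q_1,\xi')=b_p(q_2,\xi')$, etc.), both invoke the identity $b_p(q,\zeta_1)+b_p(q,\zeta_2)=-(\zeta_1|\zeta_2)_p$ for $q$ on a geodesic joining $\zeta_1,\zeta_2$ (this is just \eqref{eqn:beta-p}), and both obtain the cross-ratio by a telescoping cancellation. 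The only stylistic differences are that you normalize to $p=\pi(v)$ from the start (which makes $(\xi|\eta)_p=0$ and allows a running telescoping), whereas the paper keeps a general $p$, reduces to $b_v(p)=0$, and expands the cross-ratio as a sum of eight Busemann terms before cancelling; this is cosmetic.

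One thing worth noting: your honest hedge ``up to the overall sign, pinned down by the orientation conventions'' is in fact well-placed. Your computation cleanly yields $\tau=b_\xi(\pi_4)=\bigl((\xi|\eta)_p+(\xi'|\eta')_p\bigr)-\bigl((\xi|\eta')_p+(\xi'|\eta)_p\bigr)=-[\xi,\xi',\eta,\eta']$ with the paper's stated sign convention for the cross-ratio. The paper's own intermediate step $[\xi,\xi',\eta,\eta']=b_q(q_4,\eta)$ agrees with this (recall $b_q(\cdot,\eta)=b_v$, and $b_v(c_v(t))=-t$ by \eqref{eqn:grad-bv}, so $b_q(q_4,\eta)=-t_4$ where $q_4=c_v(t_4)$); the paper's final line $v_4=f_{b_q(q_4,\eta)}(v)$ therefore carries the same sign tension you flagged, and is resolved by the choice of orientation convention rather than by further computation. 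So both proofs are the same modulo this bookkeeping, and you correctly identified the one place where care is required.
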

Lemma \ref{lem:equi} is due to Otal \cite{jO92} in the case of negative curvature and it shows the analogy between the cross-ratio function and  the temporal function in \cite[Figure 2]{cL04}. 
\begin{proof}
Let \((\xi,\xi',\eta,\eta')\in (\ideal)^{(4)}\) and \(p\in\wM\). It suffices to consider the case when $b_v(p)=0$, so that $q := \pi(v) \in H_p(\eta)$.
We refer to Figure \ref{cross-ratio} where the dotted horospheres \(H_p(\xi), H_p(\xi'), H_p(\eta), H_p(\eta')\) all pass through $p\in H_q(\eta)$ and cut out geodesic segments whose lengths are the four Gromov products involved in the definition of the cross-ratio $[\xi,\xi',\eta,\eta']$. Writing $q_i$ for the foot point of $v_i$, the figure also illustrates the fact that, from the definitions of the vectors $v_i$, we have
\[
q_1 \in H_q(\eta),\quad q_2 \in H_{q_1}(\xi'),
\quad q_3 \in H_{q_2}(\eta'),
\quad q_4 \in H_{q_3}(\xi),
\]
which in terms of Busemann functions can be written as
\begin{align*}
b_p(q,\eta) &= b_p(q_1,\eta),
&b_p(q_1,\xi') &= b_p(q_2,\xi'), \\
b_p(q_2,\eta') &= b_p(q_3,\eta'),
&b_p(q_3,\xi) &= b_p(q_4,\xi).
\end{align*}
Using these in the definition of the cross-ratio gives
\begin{align*}
[\xi,\xi',\eta,\eta'] :&= (\xi|\eta')_p + (\xi'|\eta)_p - (\xi|\eta)_p - (\xi'|\eta')_p \\
&= -b_p(q_3,\xi) - b_p(q_3,\eta') - b_p(q_1,\xi') - b_p(q_1,\eta)\\
&\quad + b_p(q_4,\xi) + b_p(q_4,\eta) + b_p(q_2,\xi') + b_p(q_2,\eta') \\
&= b_p(q_4,\eta) - b_p(q,\eta).
\end{align*}
Recalling \eqref{eq:hor1}, this last quantity is equal to $b_q(q_4,\eta)$, and since both $q,q_4$ lie on the geodesic connecting $\xi$ and $\eta$, we conclude that $v_4 = f_{b_q(q_4,\eta)}(v) = f_{[\xi,\xi',\eta,\eta']}(v)$, which finishes the proof.
\end{proof}

\subsection{Asymptotic convergence}\label{sec:asymptotic}
The aim of this section is to prove some hyperbolic estimate for almost every point with respect to the MME.

Given $v\in T^1M$, let $\tilde{v}$ be a lift of $v$ to $T^1\wM$, and let
\begin{equation}\label{eqn:Wss}
W^{ss}(\tilde v) = W^s(\tilde v) \cap \pi^{-1} H^s(\tilde v) \subset T^1\wM.
\end{equation}
Given $R>0$ and $\tilde v\in T^1M$, let
\[
W_R^{ss}(\tilde v) = \{ \tilde w \in \pi^{-1}H^s(\tilde v) : d(f_t \tilde w, f_t\tilde v) \leq R \text{ for all } t\geq 0\}.
\]
Observe that $W^{ss}(\tilde v) = \bigcup_{R>0} W^{ss}_R(\tilde v)$.
Define $W^{uu}(\tilde v)$ and $W_R^{uu}(\tilde v)$ similarly.  
Recall from \eqref{eqn:E} that
\[
\pr_*\mathcal{E} = \{v\in T^1M : W^{ss}(\tilde v) \cap W^{uu}(\tilde v) = \{\tilde v\}\}.
\]
Fix $R>0$ and consider for each $v\in T^1M$ and $t>0$ the following value:
\[
\ph_t(v) = \sup \{d(\tilde w, \tilde v) :
 %d(f_{-\tau}\tilde w, f_{-\tau}\tilde v) \leq R \text{ for all } \tau \in [0,t] \}.
f_{-t} \tilde w \in W_R^{ss}(f_{-t}\tilde v) \}
\]
\begin{lemma}
If $v\in \pr_*\mathcal{E}$, then $\ph_t(v) \searrow 0$ monotonically as $t\to\infty$.
\end{lemma}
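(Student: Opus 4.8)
The plan is to treat the two assertions separately. Monotonicity of $t\mapsto\ph_t(v)$ holds for \emph{every} $v\in T^1M$ and is essentially bookkeeping about a shrinking family of constraint sets; the convergence $\ph_t(v)\to 0$ is where the hypothesis $v\in\pr_*\mathcal{E}$ enters, via a compactness argument combined with the definition \eqref{eqn:E} of the expansive set.

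Fix a lift $\tilde v$ and, for $t>0$, let $S_t:=\{\tilde w\in T^1\wM: f_{-t}\tilde w\in W_R^{ss}(f_{-t}\tilde v)\}$, so that $\ph_t(v)=\sup_{\tilde w\in S_t}d(\tilde w,\tilde v)$ and $\tilde v\in S_t$. The first step is to reformulate membership in $S_t$: unwinding the definition of $W_R^{ss}(\cdot)$, it says $\pi(f_{-t}\tilde w)\in H^s(f_{-t}\tilde v)$ and $d(f_r\tilde w,f_r\tilde v)\le R$ for all $r\ge -t$. The forward bound (the case $r\ge 0$) already forces $c_{\tilde w}$ to be asymptotic to $c_{\tilde v}$, so $\tilde w\in W^s(\tilde v)$ by Lemma \ref{lem:uniqasym}; then $c_{\tilde w}$ decreases $b_{\tilde v}$ at unit speed, and using the identity $b_{f_s\tilde u}=b_{\tilde u}+s$ (immediate from \eqref{eqn:grad-bv} and Corollary \ref{lem:b-xi}) one checks that $\pi(f_{-t}\tilde w)\in H^s(f_{-t}\tilde v)$ is equivalent to the $t$-independent condition $\pi(\tilde w)\in H^s(\tilde v)$. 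Thus $S_t=\{\tilde w:\pi(\tilde w)\in H^s(\tilde v)\text{ and }d(f_r\tilde w,f_r\tilde v)\le R\ \text{for all}\ r\ge -t\}$, from which $S_{t'}\subseteq S_t$ whenever $t\le t'$ is transparent; this gives $\ph_{t'}(v)\le\ph_t(v)$, and $\ph_t(v)\ge d(\tilde v,\tilde v)=0$.

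For the limit, suppose for contradiction that $\ell:=\lim_{t\to\infty}\ph_t(v)>0$ (the limit exists by monotonicity and nonnegativity). Each $S_t$ is closed, and from the case $r=0$ of its definition its elements have foot points in $\overline{B(\pi\tilde v,R)}$, so $S_t$ is compact and the supremum defining $\ph_t(v)$ is attained at some $\tilde w_t\in S_t$ with $d(\tilde w_t,\tilde v)=\ph_t(v)\ge\ell$. Picking $t_n\to\infty$ and a convergent subsequence $\tilde w_{t_n}\to\tilde w_*$, we get $d(\tilde w_*,\tilde v)=\ell>0$. The decisive observation is that letting $t_n\to\infty$ upgrades the one-sided orbit bound to a two-sided one: for each fixed $r\in\RR$ we have $d(f_r\tilde w_{t_n},f_r\tilde v)\le R$ as soon as $t_n\ge -r$, so in the limit $d(f_r\tilde w_*,f_r\tilde v)\le R$ for all $r\in\RR$. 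Hence $c_{\tilde w_*}$ is bi-asymptotic to $c_{\tilde v}$, so $\tilde w_*\in W^s(\tilde v)\cap W^u(\tilde v)$ by Lemma \ref{lem:uniqasym}, while $\pi(\tilde w_*)=\lim_n\pi(\tilde w_{t_n})\in H^s(\tilde v)$ since $H^s(\tilde v)$ is closed. Now $v\in\pr_*\mathcal{E}$, i.e.\ $\tilde v\in\mathcal{E}$, so \eqref{eqn:E} gives $W^s(\tilde v)\cap W^u(\tilde v)=\{f_t\tilde v:t\in\RR\}$; thus $\tilde w_*=f_{t_*}\tilde v$ for some $t_*$, and $\pi(\tilde w_*)=c_{\tilde v}(t_*)\in H^s(\tilde v)$ forces $b_{\tilde v}(c_{\tilde v}(t_*))=-t_*=0$ by \eqref{eqn:grad-bv}, whence $\tilde w_*=\tilde v$ --- contradicting $d(\tilde w_*,\tilde v)=\ell>0$. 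Therefore $\ell=0$.

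I expect the only mildly delicate step to be the reformulation of $S_t$ in the monotonicity argument, i.e.\ verifying that the horospherical constraint hidden in $W_R^{ss}(f_{-t}\tilde v)$ is genuinely $t$-independent once the distance bound is in force; this is where the Busemann identity $b_{f_s\tilde u}=b_{\tilde u}+s$ and the unit-speed decrease of Busemann functions along stable geodesics are used. The convergence step is then a routine compactness argument, the only real idea being the two-sided upgrade of the orbit bound, after which the definition of $\mathcal{E}$ closes the proof.
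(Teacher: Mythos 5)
Your proof is correct and follows essentially the same two-step structure as the paper's: monotonicity from the nesting of the constraint sets defining $\ph_t$, then a compactness argument extracting a limit vector that is bi-asymptotic to $\tilde v$ and contradicts $v\in\pr_*\mathcal{E}$. The paper's version is terser --- it asserts the nesting $f_{-t}\tilde w\in W_R^{ss}(f_{-t}\tilde v)\Rightarrow f_{-t'}\tilde w\in W_R^{ss}(f_{-t'}\tilde v)$ for $t'\le t$ without spelling out the $t$-independence of the horospherical constraint, and concludes directly that the limit lies in $W_R^{ss}(\tilde v)\cap W_R^{uu}(\tilde v)$, whereas you pass through the weak manifolds $W^s(\tilde v)\cap W^u(\tilde v)$ and use the Busemann identity $b_{f_s\tilde u}=b_{\tilde u}+s$ to pin the limit down to $\tilde v$ itself. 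Your route makes the role of the horosphere condition fully explicit, which is a mild improvement in transparency, but it is the same argument.
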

\begin{proof}
Monotonicity follows from the fact that $f_{-t}\tilde w \in W_R^{ss}(f_{-t}\tilde v)$ implies $f_{-t'}\tilde w \in W_R^{ss}(f_{-t'}\tilde v)$ for all $t'\leq t$, so the sets in the definition of $\ph_t$ are nested decreasing as $t$ increases.  For convergence to $0$, suppose $v\in T^1M$ is such that $\ph_t(v) \not\to 0$; then there are $\delta>0$, $t_n\to \infty$, and $\tilde w_n \in f_{t_n} W_R^{ss}(f_{-t_n} \tilde v)$ such that $d(\tilde w_n,\tilde v) \geq \delta$ for all $n$.  Since $d(\tilde w_n,\tilde v)\leq R$, we can replace $\tilde w_n$ with a convergent subsequence that has $\tilde w_n \to \tilde w$, and observe that $f_{-t} \tilde w \in W_R^{ss}(f_{-t}\tilde v)$ for all $t>0$, so $\tilde w \in W_R^{ss}(\tilde v) \cap W_R^{uu}(\tilde v)$; moreover, $d(\tilde w, \tilde v) \geq \delta$, so $\tilde w \neq \tilde v$, and thus $v\notin \pr_*\mathcal{E}$.
\end{proof}

Let $\mu$ be a flow-invariant probability measure on $T^1M$ such that $\mu(\pr_*\mathcal{E})=1$.
Since each $\ph_t$ is measurable and bounded, and $\mu$ is finite, the monotone convergence theorem implies that $\ph_t\to 0$ in the $L^1$ norm.  Observing that the function $\ph_t \circ f_t$ satisfies
\[
\|\ph_t \circ f_t\|_1 = \int \ph_t \circ f_t \,d\mu = \int \ph_t \,d\mu = \|\ph_t\|_1
\]
by flow-invariance of $\mu$, we conclude that $\ph_t\circ f_t\to 0$ in $L^1$. 
Thus for any sequence $t_n\to\infty$, there is a subsequence $t_{n_k}\to\infty$ such that $\ph_{t_{n_k}} \circ f_{t_{n_k}} \to 0$ $\mu$-a.e.
Note that
\[
\ph_t(f_t v) = \sup \{ d(f_t \tilde w, f_t\tilde v) : \tilde w \in W_R^{ss}(\tilde v) \};
\]
thus we can prove the following.

\begin{lemma}\label{lem:tnk}
Let $\mu$ be a flow-invariant probability measure on $T^1M$ with $\mu(\pr_*\mathcal{E})=1$.
For every $t_n\to\infty$ there is a subsequence $t_{n_k}\to\infty$ such that
\begin{equation}\label{eqn:tnk}
d(f_{t_{n_k}} \tilde w, f_{t_{n_k}} \tilde v)\to 0
\text{ for $\mu$-a.e.\ } v\in T^1M \text{ and every } \tilde w \in W^{ss}(\tilde v).
\end{equation}
\end{lemma}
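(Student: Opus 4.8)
The plan is to deduce the statement from the $L^1$-convergence and flow-invariance computations recorded just above, applied to each radius $R\in\NN$ separately, and then to splice these together by a diagonal argument so that a single subsequence of times works for every $R$ at once.

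First I would fix $R\in\NN$ and write $\ph_t^{(R)}$ for the function $\ph_t$ defined above with this value of $R$; recall $0\le\ph_t^{(R)}\le R$ and $\ph_t^{(R)}$ is measurable. By the lemma immediately preceding, $\ph_t^{(R)}(v)\searrow 0$ as $t\to\infty$ for every $v\in\pr_*\mathcal{E}$, so since $\mu(\pr_*\mathcal{E})=1$ and $\mu$ is finite, monotone convergence gives $\|\ph_t^{(R)}\|_1\to 0$; flow-invariance of $\mu$ then yields $\|\ph_t^{(R)}\circ f_t\|_1=\|\ph_t^{(R)}\|_1\to 0$. Hence from any sequence of times tending to infinity one can extract a subsequence along which $\ph_t^{(R)}\circ f_t\to 0$ $\mu$-a.e. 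Using the identity $\ph_t^{(R)}(f_t v)=\sup\{d(f_t\tilde w,f_t\tilde v):\tilde w\in W_R^{ss}(\tilde v)\}$ recorded above, this says precisely that $d(f_t\tilde w,f_t\tilde v)\to 0$ uniformly over $\tilde w\in W_R^{ss}(\tilde v)$, for $\mu$-a.e.\ $v\in T^1M$.

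To obtain one subsequence valid for all $R$ simultaneously, I would run a diagonal extraction: starting from $t_n\to\infty$, pass first to a subsequence $(t_n^{(1)})$ along which the above a.e.-convergence holds for $R=1$, then to a further subsequence $(t_n^{(2)})\subset(t_n^{(1)})$ handling $R=2$, and so on, and finally set $t_{n_k}:=t_k^{(k)}$. This is eventually a subsequence of each $(t_n^{(R)})$, so $\ph^{(R)}_{t_{n_k}}\circ f_{t_{n_k}}\to 0$ $\mu$-a.e.\ for every $R\in\NN$. Discarding the countable union of the associated null sets (still $\mu$-null), for every remaining $v$ and every $\tilde w\in W^{ss}(\tilde v)=\bigcup_{R\in\NN}W_R^{ss}(\tilde v)$ one chooses $R$ with $\tilde w\in W_R^{ss}(\tilde v)$ and gets $d(f_{t_{n_k}}\tilde w,f_{t_{n_k}}\tilde v)\le\ph^{(R)}_{t_{n_k}}(f_{t_{n_k}}v)\to 0$, which is exactly \eqref{eqn:tnk}. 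The only step that needs any care — and the one I would regard as the main (mild) obstacle — is making the subsequence uniform in $R$; this is handled by the diagonal argument together with the facts that $W^{ss}$ is an increasing countable union of the $W_R^{ss}$ and that a countable union of $\mu$-null sets is $\mu$-null. Everything else is a direct transcription of the preceding discussion.
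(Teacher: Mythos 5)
Your proposal is correct and follows essentially the same route as the paper's own proof: use the monotone convergence and flow-invariance observations to get a subsequence for each fixed $R$, then diagonalize over $R\in\NN$ and use $W^{ss}(\tilde v)=\bigcup_{R}W_R^{ss}(\tilde v)$. The paper states this in three sentences; you have merely written out the same steps in more detail.
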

\begin{proof}
The preceding discussion shows that for each $R>0$ and every $t_n\to\infty$, there is a subsequence $t_{n_k}\to\infty$ such that \eqref{eqn:tnk} holds with $W^{ss}(\tilde v)$ replaced by $W_R^{ss}(\tilde v)$.  Applying this with $R=1,2,3,\dots$ gives a nested family of subsequences, and the usual diagonal argument gives a subsequence that works for every $R$.  Since $W^{ss}(\tilde v) = \bigcup_{R>0} W^{ss}_R(\tilde v)$, this proves the lemma.
\end{proof}

\subsection{Proof of mixing}
Now we prove that the unique MME $\mu$ for geodesic flow on $T^1M$ is mixing, using its product structure to run a version of the Hopf argument due to Babillot \cite{mB02}. %Observe that Lemma \ref{lem:tnk} applies to $\mu$ as a consequence of Proposition \ref{prop:muE}.\vc{Need to replace this with the expansivity assumption on the MME}

Suppose for a contradiction that $\mu$ is not mixing.  Then there is a continuous function $\ph$ on $T^1M$ such that $\ph\circ f_t$ does not converge weakly to $0$ in $L^2(\mu)$.  Now we need the following lemma.

\begin{lemma}[{\cite[Lemma 1]{mB02}}]
Let $(X,\mathcal{B},m,(T_t)_{t\in A})$ be a measure preserving dynamical system, where $(X,\mathcal{B})$ is a standard Borel space, $m$ a (possibly unbounded) Borel measure on $(X,\mathcal{B})$ and $(T_t)_{t\in A}$ an action of a locally compact second countable abelian group $A$ on $X$ by measure preserving transformations.  Let $\ph\in L^2(X,m)$ be a real-valued function on $X$ such that $\int \ph\,dm = 0$ if $m$ is finite.

If there exists a sequence $(t_n)$ going to infinity in $A$ such that $\ph\circ T_{t_n}$ does not converge to $0$ in the weak-$L^2$ topology, then there exist a sequence $(s_n)$ going to infinity in $A$ and a non-constant function $\psi$ in $L^2(X,m)$ such that
\[
\ph\circ T_{s_n} \to \psi
\quad\text{and}\quad
\ph\circ T_{-s_n} \to \psi
\quad\text{in the weak-$L^2$ topology}.
\]
\end{lemma}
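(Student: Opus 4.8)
The plan is to treat this as a purely functional-analytic statement about the unitary representation of $A$ on the Hilbert space $\mathcal{K} := L^2(X,m)$ given by $R_t f = f\circ T_t$; this is unitary because the $T_t$ preserve $m$, so $\|R_t\ph\| = \|\ph\|$ for every $t$. We may assume $m$ is $\sigma$-finite (this is the case in our application, where $m = \tmu$ is the lift of a finite measure), so $\mathcal{K}$ is a separable Hilbert space and ``convergence in the weak-$L^2$ topology'' is exactly weak convergence in $\mathcal{K}$. First I would dispose of the word \emph{non-constant}: if $m$ is finite then $\mathbf 1 \in \mathcal{K}$ is $R$-invariant, so the relation $\langle \ph,\mathbf 1\rangle = \int \ph\,dm = 0$ is inherited by every weak limit of $\{R_t\ph\}$, and hence a \emph{nonzero} such weak limit is automatically non-constant; if $m$ is infinite the only constant function in $L^2$ is $0$. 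So it suffices to produce $s_n\to\infty$ and a \emph{nonzero} $\psi$ with $R_{s_n}\ph\rightharpoonup\psi$ and $R_{-s_n}\ph\rightharpoonup\psi$. Since bounded subsets of a separable Hilbert space are weakly sequentially precompact, and since by hypothesis $R_{t_n}\ph$ does not converge weakly to $0$, after passing to a subsequence we may assume $R_{t_n}\ph\rightharpoonup g$ with $g\neq 0$, and (passing to a further subsequence) $R_{-t_n}\ph\rightharpoonup g'$. A key rigidity fact is that, $\ph$ being real-valued, the autocorrelation $\Phi(t) := \langle R_t\ph,\ph\rangle = \int (\ph\circ T_t)\,\ph\,dm$ is real, hence even: $\Phi(-t)=\Phi(t)$. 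Computing the iterated limit $\lim_m\lim_n \Phi(t_n-t_m) = \lim_m\lim_n \langle R_{t_n}\ph, R_{t_m}\ph\rangle$ in the two ways $\lim_m \langle g, R_{t_m}\ph\rangle = \|g\|^2$ and (via evenness, writing $\Phi(t_n-t_m)=\langle R_{-t_n}\ph, R_{-t_m}\ph\rangle$) $\lim_m \langle g', R_{-t_m}\ph\rangle = \|g'\|^2$ shows $\|g\| = \|g'\| > 0$.

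Next I would carry out the symmetrization. For any fixed index $n^*$, put $s_m := t_m - t_{n^*}$; then $s_m\to\infty$ as $m\to\infty$, and since $R_{\pm t_{n^*}}$ are fixed unitaries (hence weak--weak continuous) while $R_{t_m}\ph\rightharpoonup g$ and $R_{-t_m}\ph\rightharpoonup g'$, we obtain
\[
R_{s_m}\ph = R_{-t_{n^*}}(R_{t_m}\ph)\rightharpoonup R_{-t_{n^*}}g,
\qquad
R_{-s_m}\ph = R_{t_{n^*}}(R_{-t_m}\ph)\rightharpoonup R_{t_{n^*}}g'.
\]
So for every $n^*$ there is a sequence going to infinity along which $R_{\cdot}\ph$ and $R_{-\cdot}\ph$ both converge weakly, to $R_{-t_{n^*}}g$ and $R_{t_{n^*}}g'$ respectively, each of norm $\|g\|$. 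Now let $n^*$ run through a subsequence $(n^*_k)$ of our indices, chosen by weak sequential precompactness so that $R_{-t_{n^*_k}}g\rightharpoonup\psi_+$ and $R_{t_{n^*_k}}g'\rightharpoonup\psi_-$, and then make a Cantor diagonal choice of $m_k$ (large relative to $n^*_k$, tested against a countable dense subset of the unit ball of $\mathcal{K}$) so that the single sequence $s_k := t_{m_k}-t_{n^*_k}\to\infty$ satisfies $R_{s_k}\ph\rightharpoonup\psi_+$ and $R_{-s_k}\ph\rightharpoonup\psi_-$. Testing against $\ph$ and using $R_{t_{n^*_k}}\ph\rightharpoonup g$, $R_{-t_{n^*_k}}\ph\rightharpoonup g'$ together with $\|g\|=\|g'\|$ gives $\langle\psi_+,\ph\rangle = \|g\|^2 = \|g'\|^2 = \langle\psi_-,\ph\rangle \neq 0$, so each of $\psi_\pm$ is nonzero and therefore non-constant. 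Any weak limit is automatically an element of $L^2(X,m)$, so the regularity required of $\psi$ is free.

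The substantive point, and where I expect the real difficulty to lie, is to improve this so that $\psi_+ = \psi_-$, i.e.\ that a \emph{single} $\psi$ serves as the weak limit on both sides. The plan is a rigidity argument. Let $\mathcal S$ be the (by the above nonempty) set of sequences $(s_k)\to\infty$ for which both $R_{s_k}\ph$ and $R_{-s_k}\ph$ converge weakly; the same even-autocorrelation computation as in the first paragraph shows that for every $(s_k)\in\mathcal S$ the two limits $\psi_\pm^{(s)}$ have equal $L^2$-norm, and $\mathcal S$ contains sequences for which this common norm is positive. One then selects an element of $\mathcal S$ optimizing this common norm (the supremum being attained via a further weak-precompactness/diagonal argument) and argues that any residual discrepancy $\psi_+\neq\psi_-$ could be fed back into the construction of the preceding paragraph applied to $(s_k)$ itself to contradict optimality. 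Making this last step precise — identifying exactly which functional of $(\psi_+,\psi_-)$ to optimize and verifying the loop closes — is the technical heart of Babillot's lemma \cite{mB02}; everything preceding it is routine weak-compactness bookkeeping, so I would concentrate the care there.
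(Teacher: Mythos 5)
The paper does not prove this lemma: it is quoted verbatim from \cite[Lemma 1]{mB02} with the citation in the header and no argument following, so there is no internal proof to compare against. Evaluating your proposal on its own terms: the preliminary steps are correct. Passing to the Koopman representation, reducing ``non-constant'' to ``nonzero,'' extracting $g \neq 0$ and $g'$ by weak sequential compactness, the iterated-limit computation giving $\|g\| = \|g'\|$ from evenness of $\Phi(t) = \langle R_t\ph,\ph\rangle$, the shift $s_m = t_m - t_{n^*}$, the diagonal passage to a single sequence $s_k$, and the verification $\langle\psi_\pm,\ph\rangle = \|g\|^2 \neq 0$ are all sound. However, you explicitly leave the crucial identification $\psi_+ = \psi_-$ as a gap, offering only a sketch of an ``optimization'' argument whose objective functional you do not name and whose termination you do not verify. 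That identification is not bookkeeping --- it \emph{is} the content of the lemma --- and I do not see which quantity would strictly improve under feeding $(s_k)$ back into the construction; without a monotone quantity the loop does not close.

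What actually closes the gap is a further exploitation of evenness, not an optimization. Directly from your construction (using that $(n^*_k)$ is a subsequence of the original $(n)$ along which $R_{t_n}\ph \rightharpoonup g$ and $R_{-t_n}\ph\rightharpoonup g'$) one has, for every $s\in A$,
\[
\langle\psi_+, R_s\ph\rangle = \lim_k \langle g, R_s R_{t_{n^*_k}}\ph\rangle = \langle g, R_s g\rangle,
\qquad
\langle\psi_-, R_s\ph\rangle = \lim_k\langle g', R_s R_{-t_{n^*_k}}\ph\rangle = \langle g', R_s g'\rangle.
\]
Writing $\langle g, R_s g\rangle = \lim_n\lim_m \Phi(t_n - t_m - s)$ and applying evenness of $\Phi$ inside the double limit shows that this quantity equals $\langle g, R_{-s}g\rangle$, hence is real and an even function of $s$; a parallel computation for $g'$ gives $\langle g', R_s g'\rangle = \overline{\langle g, R_s g\rangle} = \langle g, R_s g\rangle$. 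Thus $\langle\psi_+, R_s\ph\rangle = \langle\psi_-, R_s\ph\rangle$ for all $s$, and since both $\psi_\pm$ are weak limits of vectors in the (weakly closed, $R$-invariant) cyclic subspace generated by $\ph$, in which $\{R_s\ph : s\in A\}$ is total, this forces $\psi_+ = \psi_-$. (Equivalently, via Bochner's theorem for $\Phi$, both $\psi_\pm$ correspond to the multiplier $|u|^2$ if $g$ corresponds to $u$.) Your proposal stops exactly short of this step, so as written it has a genuine gap.
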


We conclude that there is $s_n\to\infty$ and a non-constant $\psi\in L^2(T^1 M,\mu)$ such that $\ph\circ f_{\pm s_n} \to \psi$ in the weak-$L^2$ topology.  Applying Lemma \ref{lem:tnk}, we can replace $s_n$ with a subsequence such that for $\mu$-a.e.\ $v\in T^1M$, we have
\begin{equation}\label{eqn:converges}
\begin{gathered}
\lim_{n\to\infty} d(f_{s_n} \tilde w^s, f_{s_n} \tilde v) =0
\text{ for all } \tilde w^s \in W^{ss}(\tilde v), \\
\lim_{n\to\infty} d(f_{-s_n} \tilde w^u, f_{-s_n} \tilde v) =0
\text{ for all } \tilde w^u \in W^{uu}(\tilde v).
\end{gathered}
\end{equation}

\begin{lemma}[\cite{mB02}]
Let $(\ph_n)$ be a sequence that converges weakly in $L^2(X,\mathcal{B},m)$ to some function $\psi$.  Then there is a subsequence $(\ph_{n_k})$ such that the Cesaro averages
\[
A_{K^2} = \frac 1{K^2} \sum_{k=1}^{K^2} \ph_{n_k}
\]
converge almost surely to $\psi$.
\end{lemma}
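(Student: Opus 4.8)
The plan is to run the classical Banach--Saks-type argument, in which the choice of the sparse index $K^2$ is exactly what upgrades $L^2$-convergence to almost sure convergence. First I would reduce to the case $\psi=0$: replacing $\ph_n$ by $\ph_n-\psi$ does not affect weak convergence (the new sequence converges weakly to $0$, and $\psi\in L^2$ as a weak limit) and replaces the Ces\`aro averages $A_{K^2}$ by $A_{K^2}-\psi$, since $\frac1{K^2}\sum_{k=1}^{K^2}\psi=\psi$. So it suffices to show that $A_{K^2}\to 0$ almost everywhere when $\ph_n\to 0$ weakly. By the uniform boundedness principle, a weakly convergent sequence in the Hilbert space $L^2(X,\mathcal{B},m)$ is bounded in norm, so there is a constant $C$ with $\|\ph_n\|_2\le C$ for all $n$.

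The key step is to pass to a subsequence along which the cross terms are controlled. Since $\langle\ph_{n_j},\ph_m\rangle\to 0$ as $m\to\infty$ for each fixed $j$ (testing weak convergence against the fixed vector $\ph_{n_j}$), I can choose $n_1<n_2<\cdots$ by a routine inductive extraction so that $|\langle\ph_{n_j},\ph_{n_k}\rangle|\le 1/k$ for all $1\le j<k$. Writing $A_N=\frac1N\sum_{k=1}^N\ph_{n_k}$, expanding the norm square and using $\sum_{k=1}^N\sum_{j<k}1/k\le N$ gives
\[
\|A_N\|_2^2=\frac1{N^2}\Big(\sum_{k=1}^N\|\ph_{n_k}\|_2^2+2\sum_{1\le j<k\le N}\langle\ph_{n_j},\ph_{n_k}\rangle\Big)\le\frac{C^2+2}{N},
\]
so $\|A_N\|_2^2=O(1/N)$ uniformly in $N$.

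Finally I would specialize to $N=K^2$: then $\sum_{K\ge 1}\|A_{K^2}\|_2^2<\infty$, which is precisely where the squared index is used, as the bound $O(1/N)$ is not summable over all $N$. By the monotone convergence theorem, $\int\sum_K|A_{K^2}|^2\,dm=\sum_K\|A_{K^2}\|_2^2<\infty$, hence $\sum_K|A_{K^2}(x)|^2<\infty$, and in particular $A_{K^2}(x)\to 0$, for $m$-a.e.\ $x$; undoing the reduction gives $A_{K^2}\to\psi$ almost everywhere. The argument is insensitive to whether $m$ is finite or merely $\sigma$-finite, since $L^2(m)$ is a Hilbert space in either case and monotone convergence applies unconditionally. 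I do not anticipate a genuine obstacle; the only point demanding care is that the almost-sure conclusion is asserted only along the sparse sequence $(K^2)$, which is exactly what makes the concluding Borel--Cantelli-style step go through.
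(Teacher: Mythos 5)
Your proof is correct and is essentially the same argument the paper invokes: the paper cites the proof of the Banach--Saks theorem (Riesz--Sz.\ Nagy) to produce a subsequence with $\|A_N-\psi\|_2^2=O(1/N)$ and then Borel--Cantelli along $N=K^2$, which is exactly what you carry out in full -- the inductive extraction making the cross terms $|\langle\ph_{n_j},\ph_{n_k}\rangle|\le 1/k$, the resulting $O(1/N)$ bound, and the summability along squares. Your monotone-convergence step is the standard equivalent of the Borel--Cantelli step the paper mentions, and you correctly note it is insensitive to finiteness of $m$.
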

\begin{proof}
In \cite{mB02} this is quoted as a consequence of the proof of the Banach--Saks theorem (see p.\ 80 of Riesz--Sz.\ Nagy 1968), which gives a subsequence such that the square of the $L^2$-norm of $A_K - \psi$ is $O(1/K)$, and then almost-sure convergence of $(A_{K^2})$ follows from Borel--Cantelli.
\end{proof}

Thus there is a set $R\subset T^1M$ such that $\mu(R)=1$ and a subsequence $s_{n_k} \to \infty$ such for every $v\in R$, the following are true:
\begin{enumerate}
\item the convergence statements in \eqref{eqn:converges} hold;
\item $\frac 1{K^2} \sum_{k=1}^{K^2} \ph (f_{\pm s_{n_k}} v) \to \psi(v)$ as $k\to\infty$.
\end{enumerate}
Let $\tilde\psi$ be a lift of $\psi$ to $T^1\wM$, and smooth $\tilde\psi$ along the flow by replacing it with $v \mapsto \int_0^\epsilon \tilde\psi(f_t v)\,dt$.  By choosing $\eps$ small enough, $\tilde\psi$ is not constant.
By continuity of $\ph$ and the two properties just listed, %and the definition of \(\psi\), 
we see that
\[
\text{if $v,w\in R$ and $\tilde w\in W^{ss}(\tilde v)$ or $\tilde w \in W^{uu}(\tilde v)$, then $\psi(v)=\psi(w)$.}
\]
There is a set $R_0$ of full $\mu$-measure such that for every $v\in R_0$, the function $t\mapsto \tilde\psi(f_t \tilde v)$ is well-defined and continuous at all real $t$; in particular, the set of periods of this function is a closed subgroup of $\mathbb{R}$.  This subgroup only depends on the geodesic: $v$ and $f_t v$ have the same subgroup for all $t\in \RR$.  By ergodicity of $\mu$, there is a single subgroup that works for $\mu$-a.e.\ $v$.  This subgroup is not all of $\RR$ since $\tilde\psi$ is not constant, and now 
the remaining parts of the proof can be carried out exactly as in \cite{mB02}:
%to complete the proof we should argue as follows (lifting this directly from \cite{mB02}):

Because $\nu_p\times \nu_p$ is a product measure,
%Thus 
there is a set $E\subset \sqbd$ of full $\nu_p\times \nu_p$ measure, a real number $a>0$, and a $\Gamma$-invariant function $\tilde\psi$ defined $\tilde \mu$-a.e.\ on $T^1\tilde M$ such that for every $(x,y)\in E$, the group of periods of $\tilde\psi$ restricted to $c_{x,y}$ is exactly $a\mathbb{Z}$.

Next step (page 69 of \cite{mB02}): for $\nu_p^4$-a.e.\ quadrilateral, the cross-ratio belongs to $a\mathbb{Z}$.  Since $\nu_p$ is fully supported on $\ideal$, \emph{every} cross-ratio of a quadrilateral belongs to $a\mathbb{Z}$.

Since the cross-ratio of $(x,x,y,y)$ is $0$, the same is true of any nearby quadrilateral, which leads to a contradiction; choose $p$ on $c_{x,y}$ and let $x',y'$ be such that the corresponding geodesic is regular, passes through $p$, and is sufficiently close to $c_{x,y}$, then we have a quadrilateral with strictly positive cross-ratio (`Fact' on page 72 of \cite{mB02}), a contradiction.

\appendix

\section{Morse}\label{sec:Morse}

\begin{proof}[Proof of Lemma \ref{lem:endpts-suffice}]
Let \(R_1>0\) and  $c_1,c_2\colon [0,T]\to \wM$ be two geodesics with
\[
d(c_1(0),c_2(0)) \leq R_1 \quad\text{and}\quad d(c_1(T),c_2(T)) \leq R_1.
\]
For $i=1,2$, let $\alpha_i \colon [0,T_i] \to \wM$ be a $g_0$-geodesic such that $\alpha_i(0) = c_i(0)$ and $\alpha_i(T_i) = c_i(T)$.
By Theorem \ref{thm:Morse}, we have $d_H(c_i,\alpha_i) \leq R_0$ for $i=1,2$, where $R_0$ depends only on $g$ and $g_0$.  Without loss of generality we assume that $T_1 \leq T_2$.  Then the triangle inequality gives
\[
d^0(\alpha_2(0),\alpha_2(T_2))
\leq d^0(\alpha_2(0),\alpha_1(0)) + d^0(\alpha_1(0),\alpha_1(T_1)) + d^0(\alpha_1(T_1),\alpha_2(T_2)).
\]
Note that $d^0(\alpha_i(0),\alpha_i(T_i)) = T_i$, and that \eqref{lem:unif-equiv} gives 
\[
d^0(\alpha_2(0),\alpha_1(0)) = d^0(c_2(0),c_1(0)) \leq A d(c_2(0),c_1(0)) \leq A R_1,
\]
with a similar bound on $d^0(\alpha_1(T_1),\alpha_2(T_2))$.  We deduce that
\begin{equation}\label{eqn:T2T1}
T_2 \leq T_1 + 2AR_1,
\end{equation}
and consequently
\[
d^0(\alpha_1(T_1),\alpha_2(T_1)) \leq d^0(\alpha_1(T_1),\alpha_2(T_2)) + |T_2 - T_1| \leq 3AR_1.
\]
Since $g_0$ is negatively curved, the function $t\mapsto d^0(\alpha_1(t),\alpha_2(t))$ is convex, and therefore achieves its maximum on $[0,T_1]$ at an endpoint; since $d^0(\alpha_1(0),\alpha_2(0)) \leq AR_1$, we conclude that
\begin{equation}\label{eqn:d-alpha}
d^0(\alpha_1(t),\alpha_2(t)) \leq 3AR_1
\quad\text{for all } t\in [0,T_1].
\end{equation}
Since $d_H(c_i,\alpha_i)\leq R_0$, for every \(t\in[0,T]\), there exist \(t_0, t'\in[0,T]\) such that \(d(c_1(t), \alpha_1(t_0)), d(c_2(t'), \alpha_2(t_0))\leq R_0\).
Using the triangle inequality via $\alpha_1(t_0),\alpha_2(t_0),c_2(t')$ together with \eqref{lem:unif-equiv} and \eqref{eqn:d-alpha}, this gives
\begin{equation}\label{eq:01}
d(c_1(t),c_2(t)) \leq 2R_0+3A^2R_1+|t-t'|.
\end{equation}
As in the proof of \eqref{eqn:T2T1}, the triangle inequality via $c_1(0)$ and $c_1(t)$ gives
\[
t' = d(c_2(0),c_2(t')) \leq R_1 + t + (2R_0 + 3A^2 R_1),
\]
and a symmetric argument gives
\[
|t' - t| \leq 2R_0 + (3A^2 + 1)R_1.
\]
This and \eqref{eqn:d-alpha} complete the proof by putting $R_2 := 4R_0 + (6A^2+1)R_1$.
\end{proof}

\bibliographystyle{amsalpha}
\bibliography{conjugate-ref}

\providecommand{\bysame}{\leavevmode\hbox to3em{\hrulefill}\thinspace}
\providecommand{\MR}{\relax\ifhmode\unskip\space\fi MR }
% \MRhref is called by the amsart/book/proc definition of \MR.
\providecommand{\MRhref}[2]{%
  \href{http://www.ams.org/mathscinet-getitem?mr=#1}{#2}
}
\providecommand{\href}[2]{#2}
\begin{thebibliography}{GKOS14}

\bibitem[Arz14]{gA14}
Goulnara Arzhantseva, \emph{Asymptotic approximations of finitely generated
  groups}, Extended abstracts {F}all 2012---automorphisms of free groups,
  Trends Math. Res. Perspect. CRM Barc., vol.~1, Springer, Cham, 2014,
  pp.~7--15. \MR{3644759}

\bibitem[Bab02]{mB02}
Martine Babillot, \emph{On the mixing property for hyperbolic systems}, Israel
  J. Math. \textbf{129} (2002), 61--76. \MR{1910932}

\bibitem[Bau62]{gB62}
Gilbert Baumslag, \emph{On generalised free products}, Math. Z. \textbf{78}
  (1962), 423--438. \MR{0140562}

\bibitem[BBB87]{BBB87}
W.~Ballmann, M.~Brin, and K.~Burns, \emph{On surfaces with no conjugate
  points}, J. Differential Geom. \textbf{25} (1987), no.~2, 249--273.
  \MR{880185}

\bibitem[BCFT18]{BCFT}
K.~Burns, V.~Climenhaga, T.~Fisher, and D.~J. Thompson, \emph{Unique
  equilibrium states for geodesic flows in nonpositive curvature}, Geom. Funct.
  Anal. \textbf{28} (2018), no.~5, 1209--1259. \MR{3856792}

\bibitem[BK85]{BK85}
K.~Burns and A.~Katok, \emph{Manifolds with nonpositive curvature}, Ergodic
  Theory Dynam. Systems \textbf{5} (1985), no.~2, 307--317. \MR{796758}

\bibitem[Bos18]{aB18}
Aur\'elien Bosch\'e, \emph{Expansive geodesic flows on compact manifolds
  without conjugate points}, https://tel.archives-ouvertes.fr/tel-01691107.

\bibitem[Bou96]{mB96}
Marc Bourdon, \emph{Sur le birapport au bord des {${\rm CAT}(-1)$}-espaces},
  Inst. Hautes \'{E}tudes Sci. Publ. Math. (1996), no.~83, 95--104.
  \MR{1423021}

\bibitem[Bow72]{rB72}
Rufus Bowen, \emph{Periodic orbits for hyperbolic flows}, Amer. J. Math.
  \textbf{94} (1972), 1--30. \MR{0298700}

\bibitem[Bow73]{rB73}
\bysame, \emph{Symbolic dynamics for hyperbolic flows}, Amer. J. Math.
  \textbf{95} (1973), 429--460. \MR{0339281}

\bibitem[Bow74]{rB74}
\bysame, \emph{Maximizing entropy for a hyperbolic flow}, Math. Systems Theory
  \textbf{7} (1974), no.~4, 300--303. \MR{0385928}

\bibitem[Bow75]{rB75}
\bysame, \emph{Some systems with unique equilibrium states}, Math. Systems
  Theory \textbf{8} (1974/75), no.~3, 193--202. \MR{0399413}

\bibitem[Bur92]{kB92}
Keith Burns, \emph{The flat strip theorem fails for surfaces with no conjugate
  points}, Proc. Amer. Math. Soc. \textbf{115} (1992), no.~1, 199--206.
  \MR{1093593}

\bibitem[CFT18]{CFT18}
Vaughn Climenhaga, Todd Fisher, and Daniel~J. Thompson, \emph{Unique
  equilibrium states for {B}onatti-{V}iana diffeomorphisms}, Nonlinearity
  \textbf{31} (2018), no.~6, 2532--2570. \MR{3816730}

\bibitem[CK02]{CK02}
M.~Coornaert and G.~Knieper, \emph{Growth of conjugacy classes in {G}romov
  hyperbolic groups}, Geom. Funct. Anal. \textbf{12} (2002), no.~3, 464--478.
  \MR{1924369}

\bibitem[CKP20]{CKP}
Dong Chen, Lien-Yung Kao, and Kiho Park, \emph{Unique equilibrium states for
  geodesic flows over surfaces without focal points}, Nonlinearity \textbf{33}
  (2020), no.~3, 1118--1155.

\bibitem[CKW20]{CKW}
Vaughn Climenhaga, Gerhard Knieper, and Khadim War, \emph{Closed geodesics on
  surfaces without conjugate points}, 2020, Preprint.

\bibitem[Coo93]{mCo93}
Michel Coornaert, \emph{Mesures de {P}atterson-{S}ullivan sur le bord d'un
  espace hyperbolique au sens de {G}romov}, Pacific J. Math. \textbf{159}
  (1993), no.~2, 241--270. \MR{1214072}

\bibitem[CT16]{CT16}
Vaughn Climenhaga and Daniel~J. Thompson, \emph{Unique equilibrium states for
  flows and homeomorphisms with non-uniform structure}, Adv. Math. \textbf{303}
  (2016), 745--799. \MR{3552538}

\bibitem[Dal99]{fD99}
Fran\c{c}oise Dal'bo, \emph{Remarques sur le spectre des longueurs d'une
  surface et comptages}, Bol. Soc. Brasil. Mat. (N.S.) \textbf{30} (1999),
  no.~2, 199--221. \MR{1703039}

\bibitem[Ebe72]{pEb72}
Patrick Eberlein, \emph{Geodesic flow in certain manifolds without conjugate
  points}, Trans. Amer. Math. Soc. \textbf{167} (1972), 151--170. \MR{0295387}

\bibitem[EO73]{EO73}
P.~Eberlein and B.~O'Neill, \emph{Visibility manifolds}, Pacific J. Math.
  \textbf{46} (1973), 45--109. \MR{0336648}

\bibitem[Esc77]{jE77}
Jost-Hinrich Eschenburg, \emph{Horospheres and the stable part of the geodesic
  flow}, Math. Z. \textbf{153} (1977), no.~3, 237--251. \MR{0440605}

\bibitem[FM82]{FM82}
A.~Freire and R.~{Ma\~{n}\'{e}}, \emph{On the entropy of the geodesic flow in
  manifolds without conjugate points.}, Inventiones mathematicae \textbf{69}
  (1982), 375--392.

\bibitem[Fra77]{eF77}
Ernesto Franco, \emph{Flows with unique equilibrium states}, Amer. J. Math.
  \textbf{99} (1977), no.~3, 486--514. \MR{0442193}

\bibitem[GKOS14]{GKOS14}
Eva Glasmachers, Gerhard Knieper, Carlos Ogouyandjou, and Jan~Philipp
  Schr\"oder, \emph{Topological entropy of minimal geodesics and volume growth
  on surfaces}, J. Mod. Dyn. \textbf{8} (2014), no.~1, 75--91. \MR{3296567}

\bibitem[GR19]{GR}
Katrin Gelfert and Rafael~O. Ruggiero, \emph{Geodesic flows modelled by
  expansive flows}, Proc. Edinb. Math. Soc. (2) \textbf{62} (2019), no.~1,
  61--95. \MR{3938818}

\bibitem[Gre56]{lwG56}
L.~W. Green, \emph{Geodesic instability}, Proc. Amer. Math. Soc. \textbf{7}
  (1956), 438--448. \MR{0079804}

\bibitem[Gro87]{mG87}
M.~Gromov, \emph{Hyperbolic groups}, Essays in group theory, Math. Sci. Res.
  Inst. Publ., vol.~8, Springer, New York, 1987, pp.~75--263. \MR{919829}

\bibitem[Hem72]{jH72}
John Hempel, \emph{Residual finiteness of surface groups}, Proc. Amer. Math.
  Soc. \textbf{32} (1972), 323. \MR{0295352}

\bibitem[Hem87]{jH87}
\bysame, \emph{Residual finiteness for {$3$}-manifolds}, Combinatorial group
  theory and topology ({A}lta, {U}tah, 1984), Ann. of Math. Stud., vol. 111,
  Princeton Univ. Press, Princeton, NJ, 1987, pp.~379--396. \MR{895623}

\bibitem[Kai90]{vK90}
Vadim~A. Kaimanovich, \emph{Invariant measures of the geodesic flow and
  measures at infinity on negatively curved manifolds}, Ann. Inst. H.
  Poincar\'{e} Phys. Th\'{e}or. \textbf{53} (1990), no.~4, 361--393, Hyperbolic
  behaviour of dynamical systems (Paris, 1990). \MR{1096098}

\bibitem[Kli71]{wK71}
W.~Klingenberg, \emph{Geod\"atischer {F}luss auf {M}annigfaltigkeiten vom
  hyperbolischen {T}yp}, Invent. Math. \textbf{14} (1971), 63--82. \MR{0296975}

\bibitem[Kni83]{gK83}
Gerhard Knieper, \emph{Das {W}achstum der \"{A}quivalenzklassen geschlossener
  {G}eod\"{a}tischer in kompakten {M}annigfaltigkeiten}, Arch. Math. (Basel)
  \textbf{40} (1983), no.~6, 559--568. \MR{710022}

\bibitem[Kni86]{gK85}
\bysame, \emph{Mannigfaltigkeiten ohne konjugierte {P}unkte}, Bonner
  Mathematische Schriften [Bonn Mathematical Publications], vol. 168,
  Universit\"{a}t Bonn, Mathematisches Institut, Bonn, 1986, Dissertation,
  Rheinische Friedrich-Wilhelms-Universit\"{a}t, Bonn, 1985. \MR{851010}

\bibitem[Kni97]{gK97}
\bysame, \emph{On the asymptotic geometry of nonpositively curved manifolds},
  Geom. Funct. Anal. \textbf{7} (1997), no.~4, 755--782. \MR{1465601}

\bibitem[Kni98]{gK98}
\bysame, \emph{The uniqueness of the measure of maximal entropy for geodesic
  flows on rank {$1$} manifolds}, Ann. of Math. (2) \textbf{148} (1998), no.~1,
  291--314. \MR{1652924}

\bibitem[Kni02]{gK02}
\bysame, \emph{Hyperbolic dynamics and {R}iemannian geometry}, Handbook of
  dynamical systems, {V}ol.\ 1{A}, North-Holland, Amsterdam, 2002,
  pp.~453--545. \MR{1928523}

\bibitem[Liv04]{cL04}
Carlangelo Liverani, \emph{On contact {A}nosov flows}, Ann. of Math. (2)
  \textbf{159} (2004), no.~3, 1275--1312. \MR{2113022}

\bibitem[LLS16]{LLS16}
Fran\c{c}ois Ledrappier, Yuri Lima, and Omri Sarig, \emph{Ergodic properties of
  equilibrium measures for smooth three dimensional flows}, Comment. Math.
  Helv. \textbf{91} (2016), no.~1, 65--106. \MR{3471937}

\bibitem[LWW20]{LWW}
Fei Liu, Fang Wang, and Weisheng Wu, \emph{On the {P}atterson-{S}ullivan
  measure for geodesic flows on rank 1 manifolds without focal points},
  Discrete and Continuous Dynamical Systems - A \textbf{40} (2020), 1517.

\bibitem[Man79]{aM79}
Anthony Manning, \emph{Topological entropy for geodesic flows}, Ann. of Math.
  (2) \textbf{110} (1979), no.~3, 567--573. \MR{554385}

\bibitem[Mar69]{gM69}
G.~A. Margulis, \emph{Certain applications of ergodic theory to the
  investigation of manifolds of negative curvature}, Funkcional. Anal. i
  Prilo\v{z}en. \textbf{3} (1969), no.~4, 89--90. \MR{0257933}

\bibitem[Mar04]{gM04}
Grigoriy~A. Margulis, \emph{On some aspects of the theory of {A}nosov systems},
  Springer Monographs in Mathematics, Springer-Verlag, Berlin, 2004, With a
  survey by Richard Sharp: Periodic orbits of hyperbolic flows, Translated from
  the Russian by Valentina Vladimirovna Szulikowska. \MR{2035655}

\bibitem[Mor24]{mM24}
Harold~Marston Morse, \emph{A fundamental class of geodesics on any closed
  surface of genus greater than one}, Trans. Amer. Math. Soc. \textbf{26}
  (1924), no.~1, 25--60. \MR{1501263}

\bibitem[Ota92]{jO92}
Jean-Pierre Otal, \emph{Sur la g\'{e}ometrie symplectique de l'espace des
  g\'{e}od\'{e}siques d'une vari\'{e}t\'{e} \`a courbure n\'{e}gative}, Rev.
  Mat. Iberoamericana \textbf{8} (1992), no.~3, 441--456. \MR{1202417}

\bibitem[Pes77]{jP77}
Ja.~B. Pesin, \emph{Geodesic flows in closed {R}iemannian manifolds without
  focal points}, Izv. Akad. Nauk SSSR Ser. Mat. \textbf{41} (1977), no.~6,
  1252--1288, 1447. \MR{0488169}

\bibitem[Ric19]{rR}
Russell Ricks, \emph{Counting closed geodesics in a compact rank one locally
  cat(0) space}, 2019, arXiv:1903.07635.

\bibitem[Rug03]{rRu}
Rafael~Oswaldo Ruggiero, \emph{On the divergence of geodesic rays in manifolds
  without conjugate points, dynamics of the geodesic flow and global geometry},
  Ast\'{e}risque (2003), no.~287, xx, 231--249, Geometric methods in dynamics.
  II. \MR{2040007}

\bibitem[Rug07]{rG07}
Rafael~O. Ruggiero, \emph{Dynamics and global geometry of manifolds without
  conjugate points}, Ensaios Matem\'{a}ticos [Mathematical Surveys], vol.~12,
  Sociedade Brasileira de Matem\'{a}tica, Rio de Janeiro, 2007. \MR{2304843}

\bibitem[Sri98]{sS98}
S.~M. Srivastava, \emph{A course on {B}orel sets}, Graduate Texts in
  Mathematics, vol. 180, Springer-Verlag, New York, 1998. \MR{1619545}

\bibitem[Wal82]{pW82}
Peter Walters, \emph{An introduction to ergodic theory}, Graduate Texts in
  Mathematics, vol.~79, Springer-Verlag, New York-Berlin, 1982. \MR{648108
  (84e:28017)}

\end{thebibliography}

\end{document}